\documentclass[a4paper,10pt]{article} 
\usepackage[english]{babel}
\usepackage{a4wide}
\usepackage[utf8]{inputenc}
\usepackage{amssymb,amsmath,amscd,amsfonts,amsthm,bbm,mathrsfs,enumerate}
\usepackage[colorlinks=true, linkcolor=blue, citecolor=blue]{hyperref}
\usepackage{graphicx} 
\usepackage{color} 
\usepackage{accents} 
\usepackage{subcaption}

\DeclareUnicodeCharacter{00A0}{~}

\theoremstyle{definition}
\newtheorem{theorem}{Theorem}
\newtheorem*{theorem*}{Statement}
\newtheorem{lemma}[theorem]{Lemma}
\newtheorem{corollary}[theorem]{Corollary}
\newtheorem{proposition}[theorem]{Proposition}

\newtheorem{remark}{Remark}

\newtheorem*{condition*}{Condition}
\newtheorem{assumption}{Assumption}

\setlength{\marginparwidth}{1.5cm}               %  this goes with todonotes

\DeclareMathOperator{\var}{\mathbb Var}

\DeclareMathOperator{\rank}{rank}
\DeclareMathOperator{\tr}{tr}

\DeclareMathOperator{\dist}{dist}
\DeclareMathOperator{\colspan}{span}

\DeclareMathOperator{\sign}{sign}
\DeclareMathOperator{\jac}{Jac}

\newcommand{\1}{\mathbbm 1}
\newcommand{\T}{{\mathsf T}} 

\newcommand{\ZZ}{\mathbb Z}
\newcommand{\CC}{\mathbb{C}}
\newcommand{\PP}{{{\mathbb P}}} 
\newcommand{\EE}{{{\mathbb E}}} 
\newcommand{\NN}{{{\mathbb N}}} 
\newcommand{\RR}{{{\mathbb R}}} 
\newcommand{\SSS}{{{\mathbb S}}}

\newcommand{\cD}{{\mathcal D}} 
\newcommand{\cE}{{\mathcal E}} 
\newcommand{\cI}{{\mathcal I}} 
\newcommand{\cJ}{{\mathcal J}} 
\newcommand{\cK}{{\mathcal K}} 
\newcommand{\cL}{{\mathcal L}} 
\newcommand{\cN}{{\mathcal N}} 
\newcommand{\cO}{{\mathcal O}} 
 
\newcommand{\cX}{{\mathcal X}} 
\newcommand{\cU}{{\mathcal U}} 

\newcommand{\Eop}{{\mathcal E}_{\text{op}}} 
\newcommand{\HS}{{\text{HS}}} 
\newcommand{\cpl}{{\text{c}}} 
\newcommand{\ssup}{{\bf s}_{\text{sup}}}
\newcommand{\sinf}{{\bf s}_{\text{inf}}}

\newcommand{\mom}{{\boldsymbol m}_4} 

\newcommand{\comp}{\text{comp}} 
\newcommand{\incomp}{\text{incomp}}

\newcommand{\ps}[1]{\langle #1 \rangle}

% 
% Some bold mathematical symbols
\newcommand{\bs}{\boldsymbol}
%
% Diagonal matrix
\DeclareMathOperator*{\diag}{diag}
%
% Defining equality
%\newcommand{\eqdef}{{\stackrel{\text{def}}{=}}} 
\newcommand{\eqdef}{\triangleq} 
% 
% Almost sure convergence

\newcommand{\toaslong}{\xrightarrow[n\to\infty]{\text{a.s.}}}

% Convergence in probability 

%
% Convergence in law 

%
% 

\begin{document}

\title{Smallest singular value and limit eigenvalue distribution \\ of a 
class of non-Hermitian random matrices \\
with statistical application}

% \subtitle{Do you have a subtitle?\\ If so, write it here}

% if too long for running head
% \titlerunning{The Big One} 

\author{Arup Bose\thanks{Statistics and Mathematics Unit, Indian Statistical 
Institute, Kolkata. Email: \texttt{bosearu@gmail.com}.}  
\and 
Walid Hachem\thanks{
CNRS / LIGM (UMR 8049), Universit\'e Paris-Est Marne-la-Vallée, France.
Email: \texttt{walid.hachem@u-pem.fr}.}} 

\date{September 25, 2019} 
%
% The correct dates will be entered by the editor

\maketitle

\begin{abstract} 
Suppose $X$ is an $N \times n$ complex matrix whose entries are centered, independent, and identically distributed random variables
with variance $1/n$ and whose fourth moment is of order ${\mathcal O}(n^{-2})$. In the first part of the paper, we consider the non-Hermitian matrix $X A X^* - z$, where $A$ is a deterministic matrix whose smallest and largest singular values are bounded below and above respectively, and $z\neq 0$ is a complex number. Asymptotic probability bounds for the smallest singular value of this matrix are obtained in the large dimensional regime where $N$ and $n$ diverge to infinity at the same rate. 

In the second part of the paper, we consider the special case where $A = J = [\1_{i-j = 1\mod n} ]$ is a circulant matrix. Using the result of the first part, it is shown that the limit spectral distribution of $X J X^*$ exists in the large dimensional regime, and we determine this limit explicitly. A statistical application of this result devoted towards testing the presence of correlations within a multivariate time series is considered. Assuming that $X$ represents a $\CC^N$-valued time series which is observed over a time window of length $n$, the matrix $X J X^*$ represents the one-step sample autocovariance matrix of this time series. Guided by the result on the limit spectral distribution  of this matrix, a whiteness test against an MA correlation model for the time series is introduced. Numerical simulations show the excellent performance of this test.  
\end{abstract}

{\bf Keywords: } 
Large non Hermitian matrix theory; 
Limit spectral distribution; 
Smallest singular value; 
Whiteness test in multivariate time series.

\section{Introduction and the main results}
\label{sec-xjx} 

Let $(N^{(n)})_{n\geq 1}$ be a sequence of positive integers, which diverges to $\infty$ as $n \to \infty$.  Suppose 
$(X^{(n)} = [x_{ij}^{(n)} ]_{i,j=0}^{N^{(n)}-1,n-1})_{n\geq 1}$ is a sequence of complex random matrices whose entries satisfy the following assumptions:

\begin{assumption}
\label{ass-model}
For each $n\geq 1$, the complex random variables 
$\{x_{ij}^{(n)} \}_{i,j=0}^{N^{(n)}-1,n-1}$ are i.i.d.~with $\EE x_{00}^{(n)} = 0$, $\EE|x_{00}^{(n)} |^2 = 1/n$, and 
$\sup_{n}n^2 \EE |x_{00}^{(n)} |^4 \leq \mom <\infty$.
\end{assumption} 

Let $(A^{(n)})$ be a sequence of deterministic matrices such that $A^{(n)} \in \CC^{n\times n}$, and such that 
\begin{assumption}
\label{assA} 
\[
0 < \inf_n s_{n-1}(A^{(n)}) \leq \sup_n s_0(A^{(n)}) < \infty \, ,  
\]
where $s_0(M) \geq \cdots \geq s_{n-1}(M)$ will refer hereinafter to the singular values of the matrix $M \in \CC^{n\times n}$. 
\end{assumption} 

Suppose that $N^{(n)} / n \to \gamma$, $0 < \gamma  < \infty$ as $n\to\infty$. We shall first be interested in the behavior of the smallest singular value of the non-Hermitian matrix $X^{(n)} A^{(n)} {X^{(n)}}^* - z I_N$, where $z$ is an arbitrary non-zero complex number. We shall then use this result to obtain the limiting spectral behavior of the matrix $X^{(n)} J^{(n)} {X^{(n)}}^*$ where $J^{(n)}$ is given by Equation~\eqref{eq:Jn} below. Finally, we shall discuss a statistical application of this last result. \\ 

The behavior of the smallest singular value of large random matrices has recently aroused an intense research effort in the field of random matrix theory~\cite{tao-topics}. One of the main motivations for this interest is its close connections with the theory of the spectral behavior of large square non-Hermitian random matrices. It is indeed well-known that the probabilistic control of the smallest singular value of the matrix $Y - z$ is a key step towards understanding the behavior of the spectral measure of the matrix $Y$ \cite{bor-cha-12, tao-topics}.  Starting with the fundamental model where $Y$ has i.i.d.~elements, most of the contributions dealing with the smallest singular value assume the independence between the entries of $Y$, as seen in \cite{lit-paj-rud-tom-05,rud-ver-advmath08,got-tik-ap10,tao-vu-aop10,coo-18}
among many others. More structured models, such as the one dealt with in this paper, have  received comparatively much less attention. \\ 

Our results will be established under the following additional assumption on the elements of $X^{(n)}$. 

\begin{assumption}
\label{Xcomp}
The random variables $x_{00}^{(n)}$ satisfy 
$\sup_n | n \EE (x_{00}^{(n)})^2 | < 1$. 
\end{assumption} 

To understand the implication of Assumption~\ref{Xcomp}, suppose it does not
hold. Drop the superscript $^{(n)}$, and write 
$x_{00} = \Re x_{00} + \imath \Im x_{00}$. In that case, 
$1/n=|\EE x_{00}^2 | = \EE|x_{00}|^2$.
Expanding the expectations, this implies that $( \EE \Re x_{00} \Im x_{00})^2 =
\EE (\Re x_{00})^2 \EE (\Im x_{00})^2$.  Suppose for the moment, $\Re x_{00}
\not\equiv 0$. Then clearly $\Im x_{00} = \alpha \Re x_{00}$ w.p.1~for some
constant $\alpha$. Thus, 
$x_{00} \stackrel{{\mathcal L}}{=} \exp(\imath\theta) Z$, where $Z$ is a real
random variable and $\theta$ is a constant.  This amounts to $x_{00}$ being
real since the factor $\exp(\imath\theta)$ has no influence on $XJX^*$. Thus,
Assumption~\ref{Xcomp} essentially says that the $x_{ij}$ are not real. \\ 

We can now state our first result. We denote as $\| \cdot \|$ the spectral norm of a matrix. Events are expressed in the forms $[\ldots]$ or 
$\{\ldots\}$.  

\begin{theorem} 
\label{snA}
Let Assumptions~\ref{ass-model}, \ref{assA}, and~\ref{Xcomp} hold true. 
Then, there exist $\alpha, \beta > 0$ such that for each $C > 0$, 
$t >0$, and $z \in \CC \setminus \{ 0 \}$, 
\[
\PP\left[ s_{N-1}(X^{(n)}A^{(n)} {X^{(n)}}^* - z) \leq t, \ \| X \| \leq C \right] 
 \leq c \left( n^\alpha t^{1/2} + n^{-\beta} \right) , 
\] 
where the constant $c > 0$ depends on $C$, $z$, and $\mom$ only. 
\end{theorem}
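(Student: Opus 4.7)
The plan is to follow the Rudelson--Vershynin geometric framework for controlling $s_{N-1}(M)$ with $M:=XAX^{*}-zI_N$, adapted to the bilinear structure. Fix parameters $\delta,\rho\in(0,1)$ to be chosen later, and decompose the unit sphere $S^{N-1}\subset\CC^N$ as $S^{N-1}=\mathrm{Comp}(\delta,\rho)\sqcup\mathrm{Incomp}(\delta,\rho)$, where $\mathrm{Comp}(\delta,\rho)$ denotes the set of unit vectors within $\ell^2$-distance $\rho$ from a $\lfloor\delta N\rfloor$-sparse unit vector. The probability in the statement splits as a union bound over the infima on these two pieces, which I treat separately.

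For compressible vectors, on $\{\|X\|\le C\}$ one has $\|M\|\le C^2\sup_n s_0(A^{(n)})+|z|$, a bounded quantity, so a standard volumetric estimate gives an $\varepsilon$-net $\cN$ of $\mathrm{Comp}(\delta,\rho)$ of cardinality at most $\exp(c\delta N\log(1/\rho))$, and the task reduces to a small-ball estimate $\PP[\|Mu\|\le 2t]$ at any fixed $u\in\cN$. I obtain such an estimate by inspecting a single coordinate
\[
(Mu)_i=\bar u_i\,x_i^{\T}A x_i+x_i^{\T}Av^{(-i)}-zu_i,\qquad v^{(-i)}:=\sum_{l\ne i}\bar u_l x_l,
\]
and conditioning on $\{x_l:l\ne i\}$: the linear-in-$x_i$ part $x_i^{\T}Av^{(-i)}$ is a sum of independent centered complex random variables, Assumption~\ref{Xcomp} ensures that each entry of $x_i$ has a genuinely two-dimensional distribution, and a Hal\'asz/Esseen-type estimate yields a small-ball probability of order $\sqrt t/\|Av^{(-i)}\|$. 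The denominator is bounded below by $\sinf\|v^{(-i)}\|$ by Assumption~\ref{assA}, and $\|v^{(-i)}\|$ is bounded below with high probability by standard concentration arguments. Choosing $\delta$ small enough, the union bound over $\cN$ absorbs the exponential cardinality.

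For incompressible vectors I invoke the invertibility-via-distance inequality: for $u\in\mathrm{Incomp}(\delta,\rho)$,
\[
\|Mu\|\ \ge\ \frac{c\rho}{\sqrt N}\,\min_{k}\dist\!\bigl(C_k,\,\colspan\{C_j:j\ne k\}\bigr),\qquad C_k:=Me_k,
\]
reducing the task to a uniform bound $\PP[\dist(C_k,H_k)\le s,\|X\|\le C]\lesssim n^{\alpha'}s^{1/2}+n^{-\beta'}$. The structural observation is that $C_k=XA\overline{x_k}-ze_k$, and for $j\ne k$ the column $C_j=XA\overline{x_j}-ze_j$ depends on the $k$-th row $x_k$ only through its single $k$-th entry $(C_j)_k=x_k^{\T}A\overline{x_j}$. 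Passing to the enlargement $\widetilde H_k:=H_k+\CC e_k$ preserves the lower bound $\dist(C_k,H_k)\ge\dist(C_k,\widetilde H_k)$ and permits the selection of a unit normal direction $w\in\widetilde H_k^{\perp}\subseteq e_k^{\perp}$ that is $\sigma(X_{-k})$-measurable, where $X_{-k}$ is $X$ with its $k$-th row removed. This yields
\[
\dist(C_k,H_k)\ \ge\ |w^*C_k|\ =\ |w^*XA\overline{x_k}|\ =\ |\langle\xi,\overline{x_k}\rangle|,\qquad \xi:=A^*X^*w,
\]
with $\xi$ again $\sigma(X_{-k})$-measurable. Conditionally on $X_{-k}$, this is a linear form in the i.i.d.\ entries of $\overline{x_k}$; Assumption~\ref{Xcomp} yields a small-ball probability of order $\sqrt s/\|\xi\|$, while $\|\xi\|\ge\sinf\|X^*w\|$ combined with control of $\|X\|$ and the structure of $w$ provides a polynomial lower bound on $\|\xi\|$ outside an event of probability $n^{-\beta'}$.

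The delicate step is the construction, in the incompressible case, of the auxiliary normal direction $w$ which is independent of $x_k$ while remaining a unit normal to $H_k$ in $e_k^{\perp}$. Because $M$ is bilinear in $X$, the standard ``leave-one-column-out'' decoupling for matrices with independent columns is unavailable, and the enlargement-plus-projection device requires $\widetilde H_k\ne\CC^N$, which is a condition on the cokernel of the $(N-1)\times(N-1)$ minor of $XAX^*$. Handling both aspect ratios $\gamma<1$ and $\gamma>1$ uniformly, and then establishing the polynomial non-degeneracy of the coefficient vector $\xi$ of the resulting linear form, is where the bulk of the technical work lies; the anti-concentration estimate itself is comparatively routine once the dependencies have been disentangled. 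The two terms $n^\alpha t^{1/2}$ and $n^{-\beta}$ in the conclusion reflect, respectively, the small-ball losses and the probabilities of the ``bad'' events on which the above construction fails.
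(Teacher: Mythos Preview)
Your proposal has a genuine structural gap: you work directly with $M=XAX^{*}-zI_N$, whereas the paper's first and essential step is the \emph{linearization}
\[
H=\begin{bmatrix}A^{-1}&X^{*}\\X&zI_N\end{bmatrix},\qquad \|(XAX^{*}-z)^{-1}\|\le\|H^{-1}\|,
\]
which makes the matrix under study \emph{linear} in $X$. Without this step, both halves of your argument break.

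\textbf{Incompressible case.} Your enlargement device requires $\widetilde H_k=H_k+\CC e_k\ne\CC^N$. But $H_k=\colspan\{C_j:j\ne k\}$ is spanned by $N-1$ columns of the a.s.\ invertible matrix $M$, hence $\dim H_k=N-1$, and $e_k\in H_k$ would mean $(M^{-1})_{kk}=0$, a codimension-one condition that fails with probability $1$ once the entries of $X$ have densities. Thus $\widetilde H_k=\CC^N$ almost surely, $\dist(C_k,\widetilde H_k)=0$, and the lower bound you extract is vacuous. You flag this as ``where the bulk of the technical work lies,'' but there is no salvage: generically there is \emph{no} nonzero $w\perp\widetilde H_k$. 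The paper avoids this by applying the invertibility-via-distance lemma to $H$ instead; the distance formula (Lemma~\ref{lm-dist}) then isolates a quadratic form $x^{*}Rx$ with $R$ a block of $G^{-1}$ depending only on $W=X_{\cdot,[n]\setminus\{0\}}$, and G\"otze decoupling plus Berry--Ess\'een handles it.

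\textbf{Compressible case.} A single-coordinate estimate of the form $\PP[|(Mu)_i|\le t]\lesssim\sqrt t/\|Av^{(-i)}\|$ is polynomial in $t$; it cannot be union-bounded over a net of cardinality $\exp(c\delta N\log(1/\rho))$ to yield anything useful for fixed $t$. What is needed is $\PP[\|Mu\|\le c_0]\le\exp(-c_1 n)$ for a \emph{constant} $c_0>0$, and this exponential decay must come from tensorizing over $\Theta(n)$ independent coordinates. For $M=XAX^{*}-zI$ the coordinates $(Mu)_i$ are \emph{not} independent (each depends on $X^{*}u$, hence on all rows). In the linearized matrix $H$, by contrast, the relevant vector is $A^{-1}v+X^{*}w$ or $Xv+zw$, whose coordinates are i.i.d.\ functionals of the rows (resp.\ columns) of $X$; this is exactly what makes Lemma~\ref{Xu} and Proposition~\ref{proj1} deliver the required $\exp(-cn)$.

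In short, the bilinear structure of $M$ defeats both the independence needed for exponential compressible bounds and the decoupling needed in the incompressible distance argument; the linearization $M\rightsquigarrow H$ is not a convenience but the mechanism that makes the Rudelson--Vershynin scheme applicable here.
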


To prove this theorem, the first step is to linearize the model 
$X^{(n)}A^{(n)} {X^{(n)}}^* - z$ by considering the matrix 
\[
H^{(n)} = \begin{bmatrix} {A^{(n)}}^{-1} & {X^{(n)}}^* \\ 
  X^{(n)} & z \end{bmatrix} 
\in \CC^{(N+n)\times(N+n)} . 
\]
By using, \emph{e.g.}, the inversion formula for partitioned matrices, 
it is easy to see that 
\[
\| (X^{(n)}A^{(n)} {X^{(n)}}^* - z)^{-1} \| \leq 
 \| {H^{(n)}}^{-1} \| 
\]
(versions of this ``linearization trick'' have been used in many different 
contexts, see, \emph{e.g.}, \cite{haa-tho-05}). 
Thus, the problem is reduced to controlling the smallest singular value of
$H^{(n)}$.  A similar problem was tackled in~\cite{ver-14} and \cite{ngu-12}.
In this paper, we follow closely the approach of ~\cite{ver-14}.  However,
there instead of $H^{(n)}$, the author had a real symmetric matrix with
i.i.d.~elements above the diagonal. Our matrix $H^{(n)}$ is more structured, 
and this necessitates a suitable modification in the arguments. 
Theorem~\ref{snA} will be proven in Section~\ref{sec-smallest}. \\ 

Theorem~\ref{snA} can be used to study the eigenvalue distribution of the matrix $X^{(n)} A^{(n)} {X^{(n)}}^*$ in the large dimensional regime
(see~\cite{bor-cha-12} or Section~\ref{sec:prf-main} below for more explanations on this connection). Motivated by the statistical application described in Section~\ref{stats}, we shall restrict our study in this paper to the specific case where $A^{(n)}$ equals the circulant matrix 
\begin{equation}
\label{eq:Jn} 
J^{(n)} = \begin{bmatrix}  
0 &        &        &  1 \\
1 & \ddots &             \\
  & \ddots & \ddots      \\
  &        & 1      &  0  
\end{bmatrix} \in \RR^{n\times n} , 
\end{equation} 
This matrix satisfies Assumption~\ref{assA}, since it is orthogonal.  
Let $\{ \lambda_0^{(n)},\ldots, \lambda_{N^{(n)}-1}^{(n)} \}$ be the 
eigenvalues of the matrix $X^{(n)}J^{(n)} {X^{(n)}}^*$, which are in 
general complex-valued. The spectral distribution or measure of this matrix
is defined as the random probability measure: 
\[
\mu_n = \frac{1}{N^{(n)}} \sum_{i=0}^{N^{(n)}-1} 
  \delta_{\lambda_i^{(n)}}. 
\]
Given a sequence of random probability measures $(\zeta_n)$ on the space 
$\cX = \RR$ or $\CC$ and a deterministic probability measure $\bs\zeta$ on 
$\cX$,
we recall that $\zeta_n$ is said to converge weakly in the almost sure sense
(resp.~in probability) if for each continuous and bounded real function 
$\varphi$ on $\cX$, 
\[
\int \varphi \, d\zeta_n \xrightarrow[n\to\infty]{} \int \varphi \, d{\bs\zeta}  
\quad \text{almost surely (resp. in probability).}  
\]
This weak convergence will be denoted as $\zeta_n \Rightarrow \bs\zeta$ 
a.s.~(resp. in probability). 

In the asymptotic regime where $N^{(n)} / n \to \gamma$, $0 < \gamma  <
\infty$, we shall identify a deterministic probability measure $\bs \mu$ such
that $\mu_n \Rightarrow \bs\mu$ in probability.  This limit $\bs\mu$ is called
the \textit{limiting spectral distribution or measure} (LSD) of the sequence of
matrices.  
To state our result regarding this LSD, we need 
the following function. For any $ 0 < \gamma < \infty$, let  
\begin{equation}
\label{functiong}
% \label{def-g} 
g(y) = \frac{y}{y+1} (1 - \gamma + 2y)^2, \  (0 \vee (\gamma-1)) \leq y \leq \gamma.  
\end{equation}
Then $g^{-1}$ exists on the interval 
$[ 0 \vee ((\gamma-1)^3 / \gamma), \gamma(\gamma+1) ]$ and maps it to 
$[ 0 \vee (\gamma-\gamma^{-1}), \gamma]$. It is an analytic increasing function on 
the interior of the interval.

\begin{theorem}
\label{th:main}
Suppose Assumptions~\ref{ass-model} and \ref{Xcomp} hold. Then, there exists
a deterministic probability measure $\bs\mu$ such that 
$\mu_n \Rightarrow \bs\mu$ in probability. The limit  measure $\bs\mu$ is rotationally invariant on $\CC$. 
Let 
$F(r) = \bs\mu(\{ z \in \CC \, : \, |z| \leq r \}), \ 0\leq r < \infty$ 
be the distribution function of the radial component. 

\noindent If $\gamma \leq 1$, then 
\[
F(r) = \left\{\begin{array}{cl} 
 \gamma^{-1} g^{-1}(r^2) & \text{if} \  0 \leq r \leq \sqrt{\gamma(\gamma+1)}, \\ \\
 1 &\text{if} \ r > \sqrt{\gamma(\gamma+1)}. 
 \end{array}\right.
\]
If $\gamma > 1$, then 
\[
F(r) = \left\{ 
\begin{array}{cl} 
 1 - \gamma^{-1} & \text{if} \ 0 \leq r \leq (\gamma-1)^{3/2} / \sqrt{\gamma}, \\
 \\ \gamma^{-1} g^{-1}(r^2) 
   & \text{if} \ (\gamma-1)^{3/2} / \sqrt{\gamma} < r \leq  
   \sqrt{\gamma(\gamma+1)}, \\  \\
 1 & \text{if} \ r > \sqrt{\gamma(\gamma+1)} . 
 \end{array} \right. 
\]
\end{theorem}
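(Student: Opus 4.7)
The standard Girko approach reduces the problem to controlling, for a.e. $z \in \CC$, the empirical singular value distribution $\nu_n^z$ of $Y_n - z I_N$ where $Y_n = X^{(n)} J^{(n)} {X^{(n)}}^*$. The key identity is
\[
\int_\CC \log|w-z| \, d\mu_n(w) = \frac{1}{N^{(n)}} \log|\det(Y_n - z I_N)| = \int_0^\infty \log x \, d\nu_n^z(x) .
\]
If (i) $\nu_n^z \Rightarrow \nu^z$ in probability for some deterministic $\nu^z$ at a.e. $z$, and (ii) the functions $\log x$ are uniformly integrable against $\nu_n^z$ in probability, then $\mu_n \Rightarrow \bs\mu$ in probability, where $\bs\mu$ is the deterministic measure recovered from its logarithmic potential $U_{\bs\mu}(z) = \int \log|w-z|\, d\bs\mu(w) = \int \log x\, d\nu^z(x)$ by the distributional identity $\bs\mu = (2\pi)^{-1} \Delta U_{\bs\mu}$.

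\textbf{Convergence of singular values.} To establish (i), I reuse the linearization $H_z = \bigl[\begin{smallmatrix} {J^{(n)}}^\T & {X^{(n)}}^* \\ X^{(n)} & z I_N \end{smallmatrix}\bigr]$ from the proof of Theorem~\ref{snA}, noting that $J^{-1} = J^\T$ since $J$ is orthogonal. The singular values of $Y_n - z I_N$ are tied via the Schur-complement block-inversion formula to those of $H_z$, so the spectral measure of the Hermitization $\bigl[\begin{smallmatrix} 0 & H_z - w I \\ H_z^* - \bar w I & 0 \end{smallmatrix}\bigr]$ (or equivalently of $H_z H_z^*$) can be studied by the Stieltjes transform method. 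Writing the resolvent entries via Schur complements and applying standard concentration inequalities for quadratic forms in the i.i.d. entries of $X^{(n)}$, I obtain a deterministic equivalent and a fixed-point system characterizing the limit Stieltjes transform of $\nu^z$. Because $J$ enters only through $J^\T J = I$ in the relevant contractions, this system depends on $z$ only through $|z|^2$.

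\textbf{Uniform integrability and passage to the limit potential.} For (ii), the large-$x$ tail is handled by the bound $\|X^{(n)}\| \leq C$ with overwhelming probability under Assumption~\ref{ass-model}, which forces $\|Y_n\| \leq C^2$. The delicate part is the lower tail of $\nu_n^z$. Near zero, Theorem~\ref{snA} provides the key quantitative estimate $\PP[s_{N-1}(Y_n-z)\leq t,\|X\|\leq C] \leq c(n^\alpha t^{1/2}+n^{-\beta})$, which suffices to dominate $\int_0^{n^{-K}} |\log x|\, d\nu_n^z(x)$ in probability for $K$ large. The intermediate small singular values $s_{N-k}(Y_n-z)$ are controlled by a standard counting bound $\nu_n^z([0,t]) \lesssim t$ on the event that $\|X\| \leq C$, derived from the deterministic equivalent in the previous step. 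Combining these two bounds and invoking the Girko recipe yields $\mu_n \Rightarrow \bs\mu$ in probability, with $\bs\mu$ rotationally invariant since $U_{\bs\mu}$ is a function of $|z|$ only.

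\textbf{Explicit form of the limit.} It remains to invert the fixed-point system to compute $F(r)$. After solving the system, one obtains an implicit algebraic relation linking $r = |z|$ to a parameter $y$, and a direct calculation shows this relation is exactly $r^2 = g(y)$, which explains the role of $g$. Since $\bs\mu$ is rotationally invariant, the radial CDF satisfies $F(r) = \gamma^{-1} y(r)$ where $y(r) = g^{-1}(r^2)$, extended by the Lebesgue decomposition: when $\gamma > 1$, the matrix $Y_n$ has rank at most $n$, forcing an atom of mass $1 - \gamma^{-1}$ at the origin and accounting for the flat portion $F \equiv 1-\gamma^{-1}$ on $[0,(\gamma-1)^{3/2}/\sqrt\gamma]$. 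Matching the values $g(0\vee(\gamma-1)) = 0\vee((\gamma-1)^3/\gamma)$ and $g(\gamma) = \gamma(\gamma+1)$ with the piecewise definition of $F$ closes the identification. I expect the main obstacle to be this last step: deriving the fixed-point equation in a form transparent enough that its explicit solution collapses into the compact expression involving $g$, and carefully handling the atom at $0$ when $\gamma>1$ within the Hermitization framework.
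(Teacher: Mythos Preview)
Your high-level strategy---Girko hermitization, with Theorem~\ref{snA} supplying the smallest-singular-value input for uniform integrability---matches the paper exactly. But there is a genuine error in your treatment of step~(i) that would prevent you from ever arriving at the correct fixed-point system, and a related gap in the identification step.

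\textbf{The claim that ``$J$ enters only through $J^\T J = I$'' is false.} If it were true, the LSD of $XJX^*$ would be the same for every orthogonal $J$; taking $J=I$ gives the Hermitian matrix $XX^*$, whose spectrum is Marchenko--Pastur on the real axis, not the rotationally invariant $\bs\mu$ described in the theorem. What actually drives the result is the full \emph{spectral} structure of $J$: its eigenvalues are the $n$th roots of unity, so the circulant matrices $A_{00},A_{01}$ built from $\EE[x_k^* Q_{ij} x_\ell]$ are simultaneously diagonalized by the Fourier matrix, and the normalized traces become Riemann sums of the form $n^{-1}\sum_\ell (\cdots)/(s^2+|1+q_{01}e^{2\pi i\ell/n}|^2)$. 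It is the limit of these sums that produces the integrals $u(h,d),v(h,d)$ over $[0,2\pi]$ and, after residue calculus, the algebraic system whose solution is $|z|^2=g(y)$. Your linearized matrix $H_z$ is the right object for the smallest-singular-value bound, but the paper does \emph{not} use it for the LSD of the singular values; it works directly with the resolvent of $\Sigma(z)=\bigl[\begin{smallmatrix} & Y-z\\ Y^*-\bar z & \end{smallmatrix}\bigr]$ and Gaussian integration-by-parts.

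\textbf{The identification step is also more delicate than you indicate.} The paper does not read $F(r)$ off the fixed-point equation directly. It tracks the auxiliary function $d(z,\imath t)=\lim_n N^{-1}\tr Q_{01}(z,\imath t)$, shows that $(2\gamma)^{-1}d(z,\imath t)\to \partial_{\bar z}U_{\bs\mu}$ in $\cD'(\CC)$ as $t\downarrow 0$, and computes this pointwise limit $b(z)$ by a case analysis on whether $t/h(z,t)\to 0$ or a positive constant---the two cases corresponding respectively to the absolutely continuous annulus and to the regions $\{|z|>\sqrt{\gamma(\gamma+1)}\}$ and (when $\gamma>1$) $\{|z|<(\gamma-1)^{3/2}/\sqrt\gamma\}$. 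Your rank argument gives $\bs\mu(\{0\})\geq 1-\gamma^{-1}$, which is a nice shortcut, but equality and the absence of mass on the two boundary circles still require the explicit computation of $\partial_z b(z)$.
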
 

The  theorem implies that the support of $\bs\mu$ is the disc $\{z: |z|\leq
\sqrt{\gamma(\gamma+1)}\}$ when $\gamma\leq 1$,  and when $ \gamma > 1$, it is
the ring $\{z: (\gamma-1)^{3/2} / \sqrt{\gamma}\leq |z|\leq
\sqrt{\gamma(\gamma+1)}\}$  together with the point $\{0\}$ where there is a mass $1-\gamma^{-1}$.

Moreover, $F(r)$ has a positive and analytical density on the open interval 
$(0 \vee \sign(\gamma-1) |\gamma-1|^{3/2}/ \sqrt{\gamma}, 
\sqrt{\gamma(\gamma+1)})$. A closer inspection of $g$ shows that this density
is bounded if $\gamma \neq 1$. If 
$\gamma = 1$, then the density is bounded everywhere except when 
$r \downarrow 0$. 
A cumbersome closed form expression for $g^{-1}$ (and hence for $F(\cdot)$) can
be obtained by calculating the root of a third degree polynomial. For the
special case $\gamma=1$, $g^{-1}$ is given by 
\[
g^{-1}(t)=\frac{t^{1/3}}{2}
\left(\left[ 1+
\sqrt{1-\frac{t}{27}} \right]^{1/3} 
 + \left[ 1- \sqrt{1-\frac{t}{27}}\right]^{1/3} \right) , 
\quad  0 \leq t \leq 2. 
\]

As an illustration of these results, eigenvalue realizations corresponding to
the cases where $\gamma = 0.5$ and $\gamma = 2$ are shown in
Figure~\ref{fig-scatter}.  Plots of the functions $F(r)$ given in the statement
of Theorem~\ref{th:main} are shown on Figure~\ref{fig-DF}, along with their
empirical counterparts.

\begin{figure}[ht]
\centering
\begin{subfigure}{.48\textwidth}
\centering 
\includegraphics[width=1.2\linewidth]{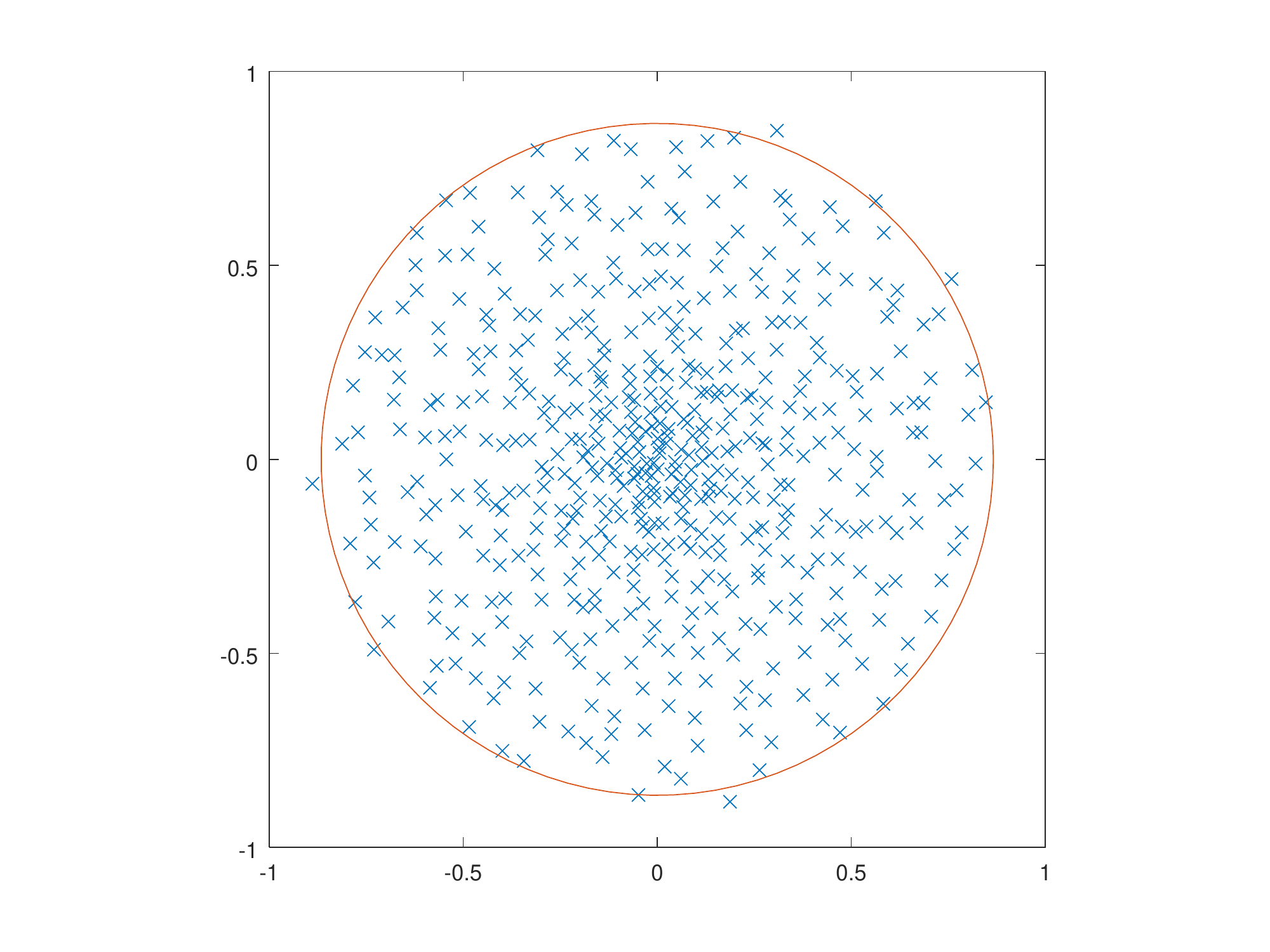}
\caption{$(N,n)=(500,1000)$} 
\end{subfigure} 
\begin{subfigure}{.48\textwidth}
\centering 
\includegraphics[width=1.2\linewidth]{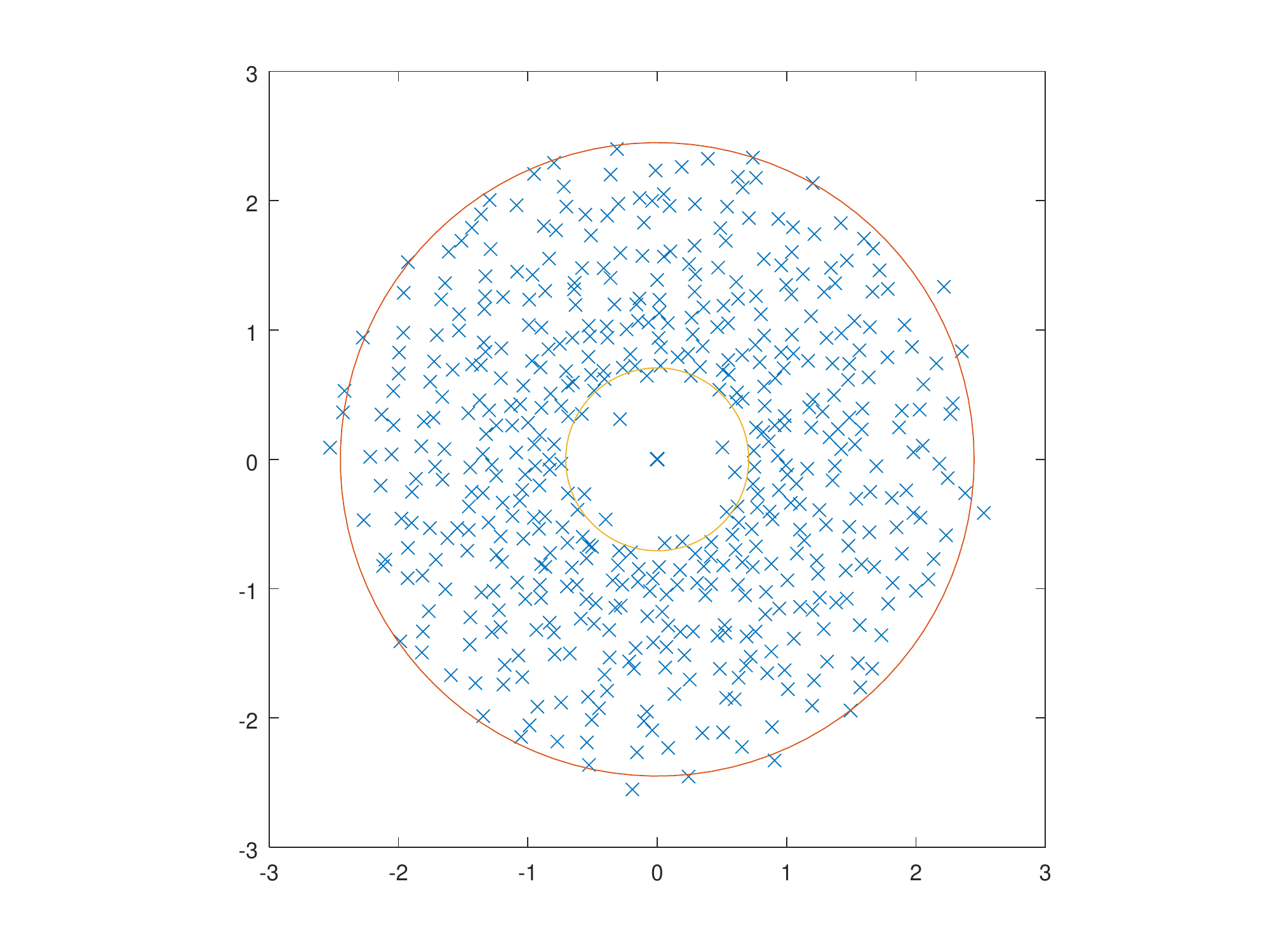}
\caption{$(N,n)=(1000,500)$} 
% \caption
\end{subfigure} 
\caption{Eigenvalue realizations and LSD support.}  
\label{fig-scatter} 
\end{figure}

\begin{figure}[ht]
\centering
\begin{subfigure}{.48\textwidth}
\centering 
\includegraphics[width=1.1\linewidth]{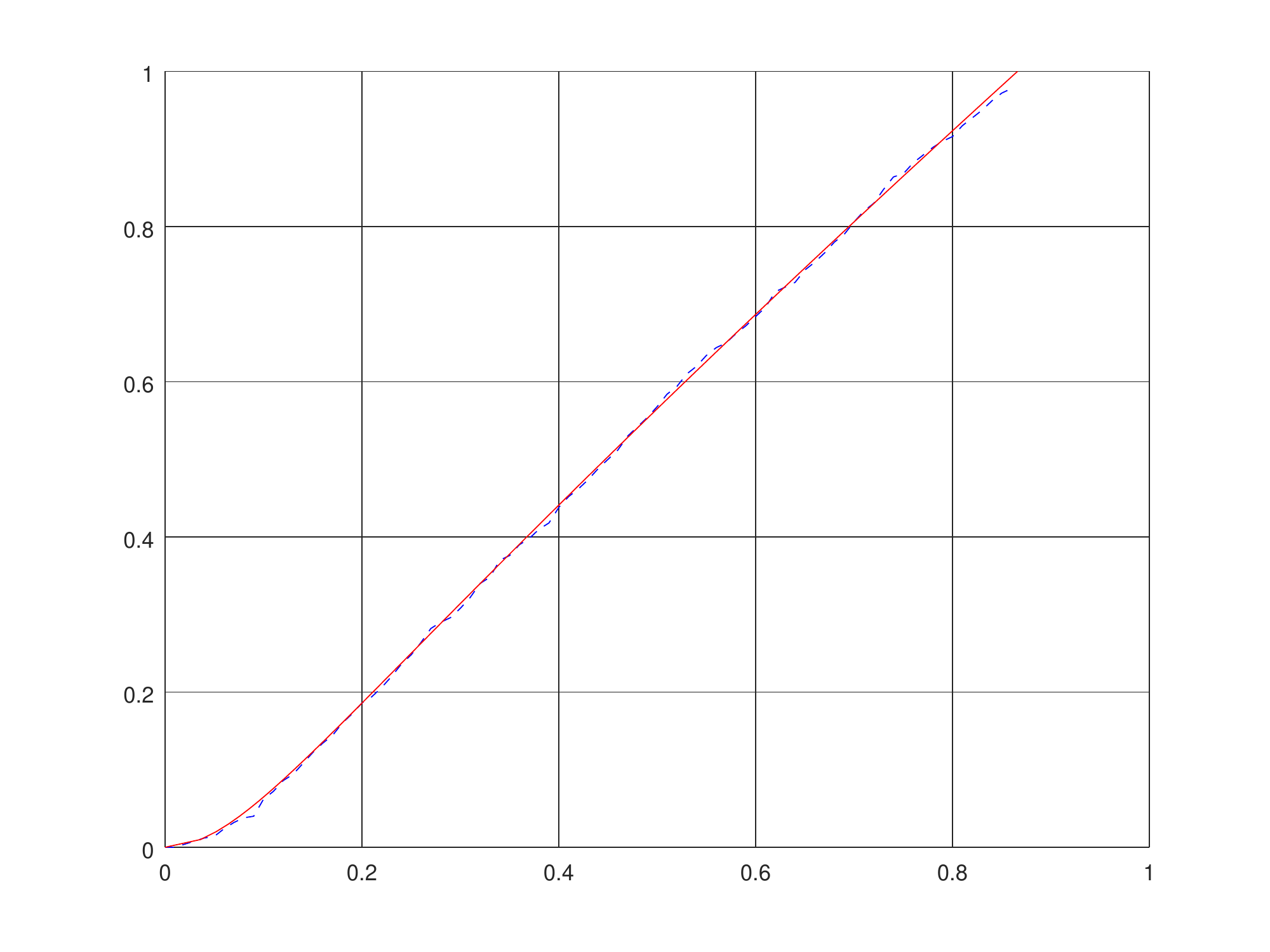}
\caption{$(N,n)=(500,1000)$} 
\end{subfigure} 
\begin{subfigure}{.48\textwidth}
\centering 
\includegraphics[width=1.1\linewidth]{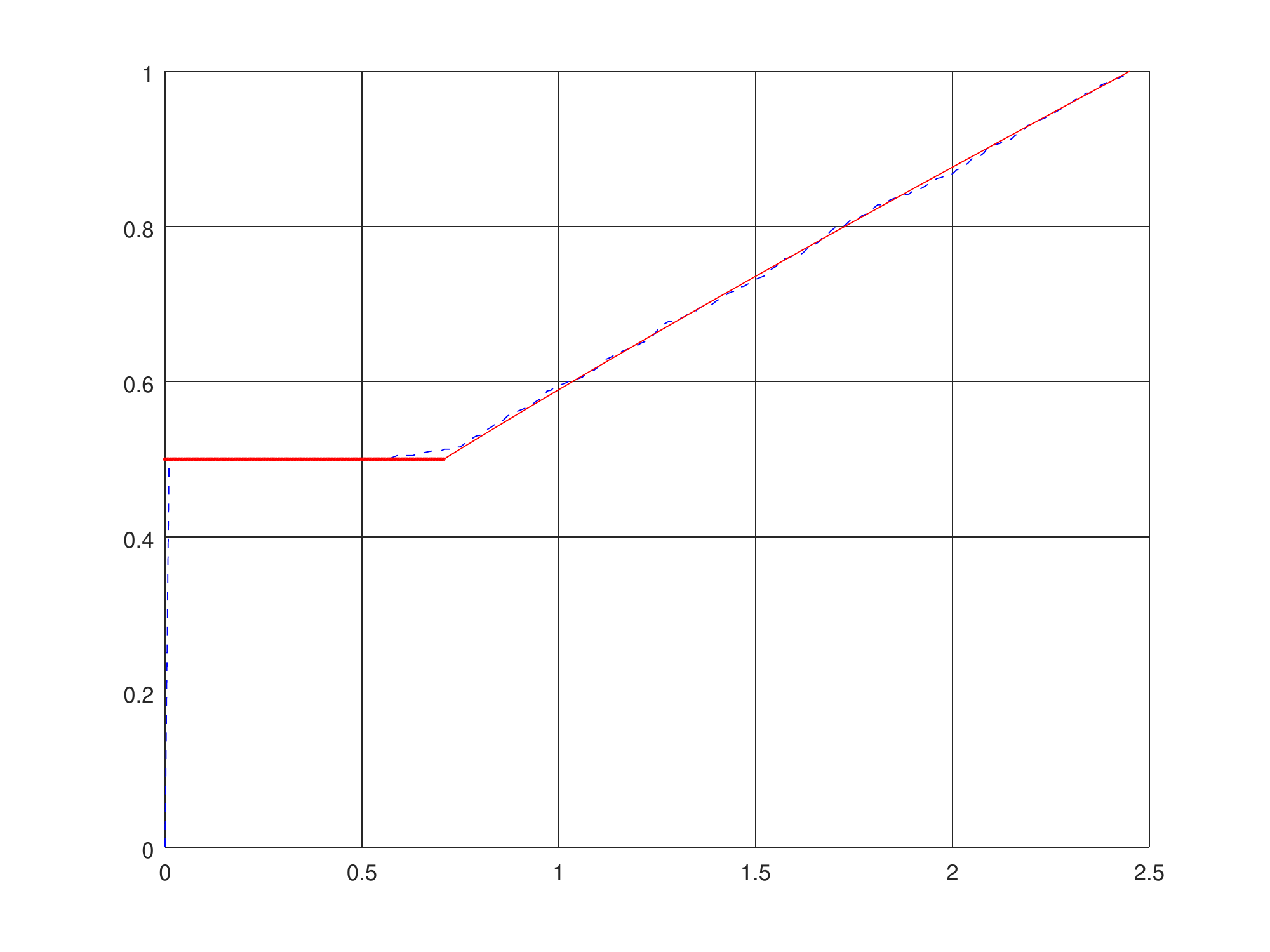}
\caption{$(N,n)=(1000,500)$} 
% \caption
\end{subfigure} 
\caption{Plots of $F(r)$ (plain curves) and their empirical realizations
 (dashed curves).}  
\label{fig-DF} 
\end{figure}

Theorem~\ref{th:main} is proven in 
Sections~\ref{sec:prf-main} -- \ref{sec:prop-mu}. Before turning to the proofs
of Theorems~\ref{snA} and~\ref{th:main}, we consider in the next section
a statistical application of these results. 

\section{Application to statistical hypothesis testing}
\label{stats} 

Consider the high dimensional linear moving average time series model
\begin{equation}
\label{timeseries} 
y_{t}^{(n)} =\sum_{i=0}^p B_i^{(n)} w_{t-i}^{(n)}, 
\end{equation} 
where $\{B_i^{(n)}\}_{i=0}^p$ are $\CC^{N\times N}$ deterministic parameter 
matrices, and $\{w_{i}^{(n)}\}_i$ are random vectors such that the random 
matrix $W^{(n)} = [ w_{0}^{(n)} \ \cdots \ w_{n-1}^{(n)} ]$ is equal in 
distribution to $n^{1/2} X^{(n)}$. Such models have found increasing attention 
in, \emph{e.g.}, the fields of signal processing,  wireless communications, 
Radar, Sonar, and wideband antenna array processing 
\cite{haykin1992adaptive,van-(livre)02}. 
The sample autocovariance matrices
$\{n^{-1}  \sum_{t=k+1}^n y_{t}^{(n)} (y_{t-k}^{(n)})^*\}, k \geq 0$, ($k$ is called the lag or the step) carry 
useful information about the model \eqref{timeseries}, specially through their 
spectral distributions. Some of the works that deal with limit spectral 
distributions, mostly for high-dimensional real-valued time series, and
their use in statistical inference are, \cite{BB2013a, BB2014free,
BB2015freey0, BB2018,  LAP2013, WAP2015, li-lam-yao-yao-(arxiv)18, li-li-yao-18, BB2019}. 

The $k$-step sample autocovariance matrices, except for the order $k=0$, are non-Hermitian. LSD results
are so far known only for certain symmetrized versions of these matrices. All
the references cited above rely on this idea of symmetrization.  To the best of
our knowledge, no LSD results are known for the non-Hermitian sample
autocovariance matrices.  The result of Theorem~\ref{th:main} above is a
beginning towards deriving the LSD of the sample autocovariance matrices in the
general model \eqref{timeseries} by considering the simplest case where
$B_0^{(n)}=I_N$ and $p=0$. This will be called the \textit{white noise model}.  

Consider the problem of testing the white noise model against an MA correlated
model. To this end, we explore the idea of designing a test which is based on
the eigenvalue distribution of the one-step sample autocovariance matrix, in
contrast to more classical tests that are based on its singular value
distribution. A non-rigorous justification of this idea is that
when performing an eigenvalue-based test, we take advantage of the higher
sensitivity of the eigenvalues of a matrix with respect to perturbations as
compared to its singular values. 

Assuming for simplicity that $p=1$, our purpose is to test the null (white
noise) hypothesis {\bf H0}: $B_0^{(n)} = I, B_1^{(n)} = 0$ against the
alternative {\bf H1}: $B_0^{(n)} = I, B_1^{(n)} \neq 0$. Consider the one-step 
sample autocovariance matrix 
\[
\widehat R_1^{(n)} = \frac 1n \sum_{t=0}^{n-1} y_t^{(n)} {y_{t-1}^{(n)}}^*,  
\]
where the sum is taken modulo $n$, and observe that under {\bf H0}, this
matrix coincides with $X^{(n)} J^{(n)} {X^{(n)}}^*$. We shall consider the 
asymptotic regime where $n\to\infty$ and $N/n \to\gamma > 0$. 
By Theorem~\ref{th:main}, the spectral measure of $\widehat R_1^{(n)}$ 
converges weakly in probability to the measure $\bs\mu$. This suggests the 
use of a white noise test based on a distance between the spectral measure of
$\widehat R_1^{(n)}$ and $\bs\mu$. We consider herein a test based on the 
$2$-Wasserstein distance between these two distributions. 
For the sake of comparison, we also considered the more classical singular
value based test which consists in comparing  
$N^{-1} \tr \widehat R_1^{(n)} (\widehat R_1^{(n)})^*$ to a threshold. We 
denote these two tests as T1 and T2 respectively. 

To get a more complete picture of the problem, we also considered a third
test which is based on the eigenvalue distribution of the Hermitian sample 
covariance matrix 
\[
\widehat R_{0,1}^{(n)} = \frac 1n \sum_{t=0}^{n-1} 
\begin{bmatrix} y_t^{(n)} \\ y_{t-1}^{(n)} \end{bmatrix} 
\begin{bmatrix} {y_t^{(n)}}^* & {y_{t-1}^{(n)}}^* \end{bmatrix}. 
\]
Its spectral distribution is known to converge weakly almost surely under {\bf H0} to the
Marchenko-Pastur distribution $\text{MP}_{2\gamma}$ with parameter $2\gamma$
(see \cite{lou-jtp16}, which deals with the Gaussian case). This suggests the
use of the $2$-Wasserstein distance between the spectral measure of 
$\widehat R_{0,1}$ and $\text{MP}_{2\gamma}$. We denote the resulting test as T3. 

Figures~\ref{roc-eye} and \ref{roc-toeplitz} represent the ROC curves obtained for
these three tests.  The tests T1 and T3 were implemented by sampling $\bs\mu$
and $\text{MP}_{2\gamma}$ from the spectra of two large random matrices and by
using the \texttt{transport} library of the \texttt{R} software. 
For Figure~\ref{roc-eye}, $B_1^{(n)} = \alpha I_N$, 
while for Figure~\ref{roc-toeplitz}, the elements $b_{ij}$ of $B_1^{(n)}$ are
chosen as $b_{ij}  = \alpha' \exp(-8 |i-j| / N)$, where $\alpha$ and $\alpha'$
are non-zero real numbers. 

\begin{figure}[h]
\centering
\includegraphics[width=0.7\linewidth]{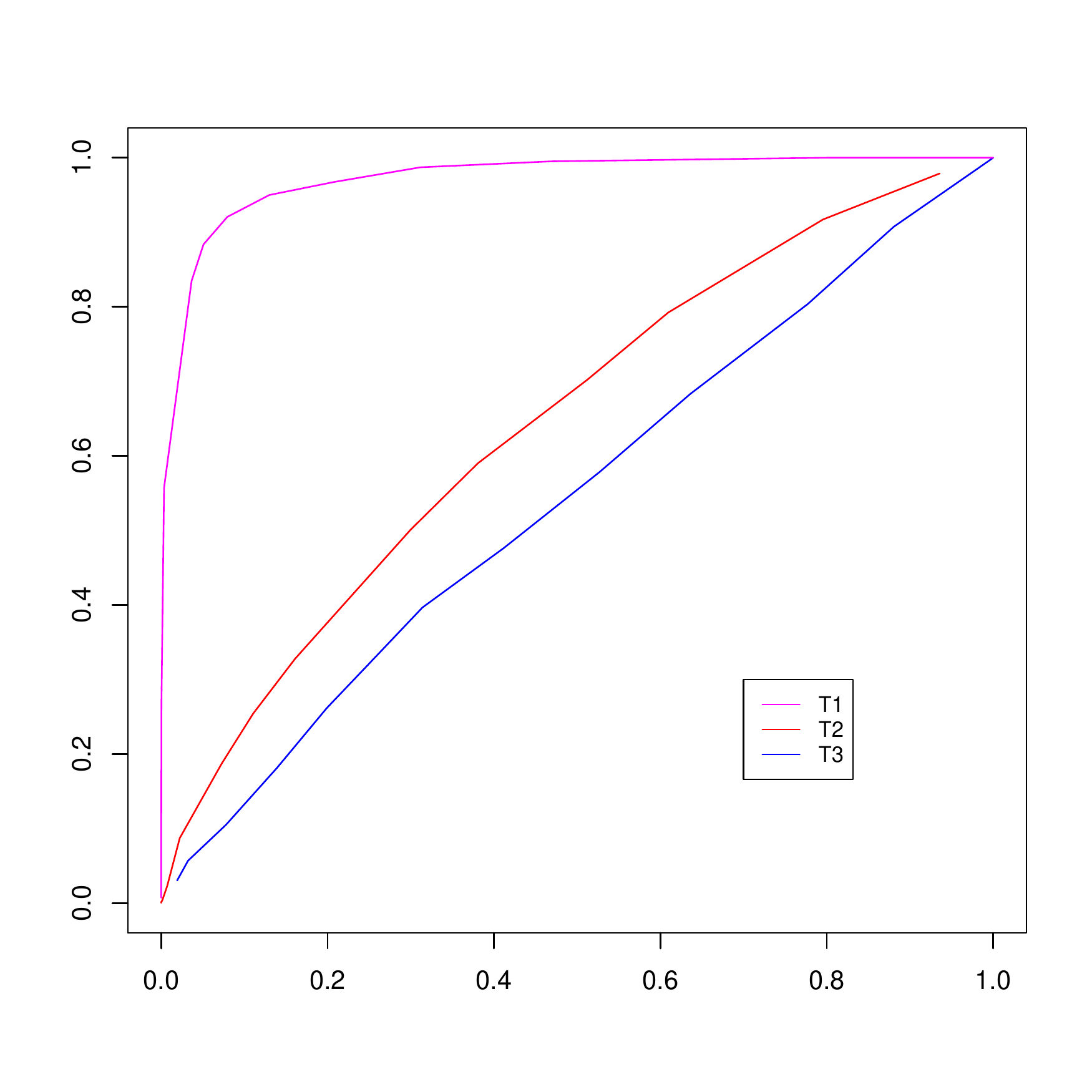}
\caption{ROC curves. Setting: $B_1^{(n)}  = \alpha I_N$ with 
 $\alpha^2 = 10^{-2.5}$, $(N,n) = (50,100)$.}  
\label{roc-eye} 
\end{figure}

\begin{figure}[h]
\centering
\includegraphics[width=0.7\linewidth]{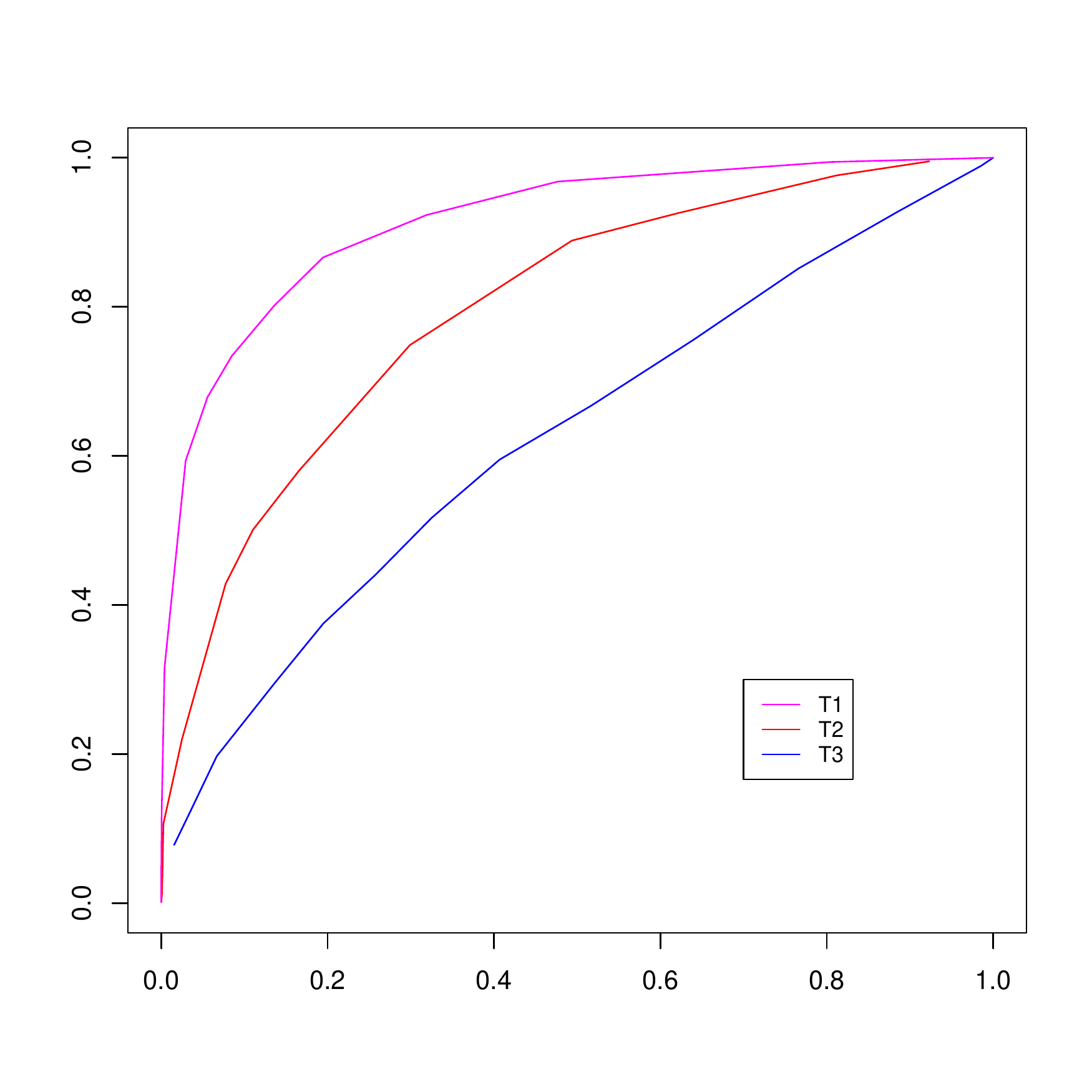}
\caption{ROC curves. Setting: 
  $B_1^{(n)}$ is a Toeplitz matrix with 
 $\tr B_1^{(n)} (B_1^{(n)})^* / N = 10^{-2}$, $(N,n) = (50,100)$.}  
\label{roc-toeplitz} 
\end{figure}

These figures clearly show that T1 outperforms T2 and T3. This tends to
corroborate the intuition that the eigenvalue sensitivity alluded to earlier, can be beneficial
when it comes to designing white noise tests. 

To better understand the behavior of
the eigenvalue-based tests, the next step would be to study the large dimensional
behavior of the spectral distribution of $\widehat R_1^{(n)}$ under {\bf H1}.
This appears to be quite non-trivial and is left for future research. 

\subsubsection*{Notations} 

The notations $\dim(V)$ and $V^\perp$ will refer to the dimension of the vector
subspace $V$, and the subspace orthogonal to $V$ respectively. The column span
of a matrix $M$ will be denoted as $\colspan(M)$. Similarly, $\colspan(V, d)$
is the span of the vector space $V$ and the vector $d$. 

The indices of the elements of a vector or a matrix start from zero. 
Given a positive integer $m$, we write $[m] = \{0,\ldots, m-1\}$.  For
$i\in[m]$, we denote as $e_{m,i}$ the $i^{\text{th}}$ canonical vector of
$\CC^m$, with $1$ at the $m$th place and $0$ elsewhere.  Given a matrix $M \in \CC^{m\times n}$ and two sets $\cI \subset [m]$
and $\cJ \subset [n]$, we denote as $M_{\cI,\cJ}$ the $|\cI| \times |\cJ|$
submatrix of $M$ that is obtained by retaining the rows of $M$ whose indices
belong to $\cI$ and the columns whose indices belong to $\cJ$. We also
write $M_{\cdot, k} = M_{[m], \{ k \}}$ and $M_{k, \cdot} = M_{\{ k \}, [n]}$.
We define as $\Pi_\cI :\CC^m \to \CC^m$ the projection operator such that 
$\Pi_\cI u$ is the vector obtained by setting to zero the elements of $u$ 
whose indices are in $\cI^\cpl$. We also denote as $u_\cI$ the vector of 
$\CC^{|\cI|}$ obtained by removing the elements of $u$ whose indices are in 
$\cI^\cpl$. When $M$ is a matrix, $\Pi_M$ refers to the orthogonal projector
on $\colspan(M)$. 

As mentioned above, $\|\cdot\|$ denotes the spectral norm. It will also 
denote the Euclidean norm of a vector. The Hilbert-Schmidt norm of a matrix 
will be denoted as $\| \cdot \|_\HS$. The unit-sphere of $\CC^n$ will be denoted 
as $\SSS^{n-1}$. 

The notations $\PP_x$ and $\EE_x$ will refer respectively to the probability
and the expectation with respect to the law of the vector $x$. 

\section{Proof of Theorem~\ref{snA}: smallest singular value}  
\label{sec-smallest}

To simplify the notations, from now on, we omit the superscript $^{(n)}$.
We shall mostly work on the matrix $A^{-1}$ instead of working on $A$. Writing
$\sinf = \inf_n s_{n-1}(A^{-1})$ and $\ssup = \sup_n s_0(A^{-1})$,
Assumption~\ref{assA} is rewritten as $0 < \sinf \leq \ssup < \infty$.
We also assume that $z\neq 0$ without further mention.

\subsection{General context and outline of proof} 
\label{sn-outline} 

We first observe that if we establish Theorem~\ref{snA} under the assumption that the entries have
densities, then it continues to hold in the general case. This is because we
can replace the matrix $X$ with, say, the independent sum  $(1 -
n^{-20})^{-1/2} (X + n^{-10} X')$ where $X'$ is a properly chosen matrix whose
elements have densities, and use a standard perturbation argument.  Hence,
\textit{we assume throughout this section that the elements of $X$ have
densities}. It may be noted that instead of 20, any other positive number could be used and
that would sharpen some of the bounds obtained later. However, it was not our goal to achieve sharp bounds. 

Suppose $E \in \CC^{N\times n}$ is such that $\| E \|^2 \| A \| < |z|$. Then
$\det ( z - E A E^*) \neq 0$. This implies that the multivariate polynomial
$\det ( z - X A X^*)$ in the variables $(\Re x_{ij}, \Im x_{ij} )_{i,j}$ is not
identically zero.  Since $X$ has a density, we conclude that $z - X A X^*$ is
invertible w.p.~1. 

Define the matrix 
\[
H = \begin{bmatrix} A^{-1} & X^* \\ X & z \end{bmatrix} 
\in \CC^{(N+n)\times(N+n)} . 
\]
By the well-known inversion formula for partitioned matrices
\cite[\S 0.7.3]{HorJoh90}, we have  
\[
H^{-1} = 
 \begin{bmatrix} 
A + A X^* (z - XAX^*)^{-1} X A & - A X^* (z - XAX^*)^{-1} \\ 
- (z - XAX^*)^{-1} X A & (z - XAX^*)^{-1}  \end{bmatrix} , 
\]
which shows that 
\[
\| (XAX^* - z)^{-1} \| \leq \| H^{-1} \| . 
\]
Therefore, to obtain Theorem~\ref{snA}, it is enough to prove that 
\begin{equation} 
\label{snH} 
\PP\left[ s_{N+n-1}(H) \leq t, \ \| X \| \leq C \right] 
 \leq c \left( n^\alpha t^{1/2} + n^{-\beta} \right) , 
\end{equation} 
where $c > 0$ depends on $C$, $z$, and $\mom$ only. 

As we mentioned in the introduction, a similar problem was considered
in~\cite{ver-14} and \cite{ngu-12}.  We shall follow here the argument
of~\cite{ver-14}. However, since our matrix $H$ is more structured than the one
considered in this reference, a substantial adaptation of the proof is
required.  Here is a description of the general approach.

First recall that 
\[
s_{N+n-1}(H) = \min_{u\in \SSS^{N+n-1}} \| H u \| . 
\]
Invoking an idea that has been frequently used in the literature since 
\cite{lit-paj-rud-tom-05,rud-ver-advmath08},  
we partition $\SSS^{N+n-1}$ into two sets of
\textit{compressible} and \textit{incompressible} 
vectors as follows. 

Let $\theta, \rho \in (0,1)$ be fixed. A vector in $\SSS^{N+n-1}$ is
said to be  \textit{$\theta$-sparse} if it does not have more than 
$\lfloor \theta (N+n) \rfloor$ non-zero elements.  Let $\SSS^{N+n-1}_\cI$ be 
the set of vectors of $\SSS^{N+n-1}$ that are supported by the (index) set 
$\cI \subset [N+n]$. Given $S \subset \CC^{N+n}$, let $\cN_\delta(S)$ denote 
the $\delta$-neighborhood of $S$ in $\CC^{N+n}$ in the Euclidean metric. 

Given $\theta, \rho \in (0,1)$, we define the set of 
$(\theta,\rho)$-compressible vectors as 
\[
\comp(\theta,\rho) = \SSS^{N+n-1} \cap 
\bigcup_{\substack{\cI \subset [N+n] \\ 
|\cI| = \lfloor \theta (N+n) \rfloor}} 
  \cN_\delta( \SSS_{\cI}^{N+n-1} ). 
\]
Note that this is the set of all unit vectors at a distance less or equal to 
$\rho$ from the set of the $\theta$-sparse unit vectors. The set 
$\incomp(\theta,\rho)$ of $(\theta,\rho)$-incompressible vectors 
is the complementary set $\SSS^{N+n-1} \setminus \comp(\theta,\rho)$. 

With these notations, we write 
\begin{equation} 
\label{comp-incomp} 
s_{N+n-1}(H) = 
 \inf_{u\in \comp(\theta, \rho)} \| H u \| 
 \ \wedge \ 
 \inf_{u\in \incomp(\theta, \rho)} \| H u \| 
\end{equation} 
for judiciously chosen $\theta,\rho \in (0,1)$.

The infimum over $\comp(\theta,\rho)$ is relatively easier to handle. Given a fixed vector $u \in \SSS^{N+n-1}$,
we first show that $\PP\left[ \| Hu \| \leq c \right]$ for some $c > 0$ is
exponentially small in $n$. Recall that an \textit{$\varepsilon$-net} is a set of 
points that are separated from each other by a distance of at most 
$\varepsilon$. Now, since the vectors of $\comp(\theta, \rho)$ are close to 
being sparse, it has an $\varepsilon$-net of controlled cardinality for a 
well-chosen $\varepsilon > 0$.  
Using this, along with a simple union bound, we will be able to infer  the 
smallness of the probability that  $\inf_{u\in \comp(\theta, \rho)} \| H u \|$ is small. 

The infimum over the set of incompressible vectors poses  a much bigger challenge since the 
$\varepsilon$-net argument fails. In this case 
the argument is more geometric. Observe that when 
$u$ is incompressible, $Hu$ is close to a sum of ${\mathcal O}(n)$ columns of 
$H$ with comparable weights. This helps to reduce the problem of 
controlling $\inf_{u\in \incomp(\theta, \rho)} \| H u \|$ to the problem of controlling the distance between an arbitrary column of $H$ and
the subspace generated by the other columns. 

Let $h_0$ be the first column of
$H$, and let $H_{-0} \in \CC^{(N+n)\times (N+n-1)}$ be the submatrix left after
extracting this column. Partition  $H$ accordingly as  
\[
% \begin{equation}
%\label{H-partition} 
H = \begin{bmatrix} b & g_{01} \\ g_{10} & G \end{bmatrix}, 
% \end{equation}
\]
with $b \in \CC$ and $G \in \CC^{(N+n-1) \times (N+n-1)}$. 
Then, the distance $\dist(h_0, H_{-0})$ between $h_0$ and
the column span of $H_{-0}$ equals ($G^{-1}$ will be shown to exist)
\begin{equation}\label{eq:column-distance}
\dist(h_0, H_{-0}) = \frac{\left| b - g_{01} G^{-1} g_{10} \right|} 
  {\sqrt{1 + \| g_{01} G^{-1} \|^2}}. 
\end{equation}
Our purpose is to 
bound the probability that this distance is small. If we write 
\[ 
A^{-1} = \begin{bmatrix} b & b_{01} \\ b_{10} & B \end{bmatrix},  
\quad 
X = \begin{bmatrix} x & W \end{bmatrix} , 
\]
where $B \in \CC^{(n-1)\times(n-1)}$, and $x\in\CC^N$ is the first column of 
$X$, then 
\begin{equation}\label{gdefine}
g_{01} = \begin{bmatrix} b_{01} & x^* \end{bmatrix} ,  \ 
g_{10} = \begin{bmatrix} b_{10} \\ x \end{bmatrix} ,  \ 
 \text{and} \ 
G = \begin{bmatrix} B & W^* \\ W & z I_N \end{bmatrix}. 
\end{equation}
Assuming inverse exists, partition  $G^{-1}$ as  
\begin{equation}\label{eq:ginverse}
G^{-1} = \begin{bmatrix} E & F \\ P & R \end{bmatrix},  
\quad E \in \CC^{(n-1)\times (n-1)}, \ R \in \CC^{N\times N}. 
\end{equation}
Then using Equation \eqref{eq:column-distance},  we have 
\begin{equation}
\label{eq:d-outline} 
\dist(h_0, H_{-0}) = \frac{\text{Num}}{\text{Den}}, 
\end{equation} 
where 
\begin{align} 
\label{eq:num-den} 
\begin{split} 
\text{Num} &= \left| b - b_{01} E b_{10} - x^* P b_{10} - b_{01} F x 
  - x^* R x \right| , \quad \text{and}  \\
\text{Den} &= \left(1 + \| b_{01} E + x^* P \|^2 + \| b_{01} F + x^* R \|^2 
               \right)^{1/2}. 
\end{split} 
\end{align} 

To control the behavior of Num, 
we need an \textit{anti-concentration} result. 
Loosely speaking, we 
show that conditionally on the matrix $W$ and for most of these matrices, 
the probability that a properly normalized version of the random variable
$x^* P b_{10} + b_{01} F x + x^* R x$ lives in an arbitrary ball of 
$\CC$ of small radius is itself small. 

Small ball probabilities are captured by the so-called \textit{Lévy's 
concentration function}. Given a constant vector $a \in \CC^n$ and a random 
vector $Z \in \CC^n$, Lévy's concentration function of the inner product 
$\ps{a,Z}$ at $\varepsilon > 0$ is  
\[
\cL_Z(\ps{a,Z}, \varepsilon) =
  \sup_{w \in \CC} \PP_Z\left[ \left| \ps{a,Z} - w \right| \leq \varepsilon 
   \right]. 
\]
When the elements of $Z$ are i.i.d.~random variables with finite third moment, 
the behavior of $\cL_Z$ can be controlled by the 
\textit{Berry-Esséen theorem}, whose use in random matrix theory dates
back to~\cite{lit-paj-rud-tom-05}. Berry-Esséen theorem is a refinement of
the Central Limit Theorem and implies that when $a\in\SSS^{n-1}$ has 
$\cO(n)$ elements with magnitudes of order $1/\sqrt{n}$, it holds that 
$\cL_Z(\ps{a,Z}, \varepsilon) \lesssim \varepsilon +  1 / \sqrt{n}$. 

Our plan now is to apply this theorem after replacing $Z$ with the random
vector $x$. Unfortunately, this theorem cannot be used as is on the random
variable $x^* P b_{10} + b_{01} F x + x^* R x$ because of the presence of the
quadratic form $x^* R x$. To circumvent this problem, we use a decoupling
argument that replaces $x^* P b_{10} + b_{01} F x + x^* R x$ with an inner
product that can be processed by the Berry-Esséen theorem.  This decoupling
idea that dates back to~\cite{got-79} has also been used in~\cite{ver-14}.

\subsection{Technical results} 
\label{subsec-technical}

The following proposition is a variation of \cite[Prop.~5.1]{tao-vu-aop10},
see also \cite[Lem.~A2]{bor-cha-12} and \cite{got-tikh-arxiv10}. 
This variation is needed because we
want the constants $c$ and $c'$ to depend on the law of the $Z_i$'s via 
$\kappa$ and $C_\kappa$. For completeness, we provide the modified proof in Appendix~\ref{prf-proj1}.   

\begin{proposition}[Distance of a random vector to a constant subspace]  
\label{proj1}
Let $Z = (Z_0,\ldots, Z_{n-1}) \in \CC^n$ be a vector of i.i.d.~centered 
unit-variance random variables such that for some $\kappa > 0$, $\EE | Z_0 |^{2+\kappa} \leq C_\kappa < \infty$. Then, there exist 
$c, c' > 0$ and $\delta \in (0,1)$ that depend only on $\kappa$ and $C_\kappa$ 
and that satisfy the following property.
For all $n \gg 1$, and for any deterministic subspace $V$ of $\CC^n$ such that 
$0\leq \dim (V) \leq \delta n$, 
\[
\PP [ \dist( Z, V) \leq c \sqrt{n} ] \leq \exp(-c' n ) .
\]
\end{proposition}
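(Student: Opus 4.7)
The plan is to write $\dist(Z, V)^2 = \|PZ\|^2 = Z^* P Z$, where $P = I - \Pi_V$ is the orthogonal projector onto $V^\perp$, of rank $m := n - \dim V \ge (1-\delta)n$. We seek an exponentially small upper bound on $\PP[\|PZ\|^2 \le c^2 n]$.

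First, I would establish a uniform one-dimensional small-ball bound for the entries. By Paley--Zygmund applied to $|Z_0|^2$ with exponent $1 + \kappa/2$, the hypotheses $\EE|Z_0|^2 = 1$ and $\EE |Z_0|^{2+\kappa} \le C_\kappa$ yield $\PP[|Z_0| \ge r_0] \ge \rho_0$ for positive constants $r_0, \rho_0$ depending only on $\kappa$ and $C_\kappa$. From this, a L\'evy concentration estimate $\cL(Z_0, \varepsilon_0) \le 1 - \rho$ follows for some $\varepsilon_0, \rho > 0$ (depending only on $\kappa, C_\kappa$).

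Second, I would exploit the diagonal structure of $P$. Since $\sum_i (1-P_{ii}) = n - m \le \delta n$ and $P_{ii} \in [0,1]$, Markov's inequality yields $|J| \ge (1 - 2\delta) n$ for $J := \{i : P_{ii} \ge 1/2\}$. For $i \in J$, decompose
\[
(PZ)_i = P_{ii} Z_i + \eta_i, \qquad \eta_i = \sum_{j \ne i} P_{ij} Z_j \in \sigma(Z_{-i}),
\]
so that, conditional on $Z_{-i}$, the first ingredient gives
\[
\PP\bigl[ |(PZ)_i| \le \tau \,\big|\, Z_{-i} \bigr] \le \cL(Z_0, 2\tau) \le 1 - \rho
\]
for all $\tau$ smaller than some explicit threshold.

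Third, I would tensorize via a covering argument. On $\{\|PZ\|^2 \le c^2 n\}$, Markov forces at most $n/4$ indices $i \in J$ to satisfy $|(PZ)_i| > 2c$, so there is a subset $K \subset J$ with $|K| \ge (3/4 - 2\delta) n$ on which $|(PZ)_i| \le 2c$ for every $i \in K$. A union bound over the at most $2^n$ such subsets $K$ reduces the problem to proving
\[
\PP\bigl[ |(PZ)_i| \le 2c \ \text{for all}\ i \in K \bigr] \le (1 - \rho)^{\beta |K|}
\]
for some $\beta > 0$, so that the combinatorial factor $2^n$ is dominated.

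The main obstacle is this last product bound. The events $\{|(PZ)_i| \le 2c\}$ for different $i \in K$ are \emph{not} jointly conditionally independent, because each $(PZ)_i$ depends on the full vector $Z_K$ through the off-diagonal entries of the principal submatrix $P_{K,K}$. My strategy is to fix $Z_{K^\cpl}$ (which contributes only a deterministic shift) and reduce to a multidimensional small-ball statement for the random vector $P_{K,K} Z_K + v$, using the fact that $P_{K,K}$ has diagonal entries $\ge 1/2$ and is thus non-degenerate in a quantitative sense. An Esseen-type characteristic function argument, leveraging the coordinate-wise non-concentration of the entries of $Z_K$ established in the first step, should produce the required decay; alternatively, a swapping/sequential-conditioning argument in the spirit of the proof in \cite{ver-14} can be adapted.

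Finally, the $2+\kappa$-moment setting (as opposed to bounded entries, which is what concentration inequalities typically require) is handled by truncating the entries at a large but fixed constant level $T$. Chernoff applied to the i.i.d.\ Bernoulli variables $\1_{|Z_i| > T}$ shows that the number of outlier coordinates is at most a small fraction of $n$ with probability $1 - e^{-c_1 n}$; on this event, the above analysis is carried out on the truncated (bounded) part of $Z$, and the outlier contribution is absorbed into the error.
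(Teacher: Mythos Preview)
Your proposal diverges from the paper's argument and contains a genuine gap at the step you yourself flag as the ``main obstacle.''

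The paper's proof truncates the entries at a fixed level $a$ (showing by Hoeffding that at least $nb$ coordinates satisfy $|Z_i|\le a$ with probability $1-e^{-cn}$), conditions on the index set $\cJ$ where truncation holds, recenters, and then applies \emph{Talagrand's concentration inequality} to the $1$-Lipschitz convex function $f(x)=\dist(x,\Pi_\cJ W)$ on the product of bounded supports. Talagrand gives $|f-Mf|$ sub-Gaussian concentration at scale $O(a)$; the median $Mf$ is then lower-bounded by computing $\EE f^2 = \EE[(\mathring Z^a)^2]\,\tr\Pi_{\Pi_\cJ(W)^\perp}\gtrsim n$ once $\dim V\le \delta n$. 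The dependence between the coordinates of $PZ$ never enters, because Talagrand operates on the whole Lipschitz function at once.

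Your coordinate-wise route runs into two concrete difficulties that you have not resolved. First, the product bound $\PP[\,|(PZ)_i|\le 2c\ \text{for all }i\in K\,]\le(1-\rho)^{\beta|K|}$ is precisely the hard content of the proposition: the events are coupled through the off-diagonal of $P_{K,K}$, and neither a generic Esseen argument nor the decoupling in \cite{ver-14} (which is designed for \emph{random} subspaces via Berry--Ess\'een, not for deterministic ones) yields this without further ideas. Second, even granting such a bound with $\beta=1$, the union bound over subsets $K$ costs roughly $\binom{n}{n/4}\approx e^{H(1/4)n}$ with $H(1/4)\approx 0.56$, so you would need $-\tfrac34\log(1-\rho)>0.56$, i.e.\ $\rho\gtrsim 0.53$. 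Paley--Zygmund only guarantees some $\rho=\rho(\kappa,C_\kappa)>0$, which can be arbitrarily small, so the numerics do not close. Your truncation step is sound and parallels the paper's, but it does not by itself repair the tensorization; the missing ingredient is a concentration-of-measure statement (Talagrand) rather than a coordinate-wise small-ball estimate.
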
 

We shall also make use of: 
\begin{lemma}[Rosenthal's inequality \cite{ros-70}] 
Let $Z_0, \ldots, Z_{n-1}$ be independent random variables such that 
$\EE Z_i = 0$ and $\EE |Z_i|^p < \infty$ for $p > 2$. Then there exists a
universal constant $C_p$ such that 
\[
\EE \Bigl| \sum_{i=0}^{n-1} Z_i \Bigr|^p \leq C_p \Bigl( 
\sum_{i=0}^{n-1} \EE |Z_i|^p \ \vee \ 
   \Bigl( \sum_{i=0}^{n-1} \EE |Z_i|^2 \Bigr)^{p/2} \Bigr) . 
\]
\end{lemma}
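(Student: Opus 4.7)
The plan is a classical symmetrization--Khintchine--induction scheme. Heuristically the two terms on the right-hand side capture two extremal scenarios: a single atypically large $|Z_i|$ (producing $\sum_i\EE|Z_i|^p$) and Gaussian-like pooled fluctuations (producing $(\sum_i\EE|Z_i|^2)^{p/2}$).

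Step 1 (Symmetrization). Let $Z_0',\dots,Z_{n-1}'$ be an independent copy of $Z_0,\dots,Z_{n-1}$. Since $\EE Z_i'=0$, Jensen's inequality gives $\EE|\sum_i Z_i|^p\leq\EE|\sum_i(Z_i-Z_i')|^p$. Because each $Z_i-Z_i'$ is symmetric, if $\varepsilon_i$ are i.i.d.\ Rademacher signs independent of the rest, then $\sum_i(Z_i-Z_i')\stackrel{d}{=}\sum_i\varepsilon_i(Z_i-Z_i')$.

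Step 2 (Khintchine). Conditioning on $(Z_i,Z_i')$, Khintchine's inequality furnishes a constant $K_p$ (depending only on $p$) with
$$\EE_\varepsilon\bigl|\sum_i\varepsilon_i(Z_i-Z_i')\bigr|^p\leq K_p\bigl(\sum_i|Z_i-Z_i'|^2\bigr)^{p/2}.$$
Taking full expectation and using $|Z_i-Z_i'|^2\leq 2(|Z_i|^2+|Z_i'|^2)$ together with convexity of $x\mapsto x^{p/2}$ reduces matters to bounding $\EE(\sum_i|Z_i|^2)^{p/2}$ by a constant times $(\sum_i\EE|Z_i|^2)^{p/2}\vee\sum_i\EE|Z_i|^p$.

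Step 3 (Peeling). Decompose $|Z_i|^2=W_i+\EE|Z_i|^2$ with $W_i=|Z_i|^2-\EE|Z_i|^2$ independent and centered. Minkowski's inequality in $L^{p/2}$ (a genuine norm since $p\geq 2$) gives
$$\bigl\|\sum_i|Z_i|^2\bigr\|_{p/2}\leq\bigl\|\sum_i W_i\bigr\|_{p/2}+\sum_i\EE|Z_i|^2,$$
and the last term raised to $p/2$ produces the ``Gaussian'' term $(\sum_i\EE|Z_i|^2)^{p/2}$.

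Step 4 (Induction on $p$). For $p\in(2,4]$ the centered sum $\sum_i W_i$ is handled by the von Bahr--Esseen inequality at exponent $p/2\in(1,2]$, yielding $\EE|\sum_i W_i|^{p/2}\leq 2\sum_i\EE|W_i|^{p/2}\leq 2^{p/2+1}\sum_i\EE|Z_i|^p$, which closes the base case. For $p>4$ one applies Rosenthal inductively at the smaller exponent $p/2$ to the variables $W_i$: the peak term contributes $\sum_i\EE|W_i|^{p/2}\leq 2^{p/2}\sum_i\EE|Z_i|^p$, while the quadratic-variation term contributes $(\sum_i\EE W_i^2)^{p/4}\leq(\sum_i\EE|Z_i|^4)^{p/4}$. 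Each recursion halves $p$, so the induction terminates in $\lceil\log_2 p\rceil$ steps and yields an explicit universal constant $C_p$.

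The main obstacle lies in Step 4: the intermediate fourth-moment term $(\sum_i\EE|Z_i|^4)^{p/4}$ must be re-expressed using only the quantities $\sum_i\EE|Z_i|^p$ and $(\sum_i\EE|Z_i|^2)^{p/2}$ that actually appear in the statement. I would handle this by the log-convex interpolation $\EE|Z_i|^4\leq(\EE|Z_i|^2)^{\alpha}(\EE|Z_i|^p)^{1-\alpha}$ with $\alpha=(p-4)/(p-2)$, followed by H\"older's inequality on the index $i$ and an AM--GM step; a short computation shows that the resulting exponents $\alpha/2$ and $(1-\alpha)p/4$ sum to $1$, which is precisely what forces the bound to collapse onto the two canonical terms on the right-hand side.
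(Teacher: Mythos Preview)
The paper does not prove this lemma; it is simply stated with a citation to Rosenthal's original article and then used as a tool in the proof of Lemma~\ref{Xu}. There is thus no ``paper's own proof'' against which to compare your attempt.

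Your symmetrization--Khintchine--recursion scheme is a standard and correct route to Rosenthal's inequality. The steps are sound: symmetrization via an independent copy and Rademacher signs, the Khintchine reduction to $\EE(\sum_i|Z_i|^2)^{p/2}$, the centering $W_i=|Z_i|^2-\EE|Z_i|^2$ with Minkowski in $L^{p/2}$, the von Bahr--Esseen base case for $p\in(2,4]$, and the inductive call at exponent $p/2$ for $p>4$. The one delicate point is the interpolation you flag at the end. Your exponents are correct: with $\alpha=(p-4)/(p-2)$ one has $\EE|Z_i|^4\leq(\EE|Z_i|^2)^{\alpha}(\EE|Z_i|^p)^{1-\alpha}$ by log-convexity of moments, H\"older on the index $i$ gives $\sum_i\EE|Z_i|^4\leq(\sum_i\EE|Z_i|^2)^{\alpha}(\sum_i\EE|Z_i|^p)^{1-\alpha}$, and after raising to the power $p/4$ the exponents on $(\sum_i\EE|Z_i|^2)^{p/2}$ and $\sum_i\EE|Z_i|^p$ are $\alpha/2=(p-4)/(2(p-2))$ and $(1-\alpha)p/4=p/(2(p-2))$, which indeed sum to $1$, so Young's inequality closes the bound. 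The sketch is valid.
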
 

These results easily lead to the following lemma: 
\begin{lemma}
\label{Xu} 
Let the matrix $X$ satisfy Assumption~\ref{ass-model}. Then, there exist 
constants $c, c' > 0$ and a constant $\delta \in (0,1)$ that depend on 
$\mom$ only and that satisfy the following property. For each 
deterministic vector $u \in \SSS^{n-1}$ and each deterministic subspace 
$V \subset \CC^N$ with $0\leq \dim(V) \leq \delta N$, 
\begin{equation}\label{distxuv}
\PP \left[ \dist(X u, V) \leq c \right] \leq \exp(- c' n).  
\end{equation}
In particular, for each deterministic vector $a \in \CC^N$, it holds that 
$\PP \left[ \| X u - a \| \leq c \right] \leq \exp(- c' n)$. Similar 
conclusions hold if $X$ is replaced with $X^*$. 
\end{lemma}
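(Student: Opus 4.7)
The plan is to reduce the statement to Proposition~\ref{proj1} via an appropriate rescaling. Given $u \in \SSS^{n-1}$, I set $Z = \sqrt n\, X u \in \CC^N$; its coordinates $Z_i = \sqrt n \sum_{j=0}^{n-1} x_{ij} u_j$ are independent across $i$ (since the rows of $X$ are independent), identically distributed, centered, and of unit variance thanks to $\|u\|=1$ and $\EE |x_{00}|^2 = 1/n$. To meet the $(2+\kappa)$-moment hypothesis of Proposition~\ref{proj1}, I take $\kappa = 2$ and bound $\EE |Z_0|^4$ via Rosenthal's inequality applied to $Z_0 = \sqrt n \sum_j x_{0j} u_j$. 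Using Assumption~\ref{ass-model} together with $\sum_j |u_j|^2 = 1$ and $\sum_j |u_j|^4 \leq 1$, this gives a bound of the form $\EE |Z_0|^4 \leq C_4 (\mom \vee 1)$ depending only on $\mom$.

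I then invoke Proposition~\ref{proj1} with ambient dimension equal to $N$ (not $n$) and with the constant $C_\kappa$ just obtained. For any deterministic subspace $V \subset \CC^N$ with $\dim(V) \leq \delta N$, the proposition yields $\PP[\dist(Z,V) \leq c\sqrt N] \leq \exp(-c' N)$ for $N$ sufficiently large. Rescaling via $\dist(X u, V) = n^{-1/2} \dist(Z, V)$, this reads $\PP[\dist(Xu, V) \leq c\sqrt{N/n}] \leq \exp(-c' N)$. Since $N/n \to \gamma > 0$, eventually $\sqrt{N/n} \geq \sqrt{\gamma/2}$ and $N \geq \gamma n /2$, so after redefining constants one obtains the announced exponential bound with $c, c' > 0$ depending only on $\mom$ (and on the fixed ratio $\gamma$). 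The "in particular" clause follows by specializing to the one-dimensional subspace $V = \colspan(a)$ (the case $a = 0$ is the choice $V = \{0\}$): since $a \in V$, one has $\dist(X u, V) \leq \| X u - a \|$, so the event $\{\| X u - a \| \leq c\}$ is contained in $\{\dist(X u, V) \leq c\}$ and the bound transfers. The statement with $X^*$ in place of $X$ is obtained by running the identical argument on $\sqrt n\, X^* u' \in \CC^n$ for $u' \in \SSS^{N-1}$: its coordinates $\sqrt n \sum_i \overline{x_{ij}} u'_i$ are again i.i.d., centered, of unit variance, with the same Rosenthal-based fourth-moment bound, so Proposition~\ref{proj1} now applies with ambient dimension $n$.

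There is no substantive obstacle in this argument beyond careful bookkeeping. The one delicate point is to keep track of which of the two indices plays the role of the ambient dimension in Proposition~\ref{proj1} ($N$ for the $X$ case, $n$ for the $X^*$ case), and to verify that the constant $\delta$, which depends in Proposition~\ref{proj1} only on $\kappa$ and $C_\kappa$, depends here only on $\mom$, since $C_\kappa$ does; this is what guarantees that the hypothesis $\dim(V) \leq \delta N$ (respectively $\delta n$) can be imposed uniformly in $n$.
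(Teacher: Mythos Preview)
Your proof is correct and follows essentially the same route as the paper: define $Z = \sqrt{n}\,Xu$, verify via Rosenthal's inequality that the i.i.d.\ coordinates of $Z$ have a fourth moment bounded in terms of $\mom$, and then invoke Proposition~\ref{proj1} with $\kappa = 2$ in ambient dimension $N$, undoing the $\sqrt{n}$ scaling at the end. Your write-up is in fact slightly more careful than the paper's in that you make explicit the passage from $\sqrt{N}$ and $\exp(-c'N)$ to constants times $1$ and $\exp(-c'n)$ via $N/n \to \gamma$, and you spell out the ``in particular'' and $X^*$ clauses.
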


\begin{proof}
Let $\tilde x_0, \ldots, \tilde x_{N-1} \in \CC^{1\times n}$ be the rows of 
$X$, and define the random variables $Z_k = \sqrt{n} \tilde x_k u$ for 
$k \in [n]$. These random variables are i.i.d., centered, and have
unit-variance. Furthermore, writing 
$u = [u_0,\ldots, u_{n-1}]^\T$, we get by Rosenthal's inequality that for some universal constant $C$, 
\[
\EE | Z_1 |^4 \leq C \left( (n^2 \EE|X_{11}|^4 \sum |u_i|^4) \ \vee \ 1
    \right) \leq C \mom. 
\]
 Writing $Z = [Z_0,\ldots, Z_{N-1}]^\T$,
we note that $\dist(Xu, V) = \dist(Z,V) / \sqrt{n}$. Applying 
Proposition~\ref{proj1} with $\kappa = 2$, we obtain (\ref{distxuv}). The
rest of the claims follow immediately. 
\end{proof}  

The $\varepsilon$-net argument alluded to above will use the following lemma. 
\begin{lemma}[Metric entropy of a complex sphere, Lemma~2.2 of
\cite{coo-18}] 
\label{entrop} 
Let $V \in \CC^{n}$ be a $k$-dimensional subspace, and let $S\subset V$. Given 
$\varepsilon > 0$, the set $\SSS^{n-1} \cap S$ has an $\varepsilon$-net of 
cardinality bounded by $(3/\varepsilon)^{2k}$. 
\end{lemma}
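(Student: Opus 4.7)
The plan is to apply the classical volume-packing argument for $\varepsilon$-nets, after translating the problem into a real Euclidean setting. A complex $k$-dimensional subspace $V \subset \CC^n$ carries a natural structure of real Euclidean space of dimension $2k$ via the identification $\CC \cong \RR^2$, and under this identification the Hermitian norm on $\CC^n$ restricts to the standard Euclidean norm on $V$. Consequently $\SSS^{n-1}\cap S \subset \SSS^{n-1}\cap V$ sits inside the unit ball of a real Euclidean space of dimension $2k$, which is the setting in which all the volume bookkeeping will take place.

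Next I would let $\mathcal N$ be a \emph{maximal} $\varepsilon$-separated subset of $\SSS^{n-1}\cap S$. By maximality, $\mathcal N$ is automatically an $\varepsilon$-net of $\SSS^{n-1}\cap S$: any point of $\SSS^{n-1}\cap S$ lying at distance greater than $\varepsilon$ from every element of $\mathcal N$ could be appended to $\mathcal N$ while preserving the separation, contradicting maximality. To bound $|\mathcal N|$, I would observe that the open balls (in $V$) of radius $\varepsilon/2$ centered at the points of $\mathcal N$ are pairwise disjoint, and all contained in the ball of radius $1+\varepsilon/2$ about the origin in $V$. Comparing $2k$-dimensional Lebesgue volumes then yields
\[
|\mathcal N| \left(\tfrac{\varepsilon}{2}\right)^{2k} \leq \left(1+\tfrac{\varepsilon}{2}\right)^{2k},
\]
so $|\mathcal N| \leq (1 + 2/\varepsilon)^{2k}$. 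Whenever $\varepsilon \leq 1$, one has $1 + 2/\varepsilon \leq 3/\varepsilon$, giving the stated bound $(3/\varepsilon)^{2k}$; for $1 < \varepsilon \leq 3$ a single-point net suffices and the inequality is trivial, while the regime $\varepsilon > 3$ is not of interest in the applications that follow.

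There is no genuine obstacle in this proof: it is a standard covering-number estimate, the only substantive inputs being maximality and a volume comparison. The only point that requires mild care is the arithmetic of dimension — the exponent $2k$ in the final bound reflects the fact that a complex $k$-dimensional subspace has real dimension $2k$, and it is in that real space that volumes are to be compared. Note also that the fact that $S$ is an arbitrary (possibly quite ugly) subset of $V$ is irrelevant, since the argument uses only the inclusion $\SSS^{n-1}\cap S \subset \SSS^{n-1}\cap V$ and the disjointness/containment of Euclidean balls in $V$.
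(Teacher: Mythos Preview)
Your argument is correct and is precisely the standard volumetric argument that underlies this lemma; the paper does not supply its own proof but simply quotes the result from \cite{coo-18}, where the same maximal-separated-set plus volume-comparison reasoning is used. Your remark about the real dimension being $2k$ is exactly the point of the lemma, and your handling of the regime $\varepsilon\leq 1$ (the only one used in the paper) is clean; the slight looseness for $\varepsilon>1$ is immaterial here.
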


The two following results regarding Lévy's concentration functions will 
be needed. 

\begin{lemma}[Restriction of the concentration function, Lemma~2.1 
 of~\cite{rud-ver-advmath08}] 
\label{lm:restrict}
Let $Z \in \CC^n$ be a vector of independent random variables. Then, for 
each non-empty $\cI \subset [n]$, we have 
$\cL_Z(\ps{a,Z}, \varepsilon) \leq  
  \cL_{Z_\cI}(\ps{a_{\cI},Z_\cI}, \varepsilon)$. 
\end{lemma}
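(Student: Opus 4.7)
The plan is a one-line reduction by independence and Fubini; no hard estimate is involved. I would first decompose the inner product according to the partition $[n] = \cI \cup \cI^\cpl$:
\[
\ps{a,Z} \;=\; \ps{a_\cI, Z_\cI} \;+\; \ps{a_{\cI^\cpl}, Z_{\cI^\cpl}} ,
\]
and note that by the standing assumption the two summands are independent. The whole argument then consists in absorbing the second summand into the free parameter $w$ defining Lévy's concentration function.

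More precisely, fix an arbitrary $w \in \CC$ and condition on $Z_{\cI^\cpl}$. Using the tower property,
\[
\PP\bigl[ \, |\ps{a,Z}-w|\leq\varepsilon \, \bigr]
 \;=\; \EE_{Z_{\cI^\cpl}} \Bigl[ \, \PP_{Z_\cI}\bigl[ \, |\ps{a_\cI, Z_\cI} - w'|\leq\varepsilon \, \bigr] \, \Bigr] ,
\]
where $w' := w - \ps{a_{\cI^\cpl}, Z_{\cI^\cpl}}$ is, conditionally on $Z_{\cI^\cpl}$, a fixed element of $\CC$. By the very definition of $\cL_{Z_\cI}$, the inner conditional probability is bounded above by $\cL_{Z_\cI}(\ps{a_\cI, Z_\cI}, \varepsilon)$, and this bound depends neither on $w$ nor on the realization of $Z_{\cI^\cpl}$. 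Taking expectation with respect to $Z_{\cI^\cpl}$ preserves the bound, and finally taking supremum over $w \in \CC$ on the left-hand side yields the claimed inequality.

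There is no genuine obstacle here: the content of the lemma is precisely that the supremum defining $\cL_{Z_\cI}$ is large enough to absorb the random shift $\ps{a_{\cI^\cpl}, Z_{\cI^\cpl}}$, which is why restricting to any subset of coordinates can only increase the concentration function. The only points of care are the measurability needed for Fubini (automatic, since the event in question is Borel in $(Z_\cI, w')$) and the degenerate case $a_\cI = 0$, in which the right-hand side equals $1$ and the inequality is trivially true. In particular, no moment assumption on $Z$ is needed.
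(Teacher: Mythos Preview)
Your proof is correct and is precisely the standard argument for this fact. The paper does not supply its own proof here; it simply cites Lemma~2.1 of \cite{rud-ver-advmath08}, whose argument is exactly the conditioning-and-absorb-the-shift reasoning you wrote down.
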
 

\begin{proposition}[Anti-concentration via the Berry-Ess\'een theorem] 
\label{prop:BE} 
There exists a constant $c > 0$ such that for any vector 
$Z = [Z_0, \ldots, Z_{n-1}]$ of complex centered independent random variables 
with finite third moments, 
\[
\cL_Z\Bigl( \sum Z_i, t\Bigr) \leq 
\frac{ct}{\sqrt{\sum \EE | Z_i |^2}} + 
 \frac{c \sum \EE | Z_i|^3}{(\sum \EE | Z_i |^2)^{3/2}} . 
\]
\end{proposition}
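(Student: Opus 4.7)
The plan is to reduce the complex anti-concentration statement to a classical real-valued Berry-Esséen bound by projecting onto a well-chosen direction in $\CC \simeq \RR^2$. Writing $Z_i = X_i + \imath Y_i$ with $X_i, Y_i \in \RR$, I would note that for any unit vector $(\cos\theta, \sin\theta)$ and any $w\in\CC$,
\[
\bigl\{ |\textstyle\sum Z_i - w| \leq t \bigr\} \subset
 \bigl\{ \bigl| \textstyle\sum (\cos\theta \, X_i + \sin\theta \, Y_i) - \Re(e^{-\imath\theta} w) \bigr| \leq t \bigr\} .
\]
Thus $\cL_Z(\sum Z_i, t) \leq \cL(\sum U_i, t)$ where $U_i = \cos\theta \, X_i + \sin\theta \, Y_i$ is a centered real random variable with $\EE U_i^2 = \cos^2\theta \, \EE X_i^2 + 2 \cos\theta \sin\theta \, \EE X_i Y_i + \sin^2\theta \, \EE Y_i^2$ and $\EE |U_i|^3 \leq \EE |Z_i|^3$.

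Next, I would choose $\theta$ to guarantee a large variance. Since $\sum_i \EE X_i^2 + \sum_i \EE Y_i^2 = \sum_i \EE |Z_i|^2$, at least one of $\theta = 0$ or $\theta = \pi/2$ produces $\sigma^2 \eqdef \sum_i \EE U_i^2 \geq \tfrac12 \sum_i \EE|Z_i|^2$. Fix that $\theta$.

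Now I would invoke the classical Berry-Esséen theorem on the real independent centered sequence $(U_i)$: letting $\Phi$ denote the standard normal CDF and $F$ the CDF of $\sigma^{-1} \sum U_i$,
\[
\sup_{x\in\RR} | F(x) - \Phi(x) | \leq C \frac{\sum_i \EE |U_i|^3}{\sigma^3} .
\]
For any $w \in \RR$, this yields
\[
\PP\bigl[ |\textstyle\sum U_i - w | \leq t \bigr]
\leq \Phi\bigl(\tfrac{w+t}{\sigma}\bigr) - \Phi\bigl(\tfrac{w-t}{\sigma}\bigr) + 2C \frac{\sum_i \EE|U_i|^3}{\sigma^3}
\leq \frac{2t}{\sigma \sqrt{2\pi}} + 2C \frac{\sum_i \EE|U_i|^3}{\sigma^3} ,
\]
since the Gaussian density is bounded by $1/\sqrt{2\pi}$. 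Taking the supremum over $w$ and inserting $\sigma^2 \geq \tfrac12 \sum \EE |Z_i|^2$ together with $\EE |U_i|^3 \leq \EE |Z_i|^3$ gives the claim.

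There is no real obstacle: both the projection-to-real step and the passage from the CDF estimate to a concentration function estimate are standard. The only point requiring minor care is the inequality $\EE|U_i|^3 \leq \EE|Z_i|^3$, which follows from $|U_i| = |\Re(e^{-\imath\theta} Z_i)| \leq |Z_i|$. The universal constant $c$ of the statement is produced from $C$ in Berry-Esséen together with $\sqrt{2}/\sqrt{2\pi}$ and the factor $(\sqrt{2})^3$ coming from the two halvings.
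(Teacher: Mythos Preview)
Your argument is correct. The paper does not give its own proof of this proposition but simply cites \cite{tao-topics} and \cite[Lem.~A6]{bor-cha-12}; your reduction to the real Berry--Ess\'een theorem via projection onto a direction capturing at least half of the total variance, followed by the Gaussian density bound, is precisely the standard argument found in those references.
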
 
For a proof, see \cite[Chap.~2]{tao-topics} or \cite[Lem.~A6]{bor-cha-12}. 
In particular, if there exist two positive constants $c_2$ and $c_3$ 
such that $\EE | Z_i |^2 \geq c_2$ and $\EE | Z_i |^3 \leq c_3$ for each
$i\in [n]$, then 
\begin{equation}
\label{eq:be-sqrt(n)} 
\cL\Bigl( \sum Z_i, t \sqrt{n}\Bigr) \leq 
 c' t + \frac{c''}{\sqrt{n}}, 
\end{equation} 
where $c' = c / \sqrt{c_2}$ and $c'' = c c_3 / c_2^{3/2}$.

We now enter the proof of Theorem~\ref{snA} via proving 
Inequality~\eqref{snH}. Recall that we have written  
$X = \begin{bmatrix} x & W \end{bmatrix}$ where 
$x$ is the first column of $X$. Given $C > 0$, we denote as $\Eop(C)$ the 
event  
\[
\Eop(C) = \left[ \| W \| \leq C \right] . 
\]
In the remainder of this section, the constants that do not depend on $n$ will
be referred to by the letter $c$, possibly with primes or numerical indices. 
In all statements of the type 
\[
\PP\left[ \left[ \cdots \leq c \right] \cap \cE \right] \leq 
  \exp(-c'n) + c_1 n^{-\gamma}  ,  
\]
where $\cE = [ \| X \| \leq C]$ or $\Eop(C)$, the constants such as $c$, $c'$, 
or $c_1$ depend on $C$, $z$, and $\mom$ at most. 

\subsection{Compressible vectors} 
\label{subsec-comp}

Recalling~\eqref{comp-incomp}, we start with the compressible vectors. 
The probability bound for these vectors is provided by the 
following proposition: 

\begin{proposition} 
\label{prop-comp} 
Let Assumption~\ref{ass-model} hold true. Then, there exists 
$\theta_{\ref{prop-comp}} \in (0,1)$, $\rho_{\ref{prop-comp}} > 0$, $c > 0$ 
and $c' > 0$ such that 
\[
\PP\left[ 
\Bigl[ \inf_{u\in \comp(\theta_{\ref{prop-comp}},\rho_{\ref{prop-comp}})} 
   \| H u \| \leq c \Bigr] \cap [\|X\| \leq C] \right] 
  \leq \exp(-c' n) \ \ \text{for large enough} \ \ n.
\]
 
\end{proposition}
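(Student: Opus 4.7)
The plan follows the standard epsilon-net approach for compressible vectors, adapted to the block structure of $H$. The main steps are: (i) a single-vector small ball estimate for $\|Hu\|$, (ii) construction of an $\varepsilon$-net for $\comp(\theta,\rho)$ using Lemma~\ref{entrop}, (iii) a union bound, and (iv) a perturbation step using the $\|X\|\leq C$ event to pass from the net to all of $\comp(\theta,\rho)$.

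For step (i), fix $u \in \SSS^{N+n-1}$ and write $u = \begin{bmatrix} u_1 \\ u_2 \end{bmatrix}$ with $u_1 \in \CC^n$, $u_2 \in \CC^N$. Then $\|Hu\|^2 \geq \|A^{-1} u_1 + X^* u_2\|^2 \vee \|X u_1 + z u_2\|^2$. Since $\|u_1\|^2 + \|u_2\|^2 = 1$, at least one of $\|u_1\|$, $\|u_2\|$ is $\geq 1/\sqrt 2$. If $\|u_1\| \geq 1/\sqrt 2$, apply Lemma~\ref{Xu} to the unit vector $u_1/\|u_1\|$ with the deterministic target $a = -z u_2 / \|u_1\|$: this yields $\PP[\|X u_1 + z u_2\| \leq c_0] \leq \exp(-c' n)$ for some constants $c_0,c'>0$. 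If instead $\|u_2\| \geq 1/\sqrt 2$, apply the $X^*$ version of Lemma~\ref{Xu} to $u_2/\|u_2\|$ with target $-A^{-1} u_1/\|u_2\|$ to obtain the analogous bound (here Assumption~\ref{assA} ensures $A^{-1} u_1/\|u_2\|$ is a fixed vector in $\CC^n$). Either way, $\PP[\|Hu\| \leq c_0] \leq \exp(-c' n)$ for every fixed $u \in \SSS^{N+n-1}$.

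For steps (ii)--(iii), note that any $u \in \comp(\theta,\rho)$ lies within distance $\rho$ of some unit vector supported on a set $\cI \subset [N+n]$ with $|\cI|=\lfloor \theta(N+n) \rfloor$. By Lemma~\ref{entrop}, each $\SSS^{N+n-1}_\cI$ admits a $\rho$-net of cardinality at most $(3/\rho)^{2\lfloor\theta(N+n)\rfloor}$, and there are $\binom{N+n}{\lfloor\theta(N+n)\rfloor} \leq (e/\theta)^{\theta(N+n)}$ choices of $\cI$. Hence $\comp(\theta,\rho)$ admits a $2\rho$-net $\cN$ with $|\cN| \leq \exp\bigl( C_1 \theta \log(1/\theta) \, n \bigr)$ for some absolute $C_1$ (since $N+n \asymp n$). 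Choosing $\theta \in (0,1)$ small enough that $C_1 \theta \log(1/\theta) < c'/2$, a union bound gives
\[
\PP\Bigl[ \exists u_0 \in \cN : \|H u_0\| \leq c_0 \Bigr] \leq |\cN| \exp(-c' n) \leq \exp(-c' n / 2).
\]

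For step (iv), on the event $[\|X\| \leq C]$, the spectral norm of $H$ is at most $D := \ssup + 2C + |z|$. For any $u \in \comp(\theta,\rho)$ take $u_0 \in \cN$ with $\|u-u_0\| \leq 2\rho$; then $\|Hu\| \geq \|H u_0\| - 2\rho D$. Choosing $\rho = c_0/(4D)$, the previous display ensures that outside an event of probability at most $\exp(-c' n/2)$, every $u \in \comp(\theta,\rho)$ satisfies $\|Hu\| \geq c_0/2$. Setting $\theta_{\ref{prop-comp}} = \theta$, $\rho_{\ref{prop-comp}} = \rho$, $c = c_0/2$, the proposition follows.

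The only mildly delicate point is balancing the entropy of $\comp(\theta,\rho)$ against the per-vector exponential bound, which forces $\theta$ to be chosen small relative to the rate $c'$ coming from Lemma~\ref{Xu}; once this is done, the remaining perturbation argument is routine thanks to the event $[\|X\|\leq C]$.
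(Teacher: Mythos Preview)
Your proof is correct and follows essentially the same route as the paper: a pointwise small-ball estimate via Lemma~\ref{Xu} applied to whichever block of $u$ has norm at least $1/\sqrt{2}$, then an $\varepsilon$-net over sparse supports combined with a union bound, and finally a Lipschitz perturbation using $\|H\|\leq D$ on $[\|X\|\leq C]$. The only cosmetic slip is that your entropy bound $|\cN|\leq\exp(C_1\theta\log(1/\theta)\,n)$ suppresses the $\theta\log(1/\rho)$ contribution from $(3/\rho)^{2\lfloor\theta(N+n)\rfloor}$; since $\rho=c_0/(4D)$ is a fixed constant determined before $\theta$ is chosen, this term can be absorbed, but it would be cleaner to state the bound as $\exp\bigl(C_1\theta(\log(1/\theta)+\log(1/\rho))\,n\bigr)$ and then take $\theta$ small.
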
 
\begin{proof}
We first show that there exist $c_0, c_1 > 0$ such that for each deterministic 
vector $u \in \SSS^{N+n-1}$, 
\begin{equation}\label{hupointwise}
\PP\left[ \| H u \| \leq c_0 \right] \leq \exp(-c_1 n). 
\end{equation}
Let us partition $u$ as $u = [v^\T,w^\T]^\T$, where $v \in \CC^n$ and 
$w \in \CC^N$. Since $\| u\| =1$, either $\| v \| \geq 1/\sqrt{2}$ or 
$\| w \| \geq 1/\sqrt{2}$. Assume that $\| w \| \geq 1/\sqrt{2}$, and note that
$[\| H u \| \leq c_0 ] \subset [ \| A^{-1}v + X^* w \| \leq c_0 ]$. 
Writing $\tilde w = w / \| w \|$, we have 
\begin{align*} 
\PP\left[ \| X^* w + A^{-1}v \| \leq c_0 \right] &= 
\PP\left[ \| X^* \tilde w + A^{-1}v / \|w\| \| \leq c_0/ \| w\| \right] \\
 &\leq 
 \PP\left[ \| X^* \tilde w + A^{-1}v / \|w\| \| \leq c_0 \sqrt{2} \right] \\ 
 &\leq \exp(-c_1 n) 
\end{align*} 
by applying Lemma~\ref{Xu} and choosing  $c_0$ and $c_1$ judiciously. 
When $\| v \| \geq 1/\sqrt{2}$, we can use a similar argument 
(with possibly different $c_0$ and $c_1$) after observing 
that $[\| H u \| \leq c_0] \subset [ \| X v + zw \| \leq c_0 ]$. This establishes (\ref{hupointwise}). 

Now, on the event $[\| X \|\leq C]$, we have 
\[
\| H \| \leq \left\| \begin{bmatrix} & X^* \\ X \end{bmatrix} \right\| 
   + \left\| \begin{bmatrix} A^{-1} \\ & z \end{bmatrix} \right\| 
  \leq C_H \eqdef C + |z| \vee \ssup. 
\]
On this event, assume that there exists $y \in \cN_{c_0 / (2C_H)} ( \{ u \})$ 
such that $\| H y \| \leq c_0 / 2$. Then 
$\| H u \| \leq \| H(u-y)\| + \| Hy \| \leq c_0$. In other words, 
\[
\left[ \exists y \in \cN_{c_0 / (2C_H)} (\{ u \}) \, : \, 
  \| H y \| \leq c_0 / 2 \right] \cap [\| X \|\leq C] \subset 
  \left[ \| H u \| \leq c_0 \right].
\]
Now, let $\theta_{\ref{prop-comp}} \in (0,1)$ to be fixed in a moment, and 
choose $\cI \subset [N+n]$ in such a way that 
$|\cI | = \lfloor \theta_{\ref{prop-comp}} (n+N) \rfloor$. 
By Lemma~\ref{entrop}, the unit-sphere $\SSS_\cI^{N+n-1}$ of the subspace 
of the vectors of $\CC^{N+n}$ that are supported by $\cI$ has a 
$(c_0 / (2 C_H))$-net of cardinality bounded by $(6 C_H / c_0)^{2 |\cI|}$. 
Applying the previous results and making use of the union bound, we get that 
\[
\PP \left[ 
  \left[ \exists y \in \cN_{c_0 / (2C_H)} (\SSS^{N+n-1}_\cI) \, : \, 
  \| H y \| \leq c_0 / 2 \right] \cap [\| X \|\leq C] \right] 
 \leq (6 C_H / c_0)^{2 \theta_{\ref{prop-comp}} (N+n)} \exp(-c_1 n) .
\]
Finally, considering all the sets $\cI \subset [N+n]$ such that 
$|\cI| = \lfloor \theta_{\ref{prop-comp}}(N+n) \rfloor$, recalling the 
elementary bound on the binomial coefficients ${{m}\choose{k}} \leq (em/k)^k$, 
and using the union bound, we get that 
\[
\PP \left[ 
  \left[ \exists y \in \comp(\theta_{\ref{prop-comp}}, c_0 / (2C_H)) \, : \, 
  \| H y \| \leq c_0 / 2 \right] \cap [\| X \|\leq C] \right] 
 \leq \left(
 \frac{36 e C_H^2}{\theta_{\ref{prop-comp}} c_0^2}
    \right)^{\theta_{\ref{prop-comp}}(N+n)}  
   \exp(-c_1 n) .
\]
A small calculation shows that the right hand side is of the form $\exp(-c'n)$
for large enough $n$ when $\theta_{\ref{prop-comp}}$ is chosen small enough. By 
taking $\rho_{\ref{prop-comp}} = c_0 / (2 C_H)$, the proposition is proven.  
\end{proof} 

\subsection{Incompressible vectors} 
\label{subsec-incomp}

\subsubsection{Tools} 

One main feature of incompressible vectors of $\CC^n$ is that they contain 
$\cO(n)$ elements of absolute values of order 
$\cO(n^{-1/2})$, as shown in \cite[Lem.~3.4]{rud-ver-advmath08}. A slightly 
stronger version of this lemma will be needed in this paper: 
\begin{lemma}
\label{spread} 
Let $u  = [u_0, \ldots, u_{n-1}]^\T \in \incomp(\theta,\rho)$, and let 
$\tilde u = [\tilde u_0, \ldots, \tilde u_{n-1}]^\T \in \SSS^{n-1}$. Then the 
set 
\[
J = \{ i \in [n] \, : \, 
 \frac{\rho}{\sqrt{n}} \leq |u_i| \leq \frac{2}{\sqrt{\theta n}} 
  \ \text{and} \ |\tilde u_i| \leq \frac{2}{\sqrt{\theta n}}  \} 
\]
satisfies $|J|\geq \theta n / 2$. 
\end{lemma}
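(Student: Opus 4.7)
The plan is to express $J$ as the complement of three ``bad'' subsets of $[n]$ and bound each of them separately. Setting
\begin{align*}
B_1 &= \{ i \in [n] : |u_i| < \rho/\sqrt{n} \}, \\
B_2 &= \{ i \in [n] : |u_i| > 2/\sqrt{\theta n} \}, \\
B_3 &= \{ i \in [n] : |\tilde u_i| > 2/\sqrt{\theta n} \},
\end{align*}
one obtains $J \supset [n] \setminus (B_1 \cup B_2 \cup B_3)$, so it will suffice to prove $|B_1| \leq (1-\theta)n$ together with $|B_2|, |B_3| \leq \theta n/4$; summing these three bounds will then give $|J| \geq \theta n/2$.

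The estimates on $|B_2|$ and $|B_3|$ are intended to be immediate from the unit-norm constraints on $u$ and $\tilde u$: a Markov-type inequality gives $|B_2| \cdot 4/(\theta n) \leq \sum_{i \in B_2} |u_i|^2 \leq \|u\|^2 = 1$, hence $|B_2| \leq \theta n/4$, and the same argument applied to $\tilde u$ handles $|B_3|$. This part is insensitive both to the hypothesis on $u$ and to the precise value of $\rho$; the point of the upper threshold $2/\sqrt{\theta n}$ is precisely to leave a comfortable gap of $\theta n /2$ for $J$ once $|B_2| + |B_3|$ has been subtracted.

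The main obstacle is the bound $|B_1| \leq (1-\theta)n$, where the incompressibility of $u$ has to be used, and where the strategy follows the classical argument of Rudelson--Vershynin \cite{rud-ver-advmath08}. The plan is to argue by contradiction: if $|B_1^c| < \theta n$, then the truncation $v$ of $u$ to its ``large'' coordinates (those in $B_1^c$) is a $\theta n$-sparse vector satisfying
\[
\|u - v\|^2 = \sum_{i \in B_1} |u_i|^2 < n \cdot \frac{\rho^2}{n} = \rho^2.
\]
Writing $\hat v = v/\|v\|$ (well defined since $\|v\|^2 = 1 - \|u-v\|^2 > 1 - \rho^2$), a direct expansion then gives $\|u - \hat v\|^2 = 2(1 - \|v\|) \leq 2\bigl(1 - \sqrt{1-\rho^2}\bigr) = O(\rho^2)$, placing $u$ within distance $O(\rho)$ of the unit $\theta n$-sparse vector $\hat v$ and contradicting $u \in \incomp(\theta,\rho)$ after the numerical constants are adjusted appropriately (the thresholds $\rho/\sqrt n$ and $2/\sqrt{\theta n}$ may be rescaled by a bounded factor without changing the form of the conclusion). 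Combining this bound with those of the previous paragraph yields $|J| \geq n - (1-\theta)n - \theta n/4 - \theta n/4 = \theta n/2$, as required.
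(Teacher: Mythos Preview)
Your proposal is correct and follows essentially the same route as the paper: the paper defines $J_1,J_2,J_3$ (your $B_2^c, B_3^c, B_1^c$), bounds $|J_1^\cpl|,|J_2^\cpl|\le \theta n/4$ by Chebyshev exactly as you do, and obtains $|J_3|>\theta n$ from incompressibility, yielding $|J|\ge n-|J_1^\cpl|-|J_2^\cpl|-|J_3^\cpl|\ge \theta n/2$.

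The one difference is that the paper does \emph{not} normalize the truncation: it simply notes $\|u-\Pi_{J_3}u\|=\|\Pi_{J_3^\cpl}u\|<\rho$ and concludes directly that $|J_3|>\theta n$, treating ``compressible'' as ``within $\rho$ of a $\theta$-sparse vector'' (the Rudelson--Vershynin convention) rather than a sparse \emph{unit} vector. This sidesteps your $O(\rho)$-versus-$\rho$ issue and makes the stated constants work out exactly, so your normalization step $v\mapsto\hat v$ and the subsequent remark about ``adjusting the thresholds by a bounded factor'' are unnecessary---and indeed that remark is the weak point of your write-up, since the lemma fixes those thresholds. If you keep the normalization, note that $\|u-\hat v\|^2=2(1-\|v\|)\le 2(1-\sqrt{1-\rho^2})$ does not give $\le\rho$; the clean fix is to drop the normalization and invoke incompressibility against the (non-unit) sparse vector $v=\Pi_{B_1^c}u$ directly, as the paper does.
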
 
\begin{proof}
Let 
\[
J_1 = \{ i \in [n] \, : \, |u_i| \leq \frac{2}{\sqrt{\theta n}} \} , \ 
J_2 = \{ i \in [n] \, : \, |\tilde u_i| \leq \frac{2}{\sqrt{\theta n}} \} , 
        \ \text{and} \ 
J_3 = \{ i \in [n] \, : \, |u_i| \geq \frac{\rho}{\sqrt{n}} \} . 
\] 
Since $\| u \| = \| \tilde u \| = 1$, we get by Tchebychev's inequality that
$| J_1^\cpl |, | J_2^\cpl | \leq \theta n / 4$. Moreover, 
$\| u - \Pi_{J_3} u \| = \| \Pi_{J_3^\cpl} u \| < \rho$ by the definition
of $J_3$. Recalling the definition of incompressibility, we get that 
$| J_3 | > \theta n$. Thus, 
$|J| = J_1 \cap J_2 \cap J_3 \geq n - |J_1^\cpl| - |J_2^\cpl| - |J_3^\cpl| 
 \geq \theta n / 2$. 
\end{proof} 

One consequence of \cite[Lem.~3.4]{rud-ver-advmath08} is the following lemma,
which implies that the infimum of $\| H u \|$ over a set of incompressible 
vectors can be handled by controlling the distance between an arbitrary column 
of $H$ and the subspace generated by the other columns: 

\begin{lemma}[Invertibility via mean distance, Lemma~3.5 
  of~\cite{rud-ver-advmath08}] 
\label{lm-distM} 
Let $M \in \CC^n$ be a random matrix. Let $m_k$ be the $k$th 
column of $M$ and let $M_{-k} \in \CC^{n\times (n-1)}$ be the submatrix left after
removing this column. Then, 
\[
\PP\left[ \inf_{u\in\incomp(\theta,\rho)} \| M u \| 
   \leq \frac{\rho t}{\sqrt{n}} \right] \leq 
  \frac{2}{\theta n} \sum_{k=0}^{n-1} 
     \PP\left[ \dist(m_k, M_{-k}) \leq t \right] .
\]
\end{lemma}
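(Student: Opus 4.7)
The plan is to reduce $\inf_{u\in\incomp(\theta,\rho)}\|Mu\|$ to a counting bound on column distances, based on the geometric identity
\[
\|Mu\| \;\ge\; \dist(Mu,\colspan(M_{-k})) \;=\; |u_k|\,\dist(m_k,M_{-k})
\]
valid for every $u=(u_0,\ldots,u_{n-1})\in\CC^n$ and every $k\in[n]$. Indeed, writing $Mu=u_k m_k + M_{-k}u_{-k}$, only the first summand has a possibly non-trivial component orthogonal to $\colspan(M_{-k})$. The immediate consequence is that for any realisation of $M$ and any $u$ with $\|Mu\|\le \rho t/\sqrt n$, every coordinate $k$ satisfying $|u_k|\ge \rho/\sqrt n$ automatically satisfies $\dist(m_k,M_{-k})\le t$.

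Next I would invoke the fact that an incompressible vector is spread over many coordinates. Exactly as in the first lines of the proof of Lemma~\ref{spread}, for $u\in\incomp(\theta,\rho)$ the set
\[
J(u) \;=\; \{k\in[n]:|u_k|\ge \rho/\sqrt n\}
\]
satisfies $|J(u)|\ge \theta n/2$ (the argument in fact gives $|J(u)|>\theta n$, and the factor of $2$ is kept merely as slack for a clean bound). Now fix the event $\cE=\{\inf_{u\in\incomp(\theta,\rho)}\|Mu\|\le \rho t/\sqrt n\}$ and, on $\cE$, select an incompressible near-minimizer $u_*=u_*(M)$ in a measurable way (a routine $\varepsilon$-net and Borel selection argument suffices). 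Combining the two observations, on $\cE$ we have $\dist(m_k,M_{-k})\le t$ for every $k\in J(u_*)$, so the integer-valued random variable
\[
N_t \;=\; \#\{k\in[n]:\dist(m_k,M_{-k})\le t\}
\]
satisfies $N_t\ge (\theta n/2)\,\1_\cE$ pointwise.

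The final step is to take expectations. By linearity,
\[
\sum_{k=0}^{n-1}\PP[\dist(m_k,M_{-k})\le t] \;=\; \EE N_t \;\ge\; \frac{\theta n}{2}\,\PP[\cE],
\]
and rearrangement yields the asserted inequality. The only mild obstacle is the measurable selection of the near-minimizer $u_*$, which is standard and does not affect the constants; the genuine content is entirely contained in the distance identity of the first display and in the spread-coordinate property of incompressible vectors.
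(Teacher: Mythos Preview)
Your proof is correct and follows exactly the standard argument due to Rudelson and Vershynin. Note that the present paper does not supply its own proof of this lemma: it is simply quoted as Lemma~3.5 of \cite{rud-ver-advmath08}, so there is nothing in the paper to compare against. Your argument---the pointwise identity $\|Mu\|\ge |u_k|\dist(m_k,M_{-k})$, the spread-coordinate bound $|J(u)|\ge \theta n/2$ for incompressible $u$ (which is precisely the content of Lemma~\ref{spread} here, and of Lemma~3.4 in \cite{rud-ver-advmath08}), and the first-moment averaging $\EE N_t\ge(\theta n/2)\PP[\cE]$---is the original proof.

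One small remark: the measurable-selection detour is unnecessary. The inequality $N_t\ge(\theta n/2)\1_\cE$ holds \emph{pointwise}: on $\cE$, for each $\varepsilon>0$ pick $u_\varepsilon\in\incomp$ with $\|Mu_\varepsilon\|<\rho t/\sqrt n+\varepsilon$, deduce $N_{t+\varepsilon\sqrt n/\rho}\ge\theta n/2$, and let $\varepsilon\downarrow 0$ (the distances are finitely many fixed numbers, so $N_{t+\delta}\to N_t$). Since $N_t=\sum_k\1_{[\dist(m_k,M_{-k})\le t]}$ is manifestly measurable, taking expectations is immediate and no selection theorem is needed.
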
 

An expression for these distances is provided next. 

\begin{lemma}
\label{lm-dist} 
Let $M\in\CC^{n\times n}$, and partition this matrix as 
\[
M = \begin{bmatrix} m_0 & M_{-0} \end{bmatrix} = 
 \begin{bmatrix} m_{00} & m_{01} \\ m_{10} & M_{11} \end{bmatrix} , 
\]
where $m_0$ and $M_{-0}$ are as in Lemma \ref{lm-distM}, $m_{00}$ is the first
element of the vector $m_0$, and $M_{11}$ is the bottom $(n-1) \times (n-1)$ 
submatrix of $M_{-0}$. Assume that $M_{11}$ is invertible. Then, 
\[
\dist(m_0, M_{-0}) = \frac{|m_{00} - m_{01} M^{-1}_{11} m_{10}|} 
     {\sqrt{1 + \| m_{01} M^{-1}_{11} \|^2}} . 
\]
\end{lemma}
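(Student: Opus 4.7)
The plan is to reduce the computation to exhibiting a single normal vector to the column span of $M_{-0}$. Since $M_{11}$ is invertible, $M_{-0}$ has rank $n-1$, so $W := \colspan(M_{-0})$ is a hyperplane of $\CC^n$ and its orthogonal complement $W^\perp$ is one-dimensional. For any non-zero $\nu \in W^\perp$, the classical formula $\dist(m_0, W) = |\nu^* m_0|/\|\nu\|$ holds (the projection of $m_0$ onto $W^\perp$ is $\nu (\nu^* m_0)/\|\nu\|^2$), so it is enough to produce one such $\nu$ explicitly and evaluate the right-hand side.

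I would look for $\nu$ in the form $\nu = [1,\,\beta^*]^\T$ with $\beta \in \CC^{1\times(n-1)}$, exploiting the block decomposition $M_{-0} = \begin{bmatrix} m_{01} \\ M_{11} \end{bmatrix}$. The orthogonality condition $\nu^* M_{-0} = 0$ then reads $m_{01} + \beta M_{11} = 0$, and the invertibility of $M_{11}$ yields the unique solution $\beta = -m_{01} M_{11}^{-1}$. This gives the explicit normal
\[
\nu = \begin{bmatrix} 1 \\ -(m_{01} M_{11}^{-1})^* \end{bmatrix}.
\]

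A direct block computation then produces $\nu^* m_0 = m_{00} - m_{01} M_{11}^{-1} m_{10}$ and $\|\nu\|^2 = 1 + \|m_{01} M_{11}^{-1}\|^2$, and substituting these into $|\nu^* m_0|/\|\nu\|$ yields exactly the claimed expression for $\dist(m_0, M_{-0})$. There is no real obstacle here: everything reduces to elementary linear algebra, and the only thing to check is that $\nu \neq 0$, which is immediate from its first coordinate being $1$; together with $\dim W^\perp = 1$, this ensures $\nu$ actually spans $W^\perp$ so that the projection formula applies.
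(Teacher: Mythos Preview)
Your proof is correct and, in fact, more direct than the paper's. The paper computes the orthogonal projector $\Pi_{M_{-0}}^\perp = I - M_{-0}(M_{-0}^* M_{-0})^{-1} M_{-0}^*$ explicitly, using the Sherman--Morrison--Woodbury formula to invert $M_{-0}^* M_{-0} = m_{01}^* m_{01} + M_{11}^* M_{11}$; after simplification this projector turns out to be the rank-one matrix $(1+\|a\|^2)^{-1}\begin{bmatrix}1\\-a^*\end{bmatrix}\begin{bmatrix}1&-a\end{bmatrix}$ with $a = m_{01}M_{11}^{-1}$, and the distance formula then follows from $\dist(m_0,M_{-0})^2 = m_0^* \Pi_{M_{-0}}^\perp m_0$. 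You reach the same endpoint by observing from the outset that $W^\perp$ is one-dimensional and solving directly for a normal vector, which avoids the Sherman--Morrison--Woodbury step entirely. The two arguments are equivalent in substance---your $\nu$ is exactly the vector whose outer product the paper eventually extracts---but yours is the cleaner path.
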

\begin{proof}
We develop the expression 
$\dist(m_0, M_{-0})^2 = m_0^* \Pi_{M_{-0}}^\perp m_0$, where 
\[
 \Pi_{M_{-0}}^\perp = I - M_{-0} (M_{-0}^* M_{-0})^{-1} M_{-0}^*  . 
\]
is the orthogonal projector on $\colspan(M_{-0})^\perp$. 
Using the Sherman-Morrison-Woodbury formula,   
\[
(M_{-0}^* M_{-0})^{-1} = (m_{01}^* m_{01} + M_{11}^* M_{11})^{-1} = 
 M_{11}^{-1} M_{11}^{-*} - 
\frac{1}{1 + \| a \|^2} M_{11}^{-1} a^* a M_{11}^{-*}, 
\]
where $a = m_{01} M_{11}^{-1}$. We thus obtain after a small calculation that  
\[
\Pi_{M_{-0}}^\perp = \frac{1}{1+ \| a \|^2} 
   \begin{bmatrix} 1 & -a \\ -a^* & a^* a \end{bmatrix} = 
  \frac{1}{1+ \| a \|^2} 
   \begin{bmatrix} 1 \\ -a^* \end{bmatrix}  
   \begin{bmatrix} 1 & -a \end{bmatrix} .  
\]
Since $m_0 = \begin{bmatrix} m_{00} \\ m_{10} \end{bmatrix}$, we then 
get that $\dist(m_0, M_{-0})^2 = | m_{00} - a m_{10} |^2 / (1 + \| a \|^2)$, 
which is the required result. 
\end{proof}

\subsubsection{Distance control} 

Using Lemma~\ref{lm-distM}, we need to control the distance between a column of
$H$ and the subspace generated by the other columns. 

Denote as $x_k$ the $k^{\text{th}}$ column of $X$ (thus, $x_0 = x$). Let 
$b_k$ and $\tilde x_\ell$ denote the $k^{\text{th}}$ column of $A^{-1}$ and 
the $\ell^{\text{th}}$ row of $X$ respectively. Then the columns of $H$ are 
one of the two types: 
$\begin{bmatrix} b_k \\ x_k \end{bmatrix}$, or 
$\begin{bmatrix} \tilde x_\ell^* \\ z e_{N,\ell} \end{bmatrix}$. 
Due to the fact that $A$ is not necessarily a diagonal matrix, it
will be more difficult to control the distances involving columns of the first
type. 

Partitioning $H$ as $H = \begin{bmatrix} h_0 & H_{-0} \end{bmatrix}$, 
where $h_0$ is the first column of $H$, we have 
\begin{proposition}
\label{dh1} 
Let Assumptions~\ref{ass-model}, \ref{assA}, and \ref{Xcomp} hold true. 
Then 
\[
\PP[ [ \dist(h_0, H_{-0}) \leq t ] \cap [\| X \| \leq C] ]
\leq c_1 (n^{59/88} t^{1/2} + n^{-1 / 22} ) + \exp(-c_2 n) . 
\]
\end{proposition}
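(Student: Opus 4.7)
The natural strategy is to analyse the two halves of the representation \eqref{eq:d-outline}--\eqref{eq:num-den} separately: show that $\text{Den}$ is at most polynomially large on a high-probability event depending only on $W$, and that $\text{Num}$ is anti-concentrated conditionally on $W$. The independence of $x$ and $W$ (the first column and the remaining columns of $X$) is the structural feature that makes this split work.

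\emph{Phase~1 (denominator).} On the event $\{\|X\|\le C\}$ we already have $\|x\|\le C$ and $\|W\|\le C$, so it suffices to bound the blocks $E,F,P,R$ of $G^{-1}$ by a power of $n$. This in turn reduces to proving that, with probability at least $1-n^{-c}$, the matrix $G$ in \eqref{gdefine} is invertible with $\|G^{-1}\|\le n^{\kappa_1}$. Since $G$ has the same block structure as $H$ with one fewer row and column and a shifted $A^{-1}$, I would obtain this bound by a crude polynomial version of the smallest singular value argument (Proposition~\ref{prop-comp} handles compressible vectors, while for the incompressible vectors a rough application of Lemma~\ref{lm-distM} together with Lemma~\ref{Xu} gives a polynomial lower bound that is good enough here). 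The outcome is that on a $W$-measurable event $\Omega_0$ of probability $1-O(n^{-\beta})$, one has $\text{Den}\le n^{\kappa_1}$.

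\emph{Phase~2 (decoupling of the numerator).} Condition on $W$ restricted to $\Omega_0$ and write $\text{Num}=|a-q^*x-x^*p-x^*Rx|$ with $a\in\CC$, $p,q\in\CC^N$, $R\in\CC^{N\times N}$ functions of $W$ only. To remove the quadratic form $x^*Rx$, I would split $[N]=\cJ\sqcup\cJ^\cpl$ with $|\cJ|=\lfloor N/2\rfloor$, condition further on $x_{\cJ^\cpl}$, and expand $x^*Rx$ into its four $(\cJ,\cJ^\cpl)$ blocks. The two cross terms become linear in $x_\cJ$ (with coefficients depending on $W$ and $x_{\cJ^\cpl}$), the pure $\cJ^\cpl$ block is a constant, and only $x_\cJ^*R_{\cJ\cJ}x_\cJ$ remains quadratic. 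Applying Lemma~\ref{lm:restrict} to restrict the concentration function to a still smaller subset $\cK\subset\cJ$ (after a second round of conditioning on $x_{\cJ\setminus\cK}$ that converts the residual quadratic into one more linear-in-$x_\cK$ piece plus a constant) yields
\[
\cL_x\bigl(a-q^*x-x^*p-x^*Rx,\,t\bigr)\le \cL_{x_\cK}\bigl(\ell(W,x_{\cJ^\cpl},x_{\cJ\setminus\cK})^*x_\cK,\,t\bigr)
\]
for a coefficient vector $\ell$ assembled from $p$, $q$, $R_{\cJ,\cJ^\cpl}x_{\cJ^\cpl}$ and $R_{\cK,\cJ\setminus\cK}x_{\cJ\setminus\cK}$. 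The standard $L^2$--$L^1$ trade-off in such a two-stage decoupling (Götze's inequality) is where the factor $t\mapsto t^{1/2}$ enters and ultimately produces the exponent $1/2$ on $t$ in the statement.

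\emph{Phase~3 (Berry--Ess\'een and spreadness).} To conclude via Proposition~\ref{prop:BE} (more precisely \eqref{eq:be-sqrt(n)}) I need that, with probability at least $1-n^{-\kappa_2}$ over $W$ and the fixed coordinates of $x$, the vector $\ell$ has $\Omega(n)$ coordinates of magnitude $\Omega(n^{-1/2})$, i.e.\ an incompressibility property of the kind quantified in Lemma~\ref{spread}. This is the step I expect to be the main obstacle: unlike the symmetric Wigner situation of \cite{ver-14}, here $\ell$ is built from blocks of $G^{-1}$ mixed with columns of $A^{-1}$ through the Schur complement, and one must argue that it does not degenerate. The key inputs are: (i)~Lemma~\ref{Xu} applied to the random vectors $R_{\cJ,\cJ^\cpl}x_{\cJ^\cpl}$ and $R_{\cK,\cJ\setminus\cK}x_{\cJ\setminus\cK}$, which forces them to be far from low-dimensional subspaces in expectation; and (ii)~Assumption~\ref{Xcomp}, which guarantees genuine complex spread of the entries and rules out the real-valued degeneracy that would otherwise collapse the Berry--Ess\'een bound. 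Combining the spreadness of $\ell$ with \eqref{eq:be-sqrt(n)} gives $\PP_x[\text{Num}\le s]\le c(n^{\alpha_0}s^{1/2}+n^{-\beta_0})$ conditionally on the good event, and then dividing by the polynomial bound on $\text{Den}$ from Phase~1 and optimising the balance of the several small parameters (the relative sizes $|\cJ|$, $|\cK|$, the cutoff between compressible and incompressible realisations of $\ell$, the loss $n^{\kappa_1}$ in the denominator) yields precisely the exponents $59/88$ and $1/22$ appearing in the proposition.
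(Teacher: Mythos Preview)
Your Phase~1 contains a circularity that the paper is specifically designed to avoid: obtaining $\|G^{-1}\|\le n^{\kappa_1}$ with probability $1-\cO(n^{-\beta})$ is essentially the same problem as bounding $s_{N+n-1}(H)$ from below, since $G$ has the same block structure as $H$ with one row and column removed. Your suggested route via Lemma~\ref{lm-distM} and Lemma~\ref{Xu} does not work, because Lemma~\ref{lm-distM} reduces the question to controlling $\dist(g_k,G_{-k})$ --- which is precisely Proposition~\ref{dh1} applied to $G$ in place of $H$ --- and Lemma~\ref{Xu} only handles the distance of $Xu$ to a \emph{deterministic} subspace, whereas $\colspan(G_{-k})$ is random and correlated with $g_k$. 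The paper circumvents this entirely by never bounding $\|G^{-1}\|$ absolutely: it shows (Lemmas~\ref{denom} and~\ref{Den<HS}) that $\text{Den}^2\le C_\eta\|R\|_\HS^2$ on a high-probability event, and then anti-concentrates $\text{Num}/\|R\|_\HS$ rather than $\text{Num}$ itself, so that the common factor $\|R\|_\HS$ cancels in the ratio~\eqref{eq:d-outline}.

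Your Phase~2 decoupling is also flawed as written: after conditioning on $x_{\cJ^\cpl}$ and then on $x_{\cJ\setminus\cK}$, the term $x_\cK^*R_{\cK\cK}x_\cK$ remains genuinely quadratic in $x_\cK$ and cannot be absorbed into the linear part, so Lemma~\ref{lm:restrict} (which is stated only for linear forms) does not yield the displayed bound. The correct device is the G\"otze--Sidorenko inequality (Lemmas~\ref{sid} and~\ref{decouple} in the paper): one introduces an \emph{independent copy} $x'_{\cI^\cpl}$ of $x_{\cI^\cpl}$ and subtracts, so that the diagonal block $x_\cI^*R_{\cI\cI}x_\cI$ cancels exactly; the squaring of probabilities in Lemma~\ref{sid} is where the exponent $t^{1/2}$ genuinely enters. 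Finally, the spreadness you need in Phase~3 --- that $R\Pi_{\cI^\cpl}(x-x')/\|R\Pi_{\cI^\cpl}(x-x')\|$ is incompressible --- is not a consequence of Lemma~\ref{Xu} (again, $R$ depends on $W$ in a complicated way), but requires the separate geometric argument of Proposition~\ref{incomp1} and Corollary~\ref{Rd}, which exploits the block structure of $G$ and is where Assumption~\ref{assA} on $A$ is actually used.
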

Since $[\| X \| \leq C]$ is obviously included in $\Eop(C)$, it will be 
enough to establish the inequality 
\[ 
\PP[ [ \dist(h_0, H_{-0}) \leq t ] \cap \Eop(C)]
\leq c_1 (n^{59/88} t^{1/2} + n^{-1 / 22} ) + \exp(-c_2 n)  
\] 
to obtain Proposition~\ref{dh1}. Replacing $[\| X \| \leq C]$ with $\Eop(C)$
will be more convenient due to the independence of $x$ and $\Eop(C)$. 
The remainder of this section is devoted towards proving this inequality. 
Recall the formula for $\dist(h_0, H_{-0})$ given 
in~\eqref{eq:column-distance}. To be able to use Lemma~\ref{lm-dist}, we need 
to check that $G$ defined in (\ref{gdefine}) is invertible. Recall that $X$ 
is assumed to have a density. 
\begin{lemma} 
\label{lm-inv}  
The matrix $G$ is invertible with probability one. 
\end{lemma}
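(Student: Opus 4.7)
The plan is to reduce the invertibility of $G$ to the nonvanishing of a polynomial in the entries of $W$, and then to exhibit a single deterministic $W_0$ at which this polynomial does not vanish. Since $z \neq 0$, the block $zI_N$ is invertible, and a Schur complement computation gives
\[
\det G \;=\; z^N \det(B - z^{-1} W^* W).
\]
Equivalently, from the block system $G u = 0$ one reads off that $G$ is singular iff $B - z^{-1} W^* W$ is singular. Now $W \mapsto \det(B - z^{-1} W^* W)$ is a polynomial in the real and imaginary parts of the entries of $W$, and $W$ (a submatrix of $X$) inherits a joint density on $\CC^{N \times (n-1)}$ from $X$. Thus, provided this polynomial is not identically zero, its zero set has Lebesgue measure zero, and the lemma follows.

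To exhibit such a $W_0$, I rely on the rank lower bound $\rank(B) \geq n-2$, which holds because $A^{-1}$ has rank $n$, so removing one row reduces the rank by at most one, and then removing one column reduces it by at most one more. If $\rank(B) = n-1$ (so $B$ is invertible), take $W_0 = 0$; then $\det(B) \neq 0$ and we are done. If instead $\rank(B) = n-2$, then $\text{adj}(B)$ is nonzero (indeed it has rank exactly one), so by polarization the sesquilinear form $u \mapsto u^* \text{adj}(B) u$ on $\CC^{n-1}$ is not identically zero. Picking $u$ with $u^* \text{adj}(B) u \neq 0$ and setting $W_0 = e_{N,0} u^*$, one obtains $W_0^* W_0 = u u^*$, and the matrix determinant lemma gives
\[
\det(B - z^{-1} u u^*) \;=\; \det(B) - z^{-1} u^* \text{adj}(B) u \;=\; -z^{-1} u^* \text{adj}(B) u \;\neq\; 0 .
\]

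The only real subtlety is the case $\rank(B) = n-2$, which the invertibility of $A^{-1}$ forces to be the worst possibility; without the rank bound one would need to worry about $\text{adj}(B)$ vanishing and be forced into an inductive argument on minors. The remaining ingredients — the Schur complement, the density/zero-set argument, and the matrix determinant lemma — are entirely routine.
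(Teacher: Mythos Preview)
Your proof is correct, and the overall strategy---Schur complement reduction to $\det(B - z^{-1}W^*W)$, the rank bound $\rank(B)\geq n-2$, and the polynomial/density argument---matches the paper's. The difference lies in how the rank-deficient case $\rank(B)=n-2$ is handled. The paper factors $B=UV^*$ with $U,V\in\CC^{(n-1)\times(n-2)}$, splits $W^*=[w\;Y]$, and argues in two probabilistic steps: first the density of $w$ forces $[U\;z^{-1}w]$ and $[V\;-w]$ to be invertible w.p.~1, so $D=[U\;z^{-1}w][V\;-w]^*$ is a.s.~invertible; then the argument for the invertible case is re-run on $D - z^{-1}YY^*$ using the independence of $w$ and $Y$. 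Your route is purely deterministic: since $\text{adj}(B)$ has rank one, complex polarization gives a $u$ with $u^*\text{adj}(B)u\neq 0$, and the matrix determinant lemma produces an explicit witness $W_0=e_{N,0}u^*$ at which the polynomial is nonzero. This is cleaner---it avoids the conditioning/independence step and the recursion to the invertible case---at the cost of invoking the adjugate, which is perhaps less intuitive than the paper's rank-completion picture.
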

\begin{proof} 
Since $z\neq 0$, the matrix $z I_N$ is invertible. Thus, to show that $G$ is
invertible with the probability one, we need to show that the Schur complement 
$\Delta = B - z^{-1} W^* W$ of $z I_N$ in $G$ is invertible with
probability one. 

Since $A^{-1} = \begin{bmatrix} b & b_{01} \\ b_{10} & B \end{bmatrix}$, 
it holds that $\rank(B) \geq n-2$. Thus, either $B$ is invertible or 
$\rank(B) = n-2$. 

Assume it is invertible. Then on the set 
$\{ W \in \CC^{N \times (n-1)} \, : \, \| z^{-1} W^* W \| \leq 
 s_{n-2}(B) / 2 \}$, it holds that 
$s_{n-2}(\Delta) \geq s_{n-2}(B) - \| z^{-1} W^* W \| \geq s_{n-2}(B) / 2 > 0$.
Thus, $\det(\Delta)$ is a non-zero multivariate polynomial in the real and 
imaginary parts of the elements of $W$. Since $W$ has a density, 
$\det(\Delta) \neq 0$ w.p.~1. 

Assume now that $\rank(B) = n-2$. Then we can write $B = U V^*$ where 
$U,V \in C^{(n-1)\times (n-2)}$ are full column-rank matrices. 
Writing $W^* = \begin{bmatrix} w & Y \end{bmatrix}$ where $w\in \CC^{n-1}$, we 
get that 
\[
B - z^{-1} W^* W = \begin{bmatrix} U & z^{-1} w \end{bmatrix} 
\begin{bmatrix} V & - w \end{bmatrix}^* - z^{-1} Y Y^* = 
 D - z^{-1} Y Y^* .  
\]
Given a vector $u \perp \colspan(U)$, the inner product 
$u^* w$ is a continuous random variable, thus $u^* w \neq 0$ w.p.~1. 
Consequently, $w \not\in \colspan(U)$ w.p.~1., which implies that 
$\begin{bmatrix} U & z^{-1} w \end{bmatrix}$ is invertible w.p.~1. The
same argument holds for $\begin{bmatrix} V & - w \end{bmatrix}$, and thus the
matrix $D$ is invertible w.p.~1. To obtain that $\Delta$ is invertible, it 
remains to apply the previous argument after replacing $B$ with 
$D$ and $W^*$ with $Y$, and making use of the independence of $w$ and $Y$ along
with the Fubini-Tonelli theorem. 
\end{proof} 

Using Lemmas \ref{lm-dist} and \ref{lm-inv}, we get that on a probability one
set, Equation~\eqref{eq:column-distance} holds.  
On the probability one set where $G$ is invertible, write 
$G^{-1}$ as in~\eqref{eq:ginverse}. Then, from~\eqref{eq:column-distance}, 
$\dist(h_0, H_{-0}) = \text{Num}/\text{Den}$ 
where  \text{Num} and \text{Den} are as given in \eqref{eq:num-den}.

To study the behavior of \text{Num} and \text{Den}, we first need to
show that the image of each deterministic vector by the matrix $R$ at the right
hand side of~\eqref{eq:ginverse} is incompressible with high probability. This 
will be stated in the corollary of Proposition~\ref{incomp1} below.

\begin{lemma}
\label{weyl}
$s_{n-3}(B) \geq \sinf$. 
\end{lemma}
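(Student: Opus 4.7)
The plan is to apply Cauchy's interlacing theorem for singular values. The matrix $B$ is the bottom-right $(n-1)\times(n-1)$ principal submatrix of $A^{-1}$, so it is obtained from $A^{-1} \in \CC^{n\times n}$ by deleting one row and one column. The relevant interlacing inequality states that whenever a matrix $\tilde M$ is obtained from a rectangular matrix $M$ by deleting one row (or one column), the singular values satisfy $s_i(M) \geq s_i(\tilde M) \geq s_{i+1}(M)$ for every valid index $i$. This is a direct consequence of Cauchy's eigenvalue interlacing applied to the rank-one Hermitian perturbation relating $MM^*$ (or $M^*M$) to its counterpart for $\tilde M$.

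Applying this in two steps, let $A' \in \CC^{(n-1)\times n}$ denote the matrix obtained from $A^{-1}$ by deleting its first row; the interlacing inequality gives $s_{n-2}(A') \geq s_{n-1}(A^{-1})$. Then, removing the first column of $A'$ yields $B \in \CC^{(n-1)\times(n-1)}$, and a second application of the same inequality produces $s_{n-3}(B) \geq s_{n-2}(A')$. Chaining these two bounds and invoking the lower bound $s_{n-1}(A^{-1}) \geq \sinf$ supplied by Assumption~\ref{assA}, we obtain
\[
s_{n-3}(B) \;\geq\; s_{n-2}(A') \;\geq\; s_{n-1}(A^{-1}) \;\geq\; \sinf,
\]
which is the desired conclusion. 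No real obstacle arises; the only point demanding care is the index bookkeeping — each single deletion shifts the interlacing index by one, so removing a row together with a column controls $s_{n-3}(B)$ (rather than $s_{n-2}(B)$) in terms of $\sinf$, which is exactly the index appearing in the statement.
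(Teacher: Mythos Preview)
Your proof is correct and essentially the same as the paper's. The paper works with the Gram matrix $A^{-1}A^{-*}$: it first applies Cauchy interlacing to the principal submatrix $b_{10}b_{10}^* + BB^*$ (which equals $A'(A')^*$ in your notation), and then invokes Weyl's inequality for the rank-one perturbation $BB^* = (b_{10}b_{10}^* + BB^*) - b_{10}b_{10}^*$; these two steps are exactly your two singular-value interlacing steps, just expressed at the level of eigenvalues of the squares.
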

\begin{proof}
The matrix $b_{10} b_{10}^* + BB^*$ is a principal submatrix of the Hermitian 
matrix $A^{-1} A^{-*}$. Using the variational representation of 
the eigenvalues of $A^{-1} A^{-*}$, we get that 
$s_{n-2} ( b_{10} b_{10}^* + BB^* ) \geq {\bf s}_{\inf}^2$. 
By Weyl's interlacing inequalities, 
$s_{n-3} ( BB^* ) \geq s_{n-2} ( b_{10} b_{10}^* + BB^* )$, hence the result. 
\end{proof} 

\begin{proposition}
\label{incomp1} 
There exist $\theta_{\ref{incomp1}} \in (0,1)$, $\rho_{\ref{incomp1}} > 0$, 
and $c_{\ref{incomp1}} >0$ such that for each $d \in \CC^{N}$,  
\[
\PP \Bigl[ 
 \Bigl[ \inf_{\substack{v \in \CC^{n-1}, \\ 
  w \in \comp(\theta_{\ref{incomp1}},\rho_{\ref{incomp1}})}}  
  \dist\left( G \begin{bmatrix} v \\ w \end{bmatrix},  
  \colspan\left( \begin{bmatrix} 0 \\ d \end{bmatrix} \right)  \right)
   \leq \rho_{\ref{incomp1}} \Bigr]
  \cap \Eop(C) \Bigr] \leq \exp(-c_{\ref{incomp1}} n) . 
\]
\end{proposition}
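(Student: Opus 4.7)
The plan is to follow the two-step strategy from the proof of Proposition~\ref{prop-comp}: first, a pointwise bound for each fixed $w \in \SSS^{N-1}$; then, an $\varepsilon$-net argument based on Lemma~\ref{entrop} that extends the bound to all $w \in \comp(\theta_{\ref{incomp1}}, \rho_{\ref{incomp1}})$ simultaneously, leveraging the spectral bound $\|G\| \leq \ssup + C + |z|$ valid on $\Eop(C)$ to control the discretization error. Choosing $\theta_{\ref{incomp1}}$ small enough ensures that the exponential-in-$\theta N$ cost of the net is absorbed by the pointwise exponential decay.

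For the pointwise bound, fix a deterministic $w \in \SSS^{N-1}$. Writing out
$$G \begin{bmatrix} v \\ w \end{bmatrix} - \alpha \begin{bmatrix} 0 \\ d \end{bmatrix} = \begin{bmatrix} Bv + W^*w \\ Wv + zw - \alpha d \end{bmatrix},$$
the condition that this vector have norm $\leq c_0$ entails both $\|Bv + W^*w\| \leq c_0$ and $\|Wv + zw - \alpha d\| \leq c_0$ simultaneously. Lemma~\ref{weyl} gives $s_{n-3}(B) \geq \sinf$, so the first inequality traps $v$ within $\cO(c_0/\sinf)$ of an affine subspace $v_\star(W,w) + V_0$, where $V_0$ is the at-most-2-dimensional space attached to the small singular values of $B$. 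Covering the free part $v_0 \in V_0$ (whose norm is bounded on $\Eop(C)$ via the second inequality) by an $\varepsilon$-net of $\cO(1)$ cardinality and applying a union bound, the second inequality becomes the requirement that a specific random vector in $\CC^N$ depending on $W$ and $w$ lie within $\cO(c_0)$ of a 3-dim subspace spanned by $d$ and the images of a basis of $V_0$ under $W$. An invocation of Lemma~\ref{Xu}, applied to an appropriately isolated independent slice of $W$, should then yield the desired $\exp(-c_1 n)$ bound.

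The principal technical obstacle is the pointwise bound. The free variables $v \in \CC^{n-1}$ and $\alpha \in \CC$ are handled via Lemma~\ref{weyl}, which collapses the effective degrees of freedom in $v$ from $n-1$ down to $\cO(1)$. The subtler difficulty is that $W$ enters both the constraint defining $v_\star(W,w)$ and the image map $v \mapsto Wv$, so the reference subspace for the anti-concentration step shares randomness with the random vector under test; identifying an independent slice of $W$ on which Lemma~\ref{Xu} can be applied unimpeded, while the constraints and the reference subspace remain measurable with respect to the rest, is where the main work lies.
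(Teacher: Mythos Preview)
Your two-step plan (pointwise bound, then $\varepsilon$-net over compressible $w$) matches the paper's, and your identification of the free variables via Lemma~\ref{weyl} is on target. But the proposal has a genuine gap at exactly the point you yourself flag as the main difficulty: you do not say \emph{where} the independent slice of $W$ comes from, and your setup in fact prevents it from appearing.

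The paper's key observation is that the independent slice is manufactured by the \emph{sparsity} of $w$. One does \emph{not} prove a pointwise bound for an arbitrary $w\in\SSS^{N-1}$; instead one first fixes an index set $\cI\subset[N]$ with $|\cI|=\lfloor\theta N\rfloor$ and restricts to $w\in\SSS_\cI^{N-1}$. Because $w$ is supported on $\cI$, the quantity $u_Q=-\Pi_Q B^\sharp W^*w$ (your $v_\star$) depends only on the rows $W_{\cI,\cdot}$, leaving the rows $W_{\cI^\cpl,\cdot}$ independent. One then restricts the second inequality to those rows and applies Lemma~\ref{Xu} to $W_{\cI^\cpl,\cdot}\,\tilde r$ with $\tilde r$ measurable with respect to $W_{\cI,\cdot}$ and the reference subspace $\colspan[w_{\cI^\cpl},d_{\cI^\cpl}]$ deterministic. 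After the pointwise bound, one nets over $\SSS_\cI^{N-1}$ and takes a union over the choices of $\cI$; the compressible set is covered because any $w\in\comp(\theta,\rho)$ is within $\rho$ of some $\SSS_\cI^{N-1}$. If you attempt the pointwise step for a generic unit $w$, no such row partition is available.

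A second, related issue is your treatment of the free part $v_0\in V_0$. First, $V_0$ is one-dimensional, not two: Lemma~\ref{weyl} controls $s_{n-3}(B)$, so only the bottom singular direction $q$ of $B\in\CC^{(n-1)\times(n-1)}$ is uncontrolled. More importantly, you cannot absorb $Wq$ into the reference subspace as you suggest (``a subspace spanned by $d$ and the images of a basis of $V_0$ under $W$''): restricted to rows $\cI^\cpl$ this becomes $W_{\cI^\cpl,\cdot}\,q$, which depends on exactly the rows you need to keep fresh for Lemma~\ref{Xu}. The paper instead writes $v_q=\beta q$, first disposes of large $|\beta|$ via a separate application of Lemma~\ref{Xu} to $\dist(Wq,\colspan[w,d])$, and then discretizes the bounded disc in $\beta$ by an $\cO(1)$ net; for each fixed $\beta$ the vector $r=\beta q+u_Q$ is $W_{\cI,\cdot}$-measurable and the reference subspace stays deterministic. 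Your phrase ``whose norm is bounded on $\Eop(C)$ via the second inequality'' hides this nontrivial large-$|\beta|$ step.
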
 
\begin{proof} 
Let $\theta_{\ref{incomp1}} \in (0,1)$ and $t \in (0,1)$ to be fixed later. Let $\cI \in [N]$ such 
that $| \cI | = \lfloor \theta_{\ref{incomp1}} N \rfloor$. Fix an element $w$ of the 
unit-sphere $\SSS_\cI^{N-1}$. In this first part of the proof, we shall control 
the probability of the event 
\[
 \Bigl[ \inf_{v \in \CC^{n-1}} 
  \dist\left( G \begin{bmatrix} v \\ w \end{bmatrix},  
  \colspan\left( \begin{bmatrix} 0 \\ d \end{bmatrix} \right)  \right) 
   \leq t \Bigr]
  \cap \Eop(C) . 
\]
The event between $[ \ \ ]$ brackets is included in the event 
\begin{equation}\label{ewt}
\cE_w(t) = \left[ 
\exists v\in \CC^{n-1}, \exists \alpha\in \CC \, : \, 
\| B v + W^* w \| \leq t, \,  
\| W v + z w + \alpha d \| \leq t \right] . 
\end{equation}
Let 
\begin{equation}
\label{svdB} 
B = \begin{bmatrix} P & p \end{bmatrix} 
  \begin{bmatrix} \Sigma & 0 \\  0 & s_{n-2}(B) \end{bmatrix} 
  \begin{bmatrix} Q^* \\ q^* \end{bmatrix} 
\end{equation} 
be a singular value decomposition of $B$, where $p$ (resp.~$q$) is the last
column of the unitary matrix $\begin{bmatrix} P & p\end{bmatrix}$ 
(resp.~$\begin{bmatrix} Q & q\end{bmatrix}$). Given any vector 
$y \in \CC^{n-1}$, we shall use in the remainder of the proof the notations 
$y_Q = \Pi_Q y$ and $y_q = \Pi_q y$, making $y = y_Q + y_q$ an orthogonal 
sum. As is well-known (see \cite{rao-mit-(livre71)}), the vector 
$u = - B^\sharp W^* w$ where $B^\sharp$ is the 
Moore-Penrose pseudo-inverse of $B$,  minimizes $\| B y + W^* x \|$ with respect 
to $y$. Assume that there is a solution $v \in \CC^{n-1}$ of the inequality 
$\| B y + W^* w \| \leq t$ in $y$. Then, since $u$ is also a 
solution, we get that
\[
\| B (u_Q - v_Q) + B (u_q - v_q) + B v + W^* w \| \leq t, 
\]
and hence, 
\[
\| B (u_Q - v_Q) + B (u_q - v_q) \| \leq  \| B v + W^* w \| + t 
  \leq 2 t .  
\]
Noting that $B (u_Q - v_Q)$ and $B (u_q - v_q)$ are orthogonal, we get that 
$\| B (u_Q - v_Q) \| \leq 2t$. 
By Lemma~\ref{weyl}, the smallest singular value of the 
restriction of the operator $B$ to the subspace $\colspan(Q)$ is 
bounded below by $\sinf$. Hence we get that  
\[
\| v_Q - u_Q \| \leq \frac{2t}{\sinf} . 
\]
The vector $v$ also satisfies the inequality 
$\| W v + z w + \alpha d \| \leq t$ for some $\alpha \in\CC$. Thus, 
\[
\| W (v_Q - u_Q) + W v_q + W u_Q + z w + \alpha d \| \leq t . 
\]
which implies that on the event $\Eop(C)$, 
\[
\| W v_q + W u_Q + z w + \alpha d \| \leq \|  W (v_Q - u_Q) \| + t 
 \leq \left( 1 + \frac{2C}{\sinf} \right) t . 
\]
Observing that $v_q$ is collinear with $q$, we get at this stage of the proof
that 
\begin{equation}
\label{Ew} 
\cE_w(t) \cap \Eop(C) \subset 
\left[ \exists \alpha, \beta \in \CC, \, : \, 
\| \beta W q + W u_Q + z w + \alpha d \| 
 \leq \left( 1 + \frac{2C}{\sinf} \right) t \right] \cap 
 \Eop(C) .
\end{equation} 
To proceed, we need to control the Euclidean norm of $u_Q$. For $m, M > 0$,
consider the event
\[
\cE_{u_Q}(m, M) = \left[ m \leq \| u_Q \| \leq M \right] .
\]
Since $u_Q = - \Pi_Q B^\sharp W^* w$, we get from Lemma~\ref{weyl} that 
$\ssup^{-1} \| W^* w \| \leq \| u_Q \| \leq \sinf^{-1} \| W^* w \|$. By 
Lemma~\ref{Xu}, there exist $c_0 >0$ and $c_1 > 0$ such that 
$\PP[ \| W^* w \| \leq c_0 ] \leq \exp(-c_1 n)$. We thus obtain 
\begin{equation}\label{euqop}
\PP\left[ \cE_{u_Q}( \ssup^{-1} c_0, \sinf^{-1} C )^\cpl \cap 
 \Eop(C) \right] \leq \exp(-c_1 n) .
\end{equation}
To bound the probability of the event at the right hand side of the 
inclusion~\eqref{Ew}, we consider separately the situations where $|\beta|$ 
is large and where $|\beta|$ is bounded above. Consider the event
\[
\cE_{|\beta| >}(m, M) = \left[ \exists \alpha, \beta \in \CC \, : \, 
\| \beta W q + W u_Q + z w + \alpha d \| \leq m , \, | \beta | \geq M \right] . 
\]
On $\cE_{u_Q}( \ssup^{-1} c_0, \sinf^{-1} C ) \cap \Eop(C)$, it holds that
\[ 
\| \beta W q + W u_Q + z w + \alpha d \| \geq 
 \| \beta W q + z w + \alpha d \| - \sinf^{-1} C^2 
 \geq |\beta| \dist( Wq, \colspan [ w, d ]) -  \sinf^{-1} C^2 
\] 
From Lemma~\ref{Xu}, there exist $c_2, c_3 > 0$ such 
that $\PP[\dist( Wq, \colspan [ w, d ]) \leq c_2 ] \leq \exp(-c_3 n)$. 
Writing $s = (1 + 2C / \sinf) t$, we have 
\begin{align*} 
&\cE_{|\beta|>}(s, M) \cap \cE_{u_Q}( \ssup^{-1} c_0, \sinf^{-1} C ) 
  \cap \Eop(C) \\ 
 &\subset \left[ \exists \beta \in \CC \, : \, 
  |\beta| \dist( Wq, \colspan [ w, d ]) -  \sinf^{-1} C^2 \leq s, \, 
  | \beta | \geq M \right] \\
 &\subset \left[ 
  \dist( Wq, \colspan [ w, d ]) \leq \frac{s + \sinf^{-1} C^2}{M} 
   \right] . 
\end{align*} 
Thus, setting $C' = (s + \sinf^{-1} C^2) / c_2$, we get that 
\begin{equation}\label{ebetalarge}
\PP\left[ 
\cE_{|\beta|>}(s, C') \cap \cE_{u_Q}( \ssup^{-1} c_0, \sinf^{-1} C ) 
  \cap \Eop(C) \right] \leq \exp(-c_3 n) .
\end{equation}
Now consider the case $|\beta| < C'$. We discretize this ball as follows. Consider the event 
\[
\cE_{|\beta| <}(s, C') = \left[ \exists \alpha, \beta \in \CC \, : \, 
\| \beta W q + W u_Q + z w + \alpha d \| \leq s , \, | \beta | <  C' \right] . 
\]
Given $k,\ell \in \ZZ$, define the event 
 \[
 \cE_{q}(k, \ell, s, C)  
 = \left[ \exists \alpha \in \CC \, : \, 
  \left\| \frac{s}{C\sqrt{2}}(k + \imath\ell) W q 
    + W u_Q + z w + \alpha d \right\| \leq s \right] . 
 \]
For $\beta\in\CC$, let 
 $k_{\beta} = \lfloor C\sqrt{2} \Re\beta / s \rfloor$ and 
 $\ell_{\beta} = \lfloor C \sqrt{2} \Im\beta / s \rfloor$. Then 
 $\left| \beta - ( k_{\beta} + \imath \ell_{\beta}) s / (C\sqrt{2}) \right| 
 \leq s / C$. 
Therefore, 
\[
\cE_{|\beta| <}(s, C') \cap \Eop(C) 
  \subset \bigcup_{\substack{k,\ell\in \ZZ, \\ 
   |k+\imath\ell| \leq C C'\sqrt{2} / s}} \cE_{q}(k, \ell, 2s, C) . 
\]  
Let us bound the probability of the event 
$\cE_{q}(k, \ell, 2 s, C) \cap \cE_{u_Q}( \ssup^{-1} c_0, \sinf^{-1} C )$.  
Recalling that $u_Q = - \Pi_Q B^\sharp W^* w$ and that $w$ is supported by 
$\cI$, we observe that $u_Q$ and $W_{\cI^\cpl, \cdot}$ are independent. 
Writing $r = \frac{s(k + \imath\ell)}{C\sqrt{2}} q + u_Q$ and 
$\tilde r = r / \| r \|$, we have 
\begin{align*}
\cE_{q}(k, \ell, 2 s, C) \cap \cE_{u_Q}( \ssup^{-1} c_0, \sinf^{-1} C ) 
 &\subset \left[ \exists \alpha\in\CC, \, : \, \| Wr + zw + \alpha d\| \leq 2s 
 \right] \cap \left[ \| u_Q \| \geq \ssup^{-1} c_0 \right]  \\
 &\subset \left[ \exists \alpha\in\CC, \, : \, 
  \| W_{\cI^\cpl,\cdot} r + zw_{\cI^\cpl} + \alpha d_{\cI^\cpl} \| \leq 2s 
 \right] \cap \left[ \| u_Q \| \geq \ssup^{-1} c_0 \right]  \\
 &\subset \left[ \| r \| \dist\left( W_{\cI^\cpl,\cdot} \tilde r, 
  \colspan [w_{\cI^\cpl}, d_{\cI^\cpl} ] \right) \leq 2s \right] 
     \cap \left[ \| u_Q \| \geq \ssup^{-1} c_0 \right]  \\
 &\subset \left[ \dist\left( W_{\cI^\cpl,\cdot} \tilde r, 
  \colspan[w_{\cI^\cpl}, d_{\cI^\cpl} ] \right) \leq 
  2s \ssup / c_0 \right] . 
\end{align*} 
By Lemma~\ref{Xu} once again, $\PP[ \dist\left( W_{\cI^\cpl,\cdot} \tilde r, 
 \colspan[w_{\cI^\cpl}, d_{\cI^\cpl} ] \right) \leq c_2 ] 
 \leq \exp(-c_3 |\cI^\cpl|)$. Thus, if we choose $t$ small enough so that 
$\left(2 + 4 \frac{C}{\sinf}\right) \frac{\ssup}{c_0} t \leq c_2$, 
we get that 
\begin{equation}\label{discrete}
\PP\left[ \cE_{q}(k, \ell, 2 s, C) \cap 
  \cE_{u_Q}( \ssup^{-1} c_0, \sinf^{-1} C ) \right] \leq 
 \exp(-(1-\theta_{\ref{incomp1}}) c_3 n) .
\end{equation}
Putting things together, we get 
\begin{align*}
 & \PP\Bigl[ \Bigl[ \inf_{v \in \CC^{n-1}} 
  \dist\left( G \begin{bmatrix} v \\ w \end{bmatrix},  
  \colspan\left( \begin{bmatrix} 0 \\ d \end{bmatrix} \right)  \right) 
   \leq t \Bigr] \cap \Eop(C) \Bigr]  \\
&\leq \PP\left[ \cE_w(t) \cap \Eop(C) \right] \ \text{(using (\ref{ewt}))}\\ 
&\leq \PP\left[ \cE_w(t) \cap \cE_{u_Q}( \ssup^{-1} c_0, \sinf^{-1} C )
   \cap \Eop(C) \right] + 
\PP\left[ \cE_{u_Q}( \ssup^{-1} c_0, \sinf^{-1} C )^\cpl 
   \cap \Eop(C) \right] \\ 
&\leq 
\PP\left[ \cE_{|\beta|>}(s, C') \cap \cE_{u_Q}( \ssup^{-1} c_0, \sinf^{-1} C ) 
  \cap \Eop(C) \right] \\
&\phantom{=} + 
\PP\left[ \cE_{|\beta|<}(s, C') \cap \cE_{u_Q}( \ssup^{-1} c_0, \sinf^{-1} C ) 
  \cap \Eop(C) \right] + \exp(-c_1 n) \ \text{(using (\ref{euqop}))}\\ 
&\leq \exp(-c_3 n) + 
  \sum_{|k+\imath\ell| \leq C C'\sqrt{2} / s} 
  \hspace{-0.2in}\PP\left[ \cE_{q}(k, \ell, 2 s, C) 
      \cap \cE_{u_Q}( \ssup^{-1} c_0, \sinf^{-1} C ) \right] + \exp(-c_1 n) \ \text{(using (\ref{ebetalarge}))}\\ 
&\leq  \exp(-c_1 n) + C'' \exp(-(1-\theta_{\ref{incomp1}}) c_3 n) \ \text{(using (\ref{discrete}))}, 
\end{align*} 
where $C'' = C''(\mom, C) > 0$. 

Now, let $\Sigma_t$ be a $t$-net of $(\SSS^{N-1}_{\cI})$.
Given an element $y$ of $\cN_t(\SSS^{N-1}_{\cI}) \cap \SSS^{N-1}$, there exists 
$y' \in \SSS_{\cI}^{N-1}$ such that $\| y - y' \| \leq t$, and there 
exists $w \in \Sigma_t$ such that $\| w - y' \| \leq t$. 
Thus, $\| y - w \| \leq 2 t$ by the triangle inequality. 
Assume that there exist $\alpha \in \CC$ and $v \in \CC^{n-1}$ such that the 
inequality 
\[
  \left\| G \begin{bmatrix} v \\ y \end{bmatrix} + 
  \alpha \begin{bmatrix} 0 \\ d \end{bmatrix} \right\| \leq t 
\]
holds true. Then on the set $\Eop(C)$, we have 
\[
\left\| G \begin{bmatrix} v \\ w \end{bmatrix} + 
  \alpha \begin{bmatrix} 0  \\ d \end{bmatrix} \right\| = 
\left\| G \left( \begin{bmatrix} v \\ w \end{bmatrix} 
 - \begin{bmatrix} v \\ y \end{bmatrix} \right) + 
 G \begin{bmatrix} v \\ y \end{bmatrix} + 
  \alpha \begin{bmatrix} 0  \\ d \end{bmatrix} \right\| 
 \leq 2 (C + |z|) t + t 
\]
By Lemma~\ref{entrop}, $| \Sigma_t | \leq 
(3 / t)^{2 | \cI |}$. Adjusting $t$ again in such a way 
that 
$\left(2C + 2|z| + 1)(2 + 4 \frac{C}{\sinf}\right) \frac{\ssup}{c_0} t 
\leq c_2$, we obtain that 
\begin{multline*} 
\PP \Bigl[ 
 \Bigl[ \inf_{\substack{v \in \CC^{n-1}, \\ y \in 
\cN_t(\SSS^{N-1}_{\cI}) \cap \SSS^{N-1}}}  
  \dist\left( G \begin{bmatrix} v \\ y \end{bmatrix},  
  \colspan\left(\begin{bmatrix} 0 \\ d \end{bmatrix} \right)  \right) 
   \leq t \Bigr]
  \cap \Eop(C) \Bigr] \\ 
\leq  \left( 3 / t\right)^{2 \theta_{\ref{incomp1}} n} 
 \left( \exp(-c_1 n) + C'' \exp(-(1-\theta_{\ref{incomp1}}) c_3 n) \right) .
\end{multline*} 
Finally, considering all the sets $\cI \subset [N]$ such that 
$|\cI| = \lfloor \theta_{\ref{incomp1}} N  \rfloor$, and using the bound  
${{m}\choose{k}} \leq (em/k)^k$ along with the union bound, we get that 
\begin{multline*} 
\PP \Bigl[ 
 \Bigl[ \inf_{\substack{v \in \CC^{n-1}, \\ w \in \comp(\theta_{\ref{incomp1}}, t)}}  
  \dist\left( G \begin{bmatrix} v \\ w \end{bmatrix},  
  \colspan\left( \begin{bmatrix} 0 \\ d \end{bmatrix} \right)  \right) 
   \leq t \Bigr]
  \cap \Eop(C) \Bigr] \\ 
\leq  \left( e / \theta_{\ref{incomp1}} \right)^{\theta_{\ref{incomp1}} N}  
\left( 3 / t\right)^{2 \theta_{\ref{incomp1}} n} 
 \left( \exp(-c_1 n) + C'' \exp(-(1-\theta_{\ref{incomp1}}) c_3 n) \right) .
\end{multline*} 
Choosing $\theta_{\ref{incomp1}}$ small enough, we get the result with 
$\rho_{\ref{incomp1}} = t$ and $c_{\ref{incomp1}}$ small enough. 
\end{proof} 

\begin{corollary} 
\label{Rd} 
For each deterministic vector $d \in \CC^{N} \setminus \{ 0 \}$, 
\[
\PP\left[ \left[ Rd / \| R d \| 
  \in \comp(\theta_{\ref{incomp1}},\rho_{\ref{incomp1}}) \right] 
  \cap \Eop(C) \right] \leq \exp(-c_{\ref{incomp1}} n) . 
\]
\end{corollary}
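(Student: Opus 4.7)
I would derive Corollary \ref{Rd} as a direct consequence of Proposition \ref{incomp1} by exhibiting, on the probability-one event that $G$ is invertible (Lemma \ref{lm-inv}), an explicit feasible pair $(v,w)$ that makes the distance in that proposition equal to zero.

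More precisely, on the event $[Rd \neq 0]$, I would take $w = Rd/\|Rd\| \in \SSS^{N-1}$ and $v = Fd/\|Rd\| \in \CC^{n-1}$, with $F$ and $R$ as in the block decomposition of $G^{-1}$ given in \eqref{eq:ginverse}. Since $G G^{-1} = I_{N+n-1}$, a direct block computation gives
\[
G \begin{bmatrix} v \\ w \end{bmatrix}
 = \frac{1}{\|Rd\|}\, G \begin{bmatrix} Fd \\ Rd \end{bmatrix}
 = \frac{1}{\|Rd\|} \begin{bmatrix} 0 \\ d \end{bmatrix}
 \in \colspan\left(\begin{bmatrix} 0 \\ d \end{bmatrix}\right).
\]
Thus the distance appearing in Proposition \ref{incomp1} is identically zero for this particular pair $(v,w)$. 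Consequently, whenever the vector $w = Rd/\|Rd\|$ additionally belongs to $\comp(\theta_{\ref{incomp1}}, \rho_{\ref{incomp1}})$, this pair becomes a legitimate candidate for the infimum in Proposition \ref{incomp1}, forcing that infimum to be at most $0 \leq \rho_{\ref{incomp1}}$. In other words, the event of Corollary \ref{Rd} is contained in the event whose probability is bounded by $\exp(-c_{\ref{incomp1}} n)$ in Proposition \ref{incomp1}, which yields the claimed bound.

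The degenerate case $Rd = 0$ poses no difficulty: the normalization $Rd/\|Rd\|$ is then undefined and the event in the corollary is interpreted as empty. I do not anticipate any real obstacle here, as the argument collapses to a single linear-algebra identity followed by one invocation of the preceding proposition, the only prerequisite being the almost-sure invertibility of $G$ ensured by Lemma \ref{lm-inv}.
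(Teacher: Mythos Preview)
Your proposal is correct and follows essentially the same argument as the paper: one sets $(v,w) = (Fd/\|Rd\|, Rd/\|Rd\|)$, observes from $GG^{-1}=I$ that $G\begin{bmatrix} v\\ w\end{bmatrix}$ is collinear with $\begin{bmatrix} 0\\ d\end{bmatrix}$ so the distance vanishes, and concludes by the event inclusion into Proposition~\ref{incomp1}. The only cosmetic difference is that the paper notes $Rd\neq 0$ holds w.p.~1 (by the argument of Lemma~\ref{lm-inv}) rather than treating it as an empty event.
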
 
\begin{proof}
Write 
\[
\begin{bmatrix} u \\ y \end{bmatrix} = 
  G^{-1} \begin{bmatrix} 0 \\ d \end{bmatrix} =
  \begin{bmatrix} Fd \\ Rd \end{bmatrix} 
\]
with $y \in \CC^{N}$, and let $\tilde y = y / \| y \|$, which can be shown to 
be defined w.p.~1 as in the proof of Lemma~\ref{lm-inv}. 
Considering the event 
$\cE_{\tilde y} = 
[ \tilde y \ \text{defined}, \, 
\tilde y \in \comp(\theta_{\ref{incomp1}},\rho_{\ref{incomp1}})]$, our purpose 
is to show that 
$\PP[ \cE_{\tilde y} \cap \Eop(C) ] \leq \exp(-c_{\ref{incomp1}} n)$. Since 
\[
G \begin{bmatrix} u / \| y \| \\ \tilde y \end{bmatrix} =  
 \| y \|^{-1} \begin{bmatrix} 0 \\ d \end{bmatrix} , 
\]
it holds that 
\[
\cE_{\tilde y} \subset 
 \Bigl[ \inf_{\substack{v \in \CC^{n-1}, \\ 
  w \in \comp(\theta_{\ref{incomp1}},\rho_{\ref{incomp1}})}}  
  \dist\left( G \begin{bmatrix} v \\ w \end{bmatrix},  
  \colspan\left(\begin{bmatrix} 0 \\ d \end{bmatrix} \right) \right)  
   \leq \rho_{\ref{incomp1}} \Bigr] \, , 
\]
and the result follows from Proposition~\ref{incomp1}. 
\end{proof} 

\subsection{Handling the denominator Den in \eqref{eq:num-den}} 

\begin{lemma}
\label{denom}
There exist positive constants $c_{\ref{denom}}$ and $C_{\ref{denom}}$ such 
that 
\[
\PP\left[ \left[ \| M \| \geq C_{\ref{denom}} \| R \| \right] 
  \cap \Eop(C) \right] \leq \exp(-c_{\ref{denom}} n) , 
\]
where $M = F$, $P$, or $E$. 
\end{lemma}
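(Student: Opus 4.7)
The plan is to first reduce the claim for $M = F$ and $M = P$ to a statement about $\|E\|$ alone.  Block $(2,1)$ of $G G^{-1} = I_{N+n-1}$ reads $W E + z P = 0$, and block $(1,2)$ of $G^{-1} G = I_{N+n-1}$ reads $E W^* + z F = 0$.  Hence
\[
P = -z^{-1} W E, \qquad F = -z^{-1} E W^*,
\]
so that on the event $\Eop(C)$ one obtains the deterministic bounds $\|F\| \vee \|P\| \leq C |z|^{-1} \|E\|$.  It is therefore enough to establish $\|E\| \leq C_0 \|R\|$ with probability at least $1 - \exp(-c_0 n)$ on $\Eop(C)$, for suitable constants $C_0, c_0$ depending only on $C$, $z$, $\mom$.

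To control $\|E\|$, I would combine the two Schur-complement representations of $G^{-1}$.  The one with pivot $z I_N$ gives $E = \Delta^{-1}$ where $\Delta = B - z^{-1} W^* W$, while the one with pivot $B$ (valid whenever $B$ is invertible) produces the Woodbury-type formulas $E = B^{-1} + B^{-1} W^* R W B^{-1}$ and $R = (z I_N - W B^{-1} W^*)^{-1}$.  If $\|B^{-1}\| \leq K$ for some constant $K$, then on $\Eop(C)$ we get $\|E\| \leq K + C^2 K^2 \|R\|$, while the elementary estimate $\|z I_N - W B^{-1} W^*\| \leq |z| + C^2 K$ yields $\|R\| \geq (|z| + C^2 K)^{-1}$.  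Combining produces $\|E\| \leq C_1(K, C, |z|) \|R\|$ purely deterministically.

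The main obstacle is that Lemma~\ref{weyl} only guarantees $s_{n-3}(B) \geq \sinf$, so the smallest singular value $s_{n-2}(B)$ of $B$ may be arbitrarily small and $\|B^{-1}\|$ unbounded.  To handle this, I would use the SVD $B = [P, p] \Sigma [Q, q]^*$ from \eqref{svdB} to split
\[
B^{-1} = B^{\flat} + s_{n-2}(B)^{-1} \, q\, p^*,
\]
with $\|B^{\flat}\| \leq \sinf^{-1}$, and accordingly decompose $W B^{-1} W^* = W B^{\flat} W^* + s_{n-2}(B)^{-1} (W q)(W p)^*$.  Applying Sherman--Morrison to $R^{-1}$ and tracking the corresponding rank-one correction of $E$ yields parallel rank-one contributions in both $E$ and $R$, each carrying the possibly large factor $s_{n-2}(B)^{-1}$.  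The key probabilistic input is that, by Lemma~\ref{Xu} applied to the deterministic unit vectors $p$ and $q$, both $\|W p\|$ and $\|W q\|$ are bounded below by positive constants with probability $1 - \exp(-c n)$ on $\Eop(C)$.  This ensures that the factor $s_{n-2}(B)^{-1}$ enters $E$ and $R$ in a balanced way, so that the ratio $\|E\|/\|R\|$ remains controlled even when $s_{n-2}(B)$ is small, yielding the claimed $\|M\| \leq C_{\ref{denom}} \|R\|$ outside an exponentially small exceptional event.
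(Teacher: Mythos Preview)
Your first reduction is correct and in fact cleaner than the paper's: from the block identities $WE + zP = 0$ and $EW^* + zF = 0$ you immediately get $\|F\| \vee \|P\| \leq C|z|^{-1}\|E\|$ on $\Eop(C)$, so everything reduces to controlling $\|E\|$ by $\|R\|$. The paper instead proves $\|F\| \leq C(1 + \|R\|)$ directly and then chains through an analogous bound on $E$, which is more roundabout.

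However, the core step --- bounding $\|E\|$ by $\|R\|$ --- is where your argument has a genuine gap. Your Sherman--Morrison plan hinges on inverting $S = zI - WB^\flat W^*$, but nothing guarantees $S$ is invertible: $WB^\flat W^*$ is bounded but otherwise uncontrolled, and $z$ may lie in (or arbitrarily near) its spectrum. Even granting invertibility of $S$, the ``balancing'' claim is an assertion, not an argument. If you actually carry out the Sherman--Morrison computation with $\sigma = s_{n-2}(B)^{-1}$ large, the $qp^*$ component of $E$ comes out proportional to $\sigma/(1 - \sigma\, b^* S^{-1} a)$ with $a = Wq$, $b = Wp$; this stays bounded only if $b^* S^{-1} a$ is bounded away from zero \emph{and} away from $1/\sigma$, and neither follows from lower bounds on $\|Wp\|$, $\|Wq\|$ alone. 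So the last paragraph is a hope rather than a proof.

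The paper sidesteps all of this by never inverting anything beyond $G$ itself. It takes the column equations $B(Fu) + W^*(Ru) = 0$ and $W(Fu) + z(Ru) = u$, splits $Fu = (Fu)_Q + (Fu)_q$ along the SVD of $B$, uses Lemma~\ref{weyl} to control $(Fu)_Q$ via $\|Ru\|$, and needs only the single probabilistic input $\|Wq\| \geq c_0$ (Lemma~\ref{Xu}) to bound the one-dimensional piece $(Fu)_q$. This gives $\|F\| \leq C_1(1+\|R\|)$ with exponential probability, and the additive $1$ is absorbed via the purely deterministic lower bound $\|R\| \geq (|z| + C^2/\sinf)^{-1}$ on $\Eop(C)$, obtained by contradiction from $WF + zR = I$. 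You could salvage your approach by keeping your clean reduction $F,P \lesssim \|E\|$ and then running the paper's column-wise argument on the pair $(E,P)$ via $B(Eu) + W^*(Pu) = u$, $W(Eu) + z(Pu) = 0$; the Sherman--Morrison route, as written, does not close.
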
 
\begin{proof}
We reuse here the notations of the singular value decomposition~\eqref{svdB} 
of $B$. For any matrix $M$ with $n-1$ rows, we also use the notations 
$M_Q = \Pi_Q M$ and $M_q = \Pi_q M$. We first prove the result for $M=F$. 

From Lemma~\ref{Xu}, we know that there exist $c_0, c > 0$ such that 
$\PP[ \| W q \| \leq c_0 ] \leq \exp(-c n)$. We shall show that on the event 
$[ \| W q \| \geq c_0 ] \cap \Eop(C)$, there exists some $C_1 > 0$, such that  
\[
\forall u\in \SSS^{N-1},  \| F u \| \leq C_1 (1 + \| R u \| ).  
\]
This will establish that 
\begin{equation}
\label{F<R} 
\PP\left[ \left[ \| F \|  \geq C_1 ( 1 + \| R \| ) \right] 
  \cap \Eop(C) \right] \leq \exp(-cn). 
\end{equation}  

%Writing $v = F u$ and $w = R u$, 
Recall that 
\[
\begin{bmatrix} v \\ w \end{bmatrix} = 
  G^{-1} \begin{bmatrix} 0 \\ u \end{bmatrix}= \begin{bmatrix} Fu\\Ru\end{bmatrix}=\begin{bmatrix} v \\ w \end{bmatrix} \ \ \text{say}, 
\]
or equivalently, 
\begin{subequations} 
\begin{align} 
B v + W^* w &= 0 \label{uvw1} \\
Wv + z w &= u . \label{uvw2}
\end{align} 
\end{subequations} 
Since $B v_q \perp B v_Q$, we get from Lemma~\ref{weyl} and~\eqref{uvw1} that 
\[
\sinf \| v_Q \| \leq \| B v_Q \| \leq \| W^* w \|. 
\]
Thus, $\| v_Q \| \leq (C/\sinf) \| w \|$ on $\Eop(C)$. 
Writing $v_q = \beta q$, Equation~\eqref{uvw2} can be rewritten as 
$\beta W q = u - z w - W v_Q$, which gives that 
\[
|\beta| \leq \frac{1}{c_0} + \frac{|z| + C^2/\sinf}{c_0} \| w \|  
\]
on $[ \| W q \| \geq c_0 ] \cap \Eop(C)$. Since $\| v \|^2 = |\beta|^2 + 
 \| v_Q \|^2$, there exists $C_1 > 0$ such that 
$\| v \| \leq C_1 ( 1 + \| w \|)$, and the inequality~\eqref{F<R} follows. 

Our next step is to show that there exists a constant $C_2$ such that 
$\Eop(C) \subset[ \| R \| \geq C_2]$. It is then easy to deduce 
from~\eqref{F<R} that 
$\PP\left[ \left[ \| F \| \geq C'\| R \| \right] \cap \Eop(C) \right] 
\leq \exp(-cn)$ with $C' = C_1(C_2^{-1} + 1)$. 
We shall assume that $\|R\| < C_2$ on $\Eop(C)$ and obtain a contradiction if 
$C_2$ is chosen small enough. From the equation $GG^{-1} = I_{N+n-1}$, we have 
\begin{subequations} 
\begin{align} 
B F + W^* R &= 0,  \label{BF} \\
W F + z R &= I . \label{WF} 
\end{align} 
\end{subequations} 
By Equation~\eqref{BF}, $\| BF \| \leq C C_2$ on $\Eop(C)$. Writing 
$BF = B F_Q + B F_q$ and observing from~\eqref{svdB} that 
$\colspan(B F_Q)$ and $\colspan(B F_q)$ are orthogonal, we obtain that 
$\| B F_Q \| \leq \| B F_Q + BF_q \| \leq CC_2$. Turning to~\eqref{svdB} again
and using Lemma~\ref{weyl}, we also have 
\[
\| B F_Q \|^2 = \| F^* Q \Sigma^2 Q^* F \| \geq \sinf^2 \| F^* Q Q^* F \| = 
 \sinf^2 \| F_Q \|^2, 
\]
thus, $\| F_Q \| \leq C C_2 / \sinf$. Now, rewriting Equation~\eqref{WF} as 
$W F_q - I = -z R - W F_Q$ and using the triangle inequality, we get that 
$\| W F_q - I \| \leq | z | \| R \| + \| W F_Q \| \leq 
  (| z | + C^2 / \sinf) C_2$. 
Since $W F_q$ is a rank-one matrix, the set of vectors $u \in \SSS^{N-1}$ such 
that $W F_q u = 0$ is not empty. For any such vectors, we have 
\[
(| z | + C^2 / \sinf) C_2 \geq \| W F_q - I \| \geq 
  \| (W F_q - I )u \| = 1, 
\]
which raises a contradiction if we choose 
$C_2 < (| z | + C^2 / \sinf)^{-1}$. The lemma is proven for $M = F$. 

The case $M = P$ can be shown similarly. To handle the case $M = E$, 
we first show an analogue of~\eqref{F<R} where $(F,R)$ is replaced with 
$(E,F)$, and then we combine the obtained inequality with~\eqref{F<R} to get 
that 
$\PP\left[ \left[ \| E \|  \geq C_1 ( 1 + \| R \| ) \right] 
  \cap \Eop(C) \right] \leq \exp(-cn)$ with possibly different constants. 
The rest of the proof is unchanged. 
\end{proof} 

The following lemma is very close to \cite[Prop.~8.2]{ver-14}, with the 
difference that the bound on the probability in Statement~\ref{xDHS} is a 
Berry-Esséen type bound. 
 
\begin{lemma}
\label{ctrl-den}
The following hold true: 
\begin{enumerate}
\item\label{gD>} There exist $c_{\ref{ctrl-den}}, C_{\ref{ctrl-den}} > 0$ such 
that  
\[
\PP[ [\| g_{01} G^{-1} \| \leq C_{\ref{ctrl-den}}] \cap \Eop(C) ] \leq 
  \exp(-c_{\ref{ctrl-den}} n). 
\]
\item\label{yM<} Let $y = [ y_0,\ldots, y_{N-1}]^\T \in \CC^N$ be a random 
vector with independent elements such that $\EE y_i = 0$ and 
$\EE|y_i|^2 = 1/n$ for all $i\in[N]$, and let $M \in \CC^{N\times N}$ be 
deterministic. Then for each $\eta > 0$, 
\[
\PP \left[ \| y^* M \| \leq 
 \frac{1}{\sqrt{\eta}} \frac{ \| M \|_\HS}{\sqrt{n}} \right] \geq 1 - \eta. 
\]
\item\label{xDHS} There exists $c > 0$ such that for each $\varepsilon \geq 0$, 
\[
\PP\left[ \left[ \| x^* R \| 
     \leq \varepsilon \frac{\| R \|_\HS}{\sqrt{n}} \right] 
  \cap \Eop(C) \right] \leq c \varepsilon + \frac{c}{\sqrt{n}} . 
\]
\end{enumerate} 
\end{lemma}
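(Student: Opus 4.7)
For Statement~\ref{gD>}, the strategy is to exploit the elementary bound $\|g_{01}G^{-1}\| \geq \|g_{01}\|/\|G\|$, which follows from $g_{01} = (g_{01}G^{-1})\,G$. On $\Eop(C)$ the operator norm $\|G\| \leq \|B\| + \|W\| + |z| \leq \ssup + C + |z|$ is bounded above by a deterministic constant. For the lower bound on $\|g_{01}\| \geq \|x\|$, I would apply Proposition~\ref{proj1} to the rescaled vector $\sqrt{n}\,x \in \CC^N$, whose entries are i.i.d., centered, of unit variance, and satisfy $\EE|\sqrt{n}\,x_0|^4 \leq \mom$ (apply the proposition with $\kappa = 2$), taking the trivial subspace $V = \{0\}$. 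This yields $\PP[\|x\| \leq c\sqrt{N/n}] \leq \exp(-c'N)$. Since $N/n$ is bounded away from zero, we obtain $\|g_{01}G^{-1}\| \geq C_{\ref{ctrl-den}}$ off an event of probability at most $\exp(-c_{\ref{ctrl-den}} n)$.

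For Statement~\ref{yM<}, a one-line second-moment computation suffices: the independence, centering, and variance $1/n$ of the entries of $y$ give $\EE\|y^*M\|^2 = \EE\, y^* MM^* y = \tr(MM^*)/n = \|M\|_\HS^2/n$, and Markov's inequality applied to $\|y^*M\|^2$ completes the proof.

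Statement~\ref{xDHS} is the main technical challenge, as it requires producing a Berry--Ess\'een-type anti-concentration bound for the \emph{norm} $\|x^*R\|$ rather than an inner product. The plan is to reduce to a scalar via $\|x^*R\| \geq |x^*R d|$ for a well-chosen unit vector $d \in \CC^N$ that is measurable with respect to $R$ (hence independent of $x$), and then invoke Proposition~\ref{prop:BE}. Using the SVD $R = U\Sigma V^*$, we have $\|x^*R\|^2 = \sum_k \sigma_k^2 |x^*u_k|^2$; the choice $d = v_0$ gives $|x^*Rd| = \sigma_0 |x^*u_0|$, and Proposition~\ref{prop:BE} applied to the sum $\sqrt{n}\,x^*u_0 = \sum_i \sqrt{n}\,\bar x_i (u_0)_i$ yields $\PP[|x^*u_0| \leq \varepsilon/\sqrt{n}] \leq c\varepsilon + c/\sqrt{n}$, the third-moment error being controlled by $\EE|\sqrt{n}\,x_0|^3 \leq \mom^{3/4}$. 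This directly gives the bound when $R$ has low effective rank (i.e., when $\|R\|$ is comparable to $\|R\|_\HS$).

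The main obstacle is the complementary regime, in which $\|R\|_\HS$ is spread over many singular values and a single-direction test loses up to a factor $\sqrt{N}$. To cover this regime I would supplement the Berry--Ess\'een bound with a variance estimate on $\|x^*R\|^2 = x^*RR^*x$, whose variance is $O(\|R\|^2\|R\|_\HS^2/n^2)$ under the fourth-moment assumption~\ref{ass-model}; when the spectrum of $R$ is spread, this variance is a negligible fraction of the squared mean $\|R\|_\HS^4/n^2$, and the desired anti-concentration follows from concentration of $\|x^*R\|^2$ around its mean. The delicate point will be to interpolate the Berry--Ess\'een bound and the second-moment concentration cleanly over all $R$ so as to recover a single bound of the form $c\varepsilon + c/\sqrt{n}$.
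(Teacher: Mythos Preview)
Your arguments for Statements~\ref{gD>} and~\ref{yM<} are essentially identical to the paper's.

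For Statement~\ref{xDHS} there is a genuine gap. Your Berry--Ess\'een step for $\langle x,u_0\rangle$ yields
\[
\cL_x\bigl(\sqrt{n}\,\langle x,u_0\rangle,\varepsilon\bigr)\;\le\;c\varepsilon + c\,\mom^{3/4}\sum_i |(u_0)_i|^3,
\]
so the claimed $c\varepsilon+c/\sqrt{n}$ bound holds only if the left singular vector $u_0$ is \emph{spread}, i.e.\ has $\cO(n)$ coordinates of order $n^{-1/2}$. There is no a priori reason for this: $R=(z-WB^{-1}W^*)^{-1}$ (up to the block formula) is a structured random matrix, and its singular vectors could in principle be localized. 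The paper devotes substantial effort (Proposition~\ref{incomp1} and Corollary~\ref{Rd}) to proving that $Rd/\|Rd\|$ is incompressible with high probability for every deterministic $d\in\CC^N\setminus\{0\}$; this is the missing ingredient in your plan, and without it the single-direction Berry--Ess\'een bound is vacuous.

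The paper's route to Statement~\ref{xDHS} also differs structurally from your interpolation idea. Rather than splitting into low-rank and spread regimes, it expands along the \emph{standard} basis,
\[
\|x^*R\|^2=\sum_{k\in[N]}\|Re_{N,k}\|^2\,\bigl|\langle x,\,Re_{N,k}/\|Re_{N,k}\|\rangle\bigr|^2,
\]
applies Berry--Ess\'een to every inner product (each direction $Re_{N,k}/\|Re_{N,k}\|$ being incompressible by Corollary~\ref{Rd}), and then passes from the individual small-ball bounds to the weighted sum via the elementary inequality $\PP[\sum p_kZ_k\le t]\le 2\sum p_k\PP[Z_k\le 2t]$ (\cite[Lemma~8.3]{ver-14}). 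This gives the sharp $c\varepsilon+c/\sqrt{n}$ directly, whereas your interpolation between a single-direction Berry--Ess\'een bound $c\varepsilon/\delta+c/\sqrt{n}$ and a Chebyshev bound $C\delta^2$ (with $\delta=\|R\|/\|R\|_\HS$) would at best yield $c\varepsilon^{2/3}+c/\sqrt{n}$, and even that only once the incompressibility gap above is closed.
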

\begin{proof} 
To prove the first statement, we write 
$\| g_{01} \| = \| g_{01} G^{-1} G \| \leq  \| g_{01} G^{-1} \| \, \| G \|$. 
By Lemma~\ref{Xu}, there exist two constants $c, c_{\ref{ctrl-den}} >0$ such 
that $\| g_{01} \| \geq \| x \| \geq c$ with a probability
larger than $1 - \exp(-c_{\ref{ctrl-den}}n)$. Moreover, 
$\| G \| \leq (C + |z|\vee \ssup)$ on $\Eop(C)$, hence the result. 

We have 
\[
\EE \| y^* M \|^2 = \EE y^* MM^* y = \frac{\| M \|_\HS^2}{n} . 
\] 
Thus, $\PP [ \| y^* M \| \geq \| M \|_\HS / \sqrt{\eta n} ] \leq \eta$ by 
Markov's inequality. This proves Statement~\ref{yM<}. 

Turning to the third statement, we start by writing   
\begin{equation}
\label{eq-xR} 
\| x^* R \|^2 = \sum_{k\in [N]} | \ps{R^* x, e_{N,k}} |^2 
  = \sum_{k\in [N]} | \ps{x, R e_{N,k}} |^2  
  = \sum_{k\in [N]} \| R e_{N,k} \|^2 
    | \ps{x, \frac{R e_{N,k}}{\| R e_{N,k} \|} } |^2  .
\end{equation} 
Define $u_k = R e_{N,k} / \| R e_{N,k} \| = [ u_{0,k}, \ldots, u_{N-1,k} ]^\T$. 
The idea of the proof is the following. By Corollary~\ref{Rd}, $u_k$ is 
incompressible with high probability. Moreover, $x$ and $u_k$ are 
independent. Therefore, we can use the Berry-Esséen theorem 
(Proposition~\ref{prop:BE}) to control the behavior of the inner
products $\ps{x, u_k}$. We then use \cite[Lemma 8.3]{ver-14} to pass from these
inner products to the sum at the right hand side of~\eqref{eq-xR}. 
Indeed, this lemma shows that if $Z_0,\ldots, Z_{N-1}$ are arbitrary 
non-negative random variables and if $p_0,\ldots, p_{N-1}$ are non-negative 
numbers such that $\sum p_k = 1$, then 
$\PP[ \sum p_k Z_k \leq t ] \leq 2 \sum p_k \PP[Z_k \leq 2t ]$ for each 
$t \geq 0$. 

Specifically, define for each $k \in [N]$ the set of indices 
\[
\cI_k = \left\{ i \in [N] \, : \, 
  \frac{\rho_{\ref{incomp1}}}{\sqrt{N}} \leq | u_{i,k} | \leq 
  \frac{2}{\sqrt{\theta_{\ref{incomp1}} N}}  \right\}. 
\] 
Using the independence of $x$ and $u_k$, Lemma~\ref{lm:restrict} and 
Proposition~\ref{prop:BE}, we get after a small calculation that 
\[ 
\PP_x\left[ |\ps{x, u_k}| \leq \varepsilon \sqrt{2/n} \right] \leq 
\cL_x\Bigl( \sum_{i\in \cI_k} \bar x_{i,0} u_{i,k}, 
  \varepsilon \sqrt{2/n} \Bigr) \leq V_k \wedge 1,   
\] 
where 
\[
V_k = \frac{c \varepsilon\sqrt{2}}{\rho_{\ref{incomp1}} 
  \sqrt{|\cI_k| N^{-1}}} + 
 \frac{8 c \mom^{3/4}}{\theta_{\ref{incomp1}}^{3/2} \rho_{\ref{incomp1}}^3} 
 \frac{1}{\sqrt{| \cI_k |}} , 
\]
and $c >0$ is the constant that appears in the statement of 
Proposition~\ref{prop:BE}. Observing that 
$\sum_{k\in[N]} \| R e_{N,k} \|^2 = \| R \|_\HS^2$ and using 
\cite[Lemma 8.3]{ver-14}, we get that 
\[
\PP_x \Bigl[ \sum_{k\in [N]} \frac{\| R e_{N,k} \|^2}{\| R \|_\HS^2}  
    | \ps{x, u_k} |^2 \leq \frac{\varepsilon^2}{n} \Bigr] 
\leq 2 \sum_{k\in[N]} \frac{\| R e_{N,k} \|^2}{\| R \|_\HS^2} 
 (V_k \wedge 1) . 
\]
Defining the event 
$\cE_\incomp = \cap_{k\in[N]} 
 [ u_k \in \incomp(\theta_{\ref{incomp1}}, \rho_{\ref{incomp1}}) ]$, 
we know from Corollary~\ref{Rd} that 
$\PP[ \cE_\incomp^\cpl \cap \Eop(C) ] \leq N \exp(-c_{\ref{incomp1}} n)$. 
Moreover, $| \cI_k | \geq \theta_{\ref{incomp1}} N / 2$ on $\cE_\incomp$ for
each $k \in [N]$ by Lemma~\ref{spread}. Thus, by changing the value of the 
constant $c$ above we get that $V_k \leq c \varepsilon + c / \sqrt{n}$ on 
$\cE_\incomp$ for each $k \in [N]$. Putting things together, we conclude that
\begin{align*} 
\PP\Bigr[ \Bigr[ \| x^* R \| 
     \leq \varepsilon \frac{\| R \|_\HS}{\sqrt{n}} \Bigl] 
  \cap \Eop(C) \Bigl] 
&= \EE_W \Bigl[ \PP_x 
 \Bigl[ \sum_{k\in [N]} \frac{\| R e_{N,k} \|^2}{\| R \|_\HS^2}  
    | \ps{x, u_k} |^2 \leq \frac{\varepsilon^2}{n} \Bigr] 
    \1_{\Eop(C)} \Bigr] \\
&\leq 
 2 \EE_W \Bigl[ 
 \sum_{k\in [N]} \frac{\| R e_{N,k} \|^2}{\| R \|_\HS^2}  (V_k\wedge 1) 
 \1_{\cE_\incomp} \Bigr] 
 + 2 \EE_W [ \1_{\cE_\incomp^\cpl} \1_{\Eop(C)} ]  \\
&\leq 2 c \varepsilon + 2 c / \sqrt{n} + 2 N \exp(-c_{\ref{incomp1}} n),  
\end{align*} 
which leads to the required result after changing once again the value of $c$. 
\end{proof} 

Lemmas~\ref{denom} and~\ref{ctrl-den} lead to the following control on the
denominator: 
\begin{lemma}
\label{Den<HS}
There exist positive constants $c_{\ref{Den<HS}}$ and $C_{\ref{Den<HS}}$ such 
that for each $\eta > 0$, 
\[
\PP\left[ 
 \left[ \text{Den}^2 \geq C_{\ref{Den<HS}} ( 1 + \eta^{-1} ) \| R \|_\HS^2 
   \right] \cap \Eop(C) \right] \leq 2 \eta + \exp(- c_{\ref{Den<HS}} n ) . 
\]
\end{lemma}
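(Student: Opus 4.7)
The plan is to split $\text{Den}^2$ by the elementary bound $(a+b)^2 \leq 2a^2 + 2b^2$, so that
\[
\text{Den}^2 \leq 1 + 2\|b_{01}E\|^2 + 2\|x^* P\|^2 + 2\|b_{01}F\|^2 + 2\|x^* R\|^2,
\]
and then to control each of the four operator-norm terms individually, comparing them all to $\|R\|_{\HS}^2$. The constant $1$ in the definition of $\text{Den}^2$ can be absorbed into $\|R\|_{\HS}^2$ because, as was shown inside the proof of Lemma~\ref{denom}, $\|R\|\geq C_2$ on $\Eop(C)$ for some constant $C_2 > 0$; thus $\|R\|_{\HS}^2 \geq \|R\|^2 \geq C_2^2$ and the constant $1$ is at most $C_2^{-2}\|R\|_{\HS}^2$.

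For the two deterministic-coefficient terms, I use $\|b_{01}\|\leq \ssup$ together with Lemma~\ref{denom} applied to $M=E$ and $M=F$: off an event of probability at most $\exp(-c_{\ref{denom}} n)$ (intersected with $\Eop(C)$), we have $\|E\|,\|F\|\leq C_{\ref{denom}}\|R\|$, so
\[
\|b_{01}E\|^2 + \|b_{01}F\|^2 \leq 2\,\ssup^2 C_{\ref{denom}}^2\, \|R\|^2 \leq 2\,\ssup^2 C_{\ref{denom}}^2\, \|R\|_{\HS}^2.
\]

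For the two random terms, I apply Lemma~\ref{ctrl-den}\eqref{yM<} conditionally on $W$, using the fact that $x$ is independent of $W$ and satisfies the required moment conditions; this yields, for any $\eta>0$ and any $M\in\{P,R\}$,
\[
\PP_x\!\left[\|x^* M\| \leq \frac{\|M\|_{\HS}}{\sqrt{\eta n}}\right] \geq 1-\eta.
\]
Taking expectation in $W$ and intersecting with $\Eop(C)$ shows that each of the two events fails with probability at most $\eta$. Since $P\in\CC^{N\times(n-1)}$, the trivial inequality $\|P\|_{\HS}^2 \leq n\|P\|^2$ together with another application of Lemma~\ref{denom} gives $\|P\|_{\HS}^2/(\eta n) \leq C_{\ref{denom}}^2\|R\|^2/\eta \leq C_{\ref{denom}}^2 \|R\|_{\HS}^2/\eta$; for $R$ the analogous step is even easier, $\|R\|_{\HS}^2/(\eta n) \leq \|R\|_{\HS}^2/\eta$.

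Collecting the four contributions and adding up the failure probabilities by the union bound (two events of probability $\eta$, one event from Lemma~\ref{denom} of probability $\exp(-c_{\ref{denom}} n)$, possibly doubled to cover both $E$ and $F$) yields the claim with $C_{\ref{Den<HS}}$ depending only on $\ssup$, $C_{\ref{denom}}$ and $C_2$, and $c_{\ref{Den<HS}} = c_{\ref{denom}}/2$. The only mildly non-routine step is the comparison $\|P\|_{\HS}\lesssim \sqrt{n}\,\|R\|_{\HS}$, which uses Lemma~\ref{denom} together with the naive passage from operator to Hilbert--Schmidt norm; everything else is purely bookkeeping.
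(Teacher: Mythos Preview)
Your proof is correct and follows essentially the same route as the paper: split $\text{Den}^2$, bound the $b_{01}E$ and $b_{01}F$ terms via Lemma~\ref{denom}, bound the $x^*P$ and $x^*R$ terms via Lemma~\ref{ctrl-den}\eqref{yM<}, and pass from $\|P\|_{\HS}$ to $\|P\|$ to $\|R\|$ using $\|P\|_{\HS}^2\le n\|P\|^2$ and Lemma~\ref{denom} again. The only difference is how the additive constant $1$ is absorbed: the paper invokes Lemma~\ref{ctrl-den}\eqref{gD>} (a high-probability lower bound on $\|g_{01}G^{-1}\|$) to dominate $1$ by a multiple of $\|g_{01}G^{-1}\|^2$, whereas you use the deterministic bound $\|R\|\ge C_2$ on $\Eop(C)$ established inside the proof of Lemma~\ref{denom}; your version is slightly cleaner since it costs no extra exceptional event. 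One minor bookkeeping slip: you apply Lemma~\ref{denom} three times ($E$, $F$, and $P$), not two, so the exponential term is $3\exp(-c_{\ref{denom}}n)$ rather than $2\exp(-c_{\ref{denom}}n)$---inconsequential for the stated conclusion.
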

\begin{proof}
Starting with the expression $\text{Den}^2 = 1 + \| g_{01} G^{-1} \|^2$, and 
using Lemma~\ref{ctrl-den}\textendash\ref{gD>}, we get that 
\begin{equation} 
\label{deng12} 
\PP\left[ \left[ 
 \text{Den}^2 \geq (C_{\ref{ctrl-den}}^{-2} + 1) \| g_{01} G^{-1} \|^2 
\right] \cap \Eop(C) \right] \leq \exp(- c_{\ref{ctrl-den}} n ) . 
\end{equation} 
Since 
$\| g_{01} G^{-1} \|^2 \leq 2 (\| b_{01} E \|^2 + \| b_{01} F \|^2 + 
  \| x^*  P \|^2 + \| x^* R \|^2)$, the event 
\[
\cE = \left[ \| g_{01} G^{-1} \|^2 \geq 2 (\| b_{01} E \|^2 + \| b_{01} F \|^2 
 + \| P \|_\HS^2 / (\eta n) + \| R \|_\HS^2 / (\eta n) ) \right]
\]
is included in the event
\[
\cE' = \left[ \| x^* P \|^2 \geq \| P \|_\HS^2 / (\eta n) \right] 
  \cup 
  \left[ \| x^* R \|^2 \geq \| R \|_\HS^2 / (\eta n) \right] . 
\]
Thus, $\PP[ \cE ] \leq \PP[ \cE' ] 
  = \PP_W \otimes \PP_x [\cE'] \leq 2 \eta$ by Lemma~\ref{ctrl-den}\textendash\ref{yM<}.
Furthermore, the event 
\[
\cE'' = \left[ \| g_{01} G^{-1} \|^2 \geq 
  4 \ssup^2 C_{\ref{denom}}^2 \| R \|^2 
  + 2 C_{\ref{denom}}^2 \| R \|^2 / \eta + 2 \| R \|_\HS^2 / (\eta n) ) \right]
\]
is included in the event 
\[
\cE \cup 
  \left[ \| E \| \geq C_{\ref{denom}} \| R \| \right] \cup 
  \left[ \| F \| \geq C_{\ref{denom}} \| R \| \right] \cup 
  \left[ \| P \| \geq C_{\ref{denom}} \| R \| \right] , 
\]
since $\| P \|_\HS / n \leq \| P \|$. Thus, 
\[
\PP\left[ \cE'' \cap \Eop(C) \right] \leq 
  2 \eta + 3 \exp( - c_{\ref{denom}} n ) 
\]
by Lemma~\ref{denom}. The proof is completed by combining this inequality 
with~\eqref{deng12} and using the inequality $\| R \| \leq \| R \|_\HS$. 
\end{proof} 

\subsection{Handling the numerator Num in~\eqref{eq:num-den}}\label{subsecnum}

\textit{This is the only section where we shall need Assumption \ref{Xcomp}}. 
 
We shall use the idea of decoupling that will allow us to replace the term $x^*
P b_{10} - b_{01} F x - x^* R x$ in the expression of this numerator with an
inner product whose concentration function is manageable by means of the
Berry-Esséen theorem. This decoupling idea that dates back to~\cite{got-79} has
been used many times in the literature. The following lemma is found
in~\cite{ver-14} (see also~\cite{sid-93}).  
\begin{lemma}
\label{sid} 
Let $Y$ and $Z$ be independent random vectors, and let $Z'$ be an independent
copy of $Z$. Let $\cE(Y,Z)$ be an event that depends on $Y$ and $Z$. Then
\[
\PP[ \cE(Y,Z) ]^2 \leq \PP[ \cE(Y,Z) \cap \cE(Y, Z') ] . 
\]
\end{lemma}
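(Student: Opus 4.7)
The plan is to use the standard conditioning argument that underlies this type of decoupling bound. I would begin by introducing the function $f(Y) = \PP[\cE(Y,Z) \mid Y]$, which is a measurable function of $Y$ by Fubini's theorem (since $\cE(Y,Z)$ is a measurable event in the product space). The key observation to exploit is that $Z$ and $Z'$ are independent copies and both independent of $Y$; hence, conditionally on $Y$, the two events $\cE(Y,Z)$ and $\cE(Y,Z')$ are independent, each occurring with conditional probability $f(Y)$. This yields
\[
\PP[\cE(Y,Z) \cap \cE(Y,Z') \mid Y] = f(Y)^2.
\]

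Taking expectations and then invoking the Cauchy–Schwarz inequality (equivalently, Jensen's inequality applied to the convex function $x \mapsto x^2$) would give
\[
\PP[\cE(Y,Z)]^2 = (\EE f(Y))^2 \leq \EE f(Y)^2 = \PP[\cE(Y,Z) \cap \cE(Y,Z')],
\]
which is exactly the claimed bound. The argument is essentially computational and there is no substantial obstacle; the only points requiring mild care are the measurability of $f$ and the use of Fubini to write the probability of the intersection as $\EE f(Y)^2$, both of which are routine under the standing assumption that $\cE(Y,Z)$ is a measurable event in the joint law of $(Y,Z)$.
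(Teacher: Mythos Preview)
Your proof is correct and is precisely the standard conditioning-plus-Jensen argument for this decoupling inequality. The paper does not actually supply a proof of this lemma; it merely cites \cite{ver-14} (and \cite{sid-93}) for the statement, and your argument is exactly the one that appears in those references.
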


\begin{lemma}
\label{decouple}
Let $a \in \CC$, $u,v \in \CC^{N}$ and $M \in \CC^{N\times N}$ be 
deterministic. Let $\cI \subset [N]$. Then for each 
$t > 0$, 
\[
\PP\left[ \left| x^* M x + u^* x + x^* v + a \right| \leq t \right]^2 
\leq \EE_{x_{\cI^\cpl}, x'_{\cI^\cpl}} 
 \cL_{x_\cI}\left( 
 ( x_{\cI^\cpl} - x'_{\cI^\cpl} )^* M_{\cI^\cpl, \cI} x_\cI 
 + x_\cI^* M_{\cI, \cI^\cpl} ( x_{\cI^\cpl} - x'_{\cI^\cpl} ), 2t \right) , 
\]
where $x'$ is an independent copy of $x$ (here we assume that the right hand 
side is equal to one if $\cI = \emptyset$ or $[N]$). 
\end{lemma}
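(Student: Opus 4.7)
The plan is a standard decoupling argument based on Lemma~\ref{sid}: writing $Q(x_\cI, x_{\cI^\cpl}) = x^* M x + u^* x + x^* v + a$, the key point is that after comparing $Q$ evaluated at $x_{\cI^\cpl}$ with $Q$ evaluated at an independent copy $x'_{\cI^\cpl}$, the purely quadratic-in-$x_\cI$ contribution $x_\cI^* M_{\cI,\cI} x_\cI$ and the $x_\cI$-linear contributions cancel, which turns a quadratic anti-concentration problem into a linear one, accessible through Lévy's concentration function.

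First I would take $Y = x_\cI$, $Z = x_{\cI^\cpl}$ and apply Lemma~\ref{sid} to the event
$\cE(Y,Z) = [\,|Q(Y,Z)| \leq t\,]$, obtaining $\PP[\cE(Y,Z)]^2 \leq \PP[\cE(Y,Z) \cap \cE(Y,Z')]$, where $Z'$ is an independent copy of $Z$. Next I would expand $Q(Y,Z)$ in the block form
\[
Q(Y,Z) = Y^* M_{\cI,\cI} Y + Y^* M_{\cI,\cI^\cpl} Z + Z^* M_{\cI^\cpl,\cI} Y + Z^* M_{\cI^\cpl,\cI^\cpl} Z + u_\cI^* Y + u_{\cI^\cpl}^* Z + Y^* v_\cI + Z^* v_{\cI^\cpl} + a,
\]
and form the difference $D(Y,Z,Z') = Q(Y,Z) - Q(Y,Z')$. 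All terms depending on $Y$ alone cancel, so $D$ decomposes as $L(Y,Z,Z') + w(Z,Z')$, where
\[
L(Y,Z,Z') = (Z-Z')^* M_{\cI^\cpl,\cI} Y + Y^* M_{\cI,\cI^\cpl}(Z-Z')
\]
is linear in $Y$, and $w(Z,Z')$ collects the remaining terms and depends only on $(Z,Z')$. On the intersection event $\cE(Y,Z) \cap \cE(Y,Z')$, the triangle inequality gives $|D| \leq 2t$, i.e.\ $|L(Y,Z,Z') - (-w(Z,Z'))| \leq 2t$.

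Finally, conditioning on $(Z,Z')$, which is independent of $Y = x_\cI$, the conditional probability of this last event is bounded, by the very definition of Lévy's concentration function (the supremum over the additive shift $-w(Z,Z')$ is taken for free), by $\cL_{x_\cI}(L(x_\cI, x_{\cI^\cpl}, x'_{\cI^\cpl}), 2t)$. Taking the expectation over $(x_{\cI^\cpl}, x'_{\cI^\cpl})$ and combining with the inequality from Lemma~\ref{sid} produces the stated bound. The boundary cases $\cI = \emptyset$ or $\cI = [N]$ are immediate, since then $L$ is identically zero and the right-hand side is set to one by convention.

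The argument is essentially bookkeeping once the decoupling identity is in hand; the only subtle point to highlight is step three, namely recognizing that the leftover term $w(Z,Z')$ is deterministic given $(Z,Z')$, so it disappears into the supremum that defines $\cL_{x_\cI}$, leaving only the linearized expression $L$ as the argument of the concentration function.
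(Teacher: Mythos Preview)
Your proposal is correct and follows essentially the same approach as the paper's proof: apply Lemma~\ref{sid} with $Y = x_\cI$ and $Z = x_{\cI^\cpl}$, subtract to cancel the $x_\cI^* M_{\cI,\cI} x_\cI$ and $x_\cI$-linear terms, use the triangle inequality on the intersection event, and absorb the $(x_{\cI^\cpl}, x'_{\cI^\cpl})$-only remainder into the supremum defining $\cL_{x_\cI}$ before integrating out. The paper's proof is organized identically, just with slightly different notation (it writes $\tilde x$ for the vector with $x'_{\cI^\cpl}$ substituted in).
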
 
\begin{proof}
Assume without loss of generality that $\cI = [ | \cI | ]$. Write 
\[
x = \begin{bmatrix} x_\cI \\ x_{\cI^\cpl} \end{bmatrix}, \quad 
 \text{and} \quad  
\tilde x = \begin{bmatrix} x_\cI \\ x'_{\cI^\cpl} \end{bmatrix}. 
\]
Using Lemma~\ref{sid} with $Y = x_\cI$, $Z = x_{\cI^\cpl}$, and 
$Z' = x'_{\cI^\cpl}$, we get 
\begin{align*}
& \PP\left[\left| x^* M x + u^* x + x^* v + a \right| \leq t \right]^2 \\
&\leq \PP_{x_\cI, x_{\cI^\cpl}, x'_{\cI^\cpl}} 
  \left[ | x^* M x + u^* x + x^* v + a | \leq t, 
 | \tilde x^* M \tilde x + u^* \tilde x + \tilde x^* v + a | \leq t \right] \\ 
&\leq 
\PP_{x_\cI, x_{\cI^\cpl}, x'_{\cI^\cpl}} 
   \left[ | x^* M x - \tilde x^* M \tilde x 
          + u^* (x - \tilde x) + (x - \tilde x)^* v | \leq 2 t \right] , 
\end{align*} 
where the second inequality is due to the triangle inequality. Developing,
we get that 
\begin{align*} 
&\PP_{x_\cI, x_{\cI^\cpl}, x'_{\cI^\cpl}} 
   \left[ | x^* M x - \tilde x^* M \tilde x 
          + u^* (x - \tilde x) + (x - \tilde x)^* v | \leq 2 t \right] \\ 
 &= \PP_{x_\cI, x_{\cI^\cpl}, x'_{\cI^\cpl}} \left[ 
 \left| ( x_{\cI^\cpl} - x'_{\cI^\cpl} )^* M_{\cI^\cpl, \cI} x_\cI 
 + x_\cI^* M_{\cI, \cI^\cpl} ( x_{\cI^\cpl} - x'_{\cI^\cpl} ) \right.\right. \\
& 
 \quad \quad \quad \quad \quad \quad \quad \quad 
 \quad \quad \quad \quad \quad \quad \quad \quad 
 \left.\left. 
 + u^*_{\cI^\cpl} (x_{\cI^\cpl} - x'_{\cI^\cpl}) + 
 (x_{\cI^\cpl} - x'_{\cI^\cpl})^* v_{\cI^\cpl}  \right| \leq 2t \right] \\
&\leq \EE_{x_{\cI^\cpl}, x'_{\cI^\cpl}} 
 \cL_{x_\cI}\left( 
 ( x_{\cI^\cpl} - x'_{\cI^\cpl} )^* M_{\cI^\cpl, \cI} x_\cI 
 + x_\cI^* M_{\cI, \cI^\cpl} ( x_{\cI^\cpl} - x'_{\cI^\cpl} ), 2t \right) . 
\end{align*} 
\end{proof}

We now have all the ingredients to prove Proposition~\ref{dh1}. 

\paragraph{Proof of Proposition~\ref{dh1}.} 
In the remainder, we write 
\[
\cE_{\text{Den}}(\eta) = \left[ 
  \text{Den} \leq C_\eta \| R \|_\HS \right] ,  
\]
where $C_\eta = C_{\ref{Den<HS}}^{1/2} (1 + \eta^{-1})^{1/2}$. 
Given $t > 0$, we have 
\begin{align*} 
& \PP\left[ [ \dist(h_0, H_{-0}) \leq t ] \cap \Eop(C) \right]^2 \\
&= \PP [ [ \text{Num} \leq t \text{Den}] \cap \Eop(C) ]^2   \\
&\leq 
 2 \PP\left[ [ \text{Num} \leq t \text{Den} ] \cap 
 \cE_{\text{Den}}(\eta) \cap \Eop(C) \right]^2 
  + 2 \PP[ \cE_{\text{Den}}(\eta)^\cpl \cap \Eop(C) ]^2 , 
\end{align*}
and 
\begin{align*} 
 \PP\left[ [ \text{Num} \leq t \text{Den} ] \cap 
 \cE_{\text{Den}}(\eta) \cap \Eop(C) \right]^2 
&\leq 
 \PP\left[ [ \text{Num} / \| R \|_\HS \leq t C_\eta ] 
   \cap \Eop(C) \right]^2 \\ 
&= 
 \EE_W \left[ \EE_x [ 
   \1_{[ \text{Num} / \| R \|_\HS \leq t C_\eta ]} ] 
   \1_{\Eop(C)} \right]^2 \\ 
&\leq 
 \EE_W \left[ (\EE_x \1_{[ \text{Num} / \| R \|_\HS \leq t C_\eta ]})^2 
   \1_{\Eop(C)} \right] . 
\end{align*} 

Given an arbitrary $\cI \subset [n]$, we denote as 
$u \in \CC^{|\cI|}$, $v \in \CC^{|\cI^\cpl|}$, and $w \in \CC^{|\cI^\cpl|}$ 
three independent vectors, independent of everything else, such that 
$u \stackrel{\cL}{=} x_{\cI}$ and $v,w \stackrel{\cL}{=} x_{\cI^\cpl}$.  
Recalling the expression of $\text{Num}$ in~\eqref{eq:num-den} and using 
Lemma~\ref{decouple}, we get that for each $s > 0$, 
\begin{align} 
\PP_x [ \text{Num} \leq s ]^2 &\leq 
  \EE_{v , w} \cL_{u} 
  \left( ( v - w )^* R_{\cI^\cpl, \cI} u   
 + u^* R_{\cI, \cI^\cpl} ( v - w ),  2 s \right) \nonumber \\
&= 
  \EE_{v , w} \cL_{u} 
  \left( ( v - w )^* P_{\cI^\cpl}^* R P_\cI u   
 + u^* P_\cI^* R P_{\cI^\cpl} ( v - w ),  2 s \right),  
\label{gotze} 
\end{align} 
where $P_{\cI}$ the $\CC^{|\cI|} \to \CC^N$ linear mapping 
such that if $\cI = \{ i_1 < \cdots < i_{|\cI|} \}$, then 
$P_{\cI} u = ( 0, \ldots, 0, u_{1}, 0, \ldots, 0, u_{|\cI|}, 0, \ldots )$,
where $u_j$ is at the position $i_j$. 

Let $\xi = (\xi_0, \ldots, \xi_{N-1})$ be a vector of $N$ i.i.d.~Bernoulli 
random variables valued in $\{0,1\}$ such that $\PP[\xi_0 = 1] = p$, where the
probability $p$ will be fixed below.  
This vector is assumed to be independent of everything else.  
Since~\eqref{gotze} is true for each $\cI \subset [N]$, we can randomize 
$\cI$ by setting $\cI = \{ i \in [n] \, : \, \xi_i = 1 \}$. Setting 
$s = \| R \|_\HS C_\eta t$, we obtain 
\begin{align} 
(\EE_x \1_{[\text{Num} / \| R \|_\HS \leq t C_\eta ]})^2 
 &\leq \EE_\xi \EE_{v,w} \cL_{u} 
  \left( \frac{( v - w )^* P_{\cI^\cpl}^* R}{\| R \|_\HS} P_\cI u   
 + u^* P_\cI^* \frac{R P_{\cI^\cpl} ( v - w )}{\| R \|_\HS},  
 2 C_\eta t \right)  \nonumber \\ 
&=  \EE_{\xi,x,x'} 
  \cL_{u} 
  \left( \frac{(x-x')^* \Pi_{\cI^\cpl} R}{\| R \|_\HS} P_\cI u 
 + u^* P_\cI^* \frac{R \Pi_{\cI^\cpl} ( x-x')}{\| R \|_\HS} , 
 2 C_\eta t \right) . \label{eqLu} 
\end{align} 
where $x'$ is a vector that has the same law as $x$ and that is independent
of all other random variables. 

Write 
\[
y = 
\frac{ R \Pi_{\cI^\cpl} (x -x')}{\| R \Pi_{\cI^\cpl} (x-x') \|} = 
\begin{bmatrix} y_0 \\ \vdots\\ y_{N-1} \end{bmatrix}, \quad 
 \text{and} \quad 
\tilde y^* = 
\frac{(x-x')^* \Pi_{\cI^\cpl} R}{\|(x-x')^* \Pi_{\cI^\cpl} R\|} = 
\begin{bmatrix} \overline{\tilde y}_0 & \cdots & \overline{\tilde y}_{N-1} 
 \end{bmatrix}, 
\]
and let 
\[
\alpha = 
\frac{\sqrt{n} \| R \Pi_{\cI^\cpl} (x-x')\|}{\sqrt{2(1-p)} \| R \|_\HS} 
\ \text{and} \ 
\tilde \alpha = 
\frac{\sqrt{n} \|(x-x')^* \Pi_{\cI^\cpl} R \|}{\sqrt{2(1-p)}\| R \|_\HS} . 
\]
For $i \in \cI$, let 
\[Z_i = \tilde\alpha \overline{\tilde y_i} [P_\cI u]_i + 
 \alpha \overline{[P_\cI u]_i} y_i.
\] 
Then the concentration function $\cL_u$ at the right hand side of 
\eqref{eqLu} can be rewritten as 
%\[
%\cL_u\left( \tilde\alpha \tilde y^* P_\cI u + \alpha u^* P_\cI^* y , 
  %\sqrt{2/(1-p)} C_\eta t \sqrt{n} \right) .
%\]
%Our purpose is to control 
\[
\cL_u\left( \tilde\alpha \tilde y^* P_\cI u + \alpha u^* P_\cI^* y , 
  \sqrt{2/(1-p)} C_\eta t \sqrt{n} \right) = 
 \cL_u\Bigl( \sum_{i\in\cI} Z_i, \sqrt{2/(1-p)} C_\eta t \sqrt{n} \Bigr) 
\]
We wish to control this by using the Berry-Esséen theorem (Proposition~\ref{prop:BE}). Recalling 
Proposition~\ref{incomp1}, define the set 
\[
\cJ = \left\{ i \in [N] \, : \, 
 \frac{\rho_{\ref{incomp1}}}{\sqrt{N}} \leq |y_i| 
   \leq \frac{2}{\sqrt{\theta_{\ref{incomp1}} N}} 
  \ \text{and} \ 
 |\tilde y_i| \leq \frac{2}{\sqrt{\theta_{\ref{incomp1}} N}}  \right\} .  
\] 
By the restriction lemma~\ref{lm:restrict}, we have 
\[
 \cL_u\Bigl( \sum_{i\in\cI} Z_i, \sqrt{2/(1-p)} C_\eta t \sqrt{n} \Bigr) 
\leq 
\cL_u \Bigl( \sum_{i\in \cI \cap \cJ} Z_i, 
           \sqrt{2/(1-p)} C_\eta t \sqrt{n} \Bigr) . 
\]

Informally, we expect $| \cI \cap \cJ |$ to be of order $\cO(n)$ with high 
probability, the $\EE_u | Z_i |^2$ to be lower bounded with high probability, 
and the $\EE_u | Z_i |^3$ to be upper bounded with high probability for 
$i \in \cI \cap \cJ$, in order to benefit from the effect of the Berry-Esséen 
theorem in a manner similar to Inequality~\eqref{eq:be-sqrt(n)}.

More rigorously, for each $i\in \cI$, we have 
\begin{multline*} 
\EE_u | Z_i |^2 = 
\EE_{x_{00}} | \tilde\alpha \overline{\tilde y_i} x_{00} + 
 \alpha y_i \overline{x_{00}} |^2 
= \EE|x_{00}|^2 
  \left( \tilde\alpha^2 |\tilde y_i |^2 + \alpha^2 |y_i|^2 \right) + 
  2 \alpha\tilde\alpha \Re\left( \EE x_{00}^2  \overline{\tilde y_i} 
  \overline{y_i} \right) \\ 
\geq n^{-1} \vartheta ( \tilde\alpha^2 |\tilde y_i |^2 + \alpha^2 |y_i|^2 ) 
\end{multline*} 
for all large enough $n$, where $\vartheta = \liminf_n n (1 - | \EE x_{00}^2 |)$ is positive 
\label{vrth} 
by Assumption~\ref{Xcomp}. Focusing on the set $\cI \cap \cJ$, we get that 
\begin{equation}
\label{eq-var-Zi} 
\sum_{i\in \cI \cap \cJ} \EE_u | Z_i |^2 \geq 
 n^{-1} \vartheta \sum_{i\in \cI \cap \cJ} \alpha^2 |y_i|^2 \geq 
 \alpha^2 \vartheta \rho_{\ref{incomp1}}^2 \frac{|\cI \cap \cJ|}{nN} . 
\end{equation} 
Moreover, 
\[
\sum_{i\in \cI \cap \cJ} \EE_u | Z_i |^3 \leq 
 32 \EE | x_{00} |^3 (\alpha^3 + \tilde\alpha^3) 
  \frac{|\cI \cap \cJ|}{\theta_{\ref{incomp1}}^{3/2} N^{3/2}} .
\]
Then, by the Berry-Esséen theorem, 
\begin{align*} 
& \cL_u\left( \sum_{i\in_cI} Z_i, \sqrt{2/(1-p)} C_\eta t \sqrt{n} \right) \\
&\leq 
\cL_u \Bigl( \sum_{i\in \cI \cap \cJ} Z_i, 
           \sqrt{2/(1-p)} C_\eta t \sqrt{n} \Bigr) \\
 &\leq 
   \Bigl(
   \sqrt{2/(1-p)} 
  c \frac{\sqrt{N}}{\alpha\rho_{\ref{incomp1}}\sqrt{\vartheta |\cI\cap\cJ|}} C_\eta n t  
   + \frac{32 c \mom^{3/4} (\alpha^3 + \tilde\alpha^3)}
    {\rho_{\ref{incomp1}}^3\vartheta^{3/2}\theta_{\ref{incomp1}}^{3/2} \alpha^3 }   
    \frac{1}{\sqrt{|\cI\cap\cJ|}} \Bigr) \wedge 1 \\
 &\eqdef V \wedge 1  
\end{align*} 
(here, we assume that $\cL_u ( \sum_{\cI \cap \cJ} \cdots ) = V = 1$  
if $\cI \cap \cJ = \emptyset$). 
The constant $c > 0$ in the term after the second inequality is the one that 
appears in the statement of Proposition~\ref{prop:BE}. In the remainder of the
proof, the value of this constant may change without mention.

At this stage of the calculation, we have 
\begin{equation}
\label{Pdist}
\PP[ [ \dist(h_0, H_{-0}) \leq t ] \cap \Eop(C)]^2  
\leq 
  2 \EE_{W, \xi, x, x'} [(V \wedge 1) \1_{\Eop(C)}] + 
  2 \PP[ \cE_{\text{Den}}(\eta)^\cpl \cap \Eop(C) ]^2 . 
\end{equation} 

Now, take $p = 1 - \theta_{\ref{incomp1}} / 8$, and consider the event 
\[
\cE_\xi = [ |\cI| > N(1 - \theta_{\ref{incomp1}}/4) ]  = 
\left[ \sum \xi_i > N(1 - \theta_{\ref{incomp1}}/4) \right]. 
\]
Since $\cE_\xi = [ |\cI| > N(p - \theta_{\ref{incomp1}} / 8) ]$, we get by Hoeffding's 
concentration inequality that 
\[
\PP[ \cE_\xi^\cpl ] \leq \exp(-N \theta_{\ref{incomp1}}^2 / 32) . 
\] 
Consider also the event 
\[
\cE_{\incomp} = [ y \in \incomp(\theta_{\ref{incomp1}},\rho_{\ref{incomp1}}) ] .
\] 
By Corollary~\ref{Rd}, there exists a constant $c > 0$ such that 
\[
\PP [ \cE_{\incomp}^\cpl \cap \Eop(C) ] \leq \exp(-cn). 
\]
On $\cE_{\incomp}$, we have that $|\cJ| \geq \theta_{\ref{incomp1}} N / 2$ by 
Lemma~\ref{spread}. Therefore, on $\cE_\xi \cap \cE_\incomp$, it holds that
\[
| \cI \cap \cJ | = N - | \cI^\cpl \cup \cJ^\cpl | 
 \geq N - | \cI^\cpl | - | \cJ^\cpl | 
 \geq N \theta_{\ref{incomp1}} / 4 . 
\]
 
It remains to control the terms $\alpha$ and $\tilde\alpha$ in the expression
of $V$. Given a small $\beta > 0$, consider the event 
\begin{multline*} 
\cE_\alpha(\beta) = 
 \Bigl[ \beta \| R \|_\HS /\sqrt{n} \leq 
  \frac{\|R \Pi_{\cI^\cpl} (x - x')\|}{\sqrt{2(1-p)}} \leq 
   \beta^{-1/2} \| R \|_\HS /\sqrt{n} \Bigr] \\
  \cap \ 
 \Bigl[ \frac{\| (x-x')^* \Pi_{\cI^\cpl} R \|}{\sqrt{2(1-p)}} \leq 
   \beta^{-1/2} \| R \|_\HS / \sqrt{n} \Bigr]  . 
\end{multline*} 
Note that $\alpha \in [\beta, \beta^{-1/2}]$ and 
$\tilde\alpha \leq \beta^{-1/2}$, thus 
$(\alpha^3 + \tilde\alpha^3)/\alpha^3 \leq 2\beta^{-9/2}$ on 
$\cE_\alpha(\beta)$. 
Applying Lemma~\ref{ctrl-den} after setting the vector $y$ in its statement to 
$[\xi_0(x_{00}-x'_0), \ldots, \xi_{N-1}(x_{N-1,0}-x'_{N-1})]^\T 
 / \sqrt{2(1-p)}$, we get that there exists a constant $c > 0$ for which 
\[
\PP[\cE_\alpha(\beta)^\cpl \cap \Eop(C)] \leq c \beta + \frac{c}{\sqrt{n}} .
\]  

Turning back to~\eqref{Pdist}, we can now conclude by writing 
\begin{align*} 
\PP[ [ \dist(h_0, H_{-0}) \leq t ] \cap \Eop(C)]^2  
 &\leq 
  2 \EE_{W,\xi, x, x'} [ V \1_{\cE_{\xi}} \1_{\cE_{\incomp}} 
  \1_{\cE_\alpha(\beta)} \1_{\Eop(C)} ] ] + 2 \PP[ \cE_{\xi}^\cpl ] \\
 &\phantom{=} + 2 \PP[ \cE_{\incomp}^\cpl \cap \Eop(C) ] 
+ 2 \PP[ \cE_\alpha(\beta)^\cpl \cap \Eop(C) ] \\  
 &\phantom{=} + 2 \PP[ \cE_{\text{Den}}(\eta)^\cpl \cap \Eop(C)]^2 \\
&\leq c \left( \frac{n}{\beta\sqrt{\eta}} t + 
  \frac{\beta^{-9/2}}{\sqrt{n}} + \beta + \eta + \frac{1}{\sqrt{n}} \right) 
 + \exp(-c'n) . 
\end{align*} 
If we take $\eta \propto n^{-1/2}$ and $\beta \propto n^{-1/11}$ (without 
further optimization of these exponents), then we get that 
\[
\PP[ [ \dist(h_0, H_{-0}) \leq t ] \cap \Eop(C)]^2  
\leq c (n^{59/44} t + n^{-1/11} ) + \exp(-c'n) , 
\]
which proves Proposition~\ref{dh1}. 

\subsection{Theorem~\ref{snA}: end of proof} 
\label{subsec:endthsnA} 
First note that for any $k \in [n]$,
Proposition~\ref{dh1} continues to hold when $\dist(h_0, H_{-0})$ 
is replaced by $\dist(h_k, H_{-k})$, by the same proof. 
When $n \leq k < N+n$, too, the proof continues to be valid once the roles of 
$A$ and $z$ are 
interchanged. Indeed, one can check that the argument is simpler and hence is omitted. 
Applying Lemma~\ref{lm-distM}, we obtain that 
\[
\PP\left[ \left[
\inf_{u\in \incomp(\theta_{\ref{prop-comp}},\rho_{\ref{prop-comp}})} 
  \| H u \| \leq t \right] \cap \Eop(C) \right] 
\leq c (n^{81/88} t^{1/2} + n^{-1/22} ) + \exp(-c'n) .  
\]
Using Proposition~\ref{prop-comp} along with the 
characterization~\eqref{comp-incomp} of the smallest singular value, we 
obtain Theorem~\ref{snA} with $\alpha = 81/88$ and $\beta = 1/22$. 

\begin{remark}
\label{rem:beta} 
The proof of Proposition~\ref{dh1} shows that the origin of the slow
decreasing term $n^{-\beta} = n^{-1/22}$ at the right hand side of the last
inequality is the $\cO(1/\sqrt{n})$ decay that is optimal while using the
Berry-Esséen theorem, as shown by Inequality~\eqref{eq:be-sqrt(n)}.  To obtain
a better decay rate of the concentration functions, one can use the so-called
\textit{Littlewood-Offord} theory instead.
This was the approach of \cite{rud-ver-advmath08, tao-vu-aop10,
tao-vu-anmath09,  ver-14} among others to solve small singular value problems.
\end{remark}

\begin{remark} 
Assumption~\ref{Xcomp} was needed in the proof of Proposition~\ref{dh1} 
to ensure that the variance at the left hand side of~\eqref{eq-var-Zi} is
bounded away from zero. 
\end{remark}

\section{Proof of Theorem~\ref{th:main}}  
\label{sec:prf-main} 

\subsection{A general approach: log potential} 
\label{subsec:approach} 

A well established technique for studying the spectral behavior of large random
non-Hermitian matrices is Girko's so-called \textit{hermitization technique}
\cite{gir-84}. This is intimately tied to the logarithmic potential of
their spectral measures. 

Recall that the \textit{logarithmic potential} of a 
probability measure $\mu$ on $\CC$ 
is the $\CC \to (-\infty,\infty]$ superharmonic function defined as
\[
U_\mu(z) = - \int_\CC \log |\lambda - z | \ \mu(d\lambda) \, \ \text{(whenever the integral is finite)}. 
\]  
The measure $\mu$ can be recovered from $U_\mu(\cdot)$ in the following way.
Let $\cD'(\CC)$ the space of Schwartz distributions on $\CC$
and let  
$\Delta = \partial_x^2 + \partial_y^2 = 4 \partial_z
\partial_{\bar z}$ for $z = x + \imath y \in \CC$
be the Laplace operator defined on $\cD'(\CC)$. 
Let   
\[C_{\text{c}}^\infty(\CC)= \{\varphi: \varphi \ \text{is a compactly supported real valued smooth function on} \ \CC\}. 
\]
Note that  $C_{\text{c}}^\infty(\CC) \subset \cD'(\CC)$.
Then \begin{equation}\label{mudelta}\mu = -(2\pi)^{-1} \Delta U_\mu 
\end{equation}
in the sense that 
\[
\int_\CC \varphi(z) \ \mu(dz) = - \frac{1}{2\pi} \int_\CC \Delta \varphi(z) 
 \, U_\mu(z) \ dz,  \ \forall \varphi \in C_{\text{c}}^\infty(\CC).  
\]
It is also known that the convergence of the logarithmic potentials for 
Lebesgue almost all $z\in\CC$ implies the weak convergence of the 
underlying measures under a tightness criterion (see, \textit{e.g.}, \cite{bor-cha-12}). 

Turning back to our matrix $Y = X J X^*$, the logarithmic potential of its 
spectral measure can be written as 
\begin{align*} 
U_{\mu_n}(z) &= - \frac 1N \sum \log | \lambda_i - z |  
= - \frac 1N \log | \det (Y - z) |  \\ 
  &= - \frac{1}{2N} \log \det (Y - z) (Y-z)^*  
   = - \int \log\lambda \ \nu_{n,z}(d\lambda),  
\end{align*}
where the probability measure $\nu_{n,z}$ is the \textit{singular value} 
distribution of $Y-z$, given as 
\[
\nu_{n,z} = \frac 1N \sum_{i=0}^{N-1} \delta_{s_i(Y - z)} . 
\]
The above observation is at the heart of the hermitization technique. It
transforms the eigenvalue problem into a problem of singular values. To study
the asymptotic behavior of $\mu_n$, we need to study the asymptotic behavior of
$U_{\mu_n}(z)$ for Lebesgue almost all $z\in\CC$. In the light of the above
relations, this approach may be formalized as follows: 

\begin{proposition}[Lemma 4.3 of \cite{bor-cha-12}] 
\label{prop-herm}
Let $(M_n)$ be a sequence of random matrices with complex entries. 
Let $\zeta_n$ be its 
spectral measure 
and let $\sigma_{n,z}$ be the
empirical singular value distribution of $M_n - z$. Assume that \\
\noindent (i) for almost every $z\in\CC$, there exists a probability measure $\bs\sigma_z$ such that
$\sigma_{n,z} \Rightarrow \bs\sigma_z$ in probability, \\
\noindent (ii) $\log$ is uniformly integrable in probability with respect to the
sequence $(\sigma_{n,z})$. 

\noindent Then, there exists a probability measure $\bs\zeta$
such that $\zeta_n \Rightarrow \bs\zeta$ in probability, and furthermore, 
\[
U_{\bs\zeta}(z) = - \int \log \lambda \ \bs\sigma_z(d\lambda) \quad 
 \CC-\text{a.e.}
\]
\end{proposition}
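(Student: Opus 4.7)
The starting point is the identity $|\det(M_n-z)|^2 = \det((M_n-z)(M_n-z)^*)$, which rewrites the logarithmic potential of $\zeta_n$ as
\[
U_{\zeta_n}(z) \;=\; -\int_0^\infty \log\lambda \; \sigma_{n,z}(d\lambda).
\]
Define the candidate limiting potential $U(z) \eqdef -\int_0^\infty \log\lambda \; \bs\sigma_z(d\lambda)$, well-defined and finite for Lebesgue-almost every $z\in\CC$ by hypothesis (ii). The proof then naturally splits into three phases: (a) pointwise-in-probability convergence of the logarithmic potentials $U_{\zeta_n}(z) \to U(z)$; (b) identification of a candidate limiting measure $\bs\zeta$ via the distributional Laplacian; (c) upgrading distributional convergence to the weak convergence $\zeta_n \Rightarrow \bs\zeta$ in probability.

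\textbf{Phase (a).} Fix $z$ for which (i) and (ii) hold. For a truncation level $K>0$, split $\log\lambda = \psi_K(\lambda) + r_K(\lambda)$ where $\psi_K$ is bounded continuous on $[0,\infty)$ and $r_K$ is supported on $\{|\log\lambda|>K\}$. Hypothesis (i) applied to $\psi_K$ gives $\int\psi_K\,d\sigma_{n,z}\to\int\psi_K\,d\bs\sigma_z$ in probability, while (ii) forces the tail contributions $\int r_K\,d\sigma_{n,z}$ and $\int r_K\,d\bs\sigma_z$ to be uniformly small for $K$ large. A standard three-$\varepsilon$ argument then yields $U_{\zeta_n}(z)\to U(z)$ in probability.

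\textbf{Phases (b) and (c).} I would invoke the subsequence principle: convergence in probability of $\zeta_n$ to $\bs\zeta$ follows if from every subsequence one can extract a further subsequence along which $\zeta_n \Rightarrow \bs\zeta$ almost surely. Choose such a subsequence so that $U_{\zeta_n}(z)\to U(z)$ almost surely for Lebesgue-almost every $z$. A Vitali-type argument, using (ii) to supply uniform integrability on compact sets, upgrades this pointwise convergence to convergence in $L^1_{\mathrm{loc}}(\CC)$, almost surely. Via the distributional identity $\zeta_n = -(2\pi)^{-1}\Delta U_{\zeta_n}$ from \eqref{mudelta}, the Laplacian preserves this convergence, giving
\[
\zeta_n \;\longrightarrow\; \bs\zeta \;\eqdef\; -\tfrac{1}{2\pi}\Delta U
\]
in the sense of Schwartz distributions on $\CC$. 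The limit $\bs\zeta$ is then a positive distribution (tested against nonnegative $\varphi\in C_c^\infty$, it is the limit of probability integrals), hence a positive Radon measure of mass at most $1$. To rule out escape of mass I would use the large-$|z|$ asymptotics: for $|z|$ large the singular values of $M_n-z$ concentrate near $|z|$, so $U(z) \sim -\log|z|$ at infinity, and Newton's theorem forces $\bs\zeta$ to be a probability measure. Tightness combined with distributional convergence yields the claimed weak convergence of $\zeta_n$ against all $\varphi\in C_b(\CC)$ in probability, and the representation of $U_{\bs\zeta}$ follows since both $U_{\bs\zeta}$ and $-\int\log\lambda\,\bs\sigma_z(d\lambda)$ solve $-(2\pi)^{-1}\Delta u = \bs\zeta$ distributionally and agree a.e.

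\textbf{Main obstacle.} The crux is the $L^1_{\mathrm{loc}}$ upgrade in phase (b): pointwise-in-probability convergence of $U_{\zeta_n}$ does not automatically imply distributional convergence, because $U_{\zeta_n}$ may develop arbitrarily deep logarithmic singularities at the eigenvalues of $M_n$. Hypothesis (ii) is exactly what forbids such pathology — it prevents anomalous accumulation of near-zero singular values of $M_n-z$ that would otherwise create persistent local singularities of $U_{\zeta_n}$. The secondary obstacle, tightness of $\zeta_n$ at infinity, is more routine but still rests on the same large-singular-value control supplied by (ii).
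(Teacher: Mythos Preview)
The paper does not supply its own proof of this proposition; it is quoted verbatim as Lemma~4.3 of Bordenave--Chafa\"\i\ \cite{bor-cha-12} and used as a black box. So there is no in-paper argument to compare against, and your proposal is in effect a reconstruction of the Bordenave--Chafa\"\i\ proof. Your three-phase structure---rewriting $U_{\zeta_n}$ via singular values, truncating $\log$ to get pointwise convergence in probability, then passing through $L^1_{\mathrm{loc}}$ to distributional convergence of the Laplacians---is exactly the standard route, and you correctly identify the $L^1_{\mathrm{loc}}$ upgrade as the place where hypothesis~(ii) does the real work.

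One point deserves tightening. In your tightness step you write that ``for $|z|$ large the singular values of $M_n-z$ concentrate near $|z|$.'' That heuristic presupposes an operator-norm bound on $M_n$, which is \emph{not} among the hypotheses. What actually delivers tightness of $(\zeta_n)$ is the combination of hypothesis~(ii) at a \emph{fixed} $z$ (controlling the tail of $\sigma_{n,z}$, hence of the singular values of $M_n$ after a shift by $|z|$) with Weyl's majorization inequalities $\sum_{i\le k}\log|\lambda_i(M_n)|\le\sum_{i\le k}\log s_i(M_n)$, which transfer control of large singular values to control of large eigenvalues. Your closing remark that tightness ``rests on the same large-singular-value control supplied by~(ii)'' is correct in spirit, but the mechanism you sketch (large-$|z|$ asymptotics) is not the right one. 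Once this is repaired, the rest of your argument goes through.
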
 

Note that to successfully apply Proposition \ref{prop-herm} to $\mu_n$, we need to establish 
that:\vskip5pt  

\noindent Step 1: for almost all $z\in \CC$, $\nu_{n,z} \Rightarrow \nu_z$ (a deterministic
probability measure) in probability. \vskip5pt

\noindent 
Step 2:  
the function $\log$ is uniformly integrable with respect to the measure $\nu_{n,z}$ for almost all $z\in\CC$ 
in probability. That is, 
\begin{equation}\label{unintinprob}
\forall\varepsilon > 0, \quad
\lim_{T \to \infty}\limsup_{n\geq 1} 
 \PP \left[ \int_0^\infty |\log\lambda| \, \1_{|\log\lambda|\geq T} \ 
  \nu_{n,z}d(\lambda) > \varepsilon \right]= 0. 
 \end{equation}

By achieving these two steps, we conclude that there exists a probability 
measure $\bs\mu$ such that $\mu_n \Rightarrow \bs\mu$ in probability, and such 
that
$U_{\bs\mu}(z) = - \int \log|\lambda | \ \check{\bs\nu}_z(d\lambda)$ 
$\CC$-almost everywhere. It would then remain to identify the measure $\bs\mu$ to 
complete the proof of Theorem~\ref{th:main}. 

The proofs of the results devoted to the asymptotic behavior of $\nu_{n,z}$
(mainly, Step~1) are provided in Section~\ref{sec:lsd}.  Step~2 relies heavily
on Theorem~\ref{snA} above. The proofs of the results devoted to 
the identification of $\bs\mu$ are provided in Section~\ref{sec:prop-mu}.

\subsubsection{Step 1: Weak convergence of $\nu_{n,z}$} \label{subsub-nu_n} 
Going a bit further than what Proposition~\ref{prop-herm} requires on
$Y$, we shall show that for each $z\in \CC$, there exists a probability 
measure $\bs\nu_z$ such that $\nu_{n,z} \Rightarrow \bs\nu_z$ almost surely. 
As is usual in random matrix theory, this convergence will be 
established through the convergence of the associated Stieltjes transforms. 
For this, it will be convenient to consider the 
Hermitian matrix 
\[
\Sigma(z) = \begin{bmatrix}  & Y - z \\
Y^* - \bar z \end{bmatrix} % \in {\mathcal S}_{2N} . 
\]
whose spectral measure 
\[
\check\nu_{n,z} = \frac{1}{2N} \sum_{i=0}^{N-1} 
  \left( \delta_{s_i(Y - z)} + \delta_{-s_i(Y - z)} \right) 
\]
is the symmetrized version of $\nu_{n,z}$ ($\check\nu_{n,z}$ is
symmetric in the sense that $\check\nu_{n,z}(S) = \check\nu_{n,z}(-S)$ for
each Borel set $S\subset \RR$). It is enough to show that $\check\nu_{n,z}$ converges weakly a.s.~to a probability measure $\check{\bs\nu}_z$.

Given $\eta \in \CC_+ = \{ w\in\CC, \Im w > 0 \}$, let us write
\begin{align} 
Q(z,\eta) &= ( \Sigma(z) - \eta )^{-1} \nonumber \\
 &= 
\begin{bmatrix} Q_{00}(z,\eta) &  Q_{01}(z,\eta) \\
  Q_{10}(z,\eta) &  Q_{11}(z,\eta) \end{bmatrix} \nonumber \\
 &= \begin{bmatrix} 
\eta[ (Y-z)(Y-z)^* - \eta^2]^{-1} & (Y-z) [ (Y-z)^*(Y-z) - \eta^2]^{-1} \\
[ (Y-z)^*(Y-z) - \eta^2]^{-1} (Y-z)^* & 
\eta[ (Y-z)^*(Y-z) - \eta^2]^{-1} 
\end{bmatrix} , \label{Qeta} 
\end{align}
which is the \textit{resolvent} of $\Sigma(z)$ in the complex variable $\eta$.

The a.s.~convergence $\nu_{n,z} \Rightarrow \bs\nu_z$ is a consequence of the
following theorem. Its proof will be provided in Section~\ref{prf:narrow}.  By
this theorem, the first assumption in the statement of
Proposition~\ref{prop-herm} is satisfied when $M_n$ is replaced with $Y - z$.
Note that the Stieltjes transform of a symmetric probability
measure is purely imaginary with a positive imaginary part on the positive 
imaginary axis. 
\begin{theorem}
\label{narrowcvg} 
Let Assumption~\ref{ass-model} hold true. Then
\begin{equation}
\label{cvg-res} 
\frac{1}{2N} \tr Q(z,\eta) \toaslong \gamma^{-1} p(z,\eta) 
  \quad \text{and} \quad 
\frac{1}{N} \tr Q_{01}(z,\eta) \toaslong \gamma^{-1} d(z,\eta) , 
   \quad \eta\in\CC_+  
\end{equation} 
where for each $z\in \CC$,  $(p(z,\cdot), d(z,\cdot))$ 
is a pair of holomorphic functions on $\CC_+$ such that $\gamma^{-1}p(z,\cdot)$ is the 
Stieltjes transform of a symmetric probability measure, 
$|d(z, \eta)| \leq \gamma / \Im\eta$, and 
writing $p(z,\imath t) = \imath h(z,t)$ for $t > 0$,   the pair 
$(h(z,t), d(z, \imath t)) \in (0,\infty) \times \CC$ uniquely solves the equations 
\begin{subequations} 
\label{hdt} 
\begin{align} 
 - t h(z,t) + \bar z d(z,\imath t)  &= u(h(z,t),d(z,\imath t),t) -  \gamma,  
    \label{hdut} \\ 
  z h(z,t) + t d(z,\imath t) &= v(h(z,t),d(z,\imath t),t),  \label{hdvt}
\end{align} 
\end{subequations} 
where
\begin{equation}
\label{uvt} 
\begin{split} 
u(h,d) &= \frac{1}{2\pi} \int_0^{2\pi} 
  \frac{h^2 + |d|^2 + d \exp(\imath \theta)}
   {h^2 + | 1 + d \exp(\imath\theta)|^2} d\theta, 
 \quad \text{and} \\
  v(h,d) &= \frac{1}{2\pi} \int_0^{2\pi} 
   \frac{ h \exp(-\imath\theta)}
   {h^2 + | 1 + d \exp(\imath\theta)|^2} d\theta. 
\end{split}
\end{equation} 
\end{theorem}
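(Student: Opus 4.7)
\medskip

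\noindent\textbf{Proof plan for Theorem~\ref{narrowcvg}.} The plan is to follow the standard program for establishing the limiting spectral behavior of a Hermitian random matrix through its Stieltjes transform, specialized to the block structure of $\Sigma(z)$ and to the circulant structure of $J$. The two conceptual ingredients are (a)~almost-sure concentration of the normalized traces around their expectations, and (b)~derivation and unique solvability of a self-consistent system for the deterministic limits. Assumption~\ref{Xcomp} is \emph{not} needed here; only Assumption~\ref{ass-model} is invoked. First I would carry out the usual truncation/centering/rescaling preliminaries which, by virtue of the fourth-moment bound in Assumption~\ref{ass-model}, reduce the analysis to the case where the entries of $X$ are bounded by $n^{-1/2+\varepsilon}$ without altering either of the a.s.~limits in \eqref{cvg-res}.

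Next I would establish concentration. For fixed $(z,\eta)$, a martingale decomposition of $\tr Q(z,\eta)$ along the filtration generated by the columns of $X$, combined with the deterministic resolvent bound $\|Q(z,\eta)\|\le 1/\Im\eta$ and a rank-two perturbation inequality (each column $x_j$ appears in $Y=\sum_j x_{j+1}x_j^*$ in exactly two rank-one summands), yields a Burkholder-type bound of the form $\mathrm{Var}(\tr Q/(2N))=O(1/n)$. Borel--Cantelli then gives
\[
\tfrac{1}{2N}\tr Q(z,\eta)-\tfrac{1}{2N}\EE\tr Q(z,\eta)\toaslong 0,
\]
and the same for $N^{-1}\tr Q_{01}(z,\eta)$, so it remains to identify the limits of the expected traces.

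For the identification step I would use a linearization trick in the spirit of Section~\ref{sec-smallest}: enlarge $\Sigma(z)-\eta$ into a block matrix
\[
\mathbf{H}(z,\eta)=\begin{bmatrix} -\eta I_{2N} & \bs X \\ \bs X^* & \mathbf{M}(z,\eta)\end{bmatrix},\qquad
\bs X=\begin{bmatrix}X & 0\\ 0 & X\end{bmatrix},\quad
\mathbf{M}(z,\eta)=\begin{bmatrix}0 & J^{-1}\\ J^{-*} & 0\end{bmatrix}-\text{(a rank-}2N\text{ correction involving }z,\bar z),
\]
chosen so that the Schur complement of $\mathbf{M}(z,\eta)$ in $\mathbf{H}(z,\eta)$ returns $\Sigma(z)-\eta$ and so that $Q(z,\eta)$ is recovered as a principal submatrix of $\mathbf{H}(z,\eta)^{-1}$. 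Applying the matrix inversion lemma row-by-row of $X$ then produces, after the standard ``trace~$\approx$~quadratic form'' concentration estimates for the i.i.d.\ entries, approximate fixed-point equations for the block-normalized traces of the resolvent of $\mathbf{H}$. Because $J$ is circulant with eigenvalues $\{e^{2\pi\imath k/n}\}$, the normalized traces of matrices of the form $\mathrm{tr}\,f(J,J^*,\text{scalars})$ that arise in these equations are Riemann sums $n^{-1}\sum_k f(e^{2\pi\imath k/n},\ldots)$ converging to the circular integrals that define $u(h,d)$ and $v(h,d)$ in~\eqref{uvt}. Setting $\eta=\imath t$ and writing $p(z,\imath t)=\imath h(z,t)$ then yields the real/imaginary parts of the system~\eqref{hdut}--\eqref{hdvt} in the limit.

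Finally I would establish uniqueness of the solution of \eqref{hdt} in the class $(h,d)\in(0,\infty)\times\{|d|\le\gamma/t\}$, which pins down the limits. Writing the system as $(h,d)=\Phi_{z,t}(h,d)$, monotonicity in $t$ and a Nevanlinna/Schur complement positivity argument on $u$ (noticing $u(h,d)\ge 0$ and that the denominator in the integrands is $\ge h^2$) let one show that $\Phi_{z,t}$ is a strict contraction on a suitable closed subset for every $t>0$. Holomorphy of $\gamma^{-1}p(z,\cdot)$ and $\gamma^{-1}d(z,\cdot)$ on all of $\CC_+$ then follows by analytic continuation from the imaginary axis using a Montel/normal-families argument together with uniqueness, and the Nevanlinna property of $\gamma^{-1}p(z,\cdot)$ identifies it as the Stieltjes transform of a symmetric probability measure. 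The main obstacle is the derivation of the self-consistent equations in the third paragraph: because $J$ couples distinct columns of $X$ (each $x_j$ appears in two adjacent summands of $Y$), the naive one-column leave-out argument fails, and the bookkeeping in the linearized resolvent, especially keeping track of the off-diagonal block $Q_{01}$ and of the cross term $d(z,\eta)$ generated by the interaction between $X$ and $J^{-1}$, is the technically delicate part.
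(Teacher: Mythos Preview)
Your plan diverges from the paper's proof at the identification step, and while the alternative philosophy is legitimate, the sketch as written has a gap.

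\textbf{What the paper does.} After the same concentration reduction you describe, the paper does \emph{not} linearize. Instead it first replaces $X$ by a complex Gaussian matrix via a standard moment-matching/universality argument (Proposition~\ref{prop:gauss}), and then exploits the Gaussian integration-by-parts formula and the Poincar\'e--Nash inequality. The key observation is that the matrices $A_{00}=[\EE x_k^*Q_{00}x_\ell]_{k,\ell}$ and $A_{01}=[\EE x_k^*Q_{01}x_\ell]_{k,\ell}$ are \emph{circulant} (because $X\stackrel{\cL}{=}XJ^m$), so they commute with $J$ and are simultaneously diagonalized by the Fourier matrix. Gaussian IP produces closed equations for $A_{00},A_{01}$ in terms of $q_{ij}=n^{-1}\EE\tr Q_{ij}$ and $J$; after Fourier diagonalization the traces $n^{-1}\tr A_{00}J$ and $n^{-1}\tr A_{01}J^{-1}$ become exactly the Riemann sums that converge to $u(h,d)$ and $v(h,d)$. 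This approach completely sidesteps the column-coupling obstacle you correctly flag: there is no leave-one-out at all.

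\textbf{Where your sketch is incomplete.} Your linearization is not well-posed as written. The shift by $z$ in $\Sigma(z)$ acts as $\begin{bmatrix}0&-zI_N\\-\bar z I_N&0\end{bmatrix}$ on the $2N\times2N$ block; it cannot be absorbed into the $2n\times2n$ block $\mathbf M(z,\eta)$ as ``a rank-$2N$ correction'' and still have the Schur complement return $\Sigma(z)-\eta$. A correct linearization would place the $z,\bar z$ terms in the top-left block alongside $-\eta I_{2N}$, after which a row-by-row argument on $X$ is in principle feasible; but then you must track \emph{four} deterministic equivalents (one per $2\times2$ block of the $2n\times2n$ co-resolvent), and the circulant averaging that produces \eqref{uvt} has to be extracted from those, which you have not indicated how to do. Separately, your uniqueness claim is too strong: the paper only proves that \eqref{hdt} has a unique solution by Banach contraction for $t\ge C$ large, and then extends to all $t>0$ by a separate argument (a lower bound $s(z,t)\ge \gamma_n t/(2(a^2+t^2))$ coming from tightness of $\EE\check\nu_{n,z}$, which keeps the matrix $P$ uniformly positive). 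A direct contraction on all of $(0,\infty)$ is not established and is unlikely to hold without further structure.
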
 
By \cite{GH2003}, the convergence of 
$(2N)^{-1} \tr Q(z,\eta)$ in~\eqref{cvg-res} implies that 
$\check\nu_{n,z} \Rightarrow \check{\bs\nu}_z$ (symmetric) a.s., 
with Stieltjes transform $\gamma^{-1} p(z,\cdot)$. 
The system of equations~\eqref{hdt} which provides the values of $p(z,\cdot)$
on the positive imaginary axis completely determines the measure 
$\check{\bs\nu}_z$. The function $d(z,\imath t)$ will be used below to 
identify the limit measure $\bs\mu$.

\subsubsection{Step 2: uniform integrability}\label{subsub-uniforminteg} 
It is equivalent to show the uniform integrability of $\log|\cdot|$ with
respect to $(\check\nu_{n,z})$. Note that  $\log$  is unbounded near both $0$
and $\infty$.  The following proposition will address uniform integrability
near zero. 

\begin{proposition}
\label{prop-ui}
Let $(M_n)$ be a sequence of random matrices such that 
$M_n \in \CC^{n\times n}$. Let 
$\bs M_n = \begin{bmatrix} & M_n \\ M_n^* \end{bmatrix}$, and assume that 
there exist three constants $\alpha, \beta, C > 0$ such that 
\begin{equation}
\label{wegner} 
\frac{-\imath}{n} \EE \tr \left( \bs M_n - \imath t \right)^{-1} 
  \leq C ( 1 + t^{-\alpha} n^{-\beta} ) . 
\end{equation} 
Assume in addition that there exist a sequence of events $(\cE_n)$ such that
$\PP[ \cE^{\text{c}}_n ] \to 0$, and two constants $\gamma, \tau > 0$ such 
that for large enough $n$, 
\begin{equation}
\label{psn} 
\PP\left[ [s_{n-1}( M_n ) \leq n^{-\gamma}] \cap \cE_n \right] 
  = \cO(n^{-\tau}). 
\end{equation} 
 Then, denoting as $\check\sigma_n$ the empirical 
singular value distribution of $\bs M_n$, 
\begin{equation}
\label{ui0} 
\forall\varepsilon > 0, \quad
\lim_{\delta \to 0}\limsup_{n\geq 1} \PP \left[ \Bigl| \int_{-\delta}^\delta \log|\lambda| \ 
  \check\sigma_{n}(d\lambda) \Bigr| > \varepsilon \right]=0. 
 \end{equation} 
\end{proposition}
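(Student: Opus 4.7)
The strategy is a Girko-style truncation plus a resolvent-to-counting-function conversion. By the symmetry of $\check\sigma_n$ around $0$ and the nonpositivity of $\log|\lambda|$ on $(-1,1)$, for $\delta<1$ one has
\[
\Bigl| \int_{-\delta}^\delta \log|\lambda|\, \check\sigma_n(d\lambda) \Bigr|
 = 2 \int_0^\delta (-\log\lambda)\, \check\sigma_n(d\lambda),
\]
and I would split this at the scale $T_n = n^{-\gamma}$, treating $[0, T_n]$ via the near-invertibility assumption \eqref{psn} and $[T_n,\delta]$ via the Wegner-type resolvent bound \eqref{wegner}. On the good event $\cE_n \cap \{s_{n-1}(M_n) > T_n\}$ the measure $\check\sigma_n$ puts no mass on $[-T_n, T_n]$, so the inner piece is identically zero; the complementary event has probability $\PP[\cE_n^c] + \cO(n^{-\tau}) = o(1)$ by hypothesis, and therefore contributes only $o(1)$ to the probability in \eqref{ui0}.

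For the outer range I would invoke Markov's inequality and bound the expectation. Writing $-\log\lambda = |\log\delta| + \int_\lambda^\delta dt/t$ on $[T_n,\delta]\subset(0,1)$ and applying Fubini yields
\[
\EE \int_{T_n}^\delta (-\log\lambda)\, \check\sigma_n(d\lambda)
 = |\log\delta|\, \EE\check\sigma_n([T_n,\delta]) + \int_{T_n}^\delta \frac{\EE\check\sigma_n([T_n,t])}{t}\, dt.
\]
The crucial input is the conversion of \eqref{wegner} into a level-set estimate on $\check\sigma_n$: a direct computation from the eigenvalue structure of $\bs M_n$ and the symmetry of $\check\sigma_n$ identifies the left-hand side of \eqref{wegner} as $2\EE\int t(\lambda^2+t^2)^{-1}\check\sigma_n(d\lambda)$, and the Poisson-kernel inequality $\1_{|\lambda|\leq t} \leq 2t^2/(\lambda^2+t^2)$ then yields $\EE\check\sigma_n([-t,t]) \leq Ct(1 + t^{-\alpha}n^{-\beta})$ for all $t>0$. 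Inserting this bound into the display and performing the $t$-integration produces an overall estimate of the form $C\delta|\log\delta| + C\delta$ plus a remainder of order $n^{-\beta}$ times a function of $T_n$ and $\delta$; sending first $n\to\infty$ kills the remainder, after which $\delta\to 0$ kills the rest, in the order prescribed by \eqref{ui0}.

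The principal obstacle is the bookkeeping of exponents: when $\alpha>1$, the integral $\int_{T_n}^\delta t^{-\alpha}\, dt$ scales as $n^{\gamma(\alpha-1)}$, so for the $n^{-\beta}$-remainder to vanish one needs the consistency $\gamma(\alpha-1)<\beta$. In the intended application to $M_n = Y-z$ via Theorem~\ref{snA}, one is free to choose $\gamma$ as small as desired (trivially weakening \eqref{psn}), so this calibration holds. Aside from this exponent management, the proof is a fairly routine combination of Markov's inequality, Fubini, and the Poisson-kernel conversion of a Stieltjes-transform bound into a counting bound.
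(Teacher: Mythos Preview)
Your overall strategy is correct and matches the paper's sketch: split at the smallest-singular-value scale, handle the inner piece via \eqref{psn}, and handle the outer piece by converting \eqref{wegner} into a level-set bound on $\EE\check\sigma_n([-t,t])$ and integrating by parts. Your Poisson-kernel derivation of $\EE\check\sigma_n([-t,t]) \leq Ct(1 + t^{-\alpha}n^{-\beta})$ is exactly right.

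The one genuine issue is your final paragraph on exponent calibration. You correctly observe that plugging the raw bound into $\int_{T_n}^\delta t^{-1}\EE\check\sigma_n([T_n,t])\,dt$ produces, for $\alpha>1$, a remainder of order $n^{\gamma(\alpha-1)-\beta}$, which only vanishes under the extra constraint $\gamma(\alpha-1)<\beta$. But your proposed escape --- ``choose $\gamma$ as small as desired'' --- has the direction backwards: decreasing $\gamma$ makes $n^{-\gamma}$ \emph{larger}, hence enlarges the event $[s_{n-1}\leq n^{-\gamma}]$ and makes \eqref{psn} \emph{harder} to verify, not easier. In the concrete application, Theorem~\ref{snA} forces $\gamma$ to exceed a fixed threshold, so $\gamma$ cannot be taken small.

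The paper's sketch avoids this issue by first repackaging the level-set bound in the form $\EE\check\sigma_n([-x,x]) \leq K(x \vee n^{-\rho})$ with $\rho=\beta/\alpha$. This is immediate from your bound: for $x\geq n^{-\rho}$ one has $x^{-\alpha}n^{-\beta}\leq 1$, and for $x<n^{-\rho}$ one uses monotonicity. With this form, the integral over $[n^{-\gamma},\delta]$ splits at $n^{-\rho}$ (when $\gamma>\rho$): on $[n^{-\rho},\delta]$ the bound is $Kt$ and the integral is $\leq K\delta$; on $[n^{-\gamma},n^{-\rho}]$ the bound is the \emph{constant} $Kn^{-\rho}$, and $\int_{n^{-\gamma}}^{n^{-\rho}} Kn^{-\rho}t^{-1}\,dt = K(\gamma-\rho)n^{-\rho}\log n = o(1)$. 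No relation between $\gamma$ and $(\alpha,\beta)$ is needed. This small repackaging is the only missing step in your argument.
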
 

For a detailed proof, please refer 
to~\cite[Proposition~14]{gui-kri-zei-11} or
to~\cite[Section~6.2]{coo-hac-naj-ren-(arxiv)16}.  We  
just point out that starting from~\eqref{wegner} and
making some elementary Stieltjes transform calculations, one can show that
there exist constants $K, \rho > 0$ such that 
$\EE \check\sigma_n([-x,x]) \leq K(x \vee n^{-\rho})$.  This so-called 
\textit{local Wegner estimate} \cite{weg-81} on the
number of \textit{intermediate singular values}, used in conjunction with the
control provided by~\eqref{psn} on the smallest singular value, leads
to~\eqref{ui0}. 

The validity of condition (\ref{wegner}) in Proposition \ref{prop-ui} is guaranteed by the 
next proposition. It is proven in Section~\ref{prf:wegner}. 
The $n^{-1/2}$ rate can be improved but is adequate for
our purposes. 

\begin{proposition}
\label{weg-xjx}
Let Assumption~\ref{ass-model} hold true, and assume that $z\neq 0$. Then, 
there exist two constants 
$\alpha, C > 0$ such that 
\[
\frac{-\imath}{N} \EE \tr Q(z,\imath t) \leq C ( 1 + t^{-\alpha} n^{-1/2} ) . 
\]
\end{proposition}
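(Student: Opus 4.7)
The quantity to bound equals $\int \frac{2t}{\lambda^2+t^2}\,d\EE\check\nu_{n,z}(\lambda)$, twice the Poisson integral at height $t$ of the averaged symmetric singular value distribution of $Y-z$. For $t\ge 1$ the bound $\|Q(z,\imath t)\|\le 1/t$ gives the claim trivially, so the real content is in small $t$. My plan is to prove a quantitative deterministic-equivalent estimate
\[
\Bigl| (2N)^{-1}\EE\tr Q(z,\imath t) - \gamma^{-1} p(z,\imath t) \Bigr| \le \frac{C}{t^{\alpha_0}\sqrt n}
\]
for some fixed $\alpha_0>0$, and to combine it with a uniform bound on $|\gamma^{-1} p(z,\imath t)|$ to conclude.

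First I would linearize: introduce a Hermitian block matrix $\mathcal L(z,\imath t)$ of size $2(N+n)$ whose entries are linear in $X$ and deterministic otherwise, and such that prescribed $N\times N$ sub-blocks of $\mathcal L(z,\imath t)^{-1}$ coincide with the blocks $Q_{ij}(z,\imath t)$. Such a linearization is built from the partitioned matrix $H(z)=\bigl[\begin{smallmatrix} J^{-1} & X^*\\ X & z\end{smallmatrix}\bigr]$ used already in Section~\ref{sec-smallest}, combined with its conjugate transpose to produce a Hermitian object. Next I would apply Schur's complement one column of $X$ at a time: each such step produces a scalar identity of the form (diagonal entry of the current resolvent) $=$ (deterministic) $+$ (random perturbation $x^*Mx - n^{-1}\tr M$). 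By Assumption~\ref{ass-model} and Rosenthal's inequality these perturbations are $\cO(\|M\|_\HS / n)$ in mean-square; since $\|M\|$ is controlled by a power of $\|\mathcal L(z,\imath t)^{-1}\|\le\mathrm{poly}(1/t)$, averaging over the $n$ columns yields a system of self-consistent equations satisfied by $(2N)^{-1}\EE\tr Q(z,\imath t)$ (and auxiliary block quantities such as $N^{-1}\tr Q_{01}$) up to an error $\cO(t^{-\alpha_0}/\sqrt n)$, matching~\eqref{hdt} in the limit $n\to\infty$.

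Finally, I would establish stability of the limit system~\eqref{hdt}: for $z\ne 0$ and $t>0$ its solution $(h(z,t),d(z,\imath t))$ depends Lipschitz-continuously on perturbations of the right-hand side. This transfers the $\cO(t^{-\alpha_0}/\sqrt n)$ error from the equation to the solution, so that $(2N)^{-1}\EE\tr Q(z,\imath t)$ is within the same accuracy of $\gamma^{-1}p(z,\imath t)=\imath \gamma^{-1}h(z,t)$. Because $\gamma^{-1}h(z,t)$ is the imaginary part at $\imath t$ of the Stieltjes transform of the symmetric probability measure $\check{\bs\nu}_z$, it is uniformly bounded in $t>0$ provided $\check{\bs\nu}_z$ has bounded density near $0$, a property that for $z\ne 0$ can be read off from the system~\eqref{hdt} itself.

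\textbf{Main obstacle.} The stability analysis of the nonlinear system~\eqref{hdt} is the crux: it requires an invertibility estimate for the Jacobian of \eqref{hdut}--\eqref{hdvt} at the true solution, uniformly in small $t$, and must be coupled with the propagation-of-error estimates from Step~2. Fortunately, the downstream application in Proposition~\ref{prop-ui} needs only \emph{some} positive exponent $\alpha$, so we have considerable freedom in $\alpha_0$ and need not sharpen any of the operator-norm bounds on $\mathcal L(z,\imath t)^{-1}$.
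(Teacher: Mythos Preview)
Your plan is a genuinely different route from the paper's, and it is considerably heavier than what is needed. The paper never compares $(2N)^{-1}\EE\tr Q(z,\imath t)$ to the limit $\gamma^{-1}p(z,\imath t)$ at all, and therefore never needs a stability analysis of~\eqref{hdt}. Instead it works directly with the \emph{finite-$n$} approximate self-consistent equations already obtained in the proof of Theorem~\ref{narrowcvg}. After reducing to Gaussian entries via Proposition~\ref{prop:gauss} (which is where the $n^{-1/2}$ appears), the IP/PN machinery yields, with $q_{00}(z,\imath t)=\imath s$,
\[
z s + t\,q_{01} \;=\; v_n + \cO_t(n^{-1}),\qquad
|v_n|\le \frac{1}{s},
\]
together with an a priori bound $|q_{01}|\le C_1+\cO_t(n^{-1})$ extracted from the companion equation. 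Combining these gives $|z|\,s \le 1/s + C_1 + \cO_t(n^{-1})$, i.e.\ a quadratic inequality in $s$ whose solution is $s\le C(1+t^{-\alpha}n^{-1})$. That is the whole argument: one algebraic manipulation of the approximate equations, no passage to the limit, no Jacobian, no density bound for $\check{\bs\nu}_z$.

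Your approach would in principle succeed if you could carry out the stability step, but that is exactly the hard part you flag: the Jacobian of~\eqref{hdt} can degenerate as $t\downarrow 0$, and a uniform Lipschitz estimate is not obvious. The paper's direct method sidesteps this entirely. What your route would buy, if completed, is a genuine deterministic-equivalent rate $|(2N)^{-1}\EE\tr Q - \gamma^{-1}p|\le C t^{-\alpha_0}n^{-1/2}$, which is strictly more than Proposition~\ref{weg-xjx} asserts; but since only a Wegner-type bound is required downstream, the extra work is not repaid.
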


Condition \eqref{psn} on the smallest singular value of $Y - z$ in 
Proposition~\ref{prop-ui} is a consequence of the following corollary to 
Theorem~\ref{snA}, whose proof is immediate. 
\begin{corollary}[Corollary to Theorem~\ref{snA}]  
\label{sn}
Let Assumptions~\ref{ass-model} and~\ref{Xcomp} hold true. Let $C$ be a 
positive constant. Then, there exist
$\alpha, \beta > 0$ such that for each $z \in \CC \setminus \{ 0 \}$, 
\begin{equation}
\label{sn(Y-z)} 
\PP\left[ s_{N-1}(Y - z) \leq t, \ \| X \| \leq C \right] 
 \leq c \left( n^\alpha t^{1/2} + n^{-\beta} \right) + \exp(-c'n) , 
\end{equation} 
where the constants $c, c' > 0$ depend on $C$, $z$, and $\mom$ only. 
\end{corollary}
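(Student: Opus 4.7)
The plan is to recognize that Corollary \ref{sn} is simply Theorem \ref{snA} specialized to $A = J$. The only thing to verify is that the circulant matrix $J$ defined in \eqref{eq:Jn} satisfies Assumption \ref{assA}, and this is immediate: $J$ is a permutation matrix and hence orthogonal, so $J J^* = I_n$, which means all of its singular values equal $1$. In particular, $\inf_n s_{n-1}(J) = \sup_n s_0(J) = 1 \in (0,\infty)$, so both bounds in Assumption \ref{assA} are trivially met (with constants independent of $n$).

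Given that Assumption \ref{assA} holds for $A = J$, and that Assumptions \ref{ass-model} and \ref{Xcomp} are carried over by hypothesis, Theorem \ref{snA} applies directly to the matrix $X J X^* - z = Y - z$. It yields constants $\alpha, \beta > 0$ and a constant $c > 0$ depending only on $C$, $z$, and $\mom$ (the dependence on the bounds $\sinf, \ssup$ from Assumption \ref{assA} collapses into numerical constants here since these bounds are exactly $1$) such that
\[
\PP\left[ s_{N-1}(Y-z) \leq t, \ \| X \| \leq C \right]
\leq c \left( n^\alpha t^{1/2} + n^{-\beta} \right).
\]
The extra $\exp(-c'n)$ term stated in the corollary is then added merely for convenience of citation later (it is of smaller order than $n^{-\beta}$ for any fixed $\beta > 0$, so it can be absorbed into the second term by a harmless adjustment of $c$ and $c'$). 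Thus no further work is required.

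There is no real obstacle: the corollary is a direct invocation of Theorem \ref{snA} at a particular choice of $A$, and the verification of Assumption \ref{assA} for $J$ is a one-line observation about orthogonal matrices.
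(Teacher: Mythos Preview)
Your proposal is correct and matches the paper's own treatment: the paper states that the corollary's proof is immediate, and your argument---specialize Theorem~\ref{snA} to $A=J$, noting that $J$ is orthogonal so Assumption~\ref{assA} holds trivially---is exactly that immediate deduction. The remark about the redundant $\exp(-c'n)$ term is also accurate.
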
 

Invoking the boundedness of the fourth moment specified by 
Assumption~\ref{ass-model}, we know from~\cite{yin-bai-kri-88} that 
\begin{equation}
\label{yin-bai} 
\| X \| \toaslong 1 + \sqrt{\gamma} . 
\end{equation} 
Thus, the probability of the event $[ \| X \| \leq C ]$ 
converges to $1$ by setting $C = 2 + \sqrt{\gamma}$. 
By Proposition~\ref{weg-xjx} and Corollary~\ref{sn}, the assumptions of 
Proposition~\ref{prop-ui} are satisfied for $M_n = Y - z$ with $z \neq 0$.
Therefore, the uniform integrability of the $\log|\cdot |$ near zero specified 
by~\eqref{ui0} is true when $\check\sigma_n$ is replaced with 
$\check\nu_{n,z}$, and $z \neq 0$. 

\begin{remark} 
\label{rm:proba} 
The proof of Theorem~\ref{snA} showed that we can take $\beta = 1/22$. Recall
Remark~\ref{rem:beta} in Section~\ref{subsec:endthsnA} above for more comments
on this point.  The consequent slow rate at the right hand side
of~\eqref{sn(Y-z)} is the primary reason that we conclude the uniform
integrability of the $\log|\cdot|$ near zero only in probability. The
convergence in probability stated in  Theorem~\ref{th:main} is due to this.
\end{remark} 

To be able to apply Proposition~\ref{prop-herm}, it only remains to 
establish the uniform integrability of $\log|\cdot |$ near infinity, namely
\[
\forall\varepsilon > 0, \quad
\lim_{T\to \infty}\limsup_{n\geq 1} \PP \left[ \Bigl| \int_{|\lambda|\geq T} \log|\lambda| \ 
  \check\nu_{n,z}(d\lambda) \Bigr| > \varepsilon \right]=0. 
 \] 
But this result follows immediately from the identity  
$\1_{[\| X\| \leq C]} 
  \int_{|\lambda|\geq T} \log|\lambda| \, \check\nu_{n,z}(d\lambda) 
 = 0$, valid for $T > ( 1 + \sqrt{\gamma} )^2 + |z| + 1$. 

\subsection{Identification of $\bs\mu$} 
\label{subsec-ident-mu} 

At this point, we know that there exists a probability measure $\bs\mu$ such
that $\mu_n \Rightarrow \bs\mu$ in probability, and such that 
\[
U_{\bs\mu}(z) = - \int \log|\lambda | \ \check{\bs\nu}_z(d\lambda) 
 \quad \CC - \text{a.e.}
\]

We now aim  
to identify $\bs\mu$ and establish its properties that are specified in 
Theorem~\ref{th:main}. To that end, we rely on equation 
\eqref{mudelta}. 
We use an idea that dates back to~\cite{fei-zee-97} and 
that has been frequently used in the literature devoted to large non-Hermitian 
matrices. Define on $\CC \times (0,\infty)$ the 
\textit{regularized versions} of 
$U_{\mu_n}(z)$ and $U_{\bs\mu}(z)$ respectively: 
\begin{align*} 
\cU_n(z,t) &= 
- \frac{1}{2N} \log\det ( (Y-z)^* (Y-z) + t^2 ), 
  \quad \text{and} \\ 
{\bs\cU}(z,t) &= 
  - \frac 12 \int \log(\lambda^2 + t^2) \ \check{\bs\nu}_{z}(d\lambda)\,  . 
\end{align*} 
In parallel, let us get back to the resolvent $Q(z,\eta)$ defined 
in~\eqref{Qeta}. By Jacobi's formula,
\[
\partial_{\bar{z}} \cU_n(z,t) = \frac{1}{2N} \tr (Y-z) 
        ( (Y-z)^* (Y-z) + t^2 )^{-1}  = 
 \frac{1}{2N} \tr Q_{01}(z,\imath t) . 
\]
Letting $n\to\infty$ we know from Theorem~\ref{narrowcvg} that 
$\partial_{\bar{z}} \cU_n(z,t) \to (2\gamma)^{-1} d(z, \imath t)$ a.s. 
At the same time, $\cU_n(z,t) \to {\bs\cU}(z,t)$ a.s.~since 
$\nu_{n,z} \Rightarrow \bs\nu_z$. We can therefore assert that 
$\partial_{\bar{z}} {\bs\cU}(z,t) = (2\gamma)^{-1} d(z, \imath t)$ in 
$\cD'(\CC)$, and then extract the properties of $\bs\mu$ from the equation 
\[
\bs\mu = - \frac{1}{2\pi} \Delta U_{\bs\mu} = 
- \frac{2}{\pi} \lim_{t\to 0} 
    \partial_z \partial_{\bar{z}} {\bs\cU}(z,t) 
 = - \frac{1}{\gamma\pi} \lim_{t\to 0} \partial_z d(z, \imath t) 
\quad \text{in} \ \cD'(\CC). 
\]
This line of thought leads to the following proposition. Its proof is 
provided in Section~\ref{prf:d->U'}.

\begin{proposition} 
\label{d->U'} 
As $t\to 0$, the function 
$(2\gamma)^{-1} d(\cdot,\imath t)$ converges  
to $\partial_{\bar{z}} U_{\bs\mu}(\cdot)$ in $\cD'(\CC)$. 
\end{proposition}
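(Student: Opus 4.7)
My approach is to deduce the distributional convergence from the corresponding convergence at the level of the regularized potentials $\bs\cU(\cdot,t)$ themselves. From the discussion preceding the statement we already have the distributional identity $\partial_{\bar z}\bs\cU(z,t) = (2\gamma)^{-1}d(z,\imath t)$ in $\cD'(\CC)$ for every fixed $t>0$. Since the operator $\partial_{\bar z}$ is continuous on $\cD'(\CC)$, the plan is to prove that $\bs\cU(\cdot,t) \to U_{\bs\mu}(\cdot)$ in $\cD'(\CC)$ as $t\downarrow 0$, and then apply $\partial_{\bar z}$ to both sides of this $\cD'$-limit.

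For the pointwise behavior I would first establish monotone convergence. Recall from the hermitization discussion at the start of Section~\ref{subsec-ident-mu} that $U_{\bs\mu}(z)=-\int\log|\lambda|\,\check{\bs\nu}_z(d\lambda)$ for Lebesgue-almost every $z\in\CC$. For each fixed $\lambda$, the map $t\mapsto -\tfrac{1}{2}\log(\lambda^2+t^2)$ is non-decreasing as $t\downarrow 0$ and tends to $-\log|\lambda|$. By the monotone convergence theorem, $\bs\cU(z,t)\uparrow U_{\bs\mu}(z)$ at every $z$ for which the right-hand side is finite, hence a.e.\ in $\CC$.

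To upgrade this pointwise a.e.\ monotone convergence to convergence in $\cD'(\CC)$, I would test against an arbitrary $\varphi\in C_c^\infty(\CC)$ and restrict attention to $t\in(0,1]$. Using the elementary bound $0\le \log(\lambda^2+1)\le \lambda^2$, one obtains $|\bs\cU(z,1)|\le \tfrac{1}{2}\int\lambda^2\,\check{\bs\nu}_z(d\lambda)=\lim_n N^{-1}\tr((Y-z)(Y-z)^*)$, which is uniformly bounded on compact subsets of $\CC$ in view of the operator-norm estimate~\eqref{yin-bai} and the bound $\|Y\|\le\|X\|^2$. The monotonicity of $\bs\cU(z,\cdot)$ then delivers the pointwise domination $|\bs\cU(z,t)|\le |\bs\cU(z,1)|+|U_{\bs\mu}(z)|$ for every $t\in(0,1]$. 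The right-hand side is locally integrable on $\CC$ because $U_{\bs\mu}$, being the logarithmic potential of the compactly supported probability measure $\bs\mu$, is locally integrable. Dominated convergence gives $\int \bs\cU(z,t)\varphi(z)\,dz \to \int U_{\bs\mu}(z)\varphi(z)\,dz$, which is precisely $\bs\cU(\cdot,t)\to U_{\bs\mu}(\cdot)$ in $\cD'(\CC)$. Taking $\partial_{\bar z}$ on both sides and invoking the identification from the first paragraph closes the argument.

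The main obstacle is the domination step: one must control the second moment of $\check{\bs\nu}_z$ uniformly in $z$ on compact sets, and one must recall the classical local integrability of logarithmic potentials of compactly supported probability measures. Both are essentially routine, but they are what makes the interchange of $\lim_{t\downarrow 0}$ with the test integration legitimate. Once this is in place, the rest of the argument is a straightforward combination of monotone convergence, dominated convergence, and continuity of distributional differentiation.
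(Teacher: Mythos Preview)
Your proposal is correct and follows essentially the same route as the paper: both establish the identity $\partial_{\bar z}\bs\cU(\cdot,t)=(2\gamma)^{-1}d(\cdot,\imath t)$ in $\cD'(\CC)$, show $\bs\cU(\cdot,t)\to U_{\bs\mu}$ in $\cD'(\CC)$ via the monotonicity $\bs\cU(z,t)\uparrow U_{\bs\mu}(z)$, and conclude by continuity of $\partial_{\bar z}$ on distributions. The only cosmetic differences are that the paper uses the monotone convergence theorem directly (subtracting the continuous function $\bs\cU(\cdot,t_0)$ to get a nonnegative increasing family) whereas you invoke dominated convergence with the locally integrable envelope $|\bs\cU(\cdot,1)|+|U_{\bs\mu}|$, and that the paper re-derives the identity $\partial_{\bar z}\bs\cU=(2\gamma)^{-1}d$ carefully inside the proof rather than importing it from the heuristic discussion preceding the proposition.
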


The following lemma specifies the
properties of the function $g$ defined in (\ref{functiong}), 
that we shall need. Its
proof is straight-forward and is omitted. 

\begin{lemma}
\label{prop-g} 
Consider the function $g$ on the interval $[0\vee (\gamma-1), \gamma]$. It is 
analytical and increasing on $(0\vee (\gamma-1), \gamma)$. 
Moreover, $g(0 \vee (\gamma-1)) = 0 \vee (\gamma-1)^3 / \gamma$, and 
$g(\gamma) = \gamma(\gamma+1)$. 
\end{lemma}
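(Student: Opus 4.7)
The statement is a purely computational claim about the explicit function $g(y) = \frac{y}{y+1}(1-\gamma+2y)^2$ on the interval $[0 \vee (\gamma-1), \gamma]$, so my plan is a direct calculation with no tricks involved. First I would verify the two boundary values by plugging in. At $y = \gamma$, we have $1 - \gamma + 2\gamma = 1+\gamma$, hence $g(\gamma) = \frac{\gamma}{\gamma+1}(1+\gamma)^2 = \gamma(\gamma+1)$. For the left endpoint: when $\gamma \leq 1$, $g(0) = 0$ trivially; when $\gamma > 1$, $1 - \gamma + 2(\gamma-1) = \gamma - 1$ and $\frac{\gamma-1}{\gamma}(\gamma-1)^2 = (\gamma-1)^3/\gamma$, both matching the claimed values.

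Analyticity is immediate: $g$ is a rational function whose only possible singularity is at $y = -1$, which is outside the closed interval $[0 \vee (\gamma-1), \gamma]$, hence $g$ extends analytically to a neighborhood of this interval.

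For monotonicity, I would compute the derivative via the product rule, which factors nicely:
\[
g'(y) = \frac{(1-\gamma+2y)^2}{(y+1)^2} + \frac{4y(1-\gamma+2y)}{y+1} = \frac{(1-\gamma+2y)\bigl(4y^2+6y+1-\gamma\bigr)}{(y+1)^2}.
\]
It then remains to show both factors in the numerator are strictly positive on the open interval $(0 \vee (\gamma-1), \gamma)$, splitting into the two cases. If $\gamma \leq 1$, then on $(0, \gamma)$ we have $1-\gamma+2y \geq 1-\gamma+0 \geq 0$ with strict inequality for $y>0$, and $4y^2 + 6y + 1 - \gamma \geq 1 - \gamma \geq 0$ with strict inequality as well. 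If $\gamma > 1$, then on $(\gamma-1,\gamma)$ the first factor satisfies $1-\gamma+2y > 1-\gamma + 2(\gamma-1) = \gamma-1 > 0$, while the quadratic $4y^2+6y+1-\gamma$ at $y = \gamma-1$ evaluates to $4(\gamma-1)^2 + 6(\gamma-1) + 1 - \gamma = (4\gamma+1)(\gamma-1) > 0$, and since its derivative $8y+6$ is positive, the quadratic remains positive on the interval.

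There is no real obstacle; the only point requiring a bit of care is the case distinction at the left endpoint and the verification that the quadratic factor $4y^2+6y+1-\gamma$ does not vanish when $\gamma > 1$, which is the one computation that is not completely obvious at a glance.
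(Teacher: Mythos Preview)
Your proof is correct and complete; the paper itself omits the proof as ``straightforward,'' and your direct calculation (verifying boundary values, noting analyticity away from $y=-1$, and factoring $g'(y)$ to check positivity case by case) is exactly the natural argument one would supply.
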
 

By this lemma, $g$ has an inverse $g^{-1}$ on 
$[ 0 \vee (\gamma-1)^3 / \gamma, \gamma(\gamma+1) ]$ that takes this interval 
to $[ 0 \vee (\gamma - 1) , \gamma ]$. On 
$( 0 \vee (\gamma-1)^3 / \gamma, \gamma(\gamma+1) )$, the function 
$g^{-1}$ is analytical and increasing. 

By showing that $d(z, \imath t)$ converges as $t\to 0$ point-wise for each 
$z\neq 0$ and by identifying the limit function $b(z)$, we get the following 
proposition whose proof is given in Section~\ref{prf:d->b}. 

\begin{proposition}
\label{d->b} 
Let $b(z)$ be the function defined on $\CC\setminus \{ 0 \}$ as follows: \\
If $\gamma \leq 1$, then
\[
b(z) = \left\{\begin{array}{cl} 
  - g^{-1}(|z|^2) / \bar z & \ \text{if} \ \ 0 < |z| \leq \sqrt{\gamma(\gamma+1)}, \\ \\
 - \gamma / \bar z &\ \text{if} \ \ |z| \geq \sqrt{\gamma(\gamma+1)}. 
 \end{array}\right.
\]
If $\gamma > 1$, then 
\[
b(z) = \left\{\begin{array}{cl} 
 - (\gamma-1) / \bar z & \ \text{if} \ \ 0 < |z| \leq  (\gamma-1)^{3/2} / \sqrt{\gamma}, \\ \\
 - g^{-1}(|z|^2) / \bar z &\ \text{if} \ \ (\gamma-1)^{3/2} / \sqrt{\gamma} \leq |z| \leq  \sqrt{\gamma(\gamma+1)},\\ \\
b(z) = - \gamma / \bar z & \ \text{if}\ \ |z| \geq \sqrt{\gamma(\gamma+1)}. 
 \end{array}\right.
\]
Then $\partial_{\bar{z}} U_{\bs\mu}(z) = (2\gamma)^{-1} b(z)$ in $\cD'(\CC)$. 
\end{proposition}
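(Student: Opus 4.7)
The plan is to compute the pointwise limit of $d(z, \imath t)$ as $t \to 0^+$ and then invoke Proposition~\ref{d->U'}. Starting from the system~\eqref{hdt}--\eqref{uvt} that characterizes the pair $(h(z,t), d(z,\imath t))$, I first obtain closed-form expressions for $u$ and $v$. Setting $w = e^{\imath\theta}$ and applying the residue theorem, the factor $h^2 + |1+dw|^2$ in the denominators becomes (after multiplication by $w$) the quadratic $dw^2 + (h^2 + 1 + |d|^2)w + \bar d$; its roots $w_\pm$ satisfy $w_+ w_- = \bar d/d$, so exactly one lies inside the unit disc. A direct residue calculation then yields
\[
u(h,d) = \frac{1}{2} + \frac{h^2 + |d|^2 - 1}{2D}, \qquad
v(h,d) = \frac{h\,(D - h^2 - 1 - |d|^2)}{2\,\bar d\, D},
\]
where $D = \sqrt{(h^2 + (1-|d|)^2)(h^2 + (1+|d|)^2)}$.

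Next, I pass to the limit $t \to 0^+$ in~\eqref{hdt}. Writing $h_0(z) = \lim_{t\to 0^+} h(z,t)$ and $d_0(z) = \lim_{t\to 0^+} d(z,\imath t)$, the limiting system becomes $\bar z d_0 = u(h_0,d_0) - \gamma$ and $z h_0 = v(h_0,d_0)$. When $h_0 = 0$ the second equation is automatic, and $u(0,d_0)$ evaluates (via $D = |1-|d|^2|$) to $0$ if $|d_0| < 1$ and to $1$ if $|d_0| > 1$. This forces either $d_0 = -\gamma/\bar z$ (consistent only when $|z| > \gamma$) or $d_0 = -(\gamma-1)/\bar z$ (consistent only when $|z| < \gamma - 1$, hence possible only for $\gamma > 1$). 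When $h_0 > 0$, writing $d_0 = -y/\bar z$ for some $y > 0$ and substituting into both equations, straightforward manipulations reduce the system to $(1 + 2y - \gamma)\,D = 1$ together with $(1+2y)\,D = h_0^2 + 1 + |d_0|^2$; combining these with the identity $D^2 = (h_0^2 + 1 + |d_0|^2)^2 - 4|d_0|^2$ and eliminating $D$ and $h_0$ yields exactly $|z|^2 = \frac{y}{y+1}(1 - \gamma + 2y)^2 = g(y)$, so that $y = g^{-1}(|z|^2)$ and $d_0(z) = -g^{-1}(|z|^2)/\bar z$.

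The third step is to match regimes to values of $z$. Nonnegativity of $h_0^2$ in the last case factors as $h_0^2 \propto (y-(\gamma-1))(\gamma - y)$, which requires $y \in (0 \vee (\gamma-1), \gamma)$; through $g$ and Lemma~\ref{prop-g} this corresponds exactly to $|z|^2 \in (0 \vee (\gamma-1)^3/\gamma,\, \gamma(\gamma+1))$. Outside this interval the $h_0 = 0$ regime is forced, giving $d_0 = -\gamma/\bar z$ when $|z|^2 > \gamma(\gamma+1)$ and, only when $\gamma > 1$, $d_0 = -(\gamma-1)/\bar z$ when $|z|^2 < (\gamma-1)^3/\gamma$. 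Continuity at the boundaries is automatic from $g(\gamma) = \gamma(\gamma+1)$ and $g(\gamma-1) = (\gamma-1)^3/\gamma$, so the piecewise formula glues to exactly the function $b$ of the statement. Once pointwise convergence $d(z, \imath t) \to b(z)$ on $\CC\setminus\{0\}$ is accompanied by locally uniform bounds, dominated convergence transfers it to $\cD'(\CC)$, and Proposition~\ref{d->U'} delivers $\partial_{\bar z} U_{\bs\mu}(z) = (2\gamma)^{-1} b(z)$ in $\cD'(\CC)$.

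The main technical difficulty I anticipate is establishing the existence of the pointwise limits $h_0(z)$, $d_0(z)$ with locally uniform control on compacts of $\CC\setminus\{0\}$, together with selecting the correct algebraic branch when $\gamma > 1$ and $|z|^2 \in ((\gamma-1)^3/\gamma,\,(\gamma-1)^2)$, where both the $h_0 > 0$ and the $h_0 = 0$, $|d_0| > 1$ solutions satisfy the limit equations. Monotonicity of $h(z,\cdot)$ on the positive imaginary axis (it is essentially the Stieltjes transform of the symmetric measure $\check{\bs\nu}_z$) should give existence of $h_0$, and the uniqueness clause in Theorem~\ref{narrowcvg} together with continuity in $t$ of the true pair $(h(z,t), d(z,\imath t))$ should force the physical branch. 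The a priori estimate $|d(z,\imath t)| \leq \gamma/t$ degenerates as $t \to 0$, so sharper $t$-independent bounds, extracted from the explicit forms of $u$ and $v$ and the equations themselves, will be needed before the dominated-convergence step.
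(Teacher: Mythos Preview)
Your overall strategy---compute $u,v$ in closed form via residues, pass to the $t\to 0$ limit, match regimes, then invoke Proposition~\ref{d->U'}---matches the paper's, and your residue formulas and the derivation $|z|^2 = g(y)$ in the $h_0>0$ regime are correct. However, the branch-selection difficulty you flag in your last paragraph is a genuine gap, and neither of your proposed fixes closes it.

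First, the monotonicity claim for $h(z,\cdot)$ is false: $h(z,t) = \gamma\int t(\lambda^2+t^2)^{-1}\,\check{\bs\nu}_z(d\lambda)$ is not monotone in $t$ in general (differentiate, or consider a measure supported away from $0$). Existence of the limit must therefore come from showing that all subsequential limits agree, which forces you back onto the branch ambiguity. Second, the uniqueness in Theorem~\ref{narrowcvg} is at each fixed $t>0$ and says nothing about which root of the limiting algebraic system is selected.

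The paper's remedy is to retain the information you lose when the second equation becomes trivial. Rewrite~\eqref{hdt} via~\eqref{IJ} so that the ratio $t/h$ appears explicitly (see the display~\eqref{gzt} in the paper), and along any $t_k\to 0$ pass to a further subsequence where $t_k/h(z,t_k)\to\alpha\in[0,\infty)$; finiteness of $\alpha$ follows from the lower bound $h(z,t)/t\geq \gamma\int(\lambda^2+1)^{-1}\check{\bs\nu}_z(d\lambda)>0$ for $t\in(0,1]$. The limiting system then carries a nontrivial $\alpha$-term. In the case $\alpha>0$ (so $h\to 0$) one can solve for $\alpha$ explicitly, and the requirement $\alpha>0$ forces $|z|^2>\gamma(\gamma+1)$ when $|b|<1$, or $\gamma>1$ and $|z|^2<(\gamma-1)^3/\gamma$ when $|b|>1$. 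These constraints are strictly sharper than your consistency conditions $|z|>\gamma$ and $|z|<\gamma-1$, and they are exactly what eliminates the spurious $h_0=0$ branch on your overlap interval $((\gamma-1)^3/\gamma,(\gamma-1)^2)$. Since the constraints in the $\alpha=0$ and $\alpha>0$ cases partition $\CC\setminus\{0\}$ (up to the boundary circles, where both give the same $b$), all subsequential limits coincide and the full limit exists. The locally uniform bound needed for dominated convergence is provided separately by Lemma~\ref{hdbnd}, namely $|d(z,\imath t)|\leq C/|z|$, obtained directly from~\eqref{hdu'} and the crude estimate $|u|\leq 5$ once $|d|\geq 2$.
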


By Lemma~\ref{prop-g}, $b(z)=b(u + \imath v)$ defined in the 
statement of Proposition~\ref{d->b} is continuously differentiable as a 
function of $u,v$ on the open set 
\[S = \{ z \in \CC \, : \, z \neq 0, \, |z|^2 \neq (\gamma - 1)^3/\gamma, \, 
 |z|^2 \neq \gamma(\gamma+1) \}.\] Therefore, 
$\Delta U_{\bs\mu} = 4 \partial_z \partial_{\bar z} U_{\bs\mu}$ coincides with 
$2\gamma^{-1} \partial_z b$ in $\cD'(S)$, where $\partial_z b$ is the pointwise 
derivative of $b$ w.r.t.~$z$. Specifically, for each test function
$\varphi \in C_{\text{c}}^\infty(S)$, we have 
\[
\int_\CC \varphi \, d\bs\mu = 
- \frac{1}{2\pi} \int_\CC \varphi(z) \Delta U_{\bs\mu}(z) \ dz = 
- \frac{1}{\gamma\pi} \int_\CC \varphi(z) \partial_z b(z) \ dz = 
\int_\CC \varphi(z) f(z) \ dz,  
\]
where, by Proposition~\ref{d->b}, the density $f(z)$ of $\bs\mu$ on $S$ is given by 
\begin{equation}
f(z) = \left\{ \begin{array}{cl} 
\displaystyle{\frac{1}{\gamma\pi} \partial_z \frac{g^{-1}(|z|^2)}{\bar z} = 
  \frac{1}{\gamma\pi} \partial_{|z|^2} g^{-1}(|z|^2)} & \text{if} \ 
  \ 0 \vee ((\gamma - 1)^3/\gamma) < |z|^2 < \gamma(\gamma+1), \\ \\
  0 & \ \ \ \text{elsewhere}
	\label{densityf} 
\end{array} 
\right. 
\end{equation}
Hence the density $f$ depends on $z$ through $|z|$ only,
and thus $\bs\mu$ is rotationally invariant on $S$.  

Now we consider $\mu$ on the boundary $\partial S$. 
We deal separately with the cases $\gamma\leq 1$ and 
$\gamma > 1$. 

First suppose $\gamma\leq 1$. 
Let $0 < s < r < \sqrt{\gamma(\gamma+1)}$. Changing to polar co-ordinates, we get 
\begin{align*} 
\bs\mu(\{ z : |z| \in [s,r]\}) &= 
 \frac{1}{\gamma\pi} \int_{\{ z : |z| \in [s,r]\}} 
         \partial_{|z|^2} g^{-1}(|z|^2) \ dz  \\
 &= \frac{1}{\gamma} \frac{1}{2\pi} \int_0^{2\pi} d\theta 
   \int_s^r 2\rho \, \partial_{\rho^2} g^{-1}(\rho^2) \ d\rho \\  
 &= \gamma^{-1} g^{-1}(r^2) - \gamma^{-1} g^{-1}(s^2).\label{mumeasure} 
\end{align*} 
But since $\gamma^{-1} g^{-1}(0) = 0$ and 
$\gamma^{-1} g^{-1}(\gamma(\gamma+1)) = 1$, we get that 
\[\bs\mu(\{ 0 \}) = 
\bs\mu(\{z : |z| = \sqrt{\gamma(\gamma+1)}\}) = 0.
\]
establishing the formula in Theorem~\ref{th:main} for $\gamma\leq 1$. 

Now suppose $\gamma > 1$.
Put 
 $a = (\gamma-1)^{3/2} / \sqrt{\gamma}$. 

If we set 
$0 < s < r < a$, we obtain from (\ref{densityf})
that $\bs\mu(\{ z : |z| \in [s,r]\}) = 0$. 

If 
$a < s < r < \sqrt{\gamma(\gamma+1)}$, then 
$\bs\mu(\{ z : |z| \in [s,r]\}) =  
 \gamma^{-1} g^{-1}(r^2) - \gamma^{-1} g^{-1}(s^2)$ 
by the same derivation as for $\gamma\leq 1$. 

Now we claim that  
$\bs\mu(\{ z : |z| = a \}) = 0$. To show this, let $\phi : [-1,1] \to[0,1]$ be a smooth 
function such that $\phi(0) = 1$ and $\phi(-1)=\phi(1)=0$. Given $\varepsilon > 0$, define the 
$\CC \to [0,1]$ function 
\[
\psi_\varepsilon(z) = \phi\left(\frac{|z| - a}{\varepsilon}\right) 
\]
which is supported on the ring $\{ z : a - \varepsilon \leq |z| 
  \leq a + \varepsilon \}$. It is then enough to show that $\int \psi_\varepsilon  \, d\bs\mu \to 0$ as $\varepsilon \to 0$. 
Indeed, by an integration by parts, we get that 
\[
\int \psi_\varepsilon  \, d\bs\mu = 
 - \frac{1}{\gamma\pi} \int \psi_\varepsilon(z) \, \partial_z b(z) \ dz 
 = \frac{1}{\gamma\pi} \int \partial_z \psi_\varepsilon(z) \, b(z) \ dz 
= \frac{1}{2\varepsilon\gamma\pi} \int \frac{1}{|z|} 
  \phi'\left(\frac{|z| - a}{\varepsilon}\right) c(|z|) \ dz , 
\]
where the function $c(\rho) = \bar z b(z)$ for $\rho = |z|$ is a real bounded 
function near $\rho = a$ that satisfies $c(a) = 1 - \gamma$ by 
Proposition~\ref{d->b}. Making a cartesian to polar variable change, we get 
that 
\begin{align*} 
\int \psi_\varepsilon  \, d\bs\mu &= \frac{1}{2\varepsilon\gamma\pi} 
 \int_0^{2\pi} d\theta \int 
  \phi'\left(\frac{\rho - a}{\varepsilon}\right) c(\rho) \ d\rho 
 = \frac{1}{\gamma} \int_{-1}^1 \phi'(u) c(\varepsilon u + a) \ du \\
&\xrightarrow[\varepsilon\to 0]{} 
\frac{1-\gamma}{\gamma} \left( \phi(1) - \phi(-1)\right) = 0 
\end{align*} 
by the dominated convergence theorem. 

Since 
$\gamma^{-1} g^{-1}(a^2) = 1 - \gamma^{-1}$, we can infer now that 
$\bs\mu( \{ z : s \leq |z| \leq r \}) = \gamma^{-1} g^{-1}(r^2) - 
  (1-\gamma^{-1})$ for each $s \in (0,a)$ and each 
$r \in [a, \sqrt{\gamma(\gamma+1)})$. Letting $s\downarrow 0$ and 
$r\uparrow \sqrt{\gamma(\gamma+1)}$, and recalling that 
$g^{-1}(\gamma(\gamma+1)) = \gamma$, we get that 
\[
\bs\mu( \{ z : |z| < \sqrt{\gamma(\gamma+1)} \}) = 
  1 - (1-\gamma^{-1}) + \bs\mu(\{ 0 \}) .
\]
Similarly to $\bs\mu(\{ z : |z| = a \}) = 0$, we can show that 
$\bs\mu(\{ z : |z| = \sqrt{\gamma(\gamma+1)} \}) = 0$. We therefore get that 
$\mu(\{ 0 \}) = 1 - \gamma^{-1}$, and hence 
the formula in
Theorem~\ref{th:main} is verified also for $\gamma > 1$. 

This completes the proof of Theorem~\ref{th:main}.

\section{Limit singular value distribution}
\label{sec:lsd} 

Given $(z,\eta) \in\CC \times \CC_+$, $\alpha\in\RR$, and a sequence 
$(a_n(z,\eta))_n$ of complex numbers, the notation $a_n = \cO_\eta(n^\alpha)$ 
(or $a_n = \cO_t(n^\alpha)$ when $\eta = \imath t$) 
will refer in this section to the existence of a constant $C > 0$ and two
non-negative integers $k$ and $\ell$ such that 
\[ 
|a_n(z,\eta) | \leq \frac{C |\eta|^k}{(\Im\eta)^\ell} n^\alpha .
\]
The constants $C$, $k$, and $\ell$ may depend on $z$ but not on $\eta$ or  
$n$. If $a_n(z,\eta)$ is a matrix, then the notations 
$a_n = \cO_\eta(n^\alpha)$ and $a_n = \cO_t(n^\alpha)$, are to be understood in a uniform entry-wise sense. 

\subsection{Proof of Theorem~\ref{narrowcvg}}  
\label{prf:narrow} 

We start by showing that for each $z\in\CC$, the bulk  behavior of the
singular values of $Y - z$ is completely specified by
Assumption~\ref{ass-model} and does not depend on the particular distribution
of the elements of $X$. 

Our first result is a standard concentration result which helps to replace the
traces by their expectations. The proof uses well-known methods, see
\cite{bai-sil-book}, and we omit it. 

\begin{proposition} 
\label{prop:conc}
Under Assumption~\ref{ass-model}, for each $(z,\eta) \in \CC \times \CC_+$, 
\[
\frac 1n \begin{bmatrix} \tr Q_{00}(z,\eta) &  \tr Q_{01}(z,\eta) \\
  \tr Q_{10}(z,\eta) &  \tr Q_{11}(z,\eta) \end{bmatrix} 
 - 
\frac 1n \begin{bmatrix} \tr \EE Q_{00}(z,\eta) &  \tr \EE Q_{01}(z,\eta) \\
  \tr \EE Q_{10}(z,\eta) &  \tr \EE Q_{11}(z,\eta) \end{bmatrix} 
 \toaslong 0. 
\]
\end{proposition}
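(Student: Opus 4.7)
The plan is to follow the classical martingale--rank-perturbation approach of Bai--Silverstein. I would decompose the centered block trace along the successive columns of $X$, bound each martingale difference using the low-rank nature of the update, and conclude via Azuma--Hoeffding combined with Borel--Cantelli.

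Concretely, denoting by $x_j$ the $j$-th column of $X$ and setting $\mathcal F_k = \sigma(x_0,\ldots,x_{k-1})$, for any of the four block traces $T = \frac{1}{n}\tr Q_{ij}(z,\eta)$ I would write $T - \EE T = \sum_{k=0}^{n-1} D_k$ with $D_k = \EE[T\mid \mathcal F_{k+1}] - \EE[T\mid \mathcal F_k]$. The key structural observation is that
\[
Y = XJX^* = \sum_{j=0}^{n-1} x_{(j+1)\bmod n}\, x_j^{*},
\]
so replacing the column $x_k$ by an independent copy $x_k'$ alters only the two terms indexed by $j=k-1$ and $j=k$, yielding a rank-$\leq 2$ perturbation of $Y$ and hence a rank-$\leq 4$ Hermitian perturbation of $\Sigma(z)$. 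Let $\Sigma^{(k)},Q^{(k)}$ denote the corresponding quantities, which are independent of $x_k$. The resolvent identity $Q - Q^{(k)} = -Q(\Sigma - \Sigma^{(k)})Q^{(k)}$, the norm bound $\|Q\|,\|Q^{(k)}\|\leq 1/\Im\eta$, and the elementary inequality $|\tr X|\leq\mathrm{rank}(X)\,\|X\|$ yield, for any $M$ of operator norm $\leq 1$,
\[
\bigl|\tr\bigl[M(Q-Q^{(k)})\bigr]\bigr|\;\leq\;4\|M\|\,\|\Sigma-\Sigma^{(k)}\|\,/\,(\Im\eta)^{2}.
\]
Choosing $M$ to be the (operator-norm-one) matrix that selects the $(i,j)$-block (for the off-diagonal blocks, $M=\bigl[\begin{smallmatrix}0 & 0\\ I_N & 0\end{smallmatrix}\bigr]$ or its adjoint), and using $\EE[\tr(MQ^{(k)})\mid\mathcal F_{k+1}]=\EE[\tr(MQ^{(k)})\mid\mathcal F_k]$, one obtains $|D_k|\leq 8\,\|\Sigma-\Sigma^{(k)}\|\,/(n(\Im\eta)^{2})$.

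The main obstacle is that $\|\Sigma-\Sigma^{(k)}\|$ is \emph{not} deterministically bounded, since it scales with the random column norms $\|x_{k-1}\|,\|x_k\|,\|x_{k+1}\|$. I would address this by the standard truncation step of \cite{bai-sil-book}: replace each $x_{ij}$ by $x_{ij}\,\1_{|x_{ij}|\leq T/\sqrt{n}}$, followed by recentering and rescaling. Assumption~\ref{ass-model} (together with a Hoffman--Wielandt type comparison) ensures that, for $T$ large, this substitution affects $n^{-1}\tr Q_{ij}$ only by a vanishing amount almost surely; after truncation, $\|x_k\|\leq T\sqrt{N/n}$ deterministically, so $\|\Sigma-\Sigma^{(k)}\|$ is bounded by an absolute constant $C(T)$. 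One then has $|D_k|\leq C(z,\eta,T)/n$ deterministically, and Azuma--Hoeffding (applied to the real and imaginary parts of the martingale) gives
\[
\PP\Bigl[\,\Bigl|\sum_{k=0}^{n-1} D_k\Bigr|>\varepsilon\,\Bigr]\;\leq\;4\exp\bigl(-c(z,\eta,T)\,n\varepsilon^{2}\bigr),
\]
which is summable in $n$; Borel--Cantelli then yields the stated almost-sure convergence to zero. The argument is identical for the four blocks, the only change being the choice of $M$.
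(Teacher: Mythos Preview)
Your approach is correct and is precisely the Bai--Silverstein martingale argument that the paper invokes (the paper omits the proof and simply cites \cite{bai-sil-book}). One simplification is worth noting: the truncation step is unnecessary. Since $Q-Q^{(k)} = -Q(\Sigma-\Sigma^{(k)})Q^{(k)}$ has rank at most $\rank(\Sigma-\Sigma^{(k)})\le 4$, and since the trivial bound $\|Q-Q^{(k)}\|\le \|Q\|+\|Q^{(k)}\|\le 2/\Im\eta$ holds regardless of $\|\Sigma-\Sigma^{(k)}\|$, you obtain directly
\[
\bigl|\tr M(Q-Q^{(k)})\bigr|\le \rank\bigl(M(Q-Q^{(k)})\bigr)\cdot\|M(Q-Q^{(k)})\|\le 4\cdot\frac{2}{\Im\eta}=\frac{8}{\Im\eta},
\]
hence $|D_k|\le 16/(n\Im\eta)$ \emph{deterministically}, with no need to control the column norms. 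Azuma--Hoeffding and Borel--Cantelli then conclude exactly as you describe.
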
 

The above expectations are easier to compute when the entries are Gaussian. 
The next result establishes that we can assume this without any loss. 
Let $x^\cN = (U + \imath V) / \sqrt{2n}$, where $U$ and $V$ are real 
independent standard Gaussian random variables. 
Define 
$X^\cN = \begin{bmatrix} x^\cN_{ij} \end{bmatrix}_{i,j=0}^{N-1,n-1}$,
where the $x^\cN_{ij}$ are independent copies of $x^\cN$. 
Clearly, these entries satisfy 
Assumption~\ref{ass-model}.
Let $Q^\cN_{ij}(z,\eta)$ be the analogues of the $Q_{ij}(z,\eta)$, obtained by replacing the matrix $X$ with $X^\cN$. The proof of 
the following proposition proceeds along standard lines and uses the 
boundedness of the fourth moment in Assumption \ref{ass-model} and the fact that the first two moments of the two sets of variables agree.
We omit the details.  
\begin{proposition} 
\label{prop:gauss}
Under Assumption~\ref{ass-model}, for each $(z,\eta) \in \CC \times \CC_+$, 
\[
\frac 1n \begin{bmatrix} \tr \EE Q_{00}(z,\eta) &  \tr \EE Q_{01}(z,\eta) \\
  \tr \EE Q_{10}(z,\eta) &  \tr \EE Q_{11}(z,\eta) \end{bmatrix} 
 - 
\frac 1n \begin{bmatrix} \tr \EE Q_{00}^\cN(z,\eta) &  
        \tr \EE Q_{01}^\cN(z,\eta) \\
  \tr \EE Q_{10}^\cN(z,\eta) &  \tr \EE Q_{11}^\cN(z,\eta) \end{bmatrix} 
 = \cO_\eta(n^{-1/2}).  
\]
\end{proposition}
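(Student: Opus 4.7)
The approach will be a Lindeberg-type replacement, interpolating between $X$ and $X^\cN$ one entry at a time. Enumerate the $nN$ entries of $X$ arbitrarily, and for $k \in \{0,1,\ldots,nN\}$ let $X^{(k)}$ be the matrix where the first $k$ entries have been swapped for their independent Gaussian counterparts, so that $X^{(0)} = X$ and $X^{(nN)} = X^\cN$. Writing $Q^{(k)}$ for the associated resolvent, each of the four trace differences in the statement becomes a telescoping sum over the $nN$ single-entry swaps. To obtain the advertised $\cO_\eta(n^{-1/2})$ rate, it is enough to control each per-swap contribution by $\cO_\eta(n^{-5/2})$.

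For a single swap, condition on all other entries of the current matrix and let $x$ and $x^\cN$ denote the entry being replaced. The key structural observation is that $\Sigma(z)$ depends \emph{affinely} on this single entry: because $J_{cc} = 0$ for every $c$, the product $Y = XJX^*$ contains neither $|x|^2$ nor $x^2$, so we can write $\Sigma = \Sigma_0 + x\, A + \bar x\, A^*$ with $\Sigma_0$ and $A$ independent of $x$, and $A$ rank one (indeed, $A$ is the outer product of a standard basis vector with a column of $X$). On the overwhelmingly likely event $\{\|X\| \leq 1 + \sqrt{\gamma} + \varepsilon\}$ guaranteed by~\eqref{yin-bai}, we have $\|A\| = \cO(1)$, and together with $\|(\Sigma_0-\eta)^{-1}\| \leq 1/\Im\eta$ the Neumann expansion
\[
Q \;=\; \sum_{k\geq 0} (-1)^k R\bigl[(xA + \bar x A^*) R\bigr]^k, \qquad R := (\Sigma_0-\eta)^{-1},
\]
yields a Taylor expansion of $\tr Q$ in $(x, \bar x)$ whose coefficients are all of the order $\cO_\eta(1)$, because the rank-one structure of $A$ collapses each trace of a product of $A$'s and $R$'s into a product of scalars of the form $v^* R^\ell u$ with $\|u\|, \|v\| = \cO(1)$. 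Taking expectations and differencing against the Gaussian case, the constant, linear, and mixed $(1,1)$-order terms cancel by the matched moments $\EE x = \EE x^\cN = 0$ and $\EE|x|^2 = \EE|x^\cN|^2 = 1/n$. The Taylor remainders of order $\geq 3$ contribute at most $\cO_\eta(1) \cdot \EE|x|^3 / n$ per swap; since $\EE|x|^3 \leq (\EE|x|^4)^{3/4} = \cO(n^{-3/2})$ by Assumption~\ref{ass-model} and Jensen's inequality, each such contribution is $\cO_\eta(n^{-5/2})$, and the $\cO(n^2)$ swaps combine to $\cO_\eta(n^{-1/2})$.

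The main obstacle is the pure second-order contribution proportional to $\EE x^2$, which vanishes for the circularly symmetric Gaussian but is only known to satisfy $|\EE x^2| \leq \EE|x|^2 = 1/n$ for the general $X$ of Assumption~\ref{ass-model}. Naively this produces a per-swap error of order $\cO_\eta(1) \cdot n^{-1} \cdot n^{-1} = \cO_\eta(n^{-2})$ and a total of $\cO_\eta(1)$, which is insufficient. To reach $\cO_\eta(n^{-1/2})$, one must exploit the specific structure of $Y = X J X^*$: the sum $\sum_{ij} \EE\bigl[x_{ij}^2 \tr(A_{ij} R A_{ij} R^2)\bigr]$ involves, for each $(i,j)$, the outer product $A_{ij} = e_i^{(N)} v_j^*$ with $v_j = X e_{j-1 \bmod n}$, and the trace reduces to the product $(v_j^* R e_i)(v_j^* R^2 e_i)$; summing over $(i,j)$ and taking the expectation, the independence structure together with the zero-mean property of the remaining entries of $X$ forces an extra averaging gain of at least $n^{-1/2}$ (formally, one performs an inner round of Lindeberg replacement on the entries of $X$ appearing in $v_j$ and $R$, with coefficients that pick up the missing factors of $\EE|x|^4 = \cO(n^{-2})$). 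After this delicate bookkeeping, the second-order sum is shown to be $\cO_\eta(n^{-1/2})$, and combined with the controlled remainders the proof is complete.
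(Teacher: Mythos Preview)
The paper omits this proof, saying only that it is standard and uses ``the fact that the first two moments of the two sets of variables agree.'' Your Lindeberg scheme is exactly the intended approach, and the matched-moment cancellations together with the third-order remainder bound are correct. Two small inaccuracies: the perturbation $A$ has rank two, not one, since both $e_i(X_0e_{j-1})^*$ and $e_i(X_0e_{j+1})^*$ enter the off-diagonal blocks of $\Sigma$; and the Neumann series need not converge for an unbounded entry $x$, so one should use a finite Taylor expansion with integral remainder, controlling $\|A\|$ on the event $[\|X\|\le C]$ and disposing of the complement via the crude bound $|n^{-1}\tr Q_{ab}|\le\gamma_n/\Im\eta$.

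The genuine gap is your last paragraph. Under Assumption~\ref{ass-model} the pseudo-variance $\EE x_{00}^2$ need not vanish (Assumption~\ref{Xcomp} explicitly permits $0<|n\EE x_{00}^2|<1$), while $\EE(x^\cN)^2=0$, so the pure $(2,0)$ and $(0,2)$ Taylor terms do not cancel. Your ``extra averaging gain via an inner Lindeberg round'' is not an argument: the second-order coefficient at position $(i,j)$ depends on the remaining entries through both the resolvent $R$ and the columns $X_0e_{j\pm1}$ sitting inside $A$, and you have given no mechanism by which the sum over $(i,j)$ of their expectations is smaller than the trivial $\cO_\eta(n^2)$. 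A workable fix is to swap $X$ first for a complex Gaussian $\widetilde X^\cN$ matching \emph{all} second moments of $x_{00}$ (so $\EE\widetilde x^{\,2}=\EE x_{00}^2$); then the $(2,0)$ and $(0,2)$ terms cancel exactly and your third-order analysis already yields $\cO_\eta(n^{-1/2})$. The residual Gaussian--Gaussian comparison between $\widetilde X^\cN$ and $X^\cN$ can then be handled by carrying the additional $\EE x^2\,\partial_x$ term through the integration-by-parts calculation of Section~\ref{prf:narrow}. The paper's one-line justification glosses over exactly this second-moment mismatch, so you were right to flag it, but your resolution as written does not close the gap.
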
 
\textit{Hence, in the rest of this subsection
we assume that the elements of
$X$ are distributed as $x^\cN$}. 
This enables us to study 
$n^{-1} \tr \EE Q_{ij}(z,\eta)$ with the help of two 
Gaussian tools that are frequently used in random matrix theory. The first 
is the Integration by Parts (IP) 
formula \cite{gli-jaf-(livre87)}, \cite{kho-pas-93}, and the second is the 
Poincaré-Nash (PN) inequality \cite{cha-bos-04}, \cite{pas-05}, which is also
a particular case of the Brascamp-Lieb inequality. 
A detailed account of the use of these tools in random matrix theory can
be found in the treatise~\cite{pas-livre}. 

Let $w = [ w_0, \ldots, w_{n-1} ]^\T$ be a complex Gaussian
random vector with  
$\EE w = 0$, $\EE w w^\T = 0$, and $\EE [ w w^* ] = \Xi$. Let 
$\varphi = \varphi(w_0, \ldots, w_{n-1}, \bar w_0, \ldots, \bar w_{n-1} )$ 
be a $C^1$ complex function which is polynomially bounded together with
its derivatives. Then, the \textit{IP formula} reads as 
\begin{equation}\label{ipformula}
\EE w_k \varphi( w ) = 
\sum_{\ell = 0}^{n-1} \left[ \Xi \right]_{k \ell} 
\EE \left[ \frac{\partial \varphi(w)}{\partial \bar w_{\ell}} \right].
\end{equation}

\noindent Furthermore, writing 
 \[\nabla_w\varphi = [ \partial\varphi / \partial w_0, \ldots, 
 \partial\varphi / \partial w_{n-1} ]^\T \ \ \text{and}\ \ 
 \nabla_{\bar{w}}\varphi = [ \partial\varphi / 
 \partial \bar w_0, \ldots, \partial\varphi / \partial \bar w_{n-1} ]^\T,\] 
the \textit{PN inequality} is 
\begin{equation}\label{pninequality}
 \var\left( {\varphi}( w) \right) \leq 
 \EE \left[ \nabla_w \varphi( w )^T \ \Xi \ 
 \overline{\nabla_w \varphi( w )} 
 \right] 
 +
 \EE \left[ \left( \nabla_{\bar{w}}  \varphi( w ) \right)^* 
 \ \Xi \ \nabla_{\bar{w}} \varphi(w) 
 \right].
\end{equation}

We shall apply (\ref{ipformula}) to the case  $w \equiv X$ and 
$\varphi \equiv u^* Q v$ where $Q=Q(z, \eta)$ is the resolvent given by Eq.~\eqref{Qeta} 
(seen as a function of $X$), and $u$ and $v$ are deterministic vectors in 
$\CC^{2N}$. 
If we disregard $(z,\eta)$ and write $Q(z,\eta) = Q^X$ to emphasize the
dependence of the resolvent on $X$, then, given
a matrix $\Delta \in \CC^{N\times n}$, the resolvent identity implies that
\[
Q^{X+\Delta} - Q^X = - Q^{X+\Delta} 
\begin{bmatrix} & (X+\Delta) J (X+\Delta)^* - X JX^* \\
(X+\Delta) J^{-1} (X+\Delta)^* - X J^{-1} X^* \end{bmatrix} Q^X . 
\]
Using this equation, we can obtain the expression of 
$\partial u^* Q v / \partial \bar x_{ij}$, where $i\in [N]$ and $j \in [n]$. 
Taking $\Delta = e_{N,i} e_{n,j}^*$ we get that
\begin{align*} 
\frac{\partial u^* Q v}{\partial \bar x_{ij}} 
 &= - u^* Q \begin{bmatrix}  & X J e_{n,j} e^*_{N,i} \\ 
  X J^{-1} e_{n,j} e_{N,i}^* \end{bmatrix} Q v . 
\end{align*} 
In particular, by taking $u = e_{2N,k}$ and $v = e_{2N,\ell}$ 
for $k,\ell \in [N]$, we obtain from these equations that 
\begin{equation}\label{firstpartial}
\frac{\partial [ Q_{00} ]_{k,\ell}}{\partial \bar x_{ij}} = 
 - [ Q_{01} X J^{-1} ]_{kj} [Q_{00}]_{i \ell} 
 - [ Q_{00} X J ]_{kj} [Q_{10} ]_{i\ell}, 
\end{equation}
and by taking $u = e_{2N,k}$ and $v = e_{2N, N+\ell}$ for $k,\ell \in [N]$, 
we get 
\begin{equation}\label{secondpartial}
\frac{\partial [ Q_{01} ]_{k,\ell}}{\partial \bar x_{ij}} = 
 - [ Q_{00} X J ]_{kj} [Q_{11}]_{i \ell} 
 - [ Q_{01} X J^{-1} ]_{kj} [Q_{01} ]_{i\ell}. 
\end{equation}
Given $M \in \CC^{n\times n}$, we shall also use the trivial relations 
\[
[ MJ^k ]_{\cdot, j} = [M]_{\cdot, j+k} \quad \text{and} \quad
[ J^k M ]_{i,\cdot} = [M]_{i-k,\cdot} ,  
\]
where both the sum $j+k$ and the difference $i-k$ are taken modulo-$n$. 

We can now start our calculations. Recalling that $x_j$ refers to the 
$j^{\text{th}}$ column of $X$ for $j \in [n]$, our first task is to study 
quadratic forms of the type $x_k^* Q_{00} x_\ell$ and $x_k^* Q_{01} x_\ell$. 
Define the matrices 
\[
A_{00} = \EE \begin{bmatrix} x_k^* Q_{00} x_\ell \end{bmatrix}_{k,\ell=0}^{n-1}
\quad \text{and} \quad 
A_{01} 
  = \EE \begin{bmatrix} x_k^* Q_{01} x_\ell \end{bmatrix}_{k,\ell=0}^{n-1} .
\]
It is obvious that $X \stackrel{{\mathcal L}}{=} X J^m$ for each $m\in \ZZ$.
Thus, given a measurable function $f : \CC^{N\times N} \to \CC^{N\times N}$ 
and the integers $k,\ell,m \in [n]$, it holds that 
\begin{align*} 
x_{k+m}^* f(XJX^*) x_{\ell+m} &= e_{n,k+m}^* X^* f(XJX^*) X e_{n,\ell+m} \\
&= e_{n,k}^* J^{-m} X^* f(X J^m J J^{-m} X^*) X J_m e_{n,\ell}   \\ 
&\stackrel{{\mathcal L}}{=} x_{k}^* f(XJX^*) x_{\ell}, 
\end{align*} 
where the index summations are taken modulo-$n$. As a consequence, \textit{the matrices
$A_{00}$ and $A_{01}$ are 
circulant matrices}, a fact very useful to us. 

Starting with $A_{00}$, we have by the IP formula (\ref{ipformula}), 
\begin{align*}
\EE x_k^* Q_{00} x_\ell &= \sum_{i,j=0}^{N-1} 
 \EE [(\bar x_{ik} [ Q_{00}]_{ij}) x_{j\ell}] 
= \frac 1n \sum_{i,j} \EE \left[ \frac{\partial ( \bar x_{ik} [ Q_{00}]_{ij})}
 {\partial \bar x_{j\ell}} \right] \\
 &= \frac 1n \sum_{i,j} \1_{i=j} \1_{k=\ell} \EE [ Q_{00}]_{ij}  \\
 &\phantom{=} 
  - \frac 1n \sum_{i,j} 
   \EE [ Q_{01} X J^{-1} ]_{i\ell} \bar x_{ik} [Q_{00}]_{jj} 
  - \frac 1n \sum_{i,j} 
   \EE [ Q_{00} X J ]_{i\ell} \bar x_{ik} [Q_{10} ]_{jj}  \ \text{(using \ref{firstpartial})}\\
&= \1_{k=\ell} \EE \tr Q_{00} / n 
 - \EE \left[ [ X^* Q_{01} X J^{-1} ]_{k\ell} \tr Q_{00} / n \right] 
 - \EE \left[ [ X^* Q_{00} X J ]_{k\ell} \tr Q_{10} / n \right] . 
\end{align*} 
We also have 
\begin{align*}
\EE x_k^* Q_{01} x_\ell &= \sum_{i,j=0}^{N-1}  
 \EE [(\bar x_{ik} [ Q_{01}]_{ij}) x_{j\ell}] 
= \frac 1n \sum_{i,j} \EE \left[ \frac{\partial ( \bar x_{ik} [ Q_{01}]_{ij})}
 {\partial \bar x_{j\ell}} \right] \ \text{(using \ref{secondpartial})}\\
%\text{[\wh{skip next two lines?}]}\\
 %&= \frac 1n \sum_{i,j} \1_{i=j} \1_{k=\ell} \EE [ Q_{01}]_{ij}  \\
 %&\phantom{=} 
  %- \frac 1n \sum_{i,j} 
  %\EE [ Q_{00} X J ]_{i\ell} \bar x_{ik} [Q_{11}]_{jj} 
  %- \frac 1n \sum_{i,j} 
  %\EE [ Q_{01} X J^{-1} ]_{i\ell} \bar x_{ik} [Q_{01} ]_{jj}  \\ 
%% 
 &= \1_{k=\ell} \EE[ \tr Q_{01} / n ] 
 - \EE\left[ [ X^* Q_{00} X J ]_{k\ell} \tr Q_{11} / n \right] 
 - \EE\left[ [ X^* Q_{01} X J^{-1} ]_{k\ell} \tr Q_{01} / n \right] . 
\end{align*} 

In right side of the above two expressions 
we have terms of the type
$\EE\left[ \left[ \cdots \right]_{k\ell} \tr Q_{ij} / n \right]$. We can use 
the PN inequality (\ref{pninequality}) to decouple $\left[ \cdots \right]_{k\ell}$ from 
$\tr Q_{ij} / n$. Specifically, we have the following lemma, which is proven in Appendix~\ref{prf-lm:np}.

\begin{lemma}
\label{lm:np} 
For each $i,j \in \{0,1\}$ and each $k,\ell \in [n]$, 
\[
\var \left( \tr Q_{ij} /n \right) = \cO_\eta(n^{-2}) \quad \text{and} \quad
\var \left( x_k^* Q_{ij} x_\ell \right) = \cO_\eta(n^{-1}) . 
\]
\end{lemma}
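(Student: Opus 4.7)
The plan is to apply the Poincar\'e--Nash inequality \eqref{pninequality} directly to each of the two families of random variables, exploiting the fact that in the Gaussian reduction adopted throughout this subsection the covariance matrix $\Xi$ of the (vectorized) entries of $X$ equals $n^{-1} I$, so that \eqref{pninequality} collapses to $\var(\varphi) \leq n^{-1}(\EE \|\nabla_X \varphi\|^2 + \EE \|\nabla_{\bar X} \varphi\|^2)$.

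For the first estimate I would carry out the computation explicitly for $\psi = \tr Q_{00}$ and treat $\tr Q_{01}, \tr Q_{10}, \tr Q_{11}$ by the same template. Summing the diagonal of~\eqref{firstpartial} yields
\[
\frac{\partial \psi}{\partial \bar x_{ij}} \ = \ -[Q_{00} Q_{01} X J^{-1}]_{ij} \ - \ [Q_{10} Q_{00} X J]_{ij},
\]
so, using $\|MN\|_{\HS} \leq \|M\| \|N\|_{\HS}$, the orthogonality of $J^{\pm 1}$ (which preserves $\|\cdot\|_{\HS}$), and $\|Q_{ab}\| \leq \|Q\| \leq (\Im \eta)^{-1}$, I would bound
\[
\sum_{i,j} \Bigl|\frac{\partial \psi}{\partial \bar x_{ij}}\Bigr|^2 \ \leq \ 4 \|Q\|^4 \|X\|_{\HS}^2.
\]
Since $\EE \|X\|_{\HS}^2 = N$ and the holomorphic derivatives admit an identical bound, the Poincar\'e--Nash inequality yields $\var(\psi) \leq 8(\Im \eta)^{-4} N / n = \cO_\eta(1)$, whence $\var(\tr Q_{00}/n) = \cO_\eta(n^{-2})$.

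For the quadratic forms I would set $\varphi = x_k^* Q_{ij} x_\ell = \sum_{a,b} \bar x_{ak}[Q_{ij}]_{ab} x_{b\ell}$ and differentiate in $\bar x_{pq}$. Two types of terms appear: an \emph{explicit} contribution $\1_{q=k}[Q_{ij} x_\ell]_p$ coming from differentiating $\bar x_{pk}$, and \emph{implicit} contributions of the generic form $[x_k^* Q_{\cdot\cdot} X J^{\pm 1}]_q [Q_{\cdot\cdot} x_\ell]_p$ produced by~\eqref{firstpartial}--\eqref{secondpartial}. Square-summing over $(p,q)$ converts the explicit piece into $\|Q_{ij} x_\ell\|^2 \leq \|Q\|^2 \|x_\ell\|^2$ and each implicit piece into $\|x_k^* Q_{\cdot\cdot} X\|^2 \|Q_{\cdot\cdot} x_\ell\|^2 \leq \|Q\|^4 \|X\|^2 \|x_k\|^2 \|x_\ell\|^2$. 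Under the Gaussian assumption, $\EE\|X\|^{2p}$, $\EE\|x_k\|^{2p}$, $\EE\|x_\ell\|^{2p}$ are all $\cO(1)$ for every fixed $p$, so Cauchy--Schwarz gives $\EE \sum_{p,q} |\partial_{\bar x_{pq}}\varphi|^2 = \cO_\eta(1)$; the same holds for the holomorphic derivatives, and \eqref{pninequality} delivers $\var(x_k^* Q_{ij} x_\ell) = \cO_\eta(n^{-1})$.

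The only mild obstacle is bookkeeping: the formulas~\eqref{firstpartial}--\eqref{secondpartial} are written only for $Q_{00}$ and $Q_{01}$, so I would need the analogous derivatives of $Q_{10}$ and $Q_{11}$, which follow from the same resolvent identity and yield structurally identical expressions (a product of two resolvent blocks sandwiching $X J^{\pm 1}$). Aside from that, every step reduces to the same trick: pull $\|Q\| \leq (\Im \eta)^{-1}$ factors out via $\|MN\|_{\HS} \leq \|M\| \|N\|_{\HS}$ so that the only possibly large factor left inside the expectation is either $\|X\|_{\HS}^2$ (bounded in expectation by $N$) or a product of Euclidean norms of single columns $x_k$ (each with moments of all orders uniformly bounded under the Gaussian simplification), after which dividing by the appropriate power of $n$ produces the announced rates.
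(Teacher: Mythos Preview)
Your proposal is correct and follows essentially the same route as the paper: apply the Poincar\'e--Nash inequality with $\Xi = n^{-1} I$, compute the resolvent derivatives (the paper uses exactly~\eqref{firstpartial}--\eqref{secondpartial} and their analogues), and bound the resulting sums by pulling out $\|Q\| \leq (\Im\eta)^{-1}$ and invoking moment bounds on $\|X\|$ and on the columns $x_k$. The paper phrases the first estimate via the trace inequality $\tr MP \leq \|M\|\tr P$ and the second via Cauchy--Schwarz together with $\EE\|X\|^8 \leq C$ (citing \cite{rud-ver-cpam09}), which is exactly your $\|MN\|_{\HS}\leq \|M\|\|N\|_{\HS}$ argument written in trace form; the bookkeeping you flag for $Q_{10}, Q_{11}$ is dispatched there with ``proof is similar''.
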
 

Let us write $q_{ij}=q_{ij}(z,\eta) = n^{-1} 
 \EE \tr Q_{ij}(z,\eta)$ for $i,j \in \{0,1\}$.  
Using the lemma, and applying the Cauchy-Schwartz inequality, 
it is easy to see that 
\begin{align*} 
\EE x_k^* Q_{00} x_\ell &= 
 \1_{k=\ell} q_{00} 
 - \EE \left[ [ X^* Q_{01} X J^{-1} ]_{k\ell} \right] q_{00} 
 - \EE \left[ [ X^* Q_{00} X J ]_{k\ell} \right] q_{10} 
  + {\mathcal O}_\eta(n^{-3/2}), \ \text{and} \\
\EE x_k^* Q_{01} x_\ell &= 
  \1_{k=\ell} q_{01} 
 - \EE\left[ [ X^* Q_{00} X J ]_{k\ell} \right] q_{11} 
 - \EE\left[ [ X^* Q_{01} X J^{-1} ]_{k\ell} \right] q_{01} 
  + {\mathcal O}_\eta(n^{-3/2}).  
\end{align*} 

Since $Y$ is a square matrix, we see from~\eqref{Qeta} 
that $q_{00} = q_{11}$. Thus, the equations above can be written in a matrix 
form as 
\begin{align} 
A_{00} (I_n +  q_{10} J ) + q_{00} A_{01} J^{-1} &= q_{00} I_n + 
           {\mathcal O}_\eta(n^{-3/2}), \ \text{and} \label{A00} \\ 
q_{00} A_{00} J + A_{01} ( I_n + q_{01} J^{-1}) &= q_{01} I_n + 
          {\mathcal O}_\eta(n^{-3/2})  .  \label{A01} 
\end{align}
Let us give these equations a more symmetric form. Developing 
$\eqref{A01} \times q_{00} J^{-1}- \eqref{A00} \times (I + q_{01} J^{-1})$, 
we get that 
\begin{equation}
\label{eq:A00} 
A_{00} \left[ q_{00}^2 - (I_n +  q_{10} J ) (I_n + q_{01} J^{-1}) \right] 
= - q_{00} + {\mathcal O}_\eta(n^{-3/2}) .  
\end{equation} 
Similarly, taking $\eqref{A00} \times q_{00} J - \eqref{A01} \times 
(I + q_{10} J)$, we get 
\begin{equation}
\label{eq:A01} 
A_{01} \left[ q_{00}^2 - (I_n +  q_{10} J ) (I_n + q_{01} J^{-1}) \right]
= q_{00}^2 J - q_{01} (I_n + q_{10} J) + {\mathcal O}_\eta(n^{-3/2}) .  
\end{equation} 

Now, by using the obvious identity $Q (\Sigma - \eta) = I_{2N}$ we obtain 
\begin{align*} 
 - \eta Q_{00} - \bar z Q_{01} + Q_{01} X J^{-1} X^* &= I_N, \\ 
 - z Q_{00} - \eta Q_{01} + Q_{00} X JX^* &= 0, 
\end{align*} 
(the similar equations involving the terms $Q_{10}$ and $Q_{11}$ will not be
used). Taking the traces of the expectations, we get 
\begin{align} 
 - \eta q_{00} - \bar z q_{01} + n^{-1} \tr A_{01} J^{-1} &= \gamma_n, 
 \label{QS1} \\ 
 - z q_{00} - \eta q_{01} + n^{-1} \tr A_{00} J &= 0,  \label{QS2} 
\end{align} 
where $\gamma_n = N/n$. 

Recalling that $q^{(n)}_{00}(z,\eta) = n^{-1}\EE \tr Q^{(n)}(z,\eta)$, the 
function $\gamma_n^{-1} q^{(n)}_{00}(z,\cdot)$ is the Stieltjes transform of 
the probability measure $\EE\check \nu_{n,z}$. Hence,  
$| \gamma_n^{-1} q^{(n)}_{00}(z,\eta) | \leq 1 / \Im\eta$. So,  
$\{ q^{(n)}_{00}(z,\cdot) \}_{n\in\NN}$ is a \textit{normal family} of holomorphic functions on $\CC_+$. Similarly, 
$q^{(n)}_{01}(z,\cdot) = n^{-1} \EE \tr Q^{(n)}_{01}(z,\cdot)$ and 
$q^{(n)}_{10}(z,\eta) = n^{-1} \EE \tr Q^{(n)}_{10}(z,\eta)$ are holomorphic
functions in $\eta\in\CC_+$ whose absolute values are bounded by 
$\sup_n \gamma_n / \Im\eta$. 

Using the normal family theorem, let us extract from the sequence $(n)$ a 
subsequence (still denoted as $(n)$) such that $q_{00}^{(n)}(z,\cdot)$, 
$q_{01}^{(n)}(z,\cdot)$, and $q_{10}^{(n)}(z,\cdot)$ converge to holomorphic 
functions in the sense of uniform convergence on the compact subsets of $\CC_+$. 
Denote these functions respectively as $p(z,\cdot)$, $d(z,\cdot)$ and $\tilde d(z,\cdot)$. We shall show that they  uniquely solve a system of
equations on the line segment $\imath [C, \infty)$ of the positive imaginary 
axis, where $C$ is some positive
constant. This will show that $p(z,\cdot)$ is uniquely defined on $\CC_+$, and
that $q_{00}(z,\cdot) \to_n p(z,\cdot)$ and $q_{01}(z,\cdot) \to_n d(z,\cdot)$ 
on $\CC_+$. We then show 
that $t \Im p(z, \imath t) \to \gamma$ as $t\to\infty$. This will lead to the 
fact that $\gamma^{-1} p(z,\cdot)$ is the Stieltjes transform of a 
symmetric probability measure $\check{\bs\nu}_{z}$. 

Assume that $\eta = \imath t$ where $t > 0$. Then, since the measure $\EE\check \nu_{n,z}$
is symmetric, $q_{00}(z, \imath t) = \imath s(z, t)$ with $s(z,t) > 0$. Moreover, 
we notice from the expressions of $Q_{01}$ and $Q_{10}$ in~\eqref{Qeta} that
$q_{10}(z, \imath t) = \bar q_{01}(z, \imath t)$. 

Recall that $A_{00}$ and $A_{01}$ are circulant matrices. 
Define the so-called Fourier matrix 
\[
{\sf F}_n = 
n^{-1/2} \left[ \exp(2\imath \pi k\ell/n) \right]_{k,\ell=0}^{n-1}. 
\]
Then the circulant matrix $J$ can be written as 
\[
J = {\sf F}_n \diag( \exp(-2\imath\pi k / n) )_{k=0}^{n-1} {\sf F}_n^* , 
\]
Notice that the matrices $A_{00}$, $A_{01}$ and $J$ commute, since they are 
circulant. 

Now Equation~\eqref{eq:A00} can be rewritten as 
$A_{00} P = \imath s + E$ where $E = \cO_t(n^{-3/2})$ is a circulant matrix, 
and 
\begin{equation}
\label{def:P} 
P = s^2  + (I_n +  \bar q_{01} J ) (I_n + q_{01} J^*) 
 = {\sf F}_n 
 \diag\left( s^2 + \left| 1 + q_{01} \exp(2\imath \ell / n) 
        \right|^2 \right)_{\ell=0}^{n-1} {\sf F}_n^* . 
\end{equation} 
If $t \geq 2 \sup_n\gamma_n$, then $|q_{01}| \leq 1/2$, and thus, 
the positive definite matrix $P$ satisfies $P \geq (1/4) I_n$ in the 
semi-definite positive ordering. In view of Equation~\eqref{QS2}, we need an 
expression for $n^{-1} \tr A_{00} J$. We can write 
\begin{equation}
\label{A00J} 
\frac{\tr A_{00} J}{n} 
= \frac{\imath s \tr P^{-1} J}{n} + \frac{\tr P^{-1} J E}{n}  
 = \frac{\imath s}{n} \sum_{\ell=0}^{n-1} 
  \frac{\exp(-2\imath\pi \ell/n)}
   {s^2 + |1 + q_{01} \exp(2\imath\pi\ell/n) |^2} 
 + \frac{\tr P^{-1} J E}{n}.  
\end{equation} 
Given two square matrices $M_1$ and $M_2$ of the same size, it is well known 
that $|\tr M_1 M_2 | \leq (\tr M_1 M_1^*)^{1/2} (\tr M_2 M_2^*)^{1/2}$. Thus, 
since $E = \cO_t(n^{-3/2})$, we get that
\[
\frac{|\tr P^{-1} J E|}{n} \leq \frac 1n \sqrt{\tr P^{-2}} 
 \sqrt{\tr E E^*} \leq \frac 1n 2n^{1/2} \cO_t(n^{-1/2}) = \cO_t(n^{-1}). 
\]
By a similar derivation, and in view of Equation~\eqref{QS1}, we also get from 
Equation~\eqref{eq:A01} that 
\begin{equation}
\label{A01J} 
\frac{\tr A_{01} J^{-1}}{n} 
 = \frac 1n \sum_{\ell=0}^{n-1} 
  \frac{s^2 + |q_{01}|^2 + q_{01} \exp(2\imath\pi \ell/n)}
   {s^2 + |1 + q_{01} \exp(2\imath\pi\ell/n) |^2} 
   + {\mathcal O}_t(n^{-1}) . 
\end{equation} 

Now, taking $n$ to infinity along the subsequence $(n)$ in
Equations~\eqref{QS1}, \eqref{QS2}, \eqref{A00J}, and \eqref{A01J}, writing  
$p(\imath t) = \imath h(z,t)$ where $h(z,t) \geq 0$, and noting that 
$\tilde d(z,\imath t) = \bar d(z,\imath t)$, the pair 
$(h(z,t), d(z,\imath t))$ satisfies the system of Equations~\eqref{hdt} of the
statement of Theorem~\ref{narrowcvg} for $t \geq 2 \sup_n\gamma_n$. 

Let us consider the system of equations in $(h,d) \in (0,\infty) \times \CC$ 
\begin{subequations} 
\label{hd} 
\begin{align} 
 - t h + \bar z d  &= u(h,d) -  \gamma,  \label{hdu} \\ 
  z h + t d &= v(h,d),  \label{hdv}
\end{align} 
\end{subequations} 
where $u(h,d)$ and $v(h,d)$ are given by Equations~\eqref{uvt}. 
Writing 
\[
{\bs I}(a, u) = \frac{1}{2\pi} \int_0^{2\pi} 
  \frac{1}{a^2 + |1 + u \exp(\imath\theta) |^2} d\theta 
\quad \text{and} \quad 
{\bs J}(a,u) = 
\frac{1}{2\pi} \int_0^{2\pi} 
  \frac{\exp(\imath \theta)}
   {a^2 + |1 + u \exp(\imath\theta) |^2} d\theta , 
\]
the system~\eqref{hd} can be rewritten as 
\begin{subequations}
\label{hd'} 
\begin{align} 
-t h + \bar z d &= (h^2 + |d|^2) {\bs I}(h, d) + d {\bs J}(h,d) - \gamma,  
 \label{hdu'} \\
z h + t d &= h \overline{\bs J}(h,d) . \label{hdv2}
\end{align} 
\end{subequations}  
By using the residue theorem (derivations omitted), we know that the integrals 
are given by the expressions 
\begin{equation}
\label{IJ} 
{\bs I}(a,u) = 
 \frac{1}{\sqrt{(a^2 + |u|^2 + 1)^2 - 4 |u|^2}} , 
\quad \text{and} \quad 
{\bs J}(a,u) = 
  \frac{1}{2u} \left( 1 - 
 \frac{a^2 + |u|^2 +1}{\sqrt{(a^2 + |u|^2 + 1)^2 - 4 |u|^2}} \right) 
\end{equation} 
for each $a \in \RR$ and $u\in\CC$ such that $a \neq 0$ or $|u| \neq 1$. 

\begin{lemma}
There exists $C > 0$ (depending on $z$ and $\gamma$) such that for each $t\in [C, \infty)$, the 
system~\eqref{hd} has a unique solution 
$(h, d)$ such that $h \in (0, \gamma/t)$ and $|d| < \gamma/t$.
\end{lemma}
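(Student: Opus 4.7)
The plan is to reduce~\eqref{hd} to a fixed-point problem in two real variables, apply the Banach contraction principle, and then verify that the resulting fixed point lies in the open region specified in the lemma.

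First I would observe that $u$ is real-valued: combining the two terms in~\eqref{IJ} gives
\[
u(h,d) \;=\; (h^2+|d|^2)\,\bs I(h,d) + d\,\bs J(h,d)
 \;=\; \tfrac12\bigl(1 + (h^2+|d|^2-1)\,\bs I(h,d)\bigr) \;\in\; \RR.
\]
Taking imaginary parts of~\eqref{hdu} with $h\in\RR$ then forces $\Im(\bar z d)=0$, so any solution has $d=\lambda z$ with $\lambda\in\RR$. A short computation based on~\eqref{IJ} shows that $v(h,\lambda z)/z=\frac{h}{2\lambda|z|^2}\bigl(1-(h^2+\lambda^2|z|^2+1)\bs I(h,\lambda z)\bigr)$ is likewise real. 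The system~\eqref{hd} is therefore equivalent to the real system
\begin{align*}
h &= \tfrac1t\bigl(\gamma + |z|^2\lambda - u(h,\lambda z)\bigr), &
\lambda &= \tfrac1t\bigl(v(h,\lambda z)/z - h\bigr)
\end{align*}
in the unknowns $(h,\lambda)\in\RR^2$. Denote the right-hand side by $\tilde T(h,\lambda)$; since $u(0,0)=0=v(0,0)$, one has $\tilde T(0,0)=(\gamma/t,0)$.

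On the compact set $K=\{(h,d)\in\RR\times\CC : |h|,|d|\leq 1/2\}$ the denominators $h^2+|1+de^{i\theta}|^2\geq(1-|d|)^2\geq 1/4$ are uniformly bounded below, so $u$ and $v$ are $C^1$ with bounded partial derivatives there. This yields a constant $L=L(z,\gamma)$, independent of $t$, bounding the Lipschitz constant of $(h,\lambda)\mapsto\bigl(|z|^2\lambda - u(h,\lambda z),\,v(h,\lambda z)/z - h\bigr)$ on the slice of $K$ with $|\lambda|\leq 1/(2|z|)$. I would then choose $C=C(z,\gamma)$ large enough so that, for all $t\geq C$, the enlarged set
\[
R_t^+ \;=\; \{(h,\lambda)\in\RR^2 : |h|\leq 2\gamma/t,\ |\lambda|\leq 2\gamma/(t|z|)\}
\]
lies inside $K$, the Lipschitz constant of $\tilde T$ on $R_t^+$ is at most $L/t\leq 1/2$, and the resulting bound $|\tilde T(h,\lambda)-(\gamma/t,0)|\leq (L/t)|(h,\lambda)|=O(1/t^2)$ gives the self-map property $\tilde T(R_t^+)\subset R_t^+$. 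Banach's fixed-point theorem then produces a unique fixed point $(h^*,\lambda^*)\in R_t^+$.

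Finally I would upgrade from $R_t^+$ to the strictly smaller region $R_t=(0,\gamma/t)\times\{|\lambda|<\gamma/(t|z|)\}$, which gives existence in the form required by the lemma; uniqueness in $R_t$ follows at once from uniqueness in $R_t^+\supset R_t$. The contraction estimate gives $(h^*,\lambda^*)=(\gamma/t,0)+O(1/t^2)$, so $\lambda^*=O(1/t^2)$. Using the expansion $\bs I(h,d)=1-h^2+|d|^2+O((|h|+|d|)^4)$ near the origin, one obtains $u(h^*,d^*)=(h^*)^2+O(1/t^4)=\gamma^2/t^2+O(1/t^3)>0$. Substituting back into the first equation yields $h^*-\gamma/t=(|z|^2\lambda^*-u(h^*,d^*))/t=-\gamma(\gamma+|z|^2)/t^3+o(1/t^3)<0$, so $h^*\in(0,\gamma/t)$, while $|d^*|=|\lambda^*|\,|z|=O(|z|/t^2)<\gamma/t$ for $t\geq C$ sufficiently large. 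The main technical subtlety is that the naïve bound $|\tilde T_1|\leq\gamma/t+O(1/t^2)$ barely fails to preserve the closed ball $\{|h|\leq\gamma/t\}$, which is exactly why I work first in the slightly enlarged region $R_t^+$ and only afterwards exploit the fact that $u$ and $v$ vanish to second order at the origin to return to the strict-interior conclusion.
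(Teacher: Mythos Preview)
Your proof is correct and takes the same approach as the paper: reduce the complex system to two real unknowns (you write $d=\lambda z$ with $\lambda\in\RR$; the paper rotates $z$ to the real axis, which is equivalent) and then apply Banach's contraction principle on a box of size $O(1/t)$. The only notable difference is the fixed-point map. The paper inverts the full linear part of~\eqref{hd}, writing
\[
\begin{bmatrix}h\\d\end{bmatrix}=\frac{1}{t^2+z^2}\begin{bmatrix}-t&z\\z&t\end{bmatrix}\begin{bmatrix}u(h,d)-\gamma\\v(h,d)\end{bmatrix},
\]
and works on $S=[0,2\gamma/t]\times[-\gamma/t,\gamma/t]$; you instead solve each equation for its ``diagonal'' variable and work on the symmetric box $R_t^+$. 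Your choice forces the additional asymptotic step at the end to land back in the open region $(0,\gamma/t)$. The paper's proof, incidentally, never verifies the strict bound $h<\gamma/t$ either---its fixed point is only shown to lie in $S$---but for the intended application (uniqueness of the subsequential limits, which are known a priori to satisfy $0<h\le\gamma/t$ and $|d|\le\gamma/t$) uniqueness in the enlarged box suffices. One minor point: your displayed leading term $-\gamma(\gamma+|z|^2)/t^3$ requires $\lambda^*=-\gamma/t^2+o(1/t^2)$, not just $\lambda^*=O(1/t^2)$; this follows by substituting once into the second fixed-point equation, since $v(h^*,d^*)/z-h^*=-h^*+O(1/t^3)$.
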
 
\begin{proof}
Equation~\eqref{hdv2} can be written equivalently as $\bar z d = d {\bs J}(h,d)
- t |d|^2 / h$. Note that $d {\bs J}(h,d)$ and ${\bs I}(h,d)$ are both real and
depend on $d$ through $|d|$ only. Thus, $\bar z d \in \RR$, and if we
write $z = \rho\exp(\imath\theta)$, then $(h,d)$ is a solution for this $z$ if
and only if $(h, d')$ is a solution for $z = \rho$, where 
$d' = d \exp(-\imath\theta) \in \RR$. Consequently, we can assume without loss of  
generality that $z$ and $d$ belong to $\RR$ in the system~\eqref{hd}. 

This system can be written equivalently as $[h,d]^\T = f([h,d]^\T)$, where
\[
f\left(\begin{bmatrix} h \\ d \end{bmatrix}\right) = 
\frac{1}{t^2 + z^2} \begin{bmatrix} -t & z \\ z & t \end{bmatrix} 
 \begin{bmatrix} u(h,d) - \gamma \\ v(h,d) \end{bmatrix} .
\]
We shall show if $C$ is large, $f$ is a Banach contraction on the space 
$S = [0, 2\gamma/t] \times [-\gamma/t, \gamma/t]$. 

If $C$ is large enough, we get from the integral expressions of $u(h,d)$ 
and $v(h,d)$ that
\[
|u(h,d)|  \leq \frac{2\gamma}{t} , \quad \text{and} \quad 
| v(h,d)| 
%\leq \frac{2 \gamma t^{-1}}{(1 - \gamma t^{-1})^2} 
\leq 
  \frac{3\gamma}{t}. 
\] 
Hence, writing $f([h,d]^\T) = [ f_1, f_2 ]^\T$, (and recalling that $z$ is real), we get that 
$\gamma t \geq t | u(h,d,t) | + | z v(h,d,t)|$, thus $f_1 \geq 0$, and 
moreover,
\[
f_1 \leq \frac{2 \gamma + t\gamma + 3 |z| \gamma t^{-1}}{t^2 + z^2} 
 \leq \frac{2\gamma}{t} 
 \quad \text{and} \quad 
 | f_2| \leq  \frac{2 \gamma |z| t^{-1} + |z|\gamma + 3 \gamma}{t^2 + z^2} 
 \leq \frac \gamma t 
\]
for large enough $C$. Thus, $f([h,d]^\T) \in S$ when $[h,d]^\T \in S$. 

We now consider the Jacobian matrix $\jac(f)$ of $f$. After some easy 
derivations that we omit, we obtain that when $C$ is large, there exist a 
constant $C' > 0$ such that 
\[
\left| \frac{\partial u}{\partial h} \right| \leq \frac{C'}{t}, \ 
\left| \frac{\partial u}{\partial d} \right| \leq C', \ 
\left| \frac{\partial v}{\partial h} \right| \leq C', \ \text{and} \ 
\left| \frac{\partial v}{\partial d} \right| \leq \frac{C'}{t} \ \ \text{(on} \ S).
\] 
 Since 
\[
\jac(f) = 
\frac{1}{t^2 + z^2} \begin{bmatrix} -t & z \\ z & t \end{bmatrix} 
 \begin{bmatrix} 
\partial u / \partial h & \partial u / \partial d \\  
\partial v / \partial h & \partial v / \partial d 
\end{bmatrix} , 
\]
we get that $\| \jac(f) \| \leq 1/2$ on $S$ for large enough $C$, and the 
result follows from Banach's fixed point theorem. 
\end{proof} 

\begin{lemma}
$t h(z,t) \to \gamma$ as $t\to\infty$. 
\end{lemma}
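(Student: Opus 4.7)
The plan is to extract the leading-order behavior of $h(z,t)$ directly from equation~\eqref{hdu} by showing that every other term in it is $o(1)$ as $t\to\infty$.

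First, I would appeal to the preceding lemma, which states that for $t$ large enough the unique solution of~\eqref{hd} satisfies $h(z,t)\in[0, 2\gamma/t]$ and $|d(z,\imath t)|\leq \gamma/t$. In particular, $h(z,t)\to 0$ and $d(z,\imath t)\to 0$ as $t\to\infty$, and both quantities are $\cO(1/t)$.

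Next, I would bound $u(h,d)$ for such small $(h,d)$. Inspecting the integrand of
\[
u(h,d) = \frac{1}{2\pi}\int_0^{2\pi}\frac{h^2 + |d|^2 + d\exp(\imath\theta)}{h^2 + |1 + d\exp(\imath\theta)|^2}\,d\theta,
\]
for $|d|\leq 1/2$ the denominator is bounded below by a positive constant (in fact by $1/4$), while the numerator is bounded in modulus by $h^2 + |d|^2 + |d| = \cO(1/t)$. Thus $|u(h(z,t),d(z,\imath t))| = \cO(1/t)$ uniformly. Similarly $|\bar z\, d(z,\imath t)|\leq |z|\gamma/t = \cO(1/t)$.

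Finally, rearranging equation~\eqref{hdu} gives
\[
t\, h(z,t) = \gamma + \bar z\, d(z,\imath t) - u(h(z,t),d(z,\imath t)) = \gamma + \cO(1/t),
\]
which yields $t\, h(z,t) \to \gamma$ as $t\to\infty$. No step here is a real obstacle once the confinement lemma is in hand; the only mild care needed is to verify that the $h^2$ term in the denominator of $u$ does not cause the estimate to break down, which it does not since the constant term in $|1+d\exp(\imath\theta)|^2$ remains bounded away from zero for $|d|$ small.
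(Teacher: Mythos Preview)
Your proof is correct and follows essentially the same approach as the paper: both invoke the preceding confinement lemma to get $h,|d|=\cO(1/t)$, then show that the $u$-term and the $\bar z d$-term in equation~\eqref{hdu} vanish as $t\to\infty$, leaving $th\to\gamma$. The only cosmetic difference is that the paper rewrites $u(h,d)=(h^2+|d|^2)\,{\bs I}(h,d)+d\,{\bs J}(h,d)$ and appeals to the closed-form expressions~\eqref{IJ}, whereas you bound the integral defining $u$ directly; the two arguments are equivalent.
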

\begin{proof} 
The functions $h(z,t)$ and $d(t)$ satisfy Equation~\eqref{hdut}, and furthermore,
$0\leq h(z,t), |d(t)| \leq \gamma/t$. From the expressions~\eqref{IJ}, it is 
clear that $(h^2+d^2){\bs I}(h(z,t), d(t))$ and $d(t) {\bs J}(h(z,t), d(t))$ converge to
zero as $t\to\infty$. The result is then obtained from Equation~\eqref{hdu'}. 
\end{proof}

We now need to prove that 
$(h(z,\imath t), d(z, \imath t))$ satisfy the system of Equations~\eqref{hdt}
for each $t > 0$. By the convergence $q_{00}(z,\cdot) \to p(z,\cdot)$, we get
that $\EE\check\nu_{n,z} \Rightarrow \check{\bs\nu}_z$. In particular, 
$\EE\check\nu_{n,z}$ is tight. Let $a > 0$ be such that 
$\inf_n \EE\check\nu_{n,z}([-a,a]) \geq 1/2$. Recalling that 
$\EE\check\nu_{n,z}$ is symmetric, we have 
\begin{align*} 
s(z,\imath t) &= -\imath q_{00}(z, \imath t) = 
-\imath \gamma_n \int \frac{1}{\lambda - \imath t} 
   \EE\check{\nu}_{n,z}(d\lambda) \\ 
 &= \frac{-\imath\gamma_n}{2}  
\int \Bigl(\frac{1}{\lambda - \imath t} + \frac{1}{-\lambda - \imath t} \Bigr)
 \EE \check{\nu}_{n,z}(d\lambda) 
=  \int \frac{\gamma_n t}{\lambda^2 + t^2} \EE \check{\nu}_{n,z}(d\lambda) \\
 &\geq \frac{\gamma_n}{2} \frac{t}{a^2 + t^2} . 
\end{align*} 
Therefore, for each $t > 0$, the matrix $P$ defined in \eqref{def:P} 
satisfies $P \geq \gamma t / (4(a^2 + t^2)) I$ in the semidefinite ordering for
all large enough $n$. By repeating the argument that follows Equation~\eqref{def:P}, 
we obtain that $(p(z,\imath t), d(z,\imath t))$ solve the system~\eqref{hdt}. 

To complete the proof of Theorem~\ref{narrowcvg}, it remains to show that
$\check\nu_{n,z} \Rightarrow \bs\nu_z$ almost surely. This is obtained at once
by combining Propositions~\ref{prop:conc} and~\ref{prop:gauss} with the 
convergence of $q_{00}(z,\cdot)$ to $p(z,\cdot)$.

\subsection{Proof of Proposition~\ref{weg-xjx}}  
\label{prf:wegner} 

We first assume that $X \stackrel{\cL}{=} X^\cN$, where
$X^\cN$ was defined before the statement of Proposition~\ref{prop:gauss}. 
Fixing $z\neq 0$, and writing $q_{00}(z, \imath t) = \imath s$, we first show that there exist constants $\alpha, C > 0$ such that 
\begin{equation}
\label{spoly}
s \in (0, C ( 1 + n^{-1} t^{-\alpha})] \quad \text{for} \quad t \in (0, 1]. 
\end{equation}   
From Equations~\eqref{QS1}, \eqref{QS2}, \eqref{A00J}, and \eqref{A01J}, we 
get that 
\begin{subequations}
\label{sd01} 
\begin{align}
-t s + \bar z q_{01} &= u_n(t) - \gamma_n + {\mathcal O}_t(n^{-1}), 
 \label{sd01a} \\
z s + t q_{01} &= v_n + {\mathcal O}_t(n^{-1}) , \label{sd01b} 
\end{align} 
\end{subequations} 
where 
\[
u_n = \frac 1n \sum_{\ell=0}^{n-1} 
  \frac{s^2 + |q_{01}|^2 + q_{01} \exp(2\imath\pi \ell/n)}
   {s^2 + |1 + q_{01} \exp(2\imath\pi\ell/n) |^2}, 
\quad \text{and} \quad 
v_n = \frac{1}{n} \sum_{\ell=0}^{n-1} 
  \frac{s \exp(-2\imath\pi \ell/n)}
   {s^2 + |1 + q_{01} \exp(2\imath\pi\ell/n) |^2} . 
\]
We now show that $|q_{01}| \leq C_1 +  {\mathcal O}_t(n^{-1})$ for some constant $C_1 > 0$. 
It is enough to focus on the case $|q_{01}| \geq 2$. Using
\[
|u_n | \leq \frac{s^2 + |q_{01}|^2 + |q_{01}|}{s^2 + ( |q_{01}| - 1)^2}, 
\]
it is easy to see that 
$|u_n| \leq 5$. Since 
$s \leq \sup_n \gamma_n/t$, we get from Equation~\eqref{sd01a} that
$|q_{01}| \leq C_1 +  {\mathcal O}_t(n^{-1})$ for some constant $C_1 > 0$.
 
Now using the obvious inequality $|v_n | \leq 1 / s$ along with 
Equation~\eqref{sd01b}, we get that 
$|z| s \leq 1/s + C_1 + {\mathcal O}_t(n^{-1})$. Thus, $s^2 \leq C_2(1 + s + s t^{-\alpha} n^{-1})$ for some $\alpha, C_2 > 0$. 
If $s \geq 2C_2$, we have that 
$0.5 s^2 \leq s^2 - C_2s \leq C_2(1 + s t^{-\alpha} n^{-1}) 
\leq s (1/2 + C_2 t^{-\alpha} n^{-1})$, thus, 
$s \leq 1 + 2 C_2 t^{-\alpha} n^{-1}$, and~\eqref{spoly} is established. 

Removing the Gaussian assumption on the elements of $X$, 
Proposition~\ref{weg-xjx} is obtained by combining
Proposition~\ref{prop:gauss} with~\eqref{spoly}. 

\section{Properties of $\bs\mu$} 
\label{sec:prop-mu} 

\subsection{Proof of Proposition~\ref{d->U'}}  
\label{prf:d->U'} 

We first show that $\bs\cU(\cdot,t)$ is continuous, and that there is a 
probability one set on which 
\begin{equation}
\label{cvgU} 
\cU_n(\cdot,t) \ \xrightarrow[n\to\infty]{\cD'(\CC)} \ \bs\cU(\cdot,t) \ \ \text{for each}\ \ t > 0.
\end{equation} 
 Fix $t > 0$. 
From the almost sure weak convergence of $\check\nu_{n,z}$ to 
$\check{\bs\nu}_z$ and~\eqref{yin-bai}, we get that  
$\cU_n(z,t) \toaslong \bs\cU(z,t)$ for each $z \in \CC$. 
Furthermore, by the Hoffman-Wielandt theorem (see \cite{HorJoh90}), given $z,z' \in \CC$, we have 
$\max_i | s_i(Y - z) - s_i(Y-z') | \leq | z - z' |$. Thus, $|\cU_n(z,t) - \cU_n(z',t) | \leq 
| z - z' | / (2 t^2)$, and taking $n$ to infinity, we get that 
$\bs\cU(\cdot,t)$ is continuous. 

Let $\cK$ be a compact set of $\CC$. For $z\in \cK$, we have  
\begin{equation}
\label{Ubnd} 
|\cU_n(z,t)| \leq \frac{1}{2N} \sum \log(1 + s_i(Y-z)^2 / t^2 ) 
 + | \log t | 
 \leq \frac{(\| X \|^2 + \max_{z\in\cK} |z|)^2}{2 t^2} + |\log t| .  
\end{equation} 
On the underlying probability space $\Omega$, let $\cE_0$ be the probability 
one event where the convergence~\eqref{yin-bai} takes place. 
For each $z\in \cK$, define the event 
\[
\cE(z) = [ \cU_n(z,t) \to_n \bs\cU(z,t) ]. 
\]
Since $\cU_n(\cdot,t)$ is measurable on the product space $\Omega\times \CC$ 
and $\bs\cU(\cdot,t)$ is continuous, the function $\1_{\cE(z)}(\omega,z)$ is 
measurable on $\Omega\times \CC$. Moreover, for each $z\in\CC$, 
\[
\int_\Omega (1 - \1_{\cE(z)}(\omega,z)) \ \PP(d\omega) = 0 .
\]
By the Fubini-Tonelli theorem, there exists a probability one set 
$S\subset\Omega$ such that 
\[
\int_\cK (1 - \1_{\cE(z)}(\omega,z)) \ dz = 0 \ \ \text{for each}\ \ \omega \in S.
\]
 Let $\varphi \in C_{\text{c}}^\infty(\CC)$ be supported by $\cK$, and let 
$\omega \in S \cap \cE_0$. We have  
\begin{align*} 
\Bigl| \int \varphi(z) (\cU_n(z,t) - \bs\cU(z,t)) \ dz \Bigr| &\leq 
\| \varphi \|_\infty \int_\cK  \left| \cU_n(z,t) - \bs\cU(z,t) \right| 
  \1_{\cE(z)}(\omega,z) \ dz  \\
&\phantom{=} 
 + \| \varphi \|_\infty \int_\cK  \left| \cU_n(z,t) - \bs\cU(z,t) \right| 
  (1 - \1_{\cE(z)}(\omega,z)) \ dz . 
\end{align*} 
By~\eqref{Ubnd} and the continuity of $\bs\cU(\cdot,t)$, the term 
$\left| \cU_n(z,t) - \bs\cU(z,t) \right|$ is bounded. Thus, the first
term at the right hand side converges to zero by the dominated convergence, 
while the second is zero for large enough $n$. This proves~\eqref{cvgU}. 

Equation~\eqref{cvgU} implies that, 
$\partial_{\bar{z}} \cU_n(\cdot,t) \xrightarrow[n\to\infty]{\cD'(\CC)} 
 \partial_{\bar{z}} \bs\cU(\cdot,t)$ almost surely. On the other hand, we know 
from Jacobi's formula that the pointwise derivative of $\cU_n(z,t)$ with 
respect to $\bar z$ is $(2N)^{-1} \tr Q_{01}(z,\imath t)$. Moreover, this 
derivative coincides with the distributional derivative 
$\partial_{\bar z} \cU_n(z,t)$. 
By Theorem~\ref{narrowcvg}, $(2N)^{-1} \tr Q_{01}(z,\imath t) \toaslong 
(2\gamma)^{-1} d(z, \imath t)$ for each $z\in \CC$. By an argument similar to 
the one used in the proof of~\eqref{cvgU}, we can show that this convergence 
holds almost surely in $\cD'(\CC)$. Thus, 
\begin{equation}
\label{U'=d}
\partial_{\bar{z}} \bs\cU(z,t) \ = \ (2\gamma)^{-1} d(z,\imath t) 
\ \text{in} \ \cD'(\CC)\ \ \text{for each} \ \ t>0.  
\end{equation}

We now show that 
\begin{equation}
\label{conv:U->Umu} 
\bs\cU(\cdot,t) \ \xrightarrow{\cD'(\CC)} \ U_{\bs\mu} \ \ \text{as}\ \ t\downarrow 0.
\end{equation}
It is clear from the expressions of $\bs\cU(z,t)$ and $U_{\bs\mu}(z)$ that 
$\bs\cU(z,t) \uparrow U_{\bs\mu}(z)$ as $t\downarrow 0$.  Therefore, 
$0 \leq \bs\cU(z,t) - \bs\cU(z,t_0) \leq \cU_{\bs\mu}(z) - \bs\cU(z,t_0)$
for $0 < t \leq t_0$, and since $\bs\cU(\cdot,t_0)$ is continuous hence 
locally integrable, we get~\eqref{conv:U->Umu} by the monotone convergence 
theorem.    

Proposition~\ref{d->U'} follows from~\eqref{U'=d} and~\eqref{conv:U->Umu}.

\subsection{Proof of Proposition~\ref{d->b}} 
\label{prf:d->b} 

The following preliminary lemma is needed. 

\begin{lemma}
\label{hdbnd} 
For each $z\neq 0$, the function $h(z,t)$ is bounded for $t\in (0,\infty)$. 
Moreover, $|d(z,\imath t)| \leq C / |z|$, where $C$ is a positive constant. 
\end{lemma}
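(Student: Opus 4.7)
The plan is to extract both bounds directly from the system \eqref{hdt} once one has the explicit rational expressions for ${\bs I}$ and ${\bs J}$ supplied by \eqref{IJ}. Writing $h = h(z,t)$, $d = d(z,\imath t)$, $A = h^2 + |d|^2 + 1$, and $D = \sqrt{A^2 - 4|d|^2}$, a short computation from \eqref{uvt} yields
\[
u(h,d) = \tfrac{1}{2} + \tfrac{h^2+|d|^2-1}{2D}, \qquad \bar d\,v(h,d) = \tfrac{h(D-A)}{2D}.
\]
Because $D \le A$ and $(h^2+|d|^2-1)^2 \le D^2$, these give $u(h,d) \in [0,1]$ and $\bar d\,v(h,d) \le 0$. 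These two sign/size facts are the only consequences of \eqref{IJ} I will use.

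First I would take the imaginary part of \eqref{hdut}: since the right-hand side is real, $\Im(\bar z d) = 0$, so $\bar z d =: -R$ is real. Multiplying \eqref{hdvt} through by $\bar d$ and using $z\bar d = -R$ gives $-Rh + t|d|^2 = \bar d\,v(h,d) \le 0$, which forces $R \ge 0$. Substituting $\bar z d = -R$ back into \eqref{hdut} produces $u(h,d) = \gamma - t h - R \in [0,1]$, and reading off the two sides yields the two scalar bounds
\[
0 \le R \le \gamma \quad \text{and} \quad 0 \le t h \le \gamma.
\]
The first immediately gives $|d(z,\imath t)| = R/|z| \le \gamma/|z|$, the desired bound on $d$.

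To bound $h$ uniformly in $t \in (0,\infty)$, I use the algebraic factorization $A - D = 4|d|^2/(A+D)$ to rewrite \eqref{hdvt}$\cdot\bar d$ as $-R h + t|d|^2 = -2h|d|^2/(D(A+D))$. Substituting $|d| = R/|z|$ and dividing by $R$ (the degenerate case $R = 0$ would force $d = 0$, and then \eqref{hdvt} with $z \ne 0$ would force $h = 0$, contradicting $h > 0$) produces the key identity
\[
R \Bigl(\frac{t}{h} + \frac{2}{D(A+D)} \Bigr) = |z|^2 .
\]
Combined with $R \le \gamma$ this gives $t/h + 2/(D(A+D)) \ge |z|^2/\gamma$. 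Since $A, D \ge h^2$, one has $D(A+D) \ge 2h^4$, hence $t/h + h^{-4} \ge |z|^2/\gamma$. Thus whenever $h \ge (2\gamma/|z|^2)^{1/4}$, the $h^{-4}$ term absorbs into half the right-hand side and one concludes $h \le 2 t\gamma/|z|^2$. Combined with $h \le \gamma/t$ from the second step and AM--GM, this yields the uniform bound
\[
h(z,t) \le \max\bigl((2\gamma/|z|^2)^{1/4},\ \sqrt{2}\,\gamma/|z|\bigr), \qquad t \in (0,\infty).
\]

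The main obstacle is the key identity in the third step: it is not a priori obvious that \eqref{hdvt}$\cdot\bar d$ collapses to a scalar relation involving only $R$, $h$, $t$ and $|z|$ rather than also the argument of $d$. The collapse is powered by the earlier fact that $\bar z d$ is real together with the factorization $A - D = 4|d|^2/(A+D)$. Once that identity is on the table, the remaining inequalities are routine.
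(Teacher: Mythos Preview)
Your proof is correct. Both bounds follow as you claim: the closed-form expressions $u = \tfrac12 + \tfrac{h^2+|d|^2-1}{2D}$ and $\bar d\,v = \tfrac{h(D-A)}{2D}$ are right, and the factorization $D^2 = (h^2+(|d|-1)^2)(h^2+(|d|+1)^2)$ justifies both $u\in[0,1]$ and $D\ge h^2$. The realness of $\bar z d$, the sign of $R$, the key identity, and the final case split are all sound.

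Compared with the paper's argument, your treatment of the bound on $d$ is sharper: you establish $u\in[0,1]$ exactly, whereas the paper bounds $|u|\le 5$ via a crude pointwise estimate on the integrand (under the provisional assumption $|d|\ge 2$), and then reads off $|d|\le C/|z|$ from \eqref{hdut} together with $th\le \gamma$. Your route yields the clean constant $C=\gamma$.

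For the bound on $h$, however, the paper's argument is considerably shorter. It simply observes from the integral definition that $|v(h,d)|\le 1/h$, so \eqref{hdvt} gives $|z|\,h \le |v| + t|d| \le 1/h + t|d|$, i.e.\ $|z|\,h^2 \le 1 + th\,|d|$; plugging in $th\le\gamma$ and $|d|\le C/|z|$ finishes. Your exact identity $R\bigl(\tfrac{t}{h}+\tfrac{2}{D(A+D)}\bigr)=|z|^2$ is correct and rather elegant, but it is more machinery than boundedness alone requires.
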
 
\begin{proof}
Assume without loss that $|d(t)| \geq 2$. Using
\[
|u| \leq \frac{h^2 + |d|^2 + |d|}{h^2 + ( |d| - 1)^2}, 
\]
it is easily seen that $|u(t)| \leq 5$.
Observing that 
$h(z,t) \leq 1/t$ by the general properties of the Stieltjes transforms, we 
obtain from~\eqref{hdu} that $|d(t)| \leq C / |z|$ for some $C > 0$. 

Using the inequality $|v(t)| \leq 1 / h(z,t)$ along with~\eqref{hdv}, we get 
that
$h(z,t)^2 \leq |z|^{-1} ( t h(z,t) |d(t)| + 1)$ which shows that $h(z,t)$ is 
bounded when $z\neq 0$. 
\end{proof} 

We now enter the proof of Proposition~\ref{d->b}. Since $\check{\bs\nu}_z$ 
is symmetric, 
\[
h(z,\imath t) = -\imath p(z, \imath t) = 
-\imath \gamma \int \frac{1}{\lambda - \imath t} \check{\bs\nu}_z(d\lambda) 
= 
\frac{-\imath\gamma}{2}  
\int \Bigl(\frac{1}{\lambda - \imath t} + \frac{1}{-\lambda - \imath t} \Bigr)
\check{\bs\nu}_z(d\lambda) 
=  \int \frac{\gamma t}{\lambda^2 + t^2} \check{\bs\nu}_z(d\lambda) , 
\]
thus, on $t\in(0,1]$, the function 
\[
\frac{h(z,t)}{t} \geq 
  \int \frac{\gamma}{\lambda^2 + 1} \check{\bs\nu}_z(d\lambda) 
\]
is lower-bounded by a positive constant. 

In the proof, we shall use the fact that $(h(z,t), d(z, \imath t))$ satisfies 
the system of equations~\eqref{hd'}. We rewrite Equation~\eqref{hdu'}  
as  $\gamma = (h^2 + |d|^2) {\bs I}(h, d) + d {\bs J}(h,d) - \bar z d + t h$, 
and Equation~\eqref{hdv2} as 
$\bar z h + t \bar d = h {\bs J}(h,d)$, or equivalently, as 
$\bar z d = d {\bs J}(h,d) - t |d|^2 / h$.  
Since $h(z,t) > 0$ for $t > 0$, we can use the expressions~\eqref{IJ} of the
integrals ${\bs I}(h, d)$ and ${\bs J}(h,d)$ to obtain 
\begin{equation}
\label{gzt}
\begin{split}
\gamma &= \frac{h^2 + |d|^2}{\sqrt{\Delta(h,d)}} 
  + \frac{t}{h}\left( h^2 + |d|^2 \right) , \\
2 \bar z d &= 1 - \frac{h^2 + |d|^2 + 1}{\sqrt{\Delta(h,d)}} - 
 2 | d|^2 \frac{t}{h}  
 = 1 - \gamma - \frac{\gamma}{h^2 + |d|^2} + 
  \frac{t}{h}\left( h^2 + 1 - |d|^2 \right), 
\end{split} 
\end{equation} 
where $\Delta(h,d) = ( h^2 + |d|^2 + 1)^2 - 4 | d |^2$. 

We now let $t\to 0$. Here, each sequence $t_k \to 0$ satisfies 
one of two cases : either $t_k / h(z,t_k) \to 0$, or $t_k / h(z,t_k) \to \alpha$ 
where $\alpha$ is a positive number. Indeed, we have just shown that 
$t_k / h(z,t_k) \to\infty$ is excluded. 

\paragraph*{Case $t_k / h(z,t_k) \to 0$.} 
Using Lemma~\ref{hdbnd}, and taking a further subsequence that we still denote 
as $(k)$, we can assume that $d(t_k) \to b \in \CC$ and $h(z,t_k) \to r \geq 0$. 
The pair $(r,b)$ satisfies the equations
\begin{align} 
\gamma^2 \Delta(r, b) &= (r^2 + |b|^2)^2,  \quad \text{and} \label{eq-r} \\ 
2 \bar z b &= 1 - \gamma - \frac{\gamma}{r^2 + |b|^2} \label{2bz} . 
\end{align} 
By Equation~\eqref{2bz}, the number $y = - \bar z b$ is real and satisfies 
\begin{equation}
\label{ry} 
r^2 + |b|^2 = r^2 + \frac{y^2}{|z|^2} = \frac{\gamma}{1 - \gamma + 2y}. 
\end{equation}
Moreover, we have  
$\Delta(r,b) = ( (\gamma / (1-\gamma + 2y) + 1)^2 - 4 y^2 / | z |^2$. 
Replacing in~\eqref{eq-r}, we get 
\[
\left( \frac{\gamma}{1 - \gamma + 2y} + 1 \right)^2 - 4\frac{y^2}{|z|^2} 
= \frac{1}{(1-\gamma+2y)^2} .
\]
Reducing to the same denominator, we get after some simple manipulations that
\[
|z|^2 = g(y), 
\]
where $g$ is the function given in the statement of Theorem~\ref{th:main}. 
Let us delineate the domain of variation of $y$. Equation $|z|^2 = g(y) = 
(1-\gamma+2y)^2 y / (y+1)$ shows that $y(y+1) > 0$, thus $y < -1$ or $y > 0$. 
By Equation~\eqref{ry},  
\[
\frac{\gamma}{1 - \gamma + 2y} \geq \frac{y^2}{|z|^2} = 
\frac{y(y+1)}{(1-\gamma+2y)^2} . 
\] 
We therefore get that $2 y + 1 - \gamma > 0$ and furthermore, by 
rearranging the terms of the inequality above, that  
$y^2 + (1-2\gamma) y + \gamma(\gamma-1) \leq 0$. The last 
inequality implies that $\gamma-1\leq y\leq \gamma$. In conclusion, we get that 
$y \in [ 0 \vee (\gamma-1), \gamma ] \setminus \{ 0 \}$. 

\paragraph*{The case $t_k / h(z,t_k) \to \alpha > 0$.} 
Here we get of course that $h(z,t_k) \to 0$. Taking a subsequence if 
necessary, we shall assume that $d(t_k) \to b$. Getting back to the
system~\eqref{gzt} and taking $t_k$ to zero, we get that 
\begin{align*}
\gamma \left| 1 - |b|^2 \right| &= |b|^2 
      + \alpha |b|^2 \left| 1 - |b|^2 \right| , \\
2\bar z b |b|^2 &= (1-\gamma) |b|^2 - \gamma + \alpha |b|^2 ( 1 - |b|^2 ) .
\end{align*}
The first equation implies that $|b| \not\in \{ 0, 1 \}$, and that
\[
\alpha = \frac{\gamma}{|b|^2} - \frac{1}{| 1 - |b|^2 |} .
\]
Replacing $\alpha$ by its value in the second equation, we get after a 
simple calculation that 
\[
2 \bar z b = 1 - 2\gamma - \frac{1-|b|^2}{| 1 - |b|^2|} .
\] 
Here we need to consider two cases: either $|b| < 1$ or $|b| > 1$. If 
$| b | < 1$, we get from the last equation that $b = - \gamma / \bar z$
(thus, $|z| \geq \gamma$). 
Plugging in the expression of $\alpha$, we get that 
\[ 
\alpha = |z|^2 \left( \frac{1}{\gamma}- \frac{1}{|z|^2 - \gamma^2}\right). 
\]
Since $\alpha > 0$, this implies that $|z| > \sqrt{\gamma(\gamma+1)}$. 

If $|b| > 1$, we obtain that $b = (1 - \gamma) / \bar z$, thus, 
$|z| < |1-\gamma|$ and  
\[
\alpha = |z|^2 \left( 
 \frac{\gamma}{(1-\gamma)^2} - \frac{1}{(1-\gamma)^2 - |z|^2} \right). 
\]
Using again that $\alpha > 0$, we get after a small calculation that 
$\gamma > 1$ and $|z|^2 \leq (\gamma-1)^3 / \gamma$. 

Let us summarize our conclusions for clarity. 
\begin{itemize} 
\item If $t_k / h(z, t_k) \to 0$, let $b$ be an arbitrary accumulation point 
 of $d(z, t_k)$, and let $y = - \bar z b$. 
 \begin{itemize} 
 \item If $\gamma \leq 1$, then $y \in (0, \gamma]$, and 
   $|z|^2 = g(y) \in (0, \gamma(\gamma+1)]$. 
 \item If $\gamma > 1$, then $y \in [\gamma-1, \gamma]$, and 
     $|z|^2 = g(y) \in [(\gamma-1)^3/\gamma, \gamma(\gamma+1)]$. 
 \end{itemize} 

\item If $t_k / h(z, t_k)$ converges to a positive constant, let $b$ be an 
  arbitrary accumulation point of $d(z, t_k)$. 
 \begin{itemize} 
 \item If $\gamma \leq 1$, then $|z|^2 > \gamma(\gamma+1)$, and 
   $b = -\gamma/ \bar z$. 
 \item If $\gamma > 1$, thein either $|z|^2 > \gamma(\gamma+1)$ in which case
  $b = -\gamma/ \bar z$, or $|z|^2 < (\gamma-1)^3/\gamma$, in which case 
  $b = (1-\gamma) / \bar z$. 
 \end{itemize} 
\end{itemize} 

These statements show that given $z \neq 0$, the accumulation points $b$ reduce
to a genuine limit. Moreover, the behavior of this limit $b(z)$ is as
described in the statement of Proposition~\ref{d->b}.   

From the point-wise convergence $d(z, \imath t) \to_{t\to 0} b(z)$ for 
$z\neq 0$ and Lemma~\ref{hdbnd}, we get that 
$d(\cdot, \imath t) \to_{t\to 0} b(\cdot)$ in $\cD'(\CC)$. Thus, 
$(2\gamma)^{-1} b(z) = \partial_{\bar{z}} U_{\bs\mu}(z)$ in $\cD'(\CC)$ by 
Proposition~\ref{d->U'}. 

\paragraph{Acknowledgements.} 
The visit of AB to France has been funded by the Indo-French Center for
Applicable Mathematics project \textit{High Dimensional Random Matrix Models
with Applications}. This work has been partially supported by the French 
ANR grant HIDITSA (ANR-17-CE40-0003). The authors would like to thank
Monika Bhattacharjee, Nicholas Cook, and David Renfrew for fruitful 
discussions. 

\appendix

\section{Supplementary proofs} 
\label{anx-prfs}

\subsection{Proof of Proposition~\ref{proj1}} 
\label{prf-proj1}

Given $a > 1$, we have by Markov's inequality that 
$\PP[ |Z_0| \geq a ] \leq 1 / a$. Let $b \in (0, 1 - 1 / a)$. By Hoeffding's 
concentration inequality, we have
\begin{align*}
\PP\left[ \sum_{i=0}^{n-1} \1_{|Z_i|\leq a} \leq nb \right] &= 
\PP\left[ \sum_{i=0}^{n-1} - \1_{|Z_i|\leq a} + n \PP[ |Z_0| \leq a ] 
   \geq n ( \PP[ |Z_0| \leq a ] - b) \right]  \\
&\leq \exp( - 2 n ( \PP[ |Z_0| \leq a ] - b)^2 ) . 
\end{align*} 
Given $\cJ \subset [n]$, let $\mathcal E_\cJ$ be the event 
\[
{\mathcal E}_\cJ = \left[ j \in \cJ \Leftrightarrow  |Z_j|\leq a \right] . 
\]
Then we just showed that 
\[
\PP\Bigl[ \bigcup_{\cJ\subset [n] : |\cJ| > nb} {\mathcal E}_\cJ 
  \Bigr] \geq 1 - \exp( - 2 n ( \PP[ |Z_0| \leq a ] - b)^2 ) .
\]
Let $\cJ \subset [n]$ be such that $|\cJ| > nb$. Assume without loss of generality that $\cJ = [|\cJ|]$. To obtain the result, it is enough to prove that 
\begin{equation}
\label{tal1} 
\PP [ \dist( Z, V) \leq c_1 \sqrt{n} \, | \, {\mathcal E}_\cJ ] 
    \leq \exp(-c_2 n ) , 
\end{equation}
where $c_1, c_2 > 0$ depend on $\kappa$ and $C_\kappa$ only.

Define $Z^a = [ Z_0^a, \ldots, Z_{n-1}^a ]^\T$, where    
the $Z_i^a$ are independent copies of a random variable whose law is the 
distribution of $Z_0$ conditionally on the event $[|Z_0| \leq a]$. 
Recalling that $\Pi_\cJ$ is the orthogonal projection on the subspace of the 
vectors that are supported by $\cJ$, we note that 
$\dist(Z, V) \geq \dist(\Pi_\cJ(Z), \Pi_\cJ(V))$. Then, the 
inequality~\eqref{tal1} will be established if we show that 
\[
\PP\left[ \dist( \Pi_\cJ( Z^a ), \Pi_\cJ(V) ) \leq c_1 \sqrt{n} \right] 
 \leq \exp(-c_2 n ) . 
\]
Write $Z^a = \mathring{Z}^a + \EE Z^a$, and define the subspace 
$W = \colspan (V, \EE Z^a)$. Since $\dist( \Pi_\cJ( Z^a ), \Pi_\cJ(V) ) 
 \geq \dist( \Pi_\cJ( Z^a ), \Pi_\cJ(W) ) 
  = \dist( \Pi_\cJ( \mathring{Z}^a ), \Pi_\cJ(W) )$, the claim can be reduced 
to 
\begin{equation}
\label{tal2} 
\PP\left[ \dist( \Pi_\cJ( \mathring{Z}^a), \Pi_\cJ(W) ) \leq c_1 \sqrt{n} \right]
  \leq \exp(-c_2 n ) .  
\end{equation} 
Consider the disc $D = \{ z \in \CC \, : \, |z| \leq a \}$, and define the 
convex and $1$-Lipschitz function 
\[
f : D^{|\cJ|} \to \RR_+, \quad x \mapsto \dist(x, \Pi_\cJ(W)) .
\]
If we denote as $\mu_a$ the probability law of an element of $Z^a$, 
then~\eqref{tal2} can be re-expressed as 
\begin{equation}
\label{f<=c1} 
\mu_a^{\otimes |\cJ|} (f \leq c_1 \sqrt{n}) \leq \exp(-c_2 n ).
\end{equation} 
We can now make use of Talagrand's concentration inequality, which shows that 
\begin{equation}
\label{talag} 
\mu_a^{\otimes |\cJ|} | f - Mf | \geq a t ) \leq 
          4 \exp\left( - t^2 / 16  \right) , 
\end{equation} 
where $Mf$ is a median of $f$ under $\mu_a^{\otimes |\cJ|}$. This inequality
shows that there exists a constant $C > 0$ such that 
\begin{equation}
\label{EfMf} 
| \EE f - Mf | \leq Ca \quad \text{and} \quad 
  \EE f \geq \sqrt{\EE f^2} - Ca .  
\end{equation} 
Writing $\mathring{Z}^a = (\mathring{Z}^a_1,\ldots, \mathring{Z}^a_n)$,
we now have
\begin{align*} 
\EE f^2 &= \EE \left[ \dist( \Pi_\cJ( \mathring{Z}^a), \Pi_\cJ(W) )^2 \right] 
 = \sum_{j=1}^{|\cJ|} \EE [ (\mathring{Z}_i^a)^2 ] 
                          \left[ \Pi_{\Pi_\cJ(W)^\perp} \right]_{jj}  . 
\end{align*} 
Observe that $\EE [ Z_0^2 \, | \, | Z_0 | \leq a ] = ( 1 - 
\EE [ Z_0^2 \1_{| Z_0 | > a} ] ) / \PP[ |Z_0| \leq a]$. 
From the assumption on the $(2+\kappa)$-th moment, 
$\EE [ Z_0^2 \1_{| Z_0 | > a} ] \leq C_\kappa / a^\kappa$, and hence, 
$\EE [ Z_0^2 \, | \, | Z_0 | \leq a ] \geq 1 - C_\kappa / a^\kappa$.  
We can similarly show that $(\EE [ Z_0 \, | \, | Z_0 | \leq a ])^2 \leq 
2 C_\kappa / a^\kappa$. Thus, 
$\EE [ (\mathring{Z}_0^a)^2 ] \geq 1 - 3 C_\kappa / a^{\kappa}$. Moreover, 
since $\dim(\Pi_\cJ W) \leq \dim(W) \leq \dim(V) + 1$, we have 
\begin{align*}
\sum_{j=1}^{|\cJ|} \left[ \Pi_{\Pi_\cJ(W)^\perp} \right]_{jj}  &= 
\sum_{j=1}^{n} \left[ \Pi_{\Pi_\cJ(W)^\perp} \right]_{jj}  - 
\sum_{j\in \cJ^{\text{c}}} \left[ \Pi_{\Pi_\cJ(W)^\perp} \right]_{jj}  \\
 &\geq  n - \dim(V) - 1 - (n - |\cJ|) \\
 &> nb - \dim(V) - 1 . 
\end{align*} 
Assuming that $\dim(V) < nb$, and using \eqref{EfMf}, this leads to: 
\[
M f \geq \sqrt{nb - \dim(V)} / 2    
\]
for $a$ above a value that depends on $\kappa$ and $C_\kappa$ only. 
Put $t = \sqrt{nb - \dim(V)} / (4a)$. Observing that in this case, 
$f \leq at \Rightarrow | f - Mf | \geq at$, we can apply~\eqref{talag} to 
obtain 
\begin{align*} 
\mu_a^{\otimes |\cJ|} (f \leq \sqrt{nb - \dim(V)} / 4) 
 &\leq 
\mu_a^{\otimes |\cJ|} (| f -Mf | \geq \sqrt{nb - \dim(V)} / 4) \\ 
 &\leq 4 \exp\left( - (nb - \dim(V)) / (2^8 a^2) \right), 
\end{align*} 
and \eqref{f<=c1} follows after bounding $\dim(V)$ and adjusting $c_1$ and $c_2$
in a straightforward manner. This concludes the proof of Proposition~\ref{proj1}.

\subsection{Proof of Lemma~\ref{lm:np}} 
\label{prf-lm:np} 

We start by showing that $\var(n^{-1} \tr Q_{00}) = \cO_\eta(n^{-2})$, the
proof for the other $n^{-1} \tr Q_{ij}$ being similar. 
Applying the NP inequality to the function $\varphi(X) = n^{-1} \tr Q_{00}$, 
we get that 
\begin{equation}
\label{eq:npphi} 
\var \varphi \leq \frac 1n \sum_{i,j=0}^{N-1,n-1} 
 \EE \left| \frac{\partial \varphi}{\partial \bar x_{ij}} \right|^2 
 + 
 \frac 1n \sum_{i,j=0}^{N-1,n-1} 
 \EE \left| \frac{\partial \varphi}{\partial x_{ij}} \right|^2. 
\end{equation} 
We focus on the first term at the right hand side of this inequality, the other
term being treated similarly. Using Equation~\eqref{firstpartial},
\begin{align*}
 \frac{\partial \varphi}{\partial \bar x_{ij}}  &= 
\frac 1n \sum_{k=0}^{N-1} \frac{\partial [Q_{00}]_{kk}}{\partial \bar x_{ij}} 
 = 
- \frac 1n \sum_{k=0}^{N-1} [ Q_{01} X J^{-1} ]_{kj} [Q_{00}]_{ik} + 
  [ Q_{00} X J ]_{kj} [Q_{10} ]_{ik} \\
 &= -\frac 1n \left( [ Q_{00} Q_{01} X J^{-1} ]_{ij} + 
                     [ Q_{10} Q_{00} X J ]_{ij} \right). 
\end{align*} 
Hence, using the inequality $\tr M P \leq \| M \| \tr P$ when the matrix  
$P$ is Hermitian and non-negative, we get  
\begin{align*} 
\frac 1n \sum_{i,j}
 \EE \left| \frac{\partial \varphi}{\partial \bar x_{ij}} \right|^2 
 &\leq \frac{2}{n^3} 
 \EE \tr X^* Q_{01}^* Q_{00}^* Q_{00} Q_{01} X + 
 \EE \tr X^* Q_{00}^* Q_{10}^* Q_{10} Q_{00} X \\ 
 &\leq \frac{4}{(\Im\eta)^4} \frac{1}{n^3} \EE \tr XX^* 
 = \frac{4N}{(\Im\eta)^4n} \times \frac{1}{n^2}.  
\end{align*} 
This shows that $\var(n^{-1} \tr Q_{00}) = \cO_\eta(n^{-2})$. 

We now show that 
$\var \left( x_k^* Q_{00} x_\ell \right) = \cO_\eta(n^{-1})$ 
(proof is similar for other $Q_{ij}$). 
Writing this time $\varphi(X) = x_k^* Q_{00} x_\ell$, we also use the 
inequality~\eqref{eq:npphi} to bound $\var \varphi$. Here we have 
\begin{align*}
 \frac{\partial \varphi}{\partial \bar x_{ij}}  &= 
 \sum_{m,p=0}^{N-1} \frac{\partial \bar x_{mk} [Q_{00}]_{mp} x_{p\ell}}
   {\partial \bar x_{ij}} \\ 
 &= \1_{k=j} [ Q_{00} X]_{i\ell} 
   - [ X^* Q_{01} X J^{-1} ]_{kj} [ Q_{00} X]_{i\ell}  
   - [ X^* Q_{00} X J ]_{kj} [ Q_{10} X]_{i\ell} ,  
\end{align*} 
thus, 
\begin{align} 
\frac 1n \sum_{i,j} 
 \EE\left| \frac{\partial \varphi}{\partial \bar x_{ij}} \right|^2 
 &\leq \frac 3n \left( 
 \EE \| Q_{00} x_\ell \|^2 + 
 \EE \left( x_k^* Q_{01} X X^* Q_{01}^* x_k \, 
                     x_\ell^* Q_{00} Q_{00}^* x_\ell \right) \right. 
    \nonumber \\ 
 & 
 \ \ \ \ \ \ \ \ \ \ \ \ \ \ \ \ 
 \ \ \ \ \ \ \ \ \ \ \ \ \ \ \ \ 
 \left. + 
 \EE \left( x_k^* Q_{00} X X^* Q_{00}^* x_k \, 
                     x_\ell^* Q_{10} Q_{10}^* x_\ell \right) \right) . 
\label{eq:varphiij} 
\end{align} 
We have $\EE \| Q_{00} x_\ell \|^2 \leq (\Im\eta)^{-2} \EE \| x_\ell \|^2 
  = (\Im\eta)^{-2} N / n$. Moreover, 
\[
\EE (x_k^* Q_{01} X X^* Q_{01}^* x_k)^2 \leq 
  \EE\left[ \| Q_{01} X \|^4 \| x_k \|^4 \right] 
 \leq (\Im\eta)^{-4} \left( \EE \| X \|^8 \right)^{1/2} 
   \left( \EE \| x_k \|^8 \right)^{1/2} . 
\]
Using, \textit{e.g.}, \cite[Prop.~2.3]{rud-ver-cpam09}, we get that there
exists a constant $C > 0$ such that $\EE \| X \|^8 \leq C$ (we note here that
\cite[Prop~2.3]{rud-ver-cpam09} can be applied to the Gaussian real case.
Extension to the complex Gaussian case is easy). Thus, 
$\EE (x_k^* Q_{01} X X^* Q_{01}^* x_k)^2 = \cO_\eta(1)$. It is clear that
$\EE (x_\ell^* Q_{00} Q_{00}^* x_\ell)^2 = \cO_\eta(1)$, and hence, 
$\EE \left( x_k^* Q_{01} X X^* Q_{01}^* x_k \, 
                     x_\ell^* Q_{00} Q_{00}^* x_\ell \right) = \cO_\eta(1)$ by 
the Cauchy-Schwarz inequality. 
The third term at the right hand side of Inequality~\eqref{eq:varphiij} 
can be dealt with similarly, which shows that 
$n^{-1} \sum_{i,j} 
 \EE\left| \partial \varphi / \partial \bar x_{ij} \right|^2 
 = \cO_\eta(n^{-1})$. 
The term involving $\partial\varphi / \partial x_{ij}$ at the right hand
side of~\eqref{eq:npphi} can be bounded in a similar manner, leading to 
the bound $\var \left( x_k^* Q_{00} x_\ell \right) = \cO_\eta(n^{-1})$.  
This concludes the proof.

% \bibliographystyle{plain} 
% \bibliography{math}

\def\cprime{$'$} \def\cdprime{$''$} \def\cprime{$'$} \def\cprime{$'$}
  \def\cprime{$'$} \def\cprime{$'$}

\end{document}